\definecolor{myred}{rgb}{0.77, 0.0, 0.1}
\definecolor{crimson}{rgb}{0.86, 0.08, 0.24}
\definecolor{awesome}{rgb}{1.0, 0.13, 0.32}
\definecolor{newgreen}{rgb}{0.0,0.6,0.0}
\renewcommand{\leq}{\leqslant}
\renewcommand{\geq}{\geqslant}
\newcommand{\mleq}{\preccurlyeq}
\newcommand{\di}{\mathrm{d}}
\newcommand{\eps}{\varepsilon}
\newcommand{\argmin}{\mathop{\mathrm{arg}\,\mathrm{min}}}
\newcommand{\wt}{\widetilde}
\newcommand{\wh}{\widehat}
\newcommand{\ol}{\overline}
\newcommand{\ul}{\underline}
\newcommand{\pp}{\, : \;}
\DeclareFontFamily{U}{mathx}{}
\DeclareFontShape{U}{mathx}{m}{n}{<-> mathx10}{}
\DeclareSymbolFont{mathx}{U}{mathx}{m}{n}
\DeclareMathAccent{\widecheck}{0}{mathx}{"71}
\newcommand{\N}{\mathbb N}
\newcommand{\R}{\mathbb R}
\newcommand{\ie}{{i.e.}\@\xspace} 
\newcommand{\eg}{e.g.\@\xspace}
\newcommand{\iid}{i.i.d.\@\xspace}
\newcommand{\set}[1]{\{ #1 \}}
\newcommand{\supp}{\mathrm{supp}}
\newcommand{\ceil}[1]{\lceil #1 \rceil}
\newcommand{\floor}[1]{\lfloor #1 \rfloor}
\newcommand{\abs}[1]{|#1|}
\newcommand{\norm}[1]{\|#1\|}
\newcommand{\E}{\mathbb E}
\renewcommand{\P}{\mathbb P}
\newcommand{\probas}{\mathcal{P}}
\newcommand{\ind}{\bm 1}
\newcommand{\indic}[1]{\ind ( #1 )}
\newcommand{\kl}{\mathrm{KL}}%
\newcommand{\kll}[2]{\kl(#1, #2)}%
\newcommand{\binomdist}{\mathsf{Bin}}
\newcommand{\poissondist}{\mathsf{Poisson}}
\newcommand{\expdist}{\mathsf{Exp}}
\newcommand{\F}{\mathcal{F}}
\newcommand{\Subsets}{\mathcal{S}}
\newcommand{\psd}{\probas_{s,d}}
\newcommand{\ad}{\mathrm{ad}}
\newcommand{\pad}{\wh P_n^{\ad}}
\newcommand{\padelta}{\wh P_{n,\delta}^{\ad}}
\newcommand{\adp}{\pad}
\newcommand{\adpdelta}{\padelta}
\newcommand{\altp}{\wh P_n^{\mathrm{HZC}}}
\newcommand{\nobs}{N_{\mathsf{obs}}}
\newcommand{\nmiss}{N_{\mathsf{miss}}}
\newcommand{\ndev}{N_{\mathsf{dev}}}
\newcommand{\nexp}{N_{\mathsf{exp}}}
\newcommand{\setsums}{\mathcal{S}}
\newtheorem{proposition}{Proposition}%
\newtheorem{theorem}{Theorem}
\newtheorem{lemma}{Lemma}
\newtheorem{corollary}{Corollary}
\newtheorem{fact}{Fact}
\theoremstyle{definition}
\newtheorem{definition}{Definition}
\theoremstyle{remark}
\newtheorem{remark}{Remark}
\title{Estimation of discrete distributions in relative entropy, and the deviations of the missing mass}
\author{Jaouad Mourtada\thanks{Department of Statistics, CREST/ENSAE Paris, Palaiseau, France}}
\date{\today}
\begin{document}
\maketitle

\begin{abstract}
  We study the problem of estimating a distribution over a finite alphabet from an i.i.d. sample, with accuracy measured in relative entropy (Kullback-Leibler divergence).
  While optimal bounds on the expected risk are known, high-probability guarantees remain less well-understood.
  First, we analyze the classical Laplace (add-one) estimator, obtaining matching upper and lower bounds on its performance and establishing its optimality among confidence-independent estimators.
  We then characterize the minimax-optimal high-probability risk and show that it is achieved by a simple confidence-dependent smoothing technique.
  Notably, the optimal non-asymptotic risk incurs an additional logarithmic factor compared to the ideal asymptotic rate.
  Next, motivated by regimes in which the alphabet size exceeds the sample size, we investigate methods that adapt to the sparsity of the underlying distribution.
  We introduce an estimator using data-dependent smoothing, for which we establish a high-probability risk bound depending on two effective sparsity parameters.
  As part of our analysis, we also derive a sharp high-probability upper bound on the missing mass.
\end{abstract}

\setcounter{tocdepth}{2}
\tableofcontents

\section{Introduction}
\label{sec:introduction}

\subsection{Problem setting
}
\label{sec:problem-setting}

Estimating a discrete probability distribution from a finite sample is a fundamental problem in
statistics, machine learning, and information theory.
In this work, we consider the following variant of this problem.
Let $P$ be an unknown probability distribution
on the finite set $\{ 1, \dots, d \}$ (identified with the vector $(p_1, \dots, p_d)$, where $p_j$ denotes the probability of the class $j$); %
given access to an \iid sample $X_1, \dots, X_n$ from $P$, find a distribution $\wh P_n = (\wh p_1, \dots, \wh p_d)$ such that the \emph{Kullback-Leibler divergence} or \emph{relative entropy}
\begin{equation}
  \label{eq:kl-p-phat}
  \kll{P}{\wh P_n}
  = \sum_{j=1}^d p_j \log \Big( \frac{p_j}{\wh p_j} \Big)
\end{equation}
is small,
with high probability over the random draw of the \iid sample $X_1, \dots, X_n$ from $P$.

The relative entropy~\eqref{eq:kl-p-phat} is a natural loss function for estimating distributions, which is commonly used in several fields:
in statistics~\cite{vandervaart1998asymptotic,vandegeer1999empirical}, %
due to its connection with maximum likelihood estimation;
in machine learning, as the excess risk for prediction under logarithmic (cross-entropy) loss~\cite{vapnik2000nature,bach2024learning};
in information theory, owing to its interpretation in terms of excess code-length in %
data compression~\cite{cover2006elements,gassiat2018universal,polyanskiy2023information};
and in natural language processing, through its link with the ``perplexity'' metric for evaluating language models~\cite{jurafsky2025speech}.

An important feature of the Kullback-Leibler divergence, compared to other common divergences between probability distributions such as the total variation and Hellinger distances, is that it penalizes significant underestimation of true frequencies.
To consider an extreme case, if the estimator $\wh P_n$ assigns a probability $\wh p_j = 0$ to a class $j$ with probability $p_j \neq 0$, then the Kullback-Leibler divergence $\kll{P}{\wh P_n}$ is infinite.
This
aligns with the needs of various applications: for instance, in a forecasting context where classes %
correspond to different outcomes, assigning a probability of $0$ to outcomes that are in fact possible
would constitute a severe underestimation of the underlying uncertainty.
In addition, in the context of
language models,
one may be interested in %
generating
new sentences %
that are not present in the training corpus; this requires assigning positive probabilities to sequence of words that have not been observed.

\paragraph{Empirical distribution.}

Perhaps the most natural estimator %
is the {empirical distribution} $\ol P_n = (N_j/n)_{1 \leq j \leq d}$, where for $j = 1, \dots, d$, we let
\begin{equation}
  \label{eq:def-class-count}
  N_j
  = \sum_{i=1}^n \indic{X_i = j}
\end{equation}
denote the
number of occurrences of the class $j$ in the sample $X_1, \dots, X_n$.
This estimator happens to coincide with the maximum likelihood estimator (MLE) over the class $\probas_d$ of all probability distributions on $\set{1, \dots, d}$.
As such, it enjoys rather strong optimality properties in the ``low-dimensional'' asymptotic regime, where the number $d$ of classes and distribution $P \in \probas_d$ are fixed, while the sample $n$ goes to infinity~\cite{lecam2000asymptotics}.

In particular, if all classes $j=1, \dots, d$ have nonzero probability, then
$\ol P_n$ converges to $P$ as $n \to \infty$ %
at a rate of $1/\sqrt{n}$ in distribution,
is asymptotically normal and efficient~\cite[\S5.2--5.6]{vandervaart1998asymptotic}.
As a result, $2n \cdot \kll{P}{\ol P_n}$ converges in distribution to a $\chi^2$ distribution with $d-1$ degrees of freedom.
Together with a standard tail bound on $\chi^2$ distributions, this implies the following guarantee: for any fixed $d \geq 2$, $P \in \probas_d$, and $\delta \in (0,1)$, one has
\begin{equation}
  \label{eq:asymptotic-mle}
  \limsup_{n \to + \infty} \P_P \Big( \kll{P}{\ol P_n} \geq \frac{d + 2 \log (1/\delta)}{n} \Big)
  \leq \delta
  \, .
\end{equation}
This guarantee features the optimal dependence on the dimension $d$, confidence level $1-\delta$ and sample size, which 
may serve as a benchmark for an ideal upper bound.

On the other hand, a significant limitation of the guarantee~\eqref{eq:asymptotic-mle} is that it is purely asymptotic, in that it holds in the limit of sufficiently large sample size with all other parameters being kept fixed.
This is at odds with the modern %
paradigm of
high-dimensional models, where the dimension $d$ may be large and possibly comparable to the sample size $n$.
Likewise, one may be interested in high-confidence bounds (that is, in small values in $\delta$),
as well as in guarantees that hold uniformly over all distributions $P \in \probas_d$.
All of these considerations highlight the limitations of purely asymptotic guarantees, and instead call for a quantitative, non-asymptotic analysis.

It should be emphasized that the lack of uniformity over the distribution $P$ of the point-wise asymptotic guarantee~\eqref{eq:asymptotic-mle} is not merely an artifact of its formulation, but instead reflects a fundamental limitation of the estimator $\ol P_n$ itself.
Indeed, the empirical distribution $\ol P_n$ assigns a probability of $0$ to classes that do not appear; such a configuration may occur in a finite sample,
especially in the presence of rare classes.
Hence, the MLE is generally inadequate for the purpose of density estimation in relative entropy, due to its propensity to underestimate uncertainty and to produce overly sharp probability estimates.

\paragraph{Laplace smoothing.}

In order to mitigate this shortcoming of the MLE, a natural approach is to ``smooth out'' or regularize the empirical distribution, by %
assigning some probability to all classes. %
Arguably the simplest and most classical method to achieve this is the \emph{add-one smoothing} technique, also known as \emph{Laplace rule of succession}~\cite{laplace1825essai}, which consists in adding $1$ to the count of each class.
The Laplace estimator is then given by $\wh P_n = (\wh p_1, \dots, \wh p_d)$, where
\begin{equation}
  \label{eq:def-laplace}
  \wh p_j
  = \frac{N_j + 1}{n + d}
  \qquad
  \text{for }
  j = 1, \dots, d
  \, .
\end{equation}
This method was first proposed (in the case $d=2$) by Laplace~\cite[p.~23]{laplace1825essai} in his treatise on probability.
Laplace deduced this estimator from what would now be called a Bayesian approach, of which it constitutes one of the earliest instances.
Indeed, 
it coincides with the Bayes predictive posterior distribution under a uniform prior on the probability simplex $\probas_d$.
This classical method has found use in various fields, including universal coding (see, \eg,~\cite[p.~435]{cover2006elements}) and natural language processing (\eg,~\cite[p.~46]{jurafsky2025speech}).
We also note in passing that
a closely related method
(adding $1/2$ to the count of each class) was proposed
by Krichevsky and Trofimov~\cite{krichevsky1981performance}.

The Laplace estimator $\wh P_n$ turns out to achieve an optimal bound in expectation%
~\cite{catoni1997mixture,mourtada2022logistic}: for any $P \in \probas_d$ and $n, d \geq 2$, one has
\begin{equation}
  \label{eq:bound-expectation-laplace}
  \E_P \big[ \kll{P}{\wh P_n} \big]
  \leq \log \Big( 1 + \frac{d-1}{n+1} \Big)
  \leq \frac{d}{n}
  \, .
\end{equation}
This matches the asymptotic rate of the MLE, but now non-asymptotically and uniformly over all distributions on $\set{1, \dots, d}$.
At the same time, the in-expectation bound~\eqref{eq:bound-expectation-laplace} falls short of constituting a non-asymptotic analogue of the asymptotic tail bound~\eqref{eq:asymptotic-mle}, as it
only provides limited information
on the tails of the estimation error $\kll{P}{\wh P_n}$.

\paragraph{Main questions.}

In this work, we %
investigate
the best possible high-probability guarantees for estimating discrete distributions in relative entropy, either through the Laplace estimator or other procedures.
Specifically, the
previous discussion naturally raises the following questions:
\begin{enumerate}
\item %
  Does there exist a constant $c > 0$ such that, for any $n \geq d \geq 2$ and $\delta \in (0, 1/2)$, there exists an estimator $\wh P_n$ for which
  \begin{equation}
    \label{eq:ideal-whp-bound}
    \sup_{P \in \probas_d} \P_P \bigg( \kll{P}{\wh P_n} \geq c \, \frac{d + \log (1/\delta)}{n} \bigg)
    \leq \delta
    \, ?
  \end{equation}
  If not, then what is the best possible uniform high-probability guarantee?
\item Does the Laplace estimator $\wh P_n$ achieve the ideal high-probability bound~\eqref{eq:ideal-whp-bound}?
  If not, then what is the best high-probability guarantee for the Laplace estimator?
\end{enumerate}

\subsection{Existing guarantees}
\label{sec:exist-high-prob}

The previous questions will be addressed in the following sections, but before this, we survey existing high-probability guarantees for estimation of discrete distributions in relative entropy.

\paragraph{High-probability guarantees for the Laplace estimator.}

As a starting point,
the in-expectation bound~\eqref{eq:bound-expectation-laplace} for the Laplace estimator $\wh P_n$ implies (by Markov's inequality) the following bound: for every distribution $P \in \probas_d$, with probability at least $1-\delta$ one has
\begin{equation}
  \label{eq:laplace-markov}
  \kll{P}{\wh P_n}
  \leq \frac{d}{n \delta}
  \, .
\end{equation}
However, this naïve bound is significantly worse than the ideal asymptotic bound~\eqref{eq:asymptotic-mle}.
For instance, it only shows that a bound of order $d/n$ holds with constant probability, rather than (asymptotically) with probability at least $1 -e^{-d}$ as in~\eqref{eq:ideal-whp-bound}.

A sequence of recent works has progressively tightened the bound~\eqref{eq:laplace-markov}.
First, Bhattacharyya, Gayen, Price, and Vinodchandran~\cite[Theorem~6.1]{bhattacharyya2021near} established a concentration inequality for $\kll{P}{\wh P_n}$, which implies the following bound: for every $P \in \probas_d$, with probability $1-\delta$,
\begin{equation}
  \label{eq:bhattacharyya-laplace}
  \kll{P}{\wh P_n}
  \lesssim \frac{d \log (n) \log (d/\delta)}{n}
  \, ,
\end{equation}
where we use the notation $A \lesssim B$ to mean that $A \leq c B$ for some universal constant $c$.
While this guarantee significantly improves over the bound~\eqref{eq:laplace-markov} for small values of $\delta$, it falls short of the asymptotic bound~\eqref{eq:asymptotic-mle} due to the fact that the dimension $d$ and deviation term $\log (1/\delta)$ are multiplied, rather than decoupled as in~\eqref{eq:ideal-whp-bound}.

A significantly improved %
bound was established by Han, Jana and Wu~\cite[Lemma~17]{han2023optimal}, who showed that, with probability $1-\delta$,
\begin{equation}
  \label{eq:han-laplace}
  \kll{P}{\wh P_n}
  \lesssim \frac{d + \sqrt{d} \log^{3} (1/\delta)}{n}
  \, .
\end{equation}
This implies an optimal bound of $d/n$ when $\log (1/\delta) \lesssim d^{1/6}$, but leads to (presumably) suboptimal guarantees in the regime $\log (1/\delta) \gg d^{1/6}$.

Finally, the best available high-probability guarantee on the Laplace estimator is due to Canonne, Sun and Suresh~\cite{canonne2023concentration}, who showed that for some absolute constant $c> 0$, with probability at least $1-\delta$,
\begin{equation}
  \label{eq:canonne-laplace}
  \kll{P}{\wh P_n}
  \leq \E_P \big[ \kll{P}{\wh P_n} \big] + c \, \frac{\sqrt{d} \log^{5/2} (d/\delta)}{n}
  \lesssim \frac{d + \sqrt{d} \log^{5/2} (1/\delta)}{n}  
\end{equation}
(where we used~\eqref{eq:bound-expectation-laplace} and that $\sqrt{d} \log^{5/2} d \lesssim d$).
In particular, this removes a $\sqrt{\log (1/\delta)}$ factor in the deviation term compared to~\eqref{eq:han-laplace}, leading to optimal guarantees in the larger range $\log (1/\delta) \lesssim d^{1/5}$.
Nevertheless, the bound still deteriorates in the regime $\log (1/\delta) \gg d^{1/5}$.
This being said, unlike other results discussed above, the guarantee from~\cite{canonne2023concentration} establishes concentration of the error $\kll{P}{\wh P_n}$ around its expectation, rather than merely a deviation bound.

\paragraph{Upper bound for an alternative estimator.}

Recently, van der Hoeven, Zhivotovskiy and Cesa-Bianchi~\cite{vanderhoeven2023high} proposed an alternative estimator $\altp$, based on a high-probability online-to-batch conversion scheme, which achieves the following high-probability guarantee: for any distribution $P$, with probability at least $1-\delta$ one has
\begin{equation}
  \label{eq:hoeven-bound}
  \kll{P}{\altp}
  \lesssim \frac{d + \log (n) \log (1/\delta)}{n}
  \, .
\end{equation}
In many regimes of interest, this constitutes the best known high-probability guarantee in the literature, for any estimator.
On the other hand, this result raises important questions.
First, the bound~\eqref{eq:hoeven-bound} features an additional $\log n$ factor compared to the asymptotic rate~\eqref{eq:asymptotic-mle}, which can be avoided in some regimes (\eg, in light of~\eqref{eq:canonne-laplace}), leaving open the question of the best possible statistical guarantees.
Second, the estimator $\altp$ is computationally involved: it requires integrating certain functions over the probability simplex, the cost of which appears to be super-linear in $\max(d, n)$ via a sampling approach.
This raises the question of whether or not the problem of high-probability estimation of discrete distributions exhibits a statistical-computational trade-off, that is, if a computational cost super-linear in $n$ is necessary to achieve a guarantee as strong as~\eqref{eq:hoeven-bound}.

Finally, after a first version of the present paper was disseminated, a concurrent work of van der Hoeven, Olkhovskaia and van Erven~\cite{vanderhoeven2025nearly} simplified the estimator from~\cite{vanderhoeven2023high} and refined its analysis, obtaining a guarantee of order $\set{d \log \log d + \log (d) \log (1/\delta)}/n$ which is near-optimal up to the $\log \log d$ factor.

\subsection{Paper %
  outline}
\label{sec:paper-outline}

This paper is organized as follows.
In Section~\ref{sec:laplace}, we describe the best possible high-probability guarantee on the Laplace estimator (Theorems~\ref{thm:upper-laplace} and~\ref{thm:lower-bound-conf-indep-tail}), and show in particular that this method is optimal among ``confidence-independent'' estimators.
In Section~\ref{sec:confidence-minimax}, we characterize the best possible uniform guarantees for any estimator (Theorem~\ref{thm:upper-bound-conf-dependent} and~\ref{thm:lower-bound-minimax}); the upper bound is achieved by a simple modification of the Laplace estimator using a confidence-dependent smoothing level.

In Section~\ref{sec:adaptation-support}, in order to handle situations where the total number of classes is very large, we study guarantees that depend on the ``effective sparsity'' of the distribution at hand.
We establish in particular a minimax lower bound for estimating sparse distributions that holds with high probability (Proposition~\ref{prop:lower-bound-minimax-sparse}), and then propose simple estimators using data-dependent smoothing that achieve high-probability guarantees (Theorem~\ref{thm:upper-bound-sparse}); these guarantees adapt to two natural ``effective sparsity'' parameters of the distribution.
In Section~\ref{sec:missing-mass}, we present a sharp high-probability bound on the missing and underestimated masses (Theorem~\ref{thm:deviation-missing-mass}), which is used in our analysis of the sparse case but may also be of independent interest.

The proof of our high-probability upper bounds for estimation are provided in Section~\ref{sec:proof-high-prob}, while Section~\ref{sec:proof-lower-bounds} contains the proofs of lower bounds for estimation.
Section~\ref{sec:proof-deviation-missing-mass} is devoted to the proof of Theorem~\ref{thm:deviation-missing-mass} on the missing mass, and Section~\ref{sec:proof-expected-sparse-loo} to the elementary proof of the in-expectation guarantee of Proposition~\ref{prop:expected-sparse-loo}.
Finally, Section~\ref{sec:technical-lemmata} gathers various technical lemmata.

\subsection{Related work}
\label{sec:related-work}

\paragraph{High-probability guarantees in relative entropy.}

As discussed in Section~\ref{sec:exist-high-prob}, our contribution belongs to a series of recent works~\cite{bhattacharyya2021near,han2023optimal,canonne2023concentration,vanderhoeven2023high}
on high-probability guarantees for estimation of discrete distributions in relative entropy.
The best known guarantee for the Laplace estimator is the upper bound~\eqref{eq:canonne-laplace} from~\cite{canonne2023concentration}, while the best guarantee (in many regimes) for any estimator is the upper bound~\eqref{eq:hoeven-bound} from~\cite{vanderhoeven2023high}.

\paragraph{Other aspects of estimation of discrete distributions.}

Naturally, estimation of discrete distributions is a basic problem, which has been investigated from various other perspectives in the literature.
First, one may consider different loss functions than the relative entropy; we refer to~\cite{kamath2015learning,canonne2020short} (and references therein) for an overview of existing guarantees under various losses.
Second, even in relative entropy,
the minimax-optimal in-expectation bound~\eqref{eq:bound-expectation-laplace} has been refined in various ways.
Braess and Sauer~\cite{braess2004bernstein}
characterize
asymptotically optimal numerical constants in the minimax expected relative entropy risk, in the regime where $d$ is fixed while $n \to \infty$.
In another direction, Orlitsky and Suresh~\cite{orlitsky2015competitive}
consider a more demanding competitive optimality criterion, in the spirit of the empirical Bayes paradigm~\cite{robbins1951compound,good1953population}.

\paragraph{Concentration properties of the empirical distribution.}

A relevant but distinct question concerns the concentration properties of the empirical distribution $\ol P_n$.
As discussed above, the relative entropy $\kll{P}{\ol P_n}$ does not enjoy distribution-free concentration properties, since $\ol P_n$ may assign zero probability to classes with positive true probability.
On the other hand, the opposite configuration cannot happen: a class $j$ with $p_j=0$ cannot appear in the sample, which qualitatively suggests that the \emph{reverse} relative entropy $\kll{\ol P_n}{P}$ may be well-behaved.
This is indeed the case: the theory of large deviations suggests that the reverse relative entropy $\kll{\ol P_n}{P}$ sharply encodes the concentration properties of the empirical distribution.
Specifically, a classical inequality~\cite[Theorem~11.2.1 p.~356]{cover2006elements} established by the so-called ``method of types''~\cite{csiszar1998method} states that, for any $n,d \geq 2$, $P \in \probas_d$ and $\delta \in (0, 1)$, one has
\begin{equation}
  \label{eq:deviation-types}
  \P_P \bigg( \kll{\ol P_n}{P} \geq \frac{d \log (n+1) + \log (1/\delta)}{n} \bigg)
  \leq \delta
  \, .
\end{equation}
While this bound is distribution-free, it features a presumably suboptimal $\log (n+1)$ factor.
This classical bound has been tightened in a series of recent works~\cite{mardia2020concentration,agrawal2020finite,guo2020chernoff,bhatt2023sharp,agrawal2022finite} on concentration of the reverse relative entropy.
In particular, it follows from~\cite[Corollary~1.7]{agrawal2022finite} (although this could also be deduced up to constants from the earlier work~\cite{agrawal2020finite}) that, for any $n, d \geq 2$, $P \in \probas_d$ and $\delta \in (0,1)$,
\begin{equation}
  \label{eq:deviation-agrawal}
  \P_P \bigg( \kll{\ol P_n}{P} \geq \frac{6 d + 6 \log (1/\delta)}{n} \bigg)
  \leq \delta
  \, .
\end{equation}
This deviation bound effectively settles the probabilistic question of optimal concentration of the empirical distribution.
In this work, we study the complementary statistical question
of optimal high-probability estimation guarantees in relative entropy $\kll{P}{\wh P_n}$.

\paragraph{Missing mass.}

As part of our analysis, we study the tail behavior of the ``missing mass''.
We refer to Section~\ref{sec:missing-mass} for a discussion of existing high-probability bounds on this quantity.

\subsection{Notation}
\label{sec:notation}

Throughout this work, we let $n \geq 1$ denote the sample size and $d \geq 2$ the number of classes.
If $A$ is a finite set, we denote by $|A|$ its cardinality.
We identify the set of probability distributions on $[d] = \set{1, \dots, d}$ with the set of probability vectors $\probas_d = \{ (p_1, \dots, p_d) \in \R_+^d : \sum_{j=1}^d p_j = 1 \}$, where for $1 \leq j \leq d$ we let $p_j$ denote the probability of the class $j$.
For $j \in \set{1, \dots, d}$, we let $\delta_j \in \probas_d$ denote the Dirac mass at $j$, identified with the $j$-th basis vector in $\R^d$.
Given two probability distributions $P = (p_1, \dots, p_d) \in \probas_d$ and $Q = (q_1, \dots, q_d)\in \probas_d$, we define the \emph{Kullback-Leibler divergence} or \emph{relative entropy} between $P$ and $Q$ by
\begin{equation*}
  \kll{P}{Q}
  = \sum_{j=1}^d p_j \log \Big( \frac{p_j}{q_j} \Big)
  \, ,
\end{equation*}
with the convention that $p \log (p/q)$ equals $0$ if $p = 0$, and $+ \infty$ if $p > 0$ but $q = 0$.
For $u, v \in \R^+$, we let $D (u, v) = u \log (\frac{u}{v}) - u + v$
with similar conventions.
Since $\sum_{j=1}^d p_j = \sum_{j=1}^d q_j = 1$, we have
\begin{equation}
  \label{eq:kl-D-function}
  \kll{P}{Q}
  = \sum_{j=1}^d D (p_j, q_j)
  \, .
\end{equation}
Additionally we define the function $h : \R^+ \to \R^+$ by $h(x) = x \log x - x + 1$ for $x > 0$ and $h (0) = 1$, so that $D (u,v) = v \cdot h (u/v)$ for $u,v \in \R^+$.

Given a distribution $P \in \probas_d$, the \emph{sample} $X_1, \dots, X_n$ is comprised of $n$ \iid random variables with distribution $P$.
We use the notations $\P_P$ and $\E_P$ to respectively denote probabilities and expectations when the %
distribution of $(X_1, \dots, X_n)$ is $P^{\otimes n}$.
For $j=1, \dots, d$, we defined the count of the class $j$ as its number of occurrences in the sample $X_1, \dots, X_n$, namely
\begin{equation}
  \label{eq:def-count}
  N_j
  = N_{j,n}
  = \sum_{i=1}^n \indic{X_i = j}
  \, .
\end{equation}
An \emph{estimator} is a map $\Phi : [d]^n \to \probas_d$, which we identify (following a standard convention) with the random variable
$\wh P_n = \Phi (X_1, \dots, X_n)$ taking values in $\probas_d$.

Finally, for any $\lambda \in \R^+$, we denote by $\poissondist(\lambda)$ the Poisson distribution with intensity $\lambda$, which assigns a probability of $e^{-\lambda} \lambda^k / k!$ to any non-negative integer $k \in \N$.

\section{Optimal
  guarantees for the Laplace estimator
}
\label{sec:laplace}

In this section, we consider the question of optimal high-probability guarantees for the classical Laplace (add-one) estimator, defined by~\eqref{eq:def-laplace}.
In Section~\ref{sec:upper-laplace}, we state our main upper bound, while in Section~\ref{sec:lower-bound-conf-indep} we provide a matching lower bound for a large class of estimators that includes the Laplace estimator.

\subsection{Upper bound for the Laplace estimator}
\label{sec:upper-laplace}

Our first main result is a finite-sample high-probability bound for the Laplace estimator.

\begin{theorem}
  \label{thm:upper-laplace}
  For any $n \geq 12, d \geq 2$ and $P \in \probas_d$, the Laplace estimator $\wh P_n$ defined by~\eqref{eq:def-laplace} achieves the following guarantee: for any $\delta \in (e^{-n/6}, e^{-2})$,
  \begin{equation}
    \label{eq:upper-laplace}
    \P_P \bigg( \kll{P}{\wh P_n}
    \geq %
    110000 \, \frac{ d + \log (1/\delta) \log \log (1/\delta)}{n}
    \bigg)
    \leq 4 \delta
    \, .
  \end{equation}
\end{theorem}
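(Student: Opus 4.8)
The plan is to bound $\kll{P}{\wh P_n} = \sum_{j} D(p_j, \wh p_j)$ directly, after a reduction to a tractable regime. Write $L = \log(1/\delta)$, so that $2 < L < n/6$. Since $\wh p_j \geq 1/(n+d)$, we always have the deterministic bound $\kll{P}{\wh P_n} = \sum_j p_j \log(p_j/\wh p_j) \leq \log(n+d)$; comparing with the right-hand side of~\eqref{eq:upper-laplace} shows that the claim is trivial unless $d \leq n$ and $\Lambda := C_0(L + \log n) \leq n$, where $C_0$ is a large absolute constant fixed below; so we may assume this. Set $\lambda_j = n p_j$ and call class $j$ \emph{heavy} if $\lambda_j \geq \Lambda$ and \emph{light} otherwise. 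There are at most $\min(d, n/\Lambda) \leq d$ heavy classes, so by the Chernoff bound for binomials together with a union bound, the event $\mathcal G = \bigcap_{j :\, \lambda_j \geq \Lambda}\{\tfrac12\lambda_j \leq N_j \leq 2\lambda_j\}$ has probability at least $1-\delta$ once $C_0$ is large enough — this is where the $\log n$ summand in $\Lambda$ is used, to absorb the $\leq d \leq n$ classes in the union bound. We work on $\mathcal G$ and split $\sum_j D(p_j, \wh p_j)$ according to the ratio $\rho_j = p_j/\wh p_j = p_j(n+d)/(N_j+1)$: \emph{central} classes have $\rho_j \in [\tfrac12, 2]$, \emph{over-represented} ones have $\rho_j < \tfrac12$, and \emph{under-represented} ones have $\rho_j > 2$ (the last group contains in particular the unseen classes with $p_j \gtrsim 1/n$).

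For central classes (which necessarily have $\lambda_j \geq \tfrac14$), the identity $D(p_j, \wh p_j) = \wh p_j\, h(\rho_j)$ together with $h(x) \lesssim (x-1)^2$ on $[\tfrac12, 2]$ gives $D(p_j, \wh p_j) \lesssim (p_j - \wh p_j)^2/\wh p_j \lesssim \tfrac1n\cdot\tfrac{(N_j - \lambda_j)^2}{\lambda_j}$ up to terms summing to $O(d/n)$. On $\mathcal G$ every heavy class is central, so the $\chi^2$-type sum $\sum \tfrac{(N_j-\lambda_j)^2}{\lambda_j}$ runs over at most $d$ classes, has expectation $\sum_j(1-p_j) \leq d$, and — because each summand is truncated to the range $N_j \leq 2\lambda_j$ (automatic for heavy classes on $\mathcal G$, and for light central classes since $\rho_j \geq \tfrac12$) — its summands are genuinely sub-exponential with an absolute-constant parameter (the Poisson fluctuations being sub-Gaussian in this range). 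Hence Bernstein's inequality for sub-exponential sums bounds it by $\lesssim d + L$ with probability $\geq 1-\delta$, contributing $\lesssim (d+L)/n$. For over-represented classes one uses $h(x) \leq (1-x)^2/(2x)$ on $(0,1]$ to get the one-sided quadratic bound $D(p_j, \wh p_j) \leq (\wh p_j - p_j)^2/(2p_j)$; on $\mathcal G$ none of these is heavy, and the resulting sum is controlled by elementary deviation estimates for the number of sample points landing in classes of probability $\lesssim 1/n$ together with the over-represented counterpart of the missing mass, again giving a contribution $\lesssim (d+L)/n$.

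The crux is the under-represented classes, where the factor $\log(1/\delta)\log\log(1/\delta)$ appears. Here $h(x) \leq x\log x$ for $x \geq 1$ gives $D(p_j, \wh p_j) = \wh p_j\, h(\rho_j) \leq p_j\log\rho_j = p_j\log\!\big(p_j(n+d)/(N_j+1)\big)$, and on $\mathcal G$ all of these classes are light, so $p_j(n+d) \leq 2\Lambda$. The quantity to bound is thus a \emph{logarithmically weighted under-represented mass}, $\sum_{j:\,\wh p_j < p_j/2} p_j\log\!\big(p_j(n+d)/(N_j+1)\big)$, which is exactly of the type handled by Theorem~\ref{thm:deviation-missing-mass}. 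Naive estimates are insufficient: Markov applied to~\eqref{eq:bound-expectation-laplace} loses a factor $1/\delta$, while bounding the logarithmic weight by $\log(2\Lambda)$ and the mass by its typical value attaches a spurious $\log\log(1/\delta)$ — or even $\log n$ — factor to the whole bound, including the $d$ term. Instead one peels the ``depth'' $\log\!\big(p_j(n+d)/(N_j+1)\big)$ dyadically and, at each depth $s$, bounds the number of classes that are simultaneously under-represented that deeply: such a simultaneous depth-$s$ under-representation of $k$ classes has probability $\gtrsim\delta$ only when $ks \lesssim L$, so at depth $s$ there are $\lesssim L/s$ such classes, and summing $s\times(L/s)$ over the $O(\log L) = O(\log\log(1/\delta))$ relevant dyadic depths produces the factor $L\log L$. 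This yields $\sum_{j:\,\wh p_j < p_j/2} D(p_j, \wh p_j) \lesssim (d + L\log L)/n$ with probability $\geq 1-\delta$. Adding the three contributions and taking a union bound over the $O(1)$ favorable events (each of probability $\geq 1-\delta$) bounds $\kll{P}{\wh P_n}$ by $\lesssim (d + L\log L)/n$ with probability $\geq 1 - 4\delta$; tracking the absolute constants yields the factor $110000$ in~\eqref{eq:upper-laplace}.

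The main obstacle is this last step: the per-class contributions $D(p_j, \wh p_j)$ of under-represented classes are heavy-tailed — they have no finite exponential moment, since $N_j = 0$ occurs with positive probability — so a direct Bernstein/Bennett bound over the classes is hopeless, and the multi-scale union bound underlying Theorem~\ref{thm:deviation-missing-mass}, which extracts the extra $\log\log(1/\delta)$ factor rather than a $\log(1/\delta)$ or $\log n$ factor, is the technical heart of the argument. The matching lower bound (Theorem~\ref{thm:lower-bound-conf-indep-tail}) confirms that this $\log\log(1/\delta)$ cannot be removed: it already appears for the single extremal configuration of one class of probability $\approx\log(1/\delta)/n$ that goes unobserved with probability $\approx\delta$.
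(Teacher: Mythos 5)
The one step that genuinely fails is your treatment of the over-represented classes. The inequality $h(x)\leq(1-x)^2/(2x)$ on $(0,1]$ is true, but the resulting bound $D(p_j,\wh p_j)\leq(\wh p_j-p_j)^2/(2p_j)$ is hopeless when $p_j$ is tiny, and the quantity you propose to control is \emph{not} $O((d+\log(1/\delta))/n)$ with probability $1-\delta$. Take $d=2$ and $p_2=n^{-3}$: with probability about $n^{-2}$, which far exceeds $\delta$ when $\delta$ is near $e^{-n/6}$, one has $N_2=1$, hence $\wh p_2\approx 2/n$ and $(\wh p_2-p_2)^2/(2p_2)\approx 2n$, whereas the actual contribution is $D(p_2,\wh p_2)\approx\wh p_2\approx 2/n$. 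No ``elementary deviation estimate'' can recover from an intermediate bound that is itself polynomially large in $n$ on an event of probability much bigger than $\delta$. The repair is to use the other quadratic comparison on this range: since these classes satisfy $\wh p_j\geq p_j/2$, one has $D(p_j,\wh p_j)\leq C\,(\sqrt{p_j}-\sqrt{\wh p_j})^2$ for an absolute constant $C$ (Lemma~\ref{lem:kl-hellinger-bounded}), after which the over-represented (and central) contributions reduce to the squared Hellinger distance between $\wh P_n$ and $P$, hence to that between the empirical distribution and $P$ up to an $O(d/n)$ smoothing perturbation; this is exactly the route taken by Lemma~\ref{lem:decomp-risk} together with Lemma~\ref{lem:hellinger-empirical}.

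Apart from this, your architecture is close in spirit to the paper's: the decomposition isolates the severely under-represented classes, whose logarithmically weighted mass carries the $\log(1/\delta)\log\log(1/\delta)$ term, and your dyadic peeling over ``depths'' with a per-depth count of how many classes can simultaneously be that badly under-represented on an event of probability $\gtrsim\delta$ is a viable alternative to the paper's handling of that term, which instead dominates each summand by an explicit super-exponentially tailed variable and sums via Lata{\l}a's moment inequality (your multi-scale scheme is essentially how the paper proves the missing-mass bound, Theorem~\ref{thm:deviation-missing-mass}). Three details would still need care if you pursue your route: the reduction to $d\leq n$ via the deterministic bound $\log(n+d)$ does not go through when $d$ and $n$ are both huge and comparable (the paper needs no such reduction); the counts $(N_j)_j$ are dependent, so both the Bernstein step for the truncated chi-square sum and the independence implicitly used in the depth-counting require Poissonization or negative association; and the central/over-/under-represented index sets are data-dependent, so concentration must be applied to sums over deterministic index sets with suitably truncated summands rather than over these random sets.
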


The proof of Theorem~\ref{thm:upper-laplace} is provided in Section~\ref{sec:proof-upper-laplace}.

We note in passing that the condition $\delta > e^{-n/6}$ in Theorem~\ref{thm:upper-laplace} is not restrictive, as it constitutes the nontrivial regime.
Indeed, when $n \geq d$ and $\delta = e^{-n/6}$, the upper bound~\eqref{eq:upper-laplace} is of order $\log n$.
But an upper bound $\kll{P}{\wh P_n} \leq \log (2n)$ actually holds deterministically
(thus for $\delta = 0$)
since for every $j = 1, \dots, d$ one has $\wh p_j \geq 1/(n+d) \geq 1/(2n)$, so that $p_j / \wh p_j \leq 2n$.

Theorem~\ref{thm:upper-laplace} improves the previously best known upper bound~\eqref{eq:canonne-laplace} on the Laplace estimator from~\cite{canonne2023concentration}, which is of order $\{ d+ \sqrt{d} \log^{5/2} (1/\delta) \}/n$.
Since $\delta > e^{-n/6}$ %
and thus $\log \log (1/\delta) \leq \log n$, it also improves the previously best known upper bound~\eqref{eq:hoeven-bound} %
of order $\{ d+ \log (n) \log (1/\delta) \}/n$ for this problem,
achieved by the (computationally involved) estimator from~\cite{vanderhoeven2023high}.
This shows in particular that such guarantees can be achieved in a computationally efficient manner, specifically
in time linear in $n$.

A curious feature of the upper bound~\eqref{eq:upper-laplace} is that it exhibits non-standard tails, in the form of the $\log (1/\delta) \log \log (1/\delta)/n$ deviation term.
This should be
contrasted with the more common quantiles of exponential and Poisson variables, respectively of order $\log (1/\delta)$ and $\frac{\log (1/\delta)}{\log \log (1/\delta)}$.
In particular, this tail bound is \emph{super-exponential}, which points to a technical
difficulty
in its proof: it cannot be established by the standard Chernoff method based on the moment generating function (m.g.f.).
Indeed, super-exponential tails lead to an infinite m.g.f.\footnote{Technically speaking, the m.g.f.~of the variable $\kll{P}{\wh P_n}$ is actually finite, due to the deterministic bound $\kll{P}{\wh P_n} \leq \log (2n)$.
  However, this (loose) bound depends on $n$, hence bounds based on the m.g.f.~would lead to deviation terms with an additional dependence on $n$ compared to~\eqref{eq:upper-laplace}.}, and conversely a finite m.g.f.~would lead to sub-exponential tails.
For this reason,
the proof does not use the m.g.f.~and instead proceeds by controlling raw moments ($L^p$ norms).

We refer to Section~\ref{sec:proof-high-prob} for a description of the main tools in the proof of Theorem~\ref{thm:upper-laplace}, which also serve for high-probability upper bounds stated in subsequent sections.
Roughly speaking, the key step in the analysis is to control the contribution to the error $\kll{P}{\wh P_n}$ of classes $j = 1, \dots, d$ for which the Laplace estimate significantly underestimates the true probability, as such classes may lead to large errors.

\subsection{Lower bound for confidence-independent estimators}
\label{sec:lower-bound-conf-indep}

It should be noted that the uniform non-asymptotic
high-probability bound of Theorem~\ref{thm:upper-laplace} for the Laplace estimator exceeds the asymptotic tail bound~\eqref{eq:asymptotic-mle} of the MLE (or Laplace estimator) by a factor of $\log \log (1/\delta)$ in the deviation term.
This raises the question of whether this extra factor is necessary, or whether it can be removed by a more precise analysis.

As it turns out, the extra $\log \log (1/\delta)$ factor is necessary, not only for the Laplace estimator but in fact for any ``confidence-independent'' estimator $\wh P_n = \Phi (X_1, \dots, X_n)$ that does not depend on the desired confidence level $1-\delta$.

\begin{theorem}
  \label{thm:lower-bound-conf-indep-tail}
  Let $n \geq d \geq 4000$ %
  and $\kappa \geq 1$.
  Let $\Phi : [d]^n \to \probas_d$ be an estimator such that, denoting $\wh P_n = \Phi (X_1, \dots, X_n)$ we have for any $P \in \probas_d$:
  \begin{equation}
    \label{eq:assumption-in-expectation-opt}
    \P_P \Big( \kll{P}{\wh P_n} \leq \frac{\kappa d}{n} \Big)
    > 0
    \, .
  \end{equation}
  Then, for any $\delta \in (e^{- n}, e^{-16 \kappa^2})$, there exists a distribution $P \in \probas_d$ such that
  \begin{equation}
    \label{eq:lower-bound-conf-indep}
    \P_P \bigg( \kll{P}{\wh P_n} \geq \frac{d + \log (1/\delta) \log \log (1/\delta)}{5000 \,n} \bigg)
    \geq \delta
    \, .
  \end{equation}
\end{theorem}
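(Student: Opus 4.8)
Since $d+\log(1/\delta)\log\log(1/\delta)\leq 2\max\{d,\ \log(1/\delta)\log\log(1/\delta)\}$, it suffices to exhibit — separately for the two terms $d/n$ and $\log(1/\delta)\log\log(1/\delta)/n$, with a possibly different distribution for each — a $P\in\probas_d$ for which $\kll{P}{\wh P_n}$ exceeds that term up to an absolute constant with probability at least $\delta$. In both cases I argue by contradiction: assume $\P_P(\kll{P}{\wh P_n}\geq L)<\delta$ for \emph{every} $P\in\probas_d$, where $L$ is the threshold at hand.

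\textbf{The deviation term.} Here $L$ is of order $\log(1/\delta)\log\log(1/\delta)/n$. Fix $j_0\in[d]$ and choose $t\in(0,1)$ with $(1-t)^n\geq 2\delta$; since $\delta>e^{-n}$ this forces $t$ of order $\log(1/\delta)/n$ up to a bounded factor. For each $k\in[d]\setminus\{j_0\}$ let $P^{(k)}\in\probas_d$ put mass $t$ on class $k$, mass $1-t$ on class $j_0$, and $0$ elsewhere. Under $P^{(k)}$ the \emph{only} sample with $N_k=0$ is the constant sample $\vec x_{j_0}=(j_0,\dots,j_0)$, and $\P_{P^{(k)}}(N_k=0)=(1-t)^n\geq 2\delta$, so the contradiction hypothesis gives $\P_{P^{(k)}}\big(N_k=0,\ \kll{P^{(k)}}{\wh P_n}<L\big)\geq\delta>0$, whence $\kll{P^{(k)}}{\Phi(\vec x_{j_0})}<L$. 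Keeping only the class-$k$ summand in~\eqref{eq:kl-D-function} gives $D\big(t,\Phi(\vec x_{j_0})_k\big)<L$; since $q\mapsto D(t,q)$ is convex, vanishes at $q=t$, and tends to $+\infty$ only logarithmically as $q\to0$, while $L/t$ is of order $\log\log(1/\delta)$, this forces $\Phi(\vec x_{j_0})_k\geq c_0\,t$ for an absolute $c_0>0$ (in general $\Phi(\vec x_{j_0})_k\gtrsim t\,(\log(1/\delta))^{-1/5000}$, which is $\geq t/2$ unless $\delta$ is absurdly small). Summing over $k\neq j_0$ and invoking Assumption~\eqref{eq:assumption-in-expectation-opt} for $P=\delta_{j_0}$ (the Dirac mass at $j_0$), whose unique sample is again $\vec x_{j_0}$ so that $\log\big(1/\Phi(\vec x_{j_0})_{j_0}\big)=\kll{\delta_{j_0}}{\Phi(\vec x_{j_0})}\leq\kappa d/n\leq\kappa$, i.e.\ $\Phi(\vec x_{j_0})_{j_0}\geq e^{-\kappa}$, one obtains $(d-1)\,c_0\,t\leq\sum_{k\neq j_0}\Phi(\vec x_{j_0})_k\leq 1-e^{-\kappa}$, hence $t\lesssim 1/d$. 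This contradicts $t$ being of order $\log(1/\delta)/n$ as soon as $\log(1/\delta)\gtrsim n/d$. Intuitively: an estimator escaping the large error must reserve mass of order $t$ for every unseen class, and runs out of total mass once the number of potentially-unseen classes, $d$, exceeds $1/t$, which is of order $n/\log(1/\delta)$. In the complementary regime $\log(1/\delta)\lesssim n/d$ the event $\{N_k=0\}$ is itself too rare, so one replaces it by $\{N_k\leq m\}$ for a small integer $m$, controls the lower tail of the corresponding Binomial/$\poissondist$ count, and re-runs the mass-budget argument; this refinement, and the careful bookkeeping around~\eqref{eq:assumption-in-expectation-opt} inside it, is what produces the cutoff $\delta<e^{-16\kappa^2}$.

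\textbf{The dimension term.} Here $L$ is of order $d/n$. Take the family $\{P_\sigma:\sigma\in\{\pm1\}^{\lfloor d/2\rfloor}\}$ perturbing the uniform distribution on consecutive pairs, $p_{2i-1}=(1+\sigma_i\gamma)/d$ and $p_{2i}=(1-\sigma_i\gamma)/d$, with $\gamma$ of order $\sqrt{d/n}$ small enough that no test determines a given $\sigma_i$ from the sample with probability bounded away from $1/2$. A standard coordinatewise (Assouad-type) argument then shows that, for any estimator and a uniformly random $\sigma$, a constant fraction of the pairs cannot be read correctly and confidently, and each such pair — whether the estimator commits to the wrong sign or hedges toward $1/d$ — contributes an amount of order $\gamma^2/d$ to $\kll{P_\sigma}{\wh P_n}$; hence $\kll{P_\sigma}{\wh P_n}$ is of order $\gamma^2$, i.e.\ of order $d/n$, with constant probability. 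Since $\delta<e^{-16}$ lies below that absolute constant, this contradicts the hypothesis for some $P_\sigma$.

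\textbf{Main obstacle.} The delicate part is the deviation term. Its tail is \emph{super-exponential} (the $\log(1/\delta)\log\log(1/\delta)$ rate), so it can neither be proved nor refuted via moment generating functions, and one must reason directly about the mass $\Phi$ assigns to the classes a rare sample fails to observe. The two sensitive points are (i) turning ``$\Phi$ avoids the large error'' into ``$\Phi$ reserves mass of order $t$ for each unseen class'', i.e.\ a uniform-in-$\delta$ form of $D(t,q)<L\Rightarrow q\gtrsim t$; and (ii) bridging the regimes $\log(1/\delta)\gtrsim n/d$ and $\log(1/\delta)\lesssim n/d$, the latter requiring small positive counts and Poisson lower-tail estimates. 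Assumption~\eqref{eq:assumption-in-expectation-opt} — which excludes grossly over-smoothed estimators that would otherwise dodge the bound — is exactly what makes these steps work and accounts for the $\kappa$-dependence of the admissible range of $\delta$.
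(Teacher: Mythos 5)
Your overall architecture (split the bound into a dimension term and a deviation term, prove each separately with probability $\geq\delta$, recombine via $a+b\leq 2\max\{a,b\}$) matches the paper's, and your dimension-term sketch, while different from the paper's (which invokes the high-probability sparse lower bound of Proposition~\ref{prop:lower-bound-minimax-sparse} rather than an Assouad perturbation), is a plausible alternative since only probability $e^{-16}$ is needed. The genuine gap is in the deviation term. Your mass-budget contradiction concludes $t\lesssim 1/d$ from $\sum_{k\neq j_0}\Phi(\vec x_{j_0})_k\leq 1-e^{-\kappa}$, which contradicts $t\asymp\log(1/\delta)/n$ only when $\log(1/\delta)\gtrsim n/d$. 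But the regime where the deviation term dominates the claimed bound is $\log(1/\delta)\log\log(1/\delta)\gtrsim d$, which is much larger: e.g.\ $n=d^{3}$ and $\log(1/\delta)=d^{3/2}$ satisfies the latter but badly fails the former, so your argument as written does not cover it, and the proposed repair (replacing $\{N_k=0\}$ by $\{N_k\leq m\}$ with Poisson lower tails) is only a sketch whose correctness is not evident — changing the event does not change the total-mass budget, which is what actually fails.

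The fix is to use Assumption~\eqref{eq:assumption-in-expectation-opt} at full strength rather than only through $\Phi(\vec x_{j_0})_{j_0}\geq e^{-\kappa}$. Applied to $P=\delta_{j_0}$ it gives $\log\big(1/\Phi(\vec x_{j_0})_{j_0}\big)\leq\kappa d/n$, hence $\sum_{k\neq j_0}\Phi(\vec x_{j_0})_k\leq 1-\Phi(\vec x_{j_0})_{j_0}\leq\kappa d/n$, so \emph{some} class $k$ has $q_k\leq 2\kappa/n$ — a bound independent of $\delta$. This is the paper's route (Lemma~\ref{lem:lower-bound-conf-indep-tail}): no contradiction is needed, one simply takes $P=(1-\rho)\delta_{j_0}+\rho\delta_k$ with $\rho\asymp\log(1/\delta)/n$ and computes directly $D(\rho,q_k)=q_k\,h(\rho/q_k)\gtrsim\rho\log(\rho/q_k)\gtrsim\frac{\log(1/\delta)}{n}\log\big(\frac{\log(1/\delta)}{\kappa}\big)\gtrsim\frac{\log(1/\delta)\log\log(1/\delta)}{n}$, the last step using $\delta<e^{-16\kappa^2}$ (which is where that threshold comes from, not from any Poisson lower-tail bookkeeping). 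This works uniformly over all regimes of $(n,d,\delta)$ and also removes your reliance on the implication $D(t,q)<L\Rightarrow q\geq c_0 t$, which, as you note yourself, is false for very small $\delta$.
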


The proof of Theorem~\ref{thm:lower-bound-conf-indep-tail} (based on Lemma~\ref{lem:lower-bound-conf-indep-tail}) can be found in Section~\ref{sec:proof-theorem}.
The main idea of the proof is that, in order to achieve the guarantee~\eqref{eq:assumption-in-expectation-opt}, the estimator $\wh P_n$ cannot be ``too far'' from the empirical distribution $\ol P_n$.
But in this case, it may significantly underestimate the probability of some classes with small probability $\delta$, leading to the lower bound~\eqref{eq:lower-bound-conf-indep}.

A few comments may help clarify the meaning of Theorem~\ref{thm:lower-bound-conf-indep-tail}.
First, one should think of the parameter $\kappa \geq 1$ as an absolute constant.
The condition~\eqref{eq:assumption-in-expectation-opt} states that the estimator $\wh P_n$ achieves a bound of order $d/n$ with positive probability for any distribution $P$.
This holds in particular when the estimator $\wh P_n$ achieves an optimal in-expectation bound $\E_P [\kll{P}{\wh P_n}] \leq \kappa d/n$,
and more generally when the estimator achieves an optimal bound of $\kappa d/n$ in the regime of constant (bounded away from $0$ and $1$) confidence level.
The first condition applies with $\kappa = 1$ to the Laplace estimator, in light of its in-expectation bound~\eqref{eq:bound-expectation-laplace}.
We refer to such estimators, including those that are optimal in expectation,
as ``confidence-independent''.

The content of Theorem~\ref{thm:lower-bound-conf-indep-tail} is that such an estimator must necessarily incur the $\log \log (1/\delta)$ factor for small values of $\delta$.
In other words, the extra $\log \log (1/\delta)$ factor
in the high-confidence regime
is a necessary price to pay for optimality in the constant-confidence regime.

When compared with the upper bound of Theorem~\ref{thm:upper-laplace}, the lower bound of Theorem~\ref{thm:lower-bound-conf-indep-tail} implies
that the Laplace estimator is optimal in a minimax sense, over a large class of estimators.
This may be of interest
in itself given the simplicity of this procedure.

\section{Minimax-optimal guarantees for confidence-dependent estimators}
\label{sec:confidence-minimax}

An obvious
restriction in the lower bound of Theorem~\ref{thm:lower-bound-conf-indep-tail} is that it only applies to ``confidence-independent'' estimators---in particular, to those that achieve an optimal $d/n$ guarantee with constant probability.
This leaves open the possibility that, for a given $\delta \in (0,1/2)$,
improved guarantees with probability $1-\delta$ may be achieved by an estimator $\wh P_{n,\delta} = \Phi_\delta (X_1, \dots, X_n)$ that depends on $\delta$; that is, %
which is
tuned for the desired confidence level $1-\delta$, at the cost of being suboptimal at constant confidence levels.
We call such an estimator ``confidence-dependent''.

In this section, we investigate optimal high-probability guarantees for confidence-dependent estimators; Section~\ref{sec:upper-bound-conf} is dedicated to the upper bound,
and Section~\ref{sec:lower-bound-conf} to the lower bound.

\subsection{Upper bound
  via confidence-dependent smoothing
}
\label{sec:upper-bound-conf}

Since the gap between the asymptotically ideal tail bound~\eqref{eq:asymptotic-mle} and the non-asymptotic upper and lower bounds of Section~\ref{sec:laplace} consists in an extra $\log \log (1/\delta)$ factor in the deviation term, the question is whether this factor can be improved by a confidence-dependent estimator.
Theorem~\ref{thm:upper-bound-conf-dependent} below
answers this question in the affirmative:

\begin{theorem}
  \label{thm:upper-bound-conf-dependent}
  For any $n \geq 12, d \geq 2$ and $\delta \in (e^{-n/6}, e^{-2})$, define the estimator $\wh P_{n,\delta} = (\wh p_1, \dots, \wh p_d)$ by, for $j=1, \dots, d$,
  \begin{equation}
    \label{eq:def-conf-dependent}
    \wh p_j = \frac{N_j + \lambda_\delta}{n + \lambda_\delta d}
    \qquad \text{where} \qquad
    \lambda_\delta
    = \max \bigg\{ 1, \frac{\log (1/\delta)}{d} \bigg\}
    \, .
  \end{equation}
  Then, for any $P \in \probas_d$, we have
  \begin{equation}
    \label{eq:upper-bound-conf}
    \P_P \bigg( \kll{P}{\wh P_{n,\delta}}
    \geq 
    110000 \, \frac{ d + \log (d) \log (1/\delta)}{n}
    \bigg)
    \leq 4 \delta
    \, .
  \end{equation}
\end{theorem}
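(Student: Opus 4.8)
The plan is to rerun the analysis behind Theorem~\ref{thm:upper-laplace} with the smoothing level $\lambda_\delta$ in place of $1$, treating separately the two regimes in the definition of $\lambda_\delta$.

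\emph{Reduction.} If $\log(1/\delta)\le d$, then $\lambda_\delta=1$, so $\wh P_{n,\delta}$ coincides with the Laplace estimator~\eqref{eq:def-laplace}, and Theorem~\ref{thm:upper-laplace} gives $\kll{P}{\wh P_{n,\delta}}\le 110000\,(d+\log(1/\delta)\log\log(1/\delta))/n$ with probability at least $1-4\delta$; since $\log\log(1/\delta)\le\log d$ in this regime, this is~\eqref{eq:upper-bound-conf}. It therefore remains to handle $\log(1/\delta)>d$, where $\lambda:=\lambda_\delta=\log(1/\delta)/d>1$. Writing $m=n+\lambda d=n+\log(1/\delta)$, one has $n\le m\le 2n$ (using $\log(1/\delta)\le n/6$) and, crucially, $\wh p_j\ge\lambda/m\ge\log(1/\delta)/(2dn)$ for every $j$. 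This uniform lower bound on $\wh p_j$ is exactly what the enlarged smoothing buys: it forces $p_j/\wh p_j\le 64d$ for every class with $p_j\le 32\log(1/\delta)/n$, so that the ``underestimation logarithm'' $\log_+(p_j/\wh p_j)$ is at most $O(\log d)$ on the classes that can actually be underestimated, whereas the Laplace floor $\wh p_j\ge 1/(2n)$ only caps it at $O(\log\log(1/\delta))$ on the critical classes $p_j\asymp\log(1/\delta)/n$ --- precisely the source of the extra $\log\log(1/\delta)$ in Theorem~\ref{thm:upper-laplace}.

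\emph{Main argument.} I would expand $\kll{P}{\wh P_{n,\delta}}=\sum_{j=1}^d D(p_j,\wh p_j)$ and split $[d]$ into the underestimated classes $B=\{j:N_j<np_j/2\}$ and their complement $G$. On $G$ one uses $D(p_j,\wh p_j)\le (p_j-\wh p_j)^2/\wh p_j$; inserting $p_j-\wh p_j=\{(np_j-N_j)-\lambda(1-p_jd)\}/m$ and $\wh p_j=(N_j+\lambda)/m$ decomposes $\sum_{j\in G}D(p_j,\wh p_j)$ into (a) a fluctuation term $\lesssim m^{-1}\sum_j(np_j-N_j)^2/(N_j+\lambda)$, which a (truncated) multinomial $\chi^2$-type concentration bound --- the same tool used for Theorem~\ref{thm:upper-laplace} --- controls with high probability by $(d+\log(1/\delta))/n$; (b) a bias term $\lesssim\lambda^2 m^{-2}\sum_j(1-p_jd)^2/\wh p_j$, bounded by $\lesssim\log(1/\delta)/n$ via elementary estimates (distinguishing $p_jd\le 2$, using $\wh p_j\ge\lambda/m$, from $p_jd>2$, using $\wh p_j\ge N_j/m$ together with $\sum_jp_j\le 1$); and (c) an ``over-smoothing'' term $\lesssim\sum_{j\in G:\,\wh p_j\ge 2p_j}\wh p_j$, which is $\lesssim(d+\log(1/\delta))/n$ because over-smoothing forces either $p_j=O(\lambda/n)$ or an over-representation $N_j\gtrsim np_j$ by a constant factor --- a rare event --- so that $\sum_j N_j$ over these classes concentrates. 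On $B$ one instead uses $D(p_j,\wh p_j)\le p_j\log_+(p_j/\wh p_j)+\wh p_j$; a Chernoff lower-tail bound plus a union bound over the at most $d$ classes with $np_j>32\log(1/\delta)$ --- affordable since $\log(d/\delta)\le 2\log(1/\delta)$ in this regime --- shows that, off an event of probability $\le\delta$, every $j\in B$ has $p_j\le 32\log(1/\delta)/n$ and hence $\log_+(p_j/\wh p_j)\le\log(64d)$, so that $\sum_{j\in B}D(p_j,\wh p_j)\lesssim\log(d)\sum_{j\in B}p_j+\log(1/\delta)/n$. Finally, the underestimated mass $\sum_{j\in B}p_j=\sum_{j:N_j<np_j/2}p_j$ is bounded, with high probability, by $\lesssim(d+\log(1/\delta))/n$ using the bound on the missing and underestimated masses (Theorem~\ref{thm:deviation-missing-mass}); since $\log(1/\delta)>d$ this yields $\log(d)\sum_{j\in B}p_j\lesssim\log(d)\log(1/\delta)/n$. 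Summing the contributions and tracking the constants gives~\eqref{eq:upper-bound-conf} with probability at least $1-4\delta$.

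\emph{Main obstacle.} The delicate point is the underestimated block $B$, which requires two ingredients at once: (i) the logarithm $\log_+(p_j/\wh p_j)$ must be capped at $O(\log d)$, which the enlarged smoothing level $\lambda_\delta$ supplies via the floor $\wh p_j\ge\log(1/\delta)/(2dn)$; and (ii) a sharp high-probability bound on the underestimated mass $\sum_{j:N_j<np_j/2}p_j$. The second is the technical crux and parallels the missing-mass analysis: applied directly, a Bernstein bound for this weighted sum of negatively associated indicators retains a spurious term of order $\log^2(1/\delta)/n$ stemming from the $O(\log(1/\delta)/n)$ size of the individual weights, and removing it requires the more careful argument behind Theorem~\ref{thm:deviation-missing-mass}.
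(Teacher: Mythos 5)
Your reduction to the case $\log(1/\delta)>d$ is exactly the paper's first step, but from there you diverge: the paper simply reruns the proof of Theorem~\ref{thm:upper-laplace} with $\lambda=\lambda_\delta$ in the decomposition of Lemma~\ref{lem:decomp-risk} (Hellinger term via Lemma~\ref{lem:hellinger-empirical}, bias term $7\lambda_\delta d/n=7\log(1/\delta)/n$, and the residual $R_{\lambda_\delta}$ via the second bound~\eqref{eq:tail-residual-conf} of Lemma~\ref{lem:tail-residual}, itself proved by Poissonization, stochastic domination and Lata\l{}a's moment inequality). You instead keep the class-dependent logarithm out of the picture: after a union bound removes classes with $np_j>32\log(1/\delta)$ from the underestimated block, the floor $\wh p_j\geq\log(1/\delta)/(2dn)$ caps $\log_+(p_j/\wh p_j)$ at $\log(64d)$ uniformly, and you factor this out against the underestimated mass, invoking Theorem~\ref{thm:deviation-missing-mass}. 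This is a genuinely different treatment of the critical term, and it is conceptually appealing because it makes explicit where the $\log d$ comes from; the paper's route has the advantage of keeping Theorems~\ref{thm:upper-laplace},~\ref{thm:upper-bound-conf-dependent} and~\ref{thm:upper-bound-sparse} on a single technical spine (the paper only deploys the underestimated-mass theorem for the sparse result), and of not relocating the difficulty into Theorem~\ref{thm:deviation-missing-mass}, whose multi-scale proof is itself the hardest argument in the paper --- a point you rightly acknowledge.

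Three caveats, none fatal but all requiring repair. First, a threshold mismatch: your block is $B=\{j:N_j<np_j/2\}$, whereas $U_n$ in Theorem~\ref{thm:deviation-missing-mass} only sums over $\{j:N_j\leq np_j/4\}$, so the theorem as stated does not bound $\sum_{j\in B}p_j$; moreover its proof Poissonizes at rate $n/2$, so $\P(\wt N_j<np_j/2)$ is of constant order and the argument does not extend verbatim to your threshold. Either define $B$ with the $1/4$ threshold (and absorb the factor into the good-set constant, as Lemma~\ref{lem:decomp-risk} does via the condition $\wh p_j\leq p_j/8$), or redo the mass bound with a Poissonization rate in $(n/2,n)$. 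Second, your failure-probability budget already exceeds $4\delta$ (Theorem~\ref{thm:deviation-missing-mass} alone costs $8\delta$), so a rescaling of $\delta$ --- harmless for the constant $110000$ but not free --- is needed. Third, the good-set control via $D(p,q)\leq(p-q)^2/q$ and a ``truncated $\chi^2$'' concentration is the least substantiated step; the paper's cleaner route is $D(p_j,\wh p_j)\leq 3(\sqrt{p_j}-\sqrt{\wh p_j})^2$ on that set, reduced to the empirical squared Hellinger distance and controlled by Lemma~\ref{lem:hellinger-empirical}, and I would recommend adopting it rather than developing a separate fluctuation/bias/over-smoothing analysis.
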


The estimator~\eqref{eq:def-conf-dependent} in Theorem~\ref{thm:upper-bound-conf-dependent}
can be seen as a confidence-dependent modification of the Laplace estimator.
Specifically, in the low and moderate-confidence regime where $\delta \geq e^{-d}$, the estimator $\wh P_{n,\delta}$ coincides with the Laplace estimator.
On the other hand, in the high-confidence regime where $\delta < e^{-d}$, it smooths the empirical distribution more strongly than the Laplace estimator, with a confidence-dependent level of smoothing---the higher the desired confidence level, the stronger the smoothing.

We 
now comment on the quantitative upper bound of Theorem~\ref{thm:upper-bound-conf-dependent}.
First, note that the condition $\delta > e^{-n/6}$ is not restrictive, since for $\delta = e^{-n/6}$ and $n \geq d$ the upper bound~\eqref{eq:upper-bound-conf} is of order $\log d$; but a deterministic upper bound of $\log d$ may be achieved by letting $\wh P_n = (1/d, \dots, 1/d)$ be the uniform distribution.
In fact, for $\delta \leq e^{-n/6}$ the estimator $\wh P_{n, \delta}$ also satisfies a deterministic bound $\kll{P}{\wh P_{n,\delta}} \leq \log (7d)$, as $\wh p_j \geq 1/(7d)$ for $j=1, \dots, d$.

Second, observe that the upper bound~\eqref{eq:upper-bound-conf}
provides an improvement over
the best possible guarantee for confidence-independent estimators, as characterized by Theorems~\ref{thm:upper-laplace} and~\ref{thm:lower-bound-conf-indep-tail}.
This amounts to saying that, regardless of $\delta \in (0,e^{-2})$ and $d \geq 2$, one has
\begin{equation}
  \label{eq:comparison-conf-dep-indep}
  \frac{d + \log (d) \log (1/\delta)}{n}
  \lesssim \frac{d + \log (1/\delta) \log \log (1/\delta)}{n}
  \, .
\end{equation}
To see why~\eqref{eq:comparison-conf-dep-indep} holds, consider the following two cases.
If $\log (1/\delta) \lesssim d/\log d$, then the left-hand side of~\eqref{eq:comparison-conf-dep-indep} is of order $d/n$ and the bound holds.
On the other hand, if $\log (1/\delta) \gtrsim d/\log d \gtrsim \sqrt{d}$, then
$\log d \lesssim \log \log (1/\delta)$ and the bound~\eqref{eq:comparison-conf-dep-indep} also holds.

Hence, Theorems~\ref{thm:lower-bound-conf-indep-tail} and~\ref{thm:upper-bound-conf-dependent} together imply an advantage of confidence-dependent estimators over confidence-independent ones.
We note that such an advantage has been previously observed for
a different problem, namely mean estimation under heavy-tailed noise~\cite{devroye2016subgaussian,catoni2012challenging}.
It is notable that it also manifests itself %
in
the basic problem
of estimation of discrete distributions, in the absence of %
robustness constraints.

We refer to Section~\ref{sec:proof-upper-conf-dependent} for the proof of Theorem~\ref{thm:upper-bound-conf-dependent}, which
shares a common structure with
that of Theorem~\ref{thm:upper-laplace},
although some details differ.
Again, the core of the analysis is to control the contribution to the relative entropy of classes whose frequency is underestimated, %
which is technically achieved through
sharp moment estimates on the corresponding terms.

\subsection{Lower bound for confidence-dependent estimators
}
\label{sec:lower-bound-conf}

While the upper bound of Theorem~\ref{thm:upper-bound-conf-dependent} for the estimator $\wh P_{n,\delta}$ circumvents the lower bound of Theorem~\ref{thm:lower-bound-conf-indep-tail} for confidence-independent estimators, it still exceeds the ideal asymptotic tail bound~\eqref{eq:asymptotic-mle} by a factor of $\log d$ in the deviation term.
Theorem~\ref{thm:lower-bound-minimax} below shows that this extra $\log d$ factor cannot be avoided, even for confidence-dependent estimators:

\begin{theorem}
  \label{thm:lower-bound-minimax}
  Let $n \geq d \geq 5000$ and $\delta \in (e^{-n}, e^{-1})$.
  For any estimator $\Phi = \Phi_\delta : [d]^n \to \probas_d$, there exists a distribution $P \in \probas_d$ such that, letting
  $\wh P_n = \wh P_{n, \delta} = \Phi_\delta (X_1, \dots, X_n)$ we have
  \begin{equation}
    \label{eq:lower-bound-minimax}
    \P_P \bigg( \kll{P}{\wh P_{n}} \geq \frac{d + \log (d) \log (1/\delta)}{5000 \,n} \bigg)
    \geq \delta
    \, .
  \end{equation}
\end{theorem}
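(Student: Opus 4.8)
The plan is to build a family of "hard" distributions on which any fixed estimator $\Phi_\delta$ must fail on at least one member, using a packing / averaging argument in the spirit of the lower bound for confidence-independent estimators (Theorem~\ref{thm:lower-bound-conf-indep-tail}) but now allowing the estimator to know $\delta$. The key new difficulty compared to Theorem~\ref{thm:lower-bound-conf-indep-tail} is that the estimator can smooth by a $\delta$-dependent amount, so we cannot simply say it must stay close to $\ol P_n$; instead, we must force the extra $\log d$ factor through a genuine minimax (two-point or many-point) argument. The natural construction: take a "bulk" part of the distribution that already costs $\asymp d/n$ to estimate (guaranteeing the first term of~\eqref{eq:lower-bound-minimax}), plus a single "hidden light class" whose identity is uniform over a large subset $S \subseteq [d]$ with $|S| \asymp d$, carrying probability mass $\asymp \log(1/\delta)/n$ — chosen exactly so that with probability $\gtrsim \delta$ this hidden class is \emph{never observed} in the sample (since $(1-c\log(1/\delta)/n)^n \approx \delta^c$). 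On that event, the estimator has learned nothing about which element of $S$ is the hidden class, so whatever value $\wh p_j$ it assigns, by a pigeonhole/averaging argument over the $|S|$ choices there must be a hidden class for which $\wh p_j \lesssim 1/|S| \asymp 1/d$, hence the contribution to $\kll{P}{\wh P_n}$ from that coordinate is $\gtrsim (\log(1/\delta)/n)\log\big((\log(1/\delta)/n)/(1/d)\big) \asymp (\log(1/\delta)/n)\log(d)$ in the regime where $\log(1/\delta) \gtrsim \log d$, which is the range where $\log(d)\log(1/\delta)$ dominates $d$.

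Concretely, I would proceed as follows. First, fix a base distribution $P_0$ that puts mass $\asymp 1$ spread over $\Theta(d)$ classes in a way that is hard to estimate in relative entropy at scale $d/n$ (e.g.\ a near-uniform distribution on a constant fraction of $[d]$, reusing the construction/analysis already behind Theorem~\ref{thm:lower-bound-conf-indep-tail}); this handles the "$d$" term, so from here on I focus on the "$\log(d)\log(1/\delta)$" term and may assume $\log(1/\delta) \geq C \log d$ for a suitable constant $C$. Second, for each $j$ in a set $S$ of size $\asymp d$, define $P_j$ by taking mass $\beta := c_0 \log(1/\delta)/n$ away from the bulk and assigning it to class $j$; check that $\beta \le 1/2$ using $\delta > e^{-n}$ and that $\kll{P_0}{P_j}$ and $\kll{P_j}{P_0}$ are small enough not to interfere. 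Third, lower-bound $\P_{P_j}(N_j = 0) = (1-\beta)^n \ge \delta^{2c_0}$ (choosing $c_0$ so this is $\ge \sqrt\delta$, say), and note that on $\{N_j = 0\}$ the conditional law of $(X_1,\dots,X_n)$ is \emph{independent of $j$} (it is the sample from $P_0$ restricted to avoid $j$, which for the near-uniform bulk is essentially the same law for all $j \in S$ — one must set things up so this is exactly or approximately true). Fourth, average over $j$ chosen uniformly from $S$: on the event $\{N_j = 0\}$, since $\sum_{j \in S} \wh p_j \le 1$, we have $\E_{j}[\wh p_j \mid \text{sample}] \le 1/|S|$, so by Markov at least a constant fraction of $j \in S$ satisfy $\wh p_j \le 2/|S|$; for such $j$ the coordinate-$j$ contribution to $\kll{P_j}{\wh P_n}$ is $\ge D(\beta, 2/|S|) = \beta \log(\beta |S|/2) - \beta + 2/|S| \gtrsim \beta \log(d) \asymp \log(d)\log(1/\delta)/n$ once $\beta |S| \gtrsim d \log(1/\delta)/n \gg 1$. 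Fifth, combine: averaging the failure probability over $j \in S$ and using that the bad event has probability $\gtrsim \sqrt\delta \cdot (\text{const})$ on a constant fraction of the $P_j$'s, conclude there exists a single $j^\star$ with $\P_{P_{j^\star}}(\kll{P_{j^\star}}{\wh P_n} \ge c\,\log(d)\log(1/\delta)/n) \ge \delta$; finally merge with the base lower bound for the "$d$" term (take whichever of the two constructions gives the larger threshold, noting $\max\{a,b\} \ge (a+b)/2$) to obtain~\eqref{eq:lower-bound-minimax} with the stated constant $1/5000$ after bookkeeping.

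The main obstacle I anticipate is the \emph{exact independence} of the conditional sample law on $j$ in step three: if the bulk $P_0$ is genuinely uniform on $[d]$ and we carve out a single class, then conditioning on $N_j = 0$ gives, for different $j$, samples from uniform distributions on different $(d-1)$-subsets, which are related by a relabeling but are not literally the same measure — so the averaging-over-$j$ pigeonhole must be done carefully (e.g.\ by symmetrizing: apply a random permutation of $[d]$ to the whole construction, or work with the bulk supported on a fixed set disjoint from $S$ so that conditioning on $N_j=0$ for $j \in S$ truly leaves the bulk-sample law untouched). The cleanest fix is probably to split $[d]$ into a "bulk block" $B$ of size $\asymp d$ and a "hidden block" $S$ of size $\asymp d$ with $B \cap S = \emptyset$, put essentially all mass on $B$ plus the single spike $\beta$ on a uniformly random $j \in S$; then $\{N_j = 0\}$ has probability $(1-\beta)^n$ and, crucially, on this event the sample is literally i.i.d.\ from the \emph{same} distribution (the $B$-restricted one) regardless of which $j \in S$ was chosen, making the pigeonhole over $S$ exact. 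The second obstacle is purely quantitative: tracking the constants so that the final threshold is $\ge (d + \log d\,\log(1/\delta))/(5000n)$ simultaneously in both regimes ($\log(1/\delta) \lessgtr C\log d$) and that the ranges $d \ge 5000$, $\delta \in (e^{-n}, e^{-1})$ suffice — this is routine but tedious and is where I would expect to spend most of the write-up.
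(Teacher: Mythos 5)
Your overall architecture matches the paper's: get the $d/n$ term from a high-probability minimax lower bound over near-uniform distributions (the paper uses the consequence~\eqref{eq:dense-lower-high-prob} of Proposition~\ref{prop:lower-bound-minimax-sparse}, valid with probability $1-3e^{-d/3000}\geq e^{-1}\geq\delta$), get the $\log(d)\log(1/\delta)/n$ term from a ``hidden light class'' family in which the spike of mass $\beta\asymp\log(1/\delta)/n$ is unobserved with probability $\geq\delta$, and merge via $\max\{a,b\}\geq\tfrac12(a+b)$. Your fix for the conditioning issue (bulk block disjoint from the hidden block, so that on $\{N_j=0\}$ the sample law is literally the same for all $j$) is sound, and is essentially what the paper's Lemma~\ref{lem:lower-minimax-tail} achieves by making the bulk a single atom $\delta_1$.

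The genuine gap is in your step four, which is exactly where the $\log d$ factor has to be extracted. From $\sum_{j\in S}\wh p_j\leq 1$ you get the pigeonhole threshold $2/|S|\asymp 1/d$, hence a ratio $\beta|S|/2\asymp d\log(1/\delta)/n$ inside the logarithm. Since $n\geq d$, this ratio is at most of order $\log(1/\delta)$, so $\log(\beta|S|)$ is \emph{not} of order $\log d$ throughout the claimed range --- and it can even be negative. Concretely, take $n\asymp d^2/\log d$ and $\log(1/\delta)\asymp d$ (allowed, since $e^{-n}<\delta<e^{-1}$ and $d\geq 5000$): then $\log(d)\log(1/\delta)\asymp d\log d\gg d$, so the logarithmic term genuinely dominates the target bound, yet $\beta|S|\asymp\log d$, so your argument yields only $\beta\log\log d$ per coordinate, short of $\beta\log d$ by a factor $\log d/\log\log d$; worse, when $n\gg d\log(1/\delta)$ one has $\beta<2/|S|$ and $D(\beta,\cdot)$ is minimized \emph{between} $\beta$ and $2/|S|$, so knowing $\wh p_j\leq 2/|S|$ gives no lower bound at all. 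The missing idea is a dichotomy on the total mass $m=\sum_{j\in S}\wh p_j$ that the estimator assigns to the hidden block on the all-bulk event: either $m\geq\sqrt{d}\,\beta$, in which case the \emph{spike-free} distribution (which must be included in your family) already incurs $\kll{P_0}{\wh P_n}\geq\sum_{j\in S}D(0,\wh p_j)=m\geq\sqrt{d}\log(1/\delta)/n\geq\log(d)\log(1/\delta)/(2n)$ deterministically; or $m<\sqrt{d}\,\beta$, in which case some $j\in S$ has $\wh p_j\leq m/|S|\lesssim\beta/\sqrt{d}$, so the ratio $\beta/\wh p_j\gtrsim\sqrt{d}$ and $D(\beta,\wh p_j)\gtrsim\beta\log\sqrt{d}$. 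This is precisely the two-case analysis on $\alpha$ (with $1-q_1=\alpha d/n$ and cutoff $\alpha\gtrless\log(1/\delta)/(7\sqrt{d})$) in the paper's proof of Lemma~\ref{lem:lower-minimax-tail}; without it, your construction is right but your analysis does not deliver the stated bound.
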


Together, Theorems~\ref{thm:upper-bound-conf-dependent} and~\ref{thm:lower-bound-minimax} characterize, up to universal constant factors, the minimax high-probability risk (in other words, the ``sample complexity'') for estimation of discrete distributions in relative entropy.
An interesting feature of this lower bound is that it exceeds the asymptotic rate~\eqref{eq:asymptotic-mle} by a $\log d$ term;
this establishes a separation between %
asymptotic guarantees and uniform %
non-asymptotic guarantees.

The proof of Theorem~\ref{thm:lower-bound-minimax} relies on the following lemma, which is proved in Section~\ref{sec:proof-lemma-}.

\begin{lemma}
  \label{lem:lower-minimax-tail}
  Let $n \geq d \geq 2$ and $\delta \in (e^{-n}, e^{-1})$.
  There exists a set $\F = \F_{n, d, \delta} \subset \probas_d$ of $d$ distributions with support size at most $2$ such that the following holds.
  For any estimator $\wh P_n = \Phi (X_1, \dots, X_n)$, there exists a distribution $P \in \F$ such that
  \begin{equation}
    \label{eq:lower-minimax-tail}
    \P_P \bigg( \kll{P}{\wh P_n} \geq \frac{\log (d) \log (1/\delta)}{14 \,n} \bigg)
    \geq \delta
    \, .
  \end{equation}
\end{lemma}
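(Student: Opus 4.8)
The plan is to construct a family $\F$ of $d$ two-point distributions that are nearly indistinguishable pairwise, but for which any estimator must underestimate one ``rare'' coordinate badly with probability at least $\delta$. Concretely, fix a small probability level $q$ (to be chosen of order $\log(1/\delta)/n$, so that with probability roughly $\delta$ a $\mathrm{Binomial}(n,q)$ count vanishes) and let $\F = \{P_1, \dots, P_{d-1}\}$ (or $d$ distributions with one extra trivial one) where $P_k = (1-q)\delta_1 + q\,\delta_{k+1}$ for $k = 1, \dots, d-1$. Each $P_k$ puts almost all its mass on class $1$ and a tiny mass $q$ on a distinguished ``signal'' class. Under $P_k$, the signal class $k+1$ is absent from the sample with probability $(1-q)^n \geq e^{-2qn}$ (using $q$ small), and one picks $q$ so that this is $\geq 3\delta$ or so. The key point: conditionally on the event that the signal class is missing, the estimator sees a sample that is statistically identical (in distribution) to a sample where a \emph{different} rare class $k'+1$ is the true signal; so the estimator cannot simultaneously assign non-negligible mass to the correct rare class for all $k$.

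The key steps, in order, would be: (i) Fix $q$; I expect $q \asymp \log(d/\delta)/n$ or just $\log(1/\delta)/n$ up to constants — this must be calibrated so that $\P_{P_k}(N_{k+1} = 0)$ is a constant multiple of $\delta$ while $q$ itself stays $\ll 1/\log d$ so that the per-coordinate KL cost $q\log(1/\wh p_{k+1})$ can be made of order $\log(d)\log(1/\delta)/n$. (ii) On the missing-signal event, the conditional law of $(X_1,\dots,X_n)$ under $P_k$ is \emph{the same} for every $k$ — it is just $n$ i.i.d.\ draws from $\delta_1$ restricted appropriately — so the estimator output $\wh P_n$ on this event does not depend on $k$. (iii) Since $\sum_{k=1}^{d-1} \wh p_{k+1} \leq 1$, at least half of the indices $k$ satisfy $\wh p_{k+1} \leq 2/(d-1)$, hence $\log(1/\wh p_{k+1}) \geq \log((d-1)/2) \gtrsim \log d$ for those $k$; a pigeonhole/averaging argument over $k$ then selects a ``bad'' $P_k$. (iv) For that $P_k$, on the missing-signal event we have $\kll{P_k}{\wh P_n} \geq q \log(1/\wh p_{k+1}) \gtrsim q \log d \asymp \log(d)\log(1/\delta)/n$, and this event has probability $\geq \delta$; tracking constants gives the stated $1/14$.

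The main obstacle is step (iii)–(iv): making the pigeonhole argument work \emph{uniformly} against a possibly randomized or adversarially chosen estimator while keeping the probability bound at least $\delta$ (not $\delta/2$ or $c\delta$). The subtlety is that the set of ``bad'' indices $k$ (those with $\wh p_{k+1}$ small) is itself random — it depends on the sample — so one cannot just fix $k$ first. The clean way around this is to average over $k$ chosen uniformly at random: $\E_k \P_{P_k}(\text{bad event}) = \E[\text{fraction of bad }k \text{ on missing event}] \cdot (\text{something}) \geq \delta$, and then there exists a deterministic $k$ achieving at least the average. One has to be careful that ``$N_{k+1}=0$'' and ``$\wh p_{k+1}$ small'' interact correctly — but since on $\{N_{k+1}=0\}$ the output $\wh P_n$ is $k$-independent, the fraction of $k$ with $\wh p_{k+1} \leq 2/(d-1)$ is $\geq 1/2$ \emph{deterministically} on that event, which decouples the two and makes the averaging go through cleanly. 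The remaining work is purely arithmetic: choosing $q$ and verifying $(1-q)^n \geq 2\delta$ together with $q\log((d-1)/2)/14 \le q \log(1/\wh p_{k+1}) \wedge \ldots$ under the constraints $n \ge d \ge 2$ and $e^{-n} < \delta < e^{-1}$.
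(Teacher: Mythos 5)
Your construction is essentially the one the paper uses---the family $P^{(j)} = (1-\rho)\delta_1 + \rho\,\delta_j$ with $\rho = 1-\delta^{1/n} \asymp \log(1/\delta)/n$ (this exact choice of $\rho$ makes the all-ones event have probability exactly $(1-\rho)^n=\delta$, which disposes of your worry about losing a constant in the confidence level), and your observation that on the missing-signal event the output $Q=\Phi(1,\dots,1)$ does not depend on $k$ is also the right decoupling step. The gap is in steps (iii)--(iv). First, the inequality $\kll{P_k}{\wh P_n} \ge q\log(1/\wh p_{k+1})$ is false as stated (take $\wh P_n = P_k$, which gives zero divergence but a positive right-hand side); the correct single-coordinate bound is
\begin{equation*}
  \kll{P_k}{\wh P_n} \;\ge\; D\big(q, \wh p_{k+1}\big) \;=\; q\log\Big(\frac{q}{\wh p_{k+1}}\Big) - q + \wh p_{k+1}\, ,
\end{equation*}
which is of order $q\log d$ only when $\wh p_{k+1}$ is below $q$ by a polynomial-in-$d$ factor.

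Second, and more substantively, the pigeonhole on $\sum_k \wh p_{k+1}\le 1$ only yields $\wh p_{k+1}\le 2/(d-1)$, and this need not be small compared with $q\asymp\log(1/\delta)/n$: for $\delta=e^{-1}$ and $n=d$ one has $q\asymp 1/d\asymp 2/(d-1)$, and then $D(q,\wh p_{k+1})=O(q)$ with no $\log d$ gain. For instance, an estimator that on the all-ones sample assigns mass $\asymp 1/d$ to every class $j\ge 2$ passes your coordinate-wise test for large $d$, even though it is of course a terrible estimator (it is killed by $P=\delta_1$, which your argument never invokes). What is missing is a dichotomy on the total mass the estimator leaves off class $1$. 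Writing $Q=(q_1,\dots,q_d)$ and $1-q_1=\alpha d/n$: either $\alpha\ge\log(1/\delta)/(7\sqrt d)$, in which case $P=\delta_1$ already gives, with probability one, $\kll{\delta_1}{Q}=\log(1/q_1)\ge 1-q_1\ge \sqrt d\log(1/\delta)/(7n)\ge\log(d)\log(1/\delta)/(14n)$; or $\alpha<\log(1/\delta)/(7\sqrt d)$, in which case pigeonhole produces a $j$ with $q_j\le 2\alpha/n\lesssim \rho/\sqrt d$, so that $\rho/q_j\ge e$, $\log(\rho/q_j)\gtrsim\log(\sqrt d)$, and $D(\rho,q_j)=q_j\,h(\rho/q_j)\ge e^{-1}\rho\log(\rho/q_j)\gtrsim\rho\log(d)$. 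So you must include $\delta_1$ in $\F$ and actually use it; the $\sqrt d$ threshold is precisely what balances the two cases, and without it the argument does not close.
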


The idea behind Lemma~\ref{lem:lower-minimax-tail} is quite simple, and can be summarized as follows: let $P$ be either a Dirac mass at $1$, namely $P = \delta_1 = (1, 0, \dots, 0)$, or a mixture of the form $(1 - \frac{\log (1/\delta)}{n}) \delta_1 + \frac{\log (1/\delta)}{n} \delta_j$ for some $j = 2, \dots, d$.
Then, regardless of which of these distributions $P$ is, with probability at least of order $\delta$, only the first class is observed ($X_1 = \dots = X_n = 1$).
In this case, it is impossible to tell which distribution $P$ is.
In order to avoid incurring a large error in the first case where $P = \delta_1$, one must assign a large probability to the first class, and thus a low probability to all remaining classes.
However, this entails a large error in the second case.
Specifically, the extra $\log (d)$ factor comes from the fact that in the second case where $P$ puts some mass on a class $j \in \set{2, \dots, d}$,
no information on $j$ is available if only the first class is observed.
Hence, all the remaining mass must be shared among the $d-1$ remaining classes $j=2, \dots, d$, effectively dividing by $d-1$ the per-class probability and thus inflating the relative entropy.

We note in passing that Lemma~\ref{lem:lower-minimax-tail} has broader implications to the theory of aggregation and density estimation, beyond the present context of estimation of discrete distributions.
Indeed, it is known from~\cite{yang1999information,catoni2004statistical} (building on an idea of Barron~\cite{barron1987bayes}), that for any finite model/class $\F$ of distributions, given an \iid sample of size $n$ from an unknown distribution $P \in \F$, there exists an estimator $\wh P_n$ such that $\E_P [\kll{P}{\wh P_n}] \leq \log (|\F|)/n$ for all $P \in \probas$.
This naturally raises the question of whether a corresponding ideal high-probability guarantee, of the form $\P_P (\kll{P}{\wh P_n} \geq C\, \{ \log |\F| + \log (1/\delta) \}/n) \leq \delta$ for some absolute constant $C$, can also be achieved, possibly by another estimator.
Lemma~\ref{lem:lower-minimax-tail} shows that this is not the case: since $|\F_{n,d,\delta}| = d$, the best tail bound one may hope for is of order $\log (|\F|) \log (1/\delta)/n$.

\section{Adaptation to the effective support size}
\label{sec:adaptation-support}

The results of Section~\ref{sec:confidence-minimax} settle the question of the
minimax-optimal high-probability guarantees.
In addition, the results of Section~\ref{sec:laplace} show that the classical Laplace estimator is rather close to being optimal, in the same sense.

While these results attest to the soundness of the Laplace rule---and of its confidence-dependent modification---, one should keep in mind that in several applications such as natural language processing, its use has been supplanted by that of more advanced methods, such as Kneser-Ney smoothing~\cite{kneser1995improved,chen1999empirical}.

This apparent contradiction between theory and practice comes from the fact that we have so far focused on minimax guarantees that hold uniformly over all distributions.
By itself, this uniformity is a strength, as it requires no restrictive assumptions on the true distribution.
In addition, it is not obviously detrimental, given that the limiting risk~\eqref{eq:asymptotic-mle} of the MLE does not depend on the distribution $P \in \probas_d$, as long as the latter assigns positive probability to all classes.

Nevertheless, this last restriction points towards a possible improvement: if only $s < d$ classes have positive probability, then by~\eqref{eq:asymptotic-mle}, the limiting high-probability risk of the MLE scales as $\{ s + \log (1/\delta) \}/n$, which can be much smaller than $d/n$ if $s \ll d$.
This suggests that, more generally, the complexity of distribution estimation should be governed by some notion of support size, such as the number of positive or (for a fixed sample size) large enough probabilities.

Indeed, while the minimax rate of $d/n$ cannot be improved for worst-case distributions that are approximately uniform over $\set{1, \dots, d}$,
distributions that arise in practice often exhibit a non-uniform structure,
with a small number of frequent classes and a large number of less frequent classes.
Under such a configuration, it may be possible to estimate the distribution $P$ even when the sample size $n$ is smaller than the total number $d$ of classes.

In this section, we consider high-probability upper and lower bounds for estimation of discrete distributions, which depend on suitable notions of ``support size'' of the distribution.
Section~\ref{sec:minimax-lower-sparse} contains minimax lower bounds for estimation of sparse distributions, while Section~\ref{sec:adaptive-estimators} is devoted to high-probability upper bounds for
suitable adaptive estimators.

\subsection{Minimax lower bounds for sparse distributions}
\label{sec:minimax-lower-sparse}

We start by establishing minimax lower bounds on the best possible estimation guarantee for ``sparse'' distributions.
Since we are interested in lower bounds, we consider a small class of sparse distributions $P$, namely distributions with support size at most $s \leq d$.
Naturally, such lower bounds transfer to larger classes, that is, to less stringent notions of sparsity.

For any $P \in \probas_d$, we let $\supp (P) = \{ 1 \leq j \leq d : p_j > 0 \}$ denote the support of $P$.
In addition, for $1 \leq s \leq d$ we define the class $\probas_{s, d} = \{ P \in \probas_d : \abs{\supp (P)} \leq s \}$ of probability distributions on $\set{1, \dots, d}$ supported on at most $s$ elements.
We call such distributions \emph{$s$-sparse}.

Our main lower bound for the class of $s$-sparse distributions is the following:

\begin{proposition}
  \label{prop:lower-bound-minimax-sparse}
  Let $n, d \geq 2$.
  For any $1 \leq s \leq \min (n, d/55%
  )$ and any estimator $\wh P_n = \Phi (X_1, \dots, X_n)$, 
  there exists a distribution $P \in \probas_{s,d}$ such that
  \begin{equation}
    \label{eq:sparse-lower-high-probability}
    \P_P \bigg( \kll{P}{\wh P_n} \geq \frac{s \log (e d/s)}{300\, n} \bigg)
    \geq 1 - 3 \exp \Big( - \frac{s}{35} \Big) %
    \, .
  \end{equation}
\end{proposition}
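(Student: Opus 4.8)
Since we only need the existence of one bad distribution, the plan is to place a prior on $\probas_{s,d}$, bound the probability of failure \emph{on average} over the prior, and conclude by the probabilistic method. Throughout one may assume $s$ exceeds an absolute constant, as otherwise $3\exp(-s/35)\ge 1$ and there is nothing to prove.

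\textbf{Construction.} Partition $\{2,\dots,d\}$ into $s-1$ blocks $G_1,\dots,G_{s-1}$ of sizes at least $\lfloor(d-1)/(s-1)\rfloor\ge d/(2s)\ge 27$ (using $s\le d/55$). Draw independent uniform ``decoys'' $j_i\in G_i$, set $S=\{j_1,\dots,j_{s-1}\}$, and let $P=P_S$ be the distribution with $p_1=1-(s-1)/n$ and $p_{j_i}=1/n$ for $i=1,\dots,s-1$. Then $P\in\probas_{s,d}$, and $p_1>0$ because $s\le n$. The heavy atom at class $1$ absorbs almost the entire sample, so each of the $s-1$ decoys is a ``needle'' of true probability $1/n$ hidden in a ``haystack'' $G_i$ of size of order $d/s$. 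Each block is \emph{empty} (contains no sampled point) with probability $(1-1/n)^n\ge 1/4$; since the indicators of these events are non-increasing functions of negatively associated multinomial counts, the number $|\mathcal I|$ of empty blocks is at least $(s-1)/8$ with probability $1-e^{-\Omega(s)}$, \emph{uniformly over $S$}. Call this sample-measurable event $E$.

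\textbf{Reduction to per-block ``needle'' problems.} The structural fact driving the argument is that, conditionally on the sample, the decoy of an empty block is uniformly distributed over that block, and the decoys of distinct empty blocks are conditionally independent: the likelihood of an i.i.d.\ sample only involves the probabilities of the classes that actually appear, hence does not discriminate among the decoy candidates of an empty block. Fix the sample (so $\wh P_n=(\wh p_j)_j$ is determined) and work on $E$. From $\kll{P}{\wh P_n}=\sum_{j=1}^{d}D(p_j,\wh p_j)$, dropping the nonnegative terms outside the empty blocks gives $\kll{P}{\wh P_n}\ge\sum_{i\in\mathcal I}Y_i$, where $Y_i:=D(\tfrac1n,\wh p_{j_i})+\sum_{l\in G_i\setminus\{j_i\}}\wh p_l$ is the contribution of block $i$ and depends only on the conditionally independent uniform variables $j_i$.

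\textbf{Per-block estimate and conclusion.} I would then show that, conditionally on the sample, $\P(Y_i\ge c_0\,n^{-1}\log(d/s))\ge 1-\rho$ for absolute constants $c_0>0$ and $\rho<1$. The corresponding bound in expectation is immediate from convexity of $D(\tfrac1n,\cdot)$: Jensen's inequality yields $\E[Y_i\mid\text{sample}]\ge n^{-1}\log\frac{|G_i|}{nW_i}-n^{-1}+W_i$ with $W_i=\sum_{l\in G_i}\wh p_l$, and the right-hand side is $\ge n^{-1}\log|G_i|$ for every $W_i>0$. Upgrading this to a high-probability statement is the crux: if $Y_i$ is small then both $D(\tfrac1n,\wh p_{j_i})$ and the wasted-mass term $W_i-\wh p_{j_i}$ are small, which confines $\wh p_{j_i}$ to a narrow sub-level set of $D(\tfrac1n,\cdot)$ about $1/n$ — forcing $\wh p_{j_i}$ (hence $W_i$) to be $O(n^{-1}\log|G_i|)$ while $\wh p_{j_i}\ge\Omega(n^{-1}|G_i|^{-1/4})$ — so that a short counting argument (distinguishing whether the estimator concentrates or spreads its block-$i$ mass $W_i$) bounds the number of classes $l\in G_i$ with $\wh p_l$ in the admissible range by a strictly sub-constant fraction of $|G_i|$; since $j_i$ is uniform on $G_i$, the claim follows. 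This step must hold uniformly in $n$: when $n\gg d$ the needle is \emph{over}-estimated rather than under-estimated, but then $Y_i$ is of order $W_i$, which is still $\gg n^{-1}\log(d/s)$ unless $W_i$ is small. Finally, the events $\{Y_i\ge c_0 n^{-1}\log(d/s)\}$, $i\in\mathcal I$, are conditionally independent with conditional probability $\ge 1-\rho$, so a Chernoff bound shows that a positive fraction of them occur except with conditional probability $e^{-\Omega(|\mathcal I|)}=e^{-\Omega(s)}$; on that event $\kll{P}{\wh P_n}\ge\sum_{i\in\mathcal I}Y_i\gtrsim n^{-1}s\log(d/s)$, which exceeds $\frac{s\log(ed/s)}{300n}$ once the block sizes, decoy mass and thresholds are calibrated so the absolute constants compose correctly (note $\log(ed/s)\le 2\log(d/s)$ since $d/s\ge 55$). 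Combining with $\P_P(E^c)\le e^{-\Omega(s)}$ and averaging over the prior produces a fixed $P\in\probas_{s,d}$ with $\P_P\big(\kll{P}{\wh P_n}\ge\frac{s\log(ed/s)}{300n}\big)\ge 1-3e^{-s/35}$. The one genuinely delicate ingredient is the per-block high-probability estimate together with the ensuing constant-chasing; everything else is convexity and standard concentration for negatively associated and independent indicators.
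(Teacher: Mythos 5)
Your plan is sound and, at its core, it is the same argument as the paper's: a Bayesian (probabilistic-method) lower bound in which a heavy atom at class $1$ hides $s-1$ ``needle'' classes of mass $\asymp 1/n$, one conditions on the sample so that the unobserved needles remain (conditionally) random, and the $\log(d/s)$ factor is extracted by counting how many candidate locations the estimator can afford to cover with probability at least $\frac{1}{n}\mathrm{poly}(s/d)$ --- at most a vanishing fraction, since the total mass it can spend outside class $1$ is bounded (else the $D(p_1,\wh p_1)$ or total-mass term already forces the bound). Where you differ is the prior and the granularity of the dichotomy. The paper draws the support uniformly among $(s-1)$-subsets of $\{2,\dots,d\}$, so the posterior is a permutation-type distribution and it must invoke negative association (Joag-Dev--Proschan) to concentrate $|\wh J\cap\wt\sigma|$; it also runs the ``too much mass off class $1$'' vs.\ ``too little coverage'' dichotomy \emph{globally} on $\wh\alpha=n(1-\wh p_1)$. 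Your blockwise prior buys exact conditional independence of the decoys across empty blocks, so the final concentration is a plain Chernoff bound for independent Bernoullis, at the price of running the dichotomy \emph{per block} on $W_i$ and of a separate (negative-association) concentration step for the number of empty blocks. Both routes work; yours is arguably cleaner on the independence side and slightly heavier on bookkeeping.

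Two caveats. First, the step you correctly identify as the crux --- the per-block estimate $\P(Y_i\ge c_0 n^{-1}\log(d/s))\ge 1-\rho$ --- is only sketched, but the counting argument you gesture at does close: if $W_i\ge\sqrt{|G_i|}/(2n)$ then $nY_i\ge nW_i-1-\log(nW_i)$ is already large for every choice of $j_i$, and otherwise at most $W_i\cdot n\sqrt{|G_i|}\le|G_i|/2$ classes of $G_i$ can receive $\wh p_l\ge\frac{1}{n\sqrt{|G_i|}}$, so a uniform $j_i$ lands on an underestimated class with conditional probability at least $1/2$ and contributes $D(\frac1n,\wh p_{j_i})\ge\frac{1}{n}(\frac12\log|G_i|-1)$. (The exponent $1/4$ you propose is too aggressive at the boundary $|G_i|\approx 27$; $1/2$ is safer.) Second, the specific constants $300$, $35$ and the threshold $d/55$ in the statement are the product of careful calibration; with decoy mass $1/n$ rather than the paper's $\frac{1}{2en}$ and with the extra factor lost in ``a positive fraction of the $|\mathcal I|\ge(s-1)/8$ empty blocks succeed,'' you would need to verify that your chain of constants still lands under $1/300$, or be content with proving the statement up to different absolute constants.
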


We refer to Section~\ref{sec:proof-lower-minimax-sparse} for the proof of Proposition~\ref{prop:lower-bound-minimax-sparse}.
In particular, letting $s = \floor{d/55}$ in Proposition~\ref{prop:lower-bound-minimax-sparse} shows the following: for any $n \geq d \geq 110$ and any estimator $\wh P_n = \Phi (X_1, \dots, X_n)$, there exists a distribution $P \in \probas_d$ such that
\begin{equation}
  \label{eq:dense-lower-high-prob}
  \P_P \bigg( \kll{P}{\wh P_n} \geq \frac{d}{4600\, n} \bigg)
  \geq 1 - 3 \exp \Big( - \frac{d}{3000} \Big) %
  \, .
\end{equation}

Two aspects of the lower bound of Proposition~\ref{prop:lower-bound-minimax-sparse}
merit
further discussion.

First, the minimax lower bound is of order $s \log (e d/s) / n$, which exceeds by a $\log (e d/s)$ factor the rate of estimation of a distribution $P$ with \emph{known} support of size $s$.
This extra factor, which is standard in the context of estimation under sparsity, can be seen as the price of estimation of the support of $P$: indeed, one has $s \log (e d/s) \asymp \log \binom{d}{s}$, where $\binom{d}{s}$ is the number of possible supports of size $s$.
We note that
the rate of $s \log (e d/s)/n$ coincides with the minimax rate of estimation of a sparse vector
under Gaussian noise
(e.g.,~\cite[p.~156]{wainwright2019high}).

However, despite the similarity in rates, there are qualitative differences between the Gaussian setting and the multinomial setting that we consider.
Indeed, it follows from the results of Agrawal~\cite{agrawal2022finite} (see Lemma~\ref{lem:hellinger-empirical} below) that when $P \in \probas_{s,d}$, the empirical distribution achieves an upper bound in squared Hellinger distance of order $s/n$ with probability at least $1 - e^{-s}$.
This rate no longer features the extra $\log (ed/s)$ factor; by contrast, the extra $\log (e d/s)$ factor does appear in the Gaussian model, even when the error is measured in squared Hellinger distance.
Roughly speaking, this difference stems from the fact that in the multinomial model, unlike in the Gaussian model, the amount of ``noise'' in a coordinate $j=1, \dots, d$ decays when the magnitude of the corresponding coefficient parameter $p_j$ goes to $0$.

In particular,
the optimal rate of estimation of sparse discrete distributions in Kullback-Leibler divergence exceeds the optimal rate under squared Hellinger divergence by %
a $\log (e d/s)$ factor.
This gap comes from the fact that the empirical distribution %
only identifies a subset of the support of the true distribution,
and the price of
missing part of the support of $P$
is higher in Kullback-Leibler divergence than in squared Hellinger distance.

A second important feature of the lower bound of Proposition~\ref{prop:lower-bound-minimax-sparse} (and of its consequence~\eqref{eq:dense-lower-high-prob} in the non-sparse case) is that it holds \emph{with high probability}.
Such a statement is stronger than minimax lower bounds that are usually found in the literature, which hold either in expectation or with constant probability.
For estimation of non-sparse discrete distributions, lower bounds in expectation are proven in~\cite{han2015minimax,kamath2015learning},
while~\cite{chhor2024generalized} obtains a lower bound with constant probability.
To appreciate the difference, consider the $d/n$ lower bound in the non-sparse case.
Then, a $d/n$ lower bound in expectation is in principle compatible with the following behavior: a risk of $1$ with probability $d/n$, and a risk of $0 \ll d/n$ with high probability $1-d/n$.
In contrast, a stronger lower bound with constant probability rules out such a behavior: it asserts that a lower bound of order $d/n$ must hold with probability bounded away from $0$, say $10\%$ or $80\%$.
However, even this lower bound does not rule out the possibility that the estimator achieves an error significantly smaller than $d/n$ (say, of $0$) with nontrivial probability---respectively, with probability $90\%$ or $20\%$.
A high-probability lower bound such as~\eqref{eq:dense-lower-high-prob} excludes such a behavior: it asserts that a lower bound of order $d/n$ holds with
overwhelming probability,
effectively ruling out the possibility that the estimator ``gets lucky''.
In addition, 
the probability with which the lower bound~\eqref{eq:dense-lower-high-prob} holds,
which behaves as $1-e^{-d/c}$,
is best possible, as shown by the
convergence of the properly scaled risk of the MLE to a $\chi_{d-1}^2$ distribution as $n \to \infty$.

We note that the classical method for proving lower bounds in statistical estimation (put forward by Ibragimov and Has'minskii~\cite{ibragimov1981estimation}, see~\cite[Chapter~2]{tsybakov2009nonparametric} and~\cite[Chapter~15]{wainwright2019high}), which consists in a reduction from estimation to testing, followed by a lower bound for testing though Fano's inequality, actually provides a high-probability lower bound.
However, the probability estimate that the commonly used version of this inequality (see \eg \cite[Proposition~5.12 p.~502]{wainwright2019high}) leads to is weaker than~\eqref{eq:dense-lower-high-prob}.
Specifically, it leads to a lower bound that holds with probability $1- c/d$ for some constant $c$, in contrast with the lower bound~\eqref{eq:dense-lower-high-prob} with probability $1-e^{-d/c}$.
As it turns out, in the non-sparse case, one could also obtain a probability of $1-e^{-d/c}$ by instead using the sharp version of Fano's inequality (\eg,~\cite[Lemma~2.10]{tsybakov2009nonparametric}) and further simplifying the resulting bound.
However, the reduction from estimation to testing is not straightforward in the sparse case, essentially because the Kullback-Leibler divergence does not behave like a distance over sparse distributions with distinct supports.

The proof of Proposition~\ref{prop:lower-bound-minimax-sparse}, %
which can be found in Section~\ref{sec:proof-lower-minimax-sparse},
is not based 
on a reduction to testing, but instead on the probabilistic method---in other words, on a Bayesian lower bound.
Specifically, we consider suitable distributions with randomly chosen support of size $s$, and show that on average over the random draw of such a distribution, the probability that any estimator achieves a risk at least of order $s \log (ed/s)/n$ is overwhelmingly close to $1$.

We believe that minimax lower bounds that hold with probability exponentially close to $1$ (such as Proposition~\ref{prop:lower-bound-minimax-sparse}) have broad relevance in statistical estimation beyond the present setting of discrete distributions, as they provide very precise information.
Although such lower bounds appear to be scarce in the literature, a recent example is~\cite[Theorem~2.1]{rajaraman2024statistical} which is established via the $\chi^2$ mutual information using tools from~\cite{chen2016bayes}.

In addition to the high-probability lower bound of Proposition~\ref{prop:lower-bound-minimax-sparse}, we now provide a \emph{low-probability} lower bound.
Low-probability lower bounds (such as Theorems~\ref{thm:lower-bound-conf-indep-tail} and~\ref{thm:lower-bound-minimax} in previous sections) are more common in the literature;
their interest stems from the fact that they are precisely the converse of high-probability upper bounds, and can thus attest to the optimality of such upper bounds.
Specifically, combining Proposition~\ref{prop:lower-bound-minimax-sparse} with Lemma~\ref{lem:lower-minimax-tail} above---which applies to our present setting as it involves $2$-sparse distributions---leads to the following:

\begin{corollary}
  \label{cor:minimax-sparse-lower}
  Let $n, d \geq 110$, $s \in \{ 2, \dots, \min(n, d/55) \}$ and $\delta \in (e^{-n}, e^{-2})$.
  For any estimator $\Phi = \Phi_{s,\delta} : [d]^n \to \probas_d$, there exists a distribution $P \in \probas_{s,d}$ such that, letting $\wh P_n = \Phi (X_1, \dots, X_n)$, we have
  \begin{equation}
    \label{eq:minimax-sparse-lower}
    \P_P \bigg( \kll{P}{\wh P_n} \geq \frac{s \log (e d/s) + \log (d) \log (1/\delta)}{320\, n} \bigg)
    \geq \delta
    \, .
  \end{equation}
\end{corollary}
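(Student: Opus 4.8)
The plan is to assemble the two lower bounds already in hand: the high-probability minimax bound of Proposition~\ref{prop:lower-bound-minimax-sparse}, which for any estimator exhibits an $s$-sparse $P$ with $\kll{P}{\wh P_n} \geq s\log(ed/s)/(300n)$ with probability at least $1 - 3e^{-s/35}$; and the confidence-level bound of Lemma~\ref{lem:lower-minimax-tail}, which exhibits a $P$ of support size at most $2$ --- hence an element of $\probas_{s,d}$ since $s \geq 2$ --- with $\kll{P}{\wh P_n} \geq \log(d)\log(1/\delta)/(14n)$ with probability at least $\delta$. Note Lemma~\ref{lem:lower-minimax-tail} applies since $\delta \in (e^{-n},e^{-2}) \subset (e^{-n},e^{-1})$. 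Fixing an estimator $\Phi = \Phi_{s,\delta}$ and writing $\wh P_n = \Phi(X_1,\dots,X_n)$, it suffices to produce $P \in \probas_{s,d}$ with $\P_P\big(\kll{P}{\wh P_n} \geq (s\log(ed/s) + \log(d)\log(1/\delta))/(320n)\big) \geq \delta$, which I would do by a case split on $s$.

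For $2 \leq s \leq 43$, a short computation (distinguishing $s=2$ from $3\leq s\leq 43$ in $s\log(ed/s) = s\log d + s(1-\log s)$) shows $s\log(ed/s) < 43\log d$; since $\log(1/\delta) > 2$ this yields $s\log(ed/s) < 21.5\,\log(d)\log(1/\delta)$, hence $s\log(ed/s) + \log(d)\log(1/\delta) < 22.5\,\log(d)\log(1/\delta)$. Lemma~\ref{lem:lower-minimax-tail} then already provides $P \in \probas_{s,d}$ with the desired probability bound, since $\log(d)\log(1/\delta)/(14n) = 22.5\,\log(d)\log(1/\delta)/(315n) > (s\log(ed/s) + \log(d)\log(1/\delta))/(320n)$.

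For $s \geq 44$, the success probability in Proposition~\ref{prop:lower-bound-minimax-sparse} obeys $1 - 3e^{-s/35} \geq 1 - 3e^{-44/35} > e^{-2} > \delta$, so its guarantee holds with probability at least $\delta$. Here I would split once more. If $s\log(ed/s) \geq 15\,\log(d)\log(1/\delta)$, then $s\log(ed/s) + \log(d)\log(1/\delta) \leq (16/15)\,s\log(ed/s)$, so $s\log(ed/s)/(300n) \geq (s\log(ed/s) + \log(d)\log(1/\delta))/(320n)$ and Proposition~\ref{prop:lower-bound-minimax-sparse} does the job. Otherwise $s\log(ed/s) + \log(d)\log(1/\delta) < 16\,\log(d)\log(1/\delta)$, and Lemma~\ref{lem:lower-minimax-tail} finishes, since $\log(d)\log(1/\delta)/(14n) > 16\,\log(d)\log(1/\delta)/(320n)$.

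The argument is thus a reduction to the two established results; the only delicate point is the bookkeeping of absolute constants across these scenarios. The reason the split occurs near $s \approx 43$ rather than at the point where the two rates balance is that Proposition~\ref{prop:lower-bound-minimax-sparse} guarantees its bound only with probability $1 - 3e^{-s/35}$, which exceeds the target confidence $\delta$ (for every $\delta < e^{-2}$) precisely when $s$ is at least this absolute constant; for smaller $s$ the proposition cannot be invoked at confidence $\delta$, but there $s\log(ed/s) \lesssim \log(d)\log(1/\delta)$ anyway (using $\log(1/\delta)>2$), so Lemma~\ref{lem:lower-minimax-tail} alone suffices. As in Lemma~\ref{lem:lower-minimax-tail} one works in the regime $n \geq d$; the complementary range, where relevant, reduces to this by restriction of the alphabet.
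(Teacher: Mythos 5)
Your proof is correct and follows essentially the same route as the paper: both combine Proposition~\ref{prop:lower-bound-minimax-sparse} with Lemma~\ref{lem:lower-minimax-tail}, splitting on whether $s$ exceeds an absolute constant (the paper uses $42$, you use $44$) so that the sparse high-probability bound holds at confidence $\delta$, and otherwise absorbing $s\log(ed/s)$ into the $\log(d)\log(1/\delta)$ term via $\log(1/\delta)>2$. The only cosmetic difference is that the paper converts the maximum of the two thresholds into their sum by a convex combination, $\max\{a,b\}\geq\tfrac{20}{21}a+\tfrac{1}{21}b$, instead of your second case split on which term dominates; your closing caveat about the regime $n\geq d$ is unnecessary (and ``restriction of the alphabet'' would in any case weaken the $\log d$ factor), since the proof of Lemma~\ref{lem:lower-minimax-tail} never actually uses $n\geq d$ and the paper applies it directly.
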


We refer to Section~\ref{sec:proof-lower-minimax-sparse-tail} for the proof of this result.
Corollary~\ref{cor:minimax-sparse-lower} recovers the minimax lower bound of Theorem~\ref{thm:lower-bound-minimax} up to constants, by setting $s = \floor{d/55}$.

\subsection{Upper bounds for adaptive estimators}
\label{sec:adaptive-estimators}

Having obtained in Corollary~\ref{cor:minimax-sparse-lower} a lower bound on the best possible high-probability guarantee, we now turn to %
upper bounds.
Of particular interest are estimators that achieve such guarantees without prior knowledge of the support size $s$ of $P$; we call such estimators \emph{adaptive}.

\subsubsection{Effective sparsity parameters}

Before stating such guarantees, it is worth discussing what an ideal upper bound might look like.
Of course, a natural objective would be to obtain minimax high-probability upper bounds under sparsity constraints that match the lower bound of Corollary~\ref{cor:minimax-sparse-lower}.
However, it should be noted that the notion of sparsity (support size) we considered in the previous section is rather restrictive: it excludes distributions with full support, but for which only a small number of classes have significant probability.
Intuitively, classes with positive but very small probability should have a limited effect on the estimation error, at least for moderate sample sizes.

These considerations call for identifying notions of ``effective support size'' or ``effective sparsity'' that capture the hardness of estimation more accurately than the number of
nonzero entries of the distribution.
One should expect such notions of effective support size to depend on the sample size $n$, given that the large-sample asymptotic behavior~\eqref{eq:asymptotic-mle} of the MLE is governed by the number of nonzero entries. %

Perhaps the most natural notion of ``effective support size of $P$ under sample size $n$'' is the typical number of distinct classes that would appear on an \iid sample of size $n$ from $P$.
As we will explain briefly, this leads to the following definition:

\begin{definition}
  \label{def:strong-effective-support}
  For any distribution $P = (p_1, \dots, p_d) \in \probas_d$ and
  $n \geq 1$,
  the \emph{effective support size} of $P$ at sample size $n$ is the quantity $s_n (P) \in [1,\min(n, d)]$ defined by
  \begin{equation}
    \label{eq:def-effective-support}
    s_n (P)
    = \sum_{j=1}^d \min (np_j, 1)
    = \big| \big\{ 1 \leq j \leq d : p_j \geq 1/n \big\} \big| + \sum_{j \pp p_j < 1/n} n p_j
    \, .
  \end{equation}
\end{definition}

It is clear from the first expression that $s_n (P)$ increases with $n$ while $s_n (P)/n$ decreases with $n$, that $s_1 (P) = 1$, that $s_n (P) \leq s$ if $P \in \probas_{s,d}$, and that $s_n (P)$ converges as $n \to \infty$ to $s_{\infty} (P) = | \set{1 \leq j \leq d : p_j > 0} |$.
Now, denote by
$D_n$ the number of distinct classes in the sample $X_1, \dots, X_n$, namely
\begin{equation}
  \label{eq:def-distinct-classes}
  D_n
  = \sum_{j = 1}^d \indic{N_j \geq 1}
  = \big| \big\{ X_i : 1 \leq i \leq n \big\} \big|
  \, .
\end{equation}
The following fact relates $\E_P [D_n]$ to $s_n (P)$.

\begin{fact}
  \label{fac:expected-support}
  For any $P \in \probas_d$ and $n \geq 1$, one has
  $(1-e^{-1}) s_n (P) \leq \E_P [D_n] \leq s_n (P)$.
\end{fact}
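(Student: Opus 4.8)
The plan is to compute $\E_P[D_n]$ exactly by linearity of expectation and then compare term-by-term with the summands $\min(np_j,1)$ defining $s_n(P)$. Writing $D_n = \sum_{j=1}^d \indic{N_j \geq 1}$, linearity gives $\E_P[D_n] = \sum_{j=1}^d \P_P(N_j \geq 1)$. Since $N_j \sim \mathrm{Binomial}(n, p_j)$, we have $\P_P(N_j \geq 1) = 1 - (1-p_j)^n$. So the entire statement reduces to the pointwise inequality, valid for every $p \in [0,1]$ and every integer $n \geq 1$,
\begin{equation*}
  (1-e^{-1})\,\min(np,1)
  \;\leq\; 1 - (1-p)^n
  \;\leq\; \min(np,1)
  \, ,
\end{equation*}
after which one sums over $j = 1, \dots, d$.

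The upper bound is the easy half: $1-(1-p)^n \leq 1$ trivially, and $1-(1-p)^n \leq np$ follows from the Bernoulli-type inequality $(1-p)^n \geq 1 - np$ (immediate by induction on $n$, or from convexity of $t \mapsto (1-t)^n$). Taking the minimum of the two right-hand sides gives $1-(1-p)^n \leq \min(np,1)$.

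For the lower bound I would split on whether $np \leq 1$ or $np > 1$. If $np > 1$, then $\min(np,1) = 1$ and it suffices to check $1-(1-p)^n \geq 1-e^{-1}$, i.e. $(1-p)^n \leq e^{-1}$; this holds because $(1-p)^n \leq e^{-np} \leq e^{-1}$ using $1-p \leq e^{-p}$. If $np \leq 1$, then $\min(np,1) = np$ and I need $1-(1-p)^n \geq (1-e^{-1})np$. The cleanest route is to note that $g(p) := 1-(1-p)^n$ is concave on $[0,1]$ (since $g''(p) = -n(n-1)(1-p)^{n-2} \leq 0$), so on the interval $p \in [0, 1/n]$ the chord from $(0, g(0)) = (0,0)$ to $(1/n, g(1/n))$ lies below $g$; hence $g(p) \geq np \cdot g(1/n) = np\,(1-(1-1/n)^n)$ for $p \in [0,1/n]$. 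Finally $(1-1/n)^n$ is increasing in $n$ with $(1-1/n)^n \leq e^{-1}$ for all $n \geq 1$ (and equals $0$ at $n=1$, where the whole statement is trivial anyway), so $1-(1-1/n)^n \geq 1-e^{-1}$, giving $g(p) \geq (1-e^{-1})np$ as desired. Summing the pointwise bounds over $j$ yields $(1-e^{-1})s_n(P) \leq \E_P[D_n] \leq s_n(P)$.

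There is essentially no obstacle here; the only mild care needed is in the lower bound, where one must handle the two regimes $np \leq 1$ and $np > 1$ separately and invoke concavity of $1-(1-p)^n$ (rather than a cruder bound) to get the sharp constant $1-e^{-1}$ uniformly in $n$.
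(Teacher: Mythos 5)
Your proof is correct and follows essentially the same route as the paper: compute $\E_P[D_n]=\sum_j\{1-(1-p_j)^n\}$ and bound each term against $\min(np_j,1)$, splitting on whether $np_j\leq 1$. The only (cosmetic) difference is in the small-$p_j$ case, where you invoke concavity of $p\mapsto 1-(1-p)^n$ via a chord argument, while the paper bounds $(1-p_j)^n\leq e^{-np_j}\leq 1-(1-e^{-1})np_j$ using convexity of the exponential; both yield the same constant.
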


\begin{proof}
  The upper bound comes from the fact that $\E_P [D_n] = \sum_{j=1}^d \P (N_j = 1)$ and that
  $\P (N_j = 1) = \P (\bigcup_{i=1}^n \{X_i = j\}) \leq \min (n p_j, 1)$ by a union bound.
  For the lower bound, we start with the expression $\E_P [D_n] = \sum_{j=1}^d \{ 1 - (1-p_j)^n \}$.
  Now, if $p_j \geq 1/n$, then $(1-p_j)^n \leq (1-1/n)^n \leq e^{-1}$, hence $1 - (1-p_j)^n \geq 1-e^{-1}$.
  On the other hand, if $p_j \leq 1/n$, %
  then $(1-p_j)^n \leq e^{-np_j} \leq 1 - (1-e^{-1}) np_j$ by convexity of $\exp$, so that $1 - (1-p_j)^n \geq (1-e^{-1}) np_j$.
\end{proof}

One reason why the quantity $s_n (P)$ is a natural ``effective sparsity'' parameter is that it controls the error of the empirical distribution in squared Hellinger distance, as shown by Lemma~\ref{lem:hellinger-empirical} below.
To see how it arises here, recall that for an estimator $\wh P_n = (\wh p_1, \dots, \wh P_d)$, one has
\begin{equation*}
  \kll{P}{\wh P_n}
  = \sum_{j=1}^d D (p_j, \wh p_j)
  \, .
\end{equation*}
Now, consider the situation
in which for each class $j = 1, \dots, d$, we are given two options:
\begin{itemize}
\item estimate $p_j$ based on the binomial $N_j$, for instance using the empirical frequency %
  or the binary Laplace estimate $(N_j+1)/(n+2)$.
  By~\eqref{eq:bound-expectation-laplace}, the latter gives $\E [D (p_j, \wh p_j)] \lesssim 1/n$;
\item alternatively, %
  resort to an ``oracle'' that returns an approximation $\wt p_j$ of $p_j$
  guaranteed to be of the same order of magnitude: $p_j/2 \leq \wt p_j \leq 2p_j$.
  This ensures that $D (p_j, \wt p_j) \lesssim p_j$.
\end{itemize}
Using the best of the two options for each coordinate %
leads to an error in relative entropy of order at most $\sum_{j=1}^d \min (p_j, 1/n) = s_n (P)/n$.
However, one cannot expect a bound of $s_n (P)/n$ to be achievable without the oracle, as this would violate the lower bound of Corollary~\ref{cor:minimax-sparse-lower}.
This is because a class $j$ with small probability $p_j \ll 1/n$
typically
does not appear in the sample; in this case, one has no information about the order of magnitude of $p_j$ except that $p_j \lesssim 1/n$.
This obstruction corresponds to the difficulty of estimating the support of $P$ discussed in Section~\ref{sec:minimax-lower-sparse}, which leads to an extra $\log (e d/s)$ factor.

In light of this discussion, a natural objective would be to aim for a high-probability upper bound matching the lower bound~\eqref{eq:minimax-sparse-lower}, but with the support size $s$ possibly replaced by the effective support size $s_n (P)$.

An in-expectation version of such a result was established by Falahatgar, Ohannessian, Orlitsky and Pichapati~\cite[Theorem~1]{falahatgar2017power}, who showed that the ``absolute discounting'' estimator $\wh P_n$ satisfies the following guarantee: for any $P \in \probas_d$, denoting $s_n = s_n (P)$ one has
\begin{equation}
  \label{eq:falahatgar-expected}
  \E_P [ \kll{P}{\wh P_n} ]
  \leq c \cdot \frac{s_n \log (e d/s_n)}{n}
\end{equation}
for some constant $c$ that only depends on the tuning parameter of the estimator.

Before establishing a high-probability extension of this result,
we first show that an improvement of the bound~\eqref{eq:falahatgar-expected} is possible even in expectation.
To see why, consider a distribution $P \in \probas_d$ with support size $%
s \ll d$, and consider the asymptotic regime where $n \to \infty$.
In this case, it follows from~\eqref{eq:asymptotic-mle} that as $n \to \infty$, the MLE $\ol P_n$ achieves a risk of order $s/n$ with high probability.
On the other hand, again as $n \to \infty$, one has $s_n \to s$, hence the bound~\eqref{eq:falahatgar-expected} scales as $s \log (e d/s)/n$, which exceeds the risk of the MLE by a $\log (ed/s)$ factor.

To understand this discrepancy, recall that the $\log (ed/s)$ factor comes from the fact that the support of $P$ is unknown.
Meanwhile, $s_n (P)$ measures the typical number of distinct classes that appear in the sample.
But since classes that appear in the sample are known, they do not contribute to the uncertainty about the support of $P$.
In contrast, the only classes that contribute to the uncertainty on the support are those that are missing from the sample\footnote{To be accurate, the situation is slightly more subtle: classes that do appear, but with an empirical frequency significantly smaller than their theoretical one, also contribute to the inflation of the relative entropy.}.

Hence, one should expect that the effective support size $s_n (P)$ does not suffice to describe the achievable error rate in relative entropy.
Instead, the logarithmic term owing to the lack of knowledge of the support should be tied to a different sparsity parameter, which accounts for the contribution of classes that are missing from the sample.
We now define such a parameter:

\begin{definition}
  \label{def:effective-missing-support}
  For any distribution $P = (p_1, \dots, p_d) \in \probas_d$ and real number $n \geq 1$, the \emph{effective missing support size} of $P$ at sample size $n$ is the quantity $s_n^\circ (P) \in (0, \min (n, d)]$ defined by
  \begin{equation}
    \label{eq:def-effective-missing-support}
    s_n^\circ (P)
    = \sum_{j=1}^d \min \big( e^{1- n p_j}, n p_j \big)
    = \sum_{j \pp p_j \geq 1/n} e^{1 - n p_j} + \sum_{j \pp p_j < 1/n} n p_j
    \, .
  \end{equation}
\end{definition}

The
reason why $s_n^\circ (P)$ accounts for the contribution of missing classes is that $s_n^\circ (P)/n$ is closely related to the expected ``missing mass'' (total probability of classes that do not appear in the sample), as shown in Lemma~\ref{lem:expected-missing-mass} below.
We defer to Section~\ref{sec:missing-mass} for more discussion on the behavior of the missing mass.
In addition, again by Lemma~\ref{lem:expected-missing-mass}, $s_n^\circ (P)$ is also closely related to the expected number of new classes that would appear on an independent sample $X_{n+1}, \dots, X_{2n}$  of size $n$ (but not in $X_1, \dots, X_n$), namely $\E_P [D_{2n} - D_{n}]$; this justifies the terminology ``effective missing support size'' of Definition~\ref{def:effective-missing-support}.
This suggests that $s_n^\circ (P)$ (roughly the number of classes that first appear after $\asymp n$ observations) can be seen as a ``local'' counterpart to the ``global'' effective sparsity parameter $s_n (P)$ (number of classes that first appear after $\lesssim n$ observations).

It is clear from the second expression in~\eqref{eq:def-effective-missing-support} that $s_n^\circ (P) \leq s_n (P)$.
In addition, from the first expression, the quantity $s_n^\circ (P)/n$ decreases with $n$.
On the other hand, contrary to $s_n (P)$, the quantity $s_n^\circ (P)$ does not increase with $n$; in fact, one has $s_n^\circ (P) \to 0$ as $n \to \infty$ for any $P \in \probas_d$.

To appreciate the difference between the two effective support size parameters,
one may use the following
rough %
approximations: 
$s_n (P)$ %
usually 
amounts to the number of classes $j$ such that $p_j \gtrsim 1/n$, while $s_n^\circ (P)$ often roughly corresponds to the number of classes $j$ with $p_j \asymp 1/n$.

\subsubsection{%
  Upper bound in expectation for sparse distributions}

As discussed above, the contribution to the estimation error of classes that appear in the sample (with an empirical frequency of the same order as their true frequency) should ideally scale as $s_n (P)/n$, without additional logarithmic factors.
On the other hand, the contribution to the estimation error of classes that do not appear in the sample (or appear with an empirical frequency significantly smaller than their true frequency) is governed by the parameter $s_n^\circ  = s_n^\circ (P)$.
To anticipate their contribution, consider the situation where $s_n^\circ$ indeed scales as the number of classes $j$ such that $p_j \asymp 1/n$.
Such classes have a constant probability of not appearing in the sample, in which case their identity is unknown.
Thus, there are typically about $s_n^\circ$ missing classes that have a probability of order $1/n$.
However, assuming that the total number of missing classes is of order $d$ (which occurs for instance in the high-dimensional regime $d \geq 2n$), the total mass of missing classes, of order $s_n^\circ/n$, must be shared among roughly $d$ classes.
This means each of the roughly $s_n^\circ$ classes $j$ with $p_j \asymp 1/n$ is assigned a probability $\wh p_j \asymp (s_n^\circ (P)/n)/d \lesssim 1/n$, leading to a total contribution to the estimation error of order $s_n^\circ \times \frac{1}{n} \log \big( \frac{1/n}{s_n^\circ / (d n)} \big) \asymp \frac{s_n^\circ}{n} \log (d/s_n^{\circ})$.

The next result (proved in Section~\ref{sec:proof-expected-sparse-loo}) shows that
a matching upper bound
can indeed be achieved by a suitable adaptive
estimator:

\begin{proposition}
  \label{prop:expected-sparse-loo}
  Let $\adp = (\wh p_1, \dots, \wh p_d)$ denote the estimator defined by, for $j=1, \dots, d$,
  \begin{equation}
    \label{eq:def-adaptive-estimator}
    \wh p_j
    = \frac{N_j + \wh \lambda}{n + \wh \lambda d}
    \qquad \text{with} \qquad
    \wh \lambda = \frac{D_n}{d} \, ,
  \end{equation}
  where $D_n = \sum_{j=1}^d \indic{N_j \geq 1}$  is the number of distinct classes among $X_1, \dots, X_n$.
  Then, for any $n, d \geq 2$ and distribution $P \in \probas_d$, letting $s_n = s_n (P)$ and $s_{n/2}^\circ = s_{n/2}^\circ (P)$ one has
  \begin{equation}
    \label{eq:expected-sparse-loo}
    \E_P \big[ \kll{P}{\adp} \big]
    \leq \frac{2.4 s_n + 2 s_{n/2}^\circ \log ( {e d}/{s_{n/2}^\circ} )}{n+1}
    \, .
  \end{equation}
\end{proposition}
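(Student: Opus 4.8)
The plan is to combine a leave-one-out (add-a-point) identity with the explicit form of $\adp$. Introduce an auxiliary $X_{n+1}\sim P$ independent of $X_1,\dots,X_n$, so that $(X_1,\dots,X_{n+1})$ is \iid from $P$, and for $i\in\{1,\dots,n+1\}$ let $\wh P^{(-i)}=(\wh p_1^{(-i)},\dots,\wh p_d^{(-i)})$ be the estimator~\eqref{eq:def-adaptive-estimator} applied to the $n$ points $(X_\ell)_{\ell\neq i}$. Since $X_i$ is independent of $\wh P^{(-i)}$, which has the same law as $\adp$, one has $\E_P[\kll{P}{\adp}]=\E_P[\kll{P}{\wh P^{(-i)}}]=\E_P[\log(p_{X_i}/\wh p_{X_i}^{(-i)})]$ for every $i$, and averaging over $i$ gives
\[
  \E_P[\kll{P}{\adp}] \;=\; \frac{1}{n+1}\,\E_P\!\left[\,\sum_{i=1}^{n+1}\log\frac{p_{X_i}}{\wh p_{X_i}^{(-i)}}\,\right].
\]

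The inner sum can be evaluated explicitly. Writing $N_j$ for the count of class $j$ in the full $(n+1)$-sample and $D=D_{n+1}$ for its number of distinct classes, deleting one occurrence of a class with count $k$ leaves $D$ unchanged when $k\ge2$ and lowers it by one when $k=1$; hence $\wh p_{X_i}^{(-i)}$ equals $(N_j-1+D/d)/(n+D)$ when $N_j\ge2$ and $(D-1)/(d(n+D-1))$ when $N_j=1$. Grouping the $n+1$ terms by class yields
\[
  \sum_{i=1}^{n+1}\log\frac{p_{X_i}}{\wh p_{X_i}^{(-i)}}
  \;=\; \underbrace{\sum_{j\,:\,N_j\ge2} N_j\log\frac{p_j(n+D)}{N_j-1+D/d}}_{=:\,A}
  \;+\; \underbrace{\sum_{j\,:\,N_j=1}\log\frac{p_j\,d\,(n+D-1)}{D-1}}_{=:\,B},
\]
and it remains to bound $\E_P[A]+\E_P[B]$ by $2.4\,s_n+2\,s_{n/2}^{\circ}\log(ed/s_{n/2}^{\circ})$.

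To bound $\E_P[A]$, I would split each summand as $\log\frac{n+D}{n+1}+\log\frac{N_j}{N_j-1+D/d}+\log\frac{(n+1)p_j}{N_j}$. Summed against $N_j$, the first two pieces are $\lesssim D_{n+1}$ (using $\log\frac{n+D}{n+1}\le\frac{D-1}{n+1}$, $\sum_jN_j=n+1$, and $N_j\log\frac{N_j}{N_j-1}\le 2\log2$ for $N_j\ge2$), so Fact~\ref{fac:expected-support} with $s_{n+1}(P)\le\frac{n+1}{n}s_n(P)$ makes their expectation $\lesssim s_n$. For the third piece, Jensen's inequality for the convex map $x\mapsto x\log x$ applied to $x=N_j/((n+1)p_j)$ gives $\E_P[\sum_{j:N_j\ge1}N_j\log\frac{(n+1)p_j}{N_j}]\le0$, so restricting to $N_j\ge2$ costs at most $\E_P[\sum_{j:N_j=1}\log\frac{1}{(n+1)p_j}]$, which by concavity of $x\mapsto x\log(1/x)$ is $\le M\log(d/M)$ with $M=\sum_{p_j<1/(n+1)}(n+1)p_j\le\frac{2(n+1)}{n}s_{n/2}^{\circ}(P)$. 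For $\E_P[B]$, write $\log\frac{p_j\,d\,(n+D-1)}{D-1}=\log(p_j(n+D-1))+\log\frac{d}{D-1}$: after $n+D-1\le2(n+1)$, the first part contributes positively only through the $\lesssim s_n$ classes with $p_j\gtrsim1/n$; for the second, all singleton classes share the same $D-1$, so $\sum_{j:N_j=1}\log\frac{d}{D-1}=S_1\log\frac{d}{D-1}$ with $S_1=|\{j:N_j=1\}|$, and $S_1\le D$ gives $S_1\log\frac{d}{D-1}\le S_1\log\frac{ed}{S_1}$, so by concavity of $x\mapsto x\log(ed/x)$ and Jensen, $\E_P[S_1\log\frac{ed}{S_1}]\le\E_P[S_1]\log\frac{ed}{\E_P[S_1]}$ with $\E_P[S_1]=\sum_j(n+1)p_j(1-p_j)^n\lesssim s_{n/2}^{\circ}(P)$ (an elementary computation, in the spirit of Lemma~\ref{lem:expected-missing-mass}). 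Monotonicity of $x\mapsto x\log(ed/x)$ on $(0,d]$ then folds the two bounds involving $M$ and $\E_P[S_1]$ into the claimed $s_{n/2}^{\circ}\log(ed/s_{n/2}^{\circ})$ term.

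\textbf{The main difficulty} is that the data-dependent smoothing level $\wh\lambda=D_n/d$ entangles the per-class estimate $\wh p_j$ with the global statistic $D_n$, so the summands of $A$ and $B$ are not products of independent quantities and resist a coordinatewise analysis. The device that breaks this coupling is purely deterministic---the inequality $|\{j:N_j=1\}|\le D_{n+1}$ for the singleton term $B$, together with the crude bounds $n+D\le2(n+1)$ and $N_j-1+D/d\ge N_j-1$ for $A$---after which everything reduces to Jensen's inequality for $x\log x$ and for the concave maps $x\log(1/x)$ and $x\log(ed/x)$, plus Fact~\ref{fac:expected-support} and the missing-mass estimate. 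The remaining, somewhat delicate, bookkeeping is to decide, in each of the two regimes $s_n\asymp d$ and $s_n\ll d$, which of these inequalities to apply tightly so as to obtain the explicit constants $2.4$ and $2$.
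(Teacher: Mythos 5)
Your proposal is essentially the paper's argument, reorganized. Both proofs rest on the same add-one-point/exchangeability device; the only structural difference is that the paper first replaces the true probabilities by the maximum-likelihood frequencies $\wt N_j/(n+1)$ of the extended sample (via $L(P)\geq \E[\ell(\wt P, X_{n+1})]$, which is the deterministic, pointwise form of your Jensen step for $x\log x$), after which every remaining quantity is a ratio of counts with no $p_j$ left; you instead keep the exact identity with $p_{X_i}$ in the numerator and invoke Jensen in expectation later. Your averaged leave-one-out identity, the two case formulas for $\wh p^{(-i)}_{X_i}$ (counts $\geq 2$ versus singletons, where $D_n=D_{n+1}-1$), and each individual estimate you cite are correct, so the proof goes through and gives the claimed rate $\{s_n+s_{n/2}^\circ\log(ed/s_{n/2}^\circ)\}/(n+1)$.

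The one place where your bookkeeping is genuinely lossier than the paper's is that you estimate $A$ and $B$ separately. The correction you pay in $A$ for restricting Jensen from $N_j\geq 1$ to $N_j\geq 2$ is $+\sum_{j:N_j=1}\log\tfrac{1}{(n+1)p_j}$, while the first part of $B$ is $+\sum_{j:N_j=1}\log((n+1)p_j)$ up to the harmless $\log\tfrac{n+D-1}{n+1}$ factor; these cancel exactly if you add $A$ and $B$ before bounding, leaving only count-ratio terms (this cancellation is precisely what the paper's MLE substitution performs automatically). By instead bounding the former by $M\log(d/M)$ and the latter by a multiple of $s_n$ --- each of which requires discarding summands of the favorable sign --- you count the singleton contribution twice, roughly doubling the coefficient of the $s_{n/2}^\circ\log(ed/s_{n/2}^\circ)$ term and inflating the $s_n$ term beyond $1+\log 4\approx 2.39$. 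So the rate is right, but the constants $2.4$ and $2$ will not emerge from the proof as organized; performing the $A+B$ cancellation first recovers them.
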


The estimator $\adp$ defined by~\eqref{eq:def-adaptive-estimator} can be viewed as an adaptive modification of the Laplace estimator~\eqref{eq:def-laplace}, where the regularization parameter $\wh \lambda$ is chosen in a data-dependent manner, so as to adapt to the ``shape'' of the distribution $P$.
In the case where $n  \gtrsim d$ and the distribution is ``dense'', namely puts significant probability $p_j \gtrsim 1/d$ to all classes, then typically $D_n$ is of order $d$ and thus $\wh \lambda \asymp 1$, hence the estimator behaves similarly to the Laplace estimator.
On the other hand, when the distribution is highly sparse, then typically $D_n \ll d$ and thus the regularization parameter $\wh \lambda \ll 1$ is much smaller than for the Laplace estimator---it may be as low as $1/d$, in the extreme case where only one class appears in the sample.
The specific choice of tuning parameter $\wh \lambda = D_n / d$ is motivated by the risk decomposition in Lemma~\ref{lem:decomp-risk} below.

To the best of our knowledge, the estimator $\adp$ defined by~\eqref{eq:def-adaptive-estimator} is new.
However, it is related to existing estimators from the literature.
First, it bears some relation with absolute discounting~\cite{ney1994structuring,kneser1995improved}, a core component in the Kneser-Ney smoothing method which has long been favored in natural language processing~\cite{kneser1995improved,chen1999empirical} and whose performance has been analyzed in~\cite{ohannessian2012rare,falahatgar2017power}; see also~\cite{teh2006hierarchical} for a justification of this method in the case of polynomially decaying class frequencies.
Specifically, absolute discounting consists in removing a constant $\eta \in (0, 1)$ to the count $N_j \geq 1$ of all classes $j=1, \dots, d$ that appear in the sample, and sharing the freed mass among
missing classes.
When $D_n \leq d/2$, absolute discounting and the estimator $\adp$ both assign a probability $\wh p_j \asymp D_n / (d n)$ to missing classes $j$; on the other hand, the corrections of these two estimators for classes $j$ with $N_j \gg 1$ differ.
Second, in an online setting where observations accrue one by one, Hutter~\cite{hutter2013sparse} proposed a sequential prediction method using a regularization $\wh \lambda_t \asymp D_t / \{ d \log [e t/D_t] \}$ after $t$ observations, and shows that it satisfies regret guarantees in sparse situations.
This method is related to the estimator $\adp$, the main difference being a logarithmic factor in the regularization parameter.
We also note that guarantees in the sequential setting necessarily feature an additional $\log n$ factor compared to the fixed-sample size setting, due to the contribution of small sample sizes.

Since $s_{n/2}^\circ \leq s_{n/2} \leq s_n$, Proposition~\ref{prop:expected-sparse-loo} matches the upper bound~\eqref{eq:falahatgar-expected} from~\cite{falahatgar2017power} for the absolute discounting estimator.
In addition, it improves this bound when $s_{n/2}^\circ \ll s_n$.

\begin{remark}[Comparison between $s_n$ and $s_n^\circ$]
  \label{rem:example-distributions}
  In order to quantitatively appreciate similarities and differences between the two effective sample sizes, it helps to consider the following stylized situations.
  Let $P = (p_1, \dots, p_d)$, and assume (up to relabeling classes) that $p_1 \geq \dots \geq p_d$.
  \begin{enumerate}
  \item[(a)] \emph{Polynomial decay}: first, assume that $p_j \asymp j^{-\alpha}$ for some constant $\alpha > 1$, and that $d \gg
    n^{1/\alpha}$.
    In this case, one has $s_n \asymp s_n^\circ \asymp n^{1/\alpha}$ up to constants that may depend on $\alpha$.
    Hence, the two effective sample size parameters are equivalent, and so are the bounds~\eqref{eq:falahatgar-expected} and~\eqref{eq:expected-sparse-loo}.
  \item[(b)] \emph{Geometric decay}: assume now that $c_1 e^{-C_1 j} \leq p_j \leq C_2 e^{-c_2 j}$ for every $j=1, \dots, d$ for some constants $c_1, c_2, C_1, C_2 > 0$.
    If $d \geq C \log n$ for some large enough constant $C$ (depending on the previous constants $c_k, C_k$), then $s_n \asymp \log n$ while $s_n^\circ \asymp 1$.
    In particular, if $d$ scales polynomially in $n$, then the bound~\eqref{eq:falahatgar-expected} scales as $\log^2 (n) /n$, while the bound~\eqref{eq:expected-sparse-loo} scales as $\log (n)/n$.
  \item[(c)] \emph{Sparse distributions}: finally, assume that $P$ is supported on $s \leq d$ classes, and that the frequencies of these classes are lower-bounded; namely, $p_j \geq c/s$ for $j = 1, \dots, s$ for some $c \in (0,1)$, while $p_j = 0$ for $s < j \leq d$.
    Then, for $n \geq 2 s \log (e s)/c$, one has $s_n = s$ while $s_n^\circ \leq s_{n/2}^\circ \leq e s e^{- c n/(2s)} \leq 1$, hence $s_n^\circ \ll s_n$ if $s \gg 1$.
    Thus if $d^{\eps} \leq s \leq d^{1-\eps}$ for some $\eps \in (0,1)$, ignoring the dependence on $c, \eps$, the bound~\eqref{eq:falahatgar-expected} scales as $s \log (d)/n$, while the bound~\eqref{eq:expected-sparse-loo} scales as $s/n$, removing a $\log d$ factor.
  \end{enumerate}
\end{remark}

We note that the bound of Proposition~\ref{prop:expected-sparse-loo} holds in expectation, while the focus of the present work is on high-probability guarantees.
High-probability extensions %
will be proved in what follows, but we have nonetheless included Proposition~\ref{prop:expected-sparse-loo} for two reasons.
First, this bound features small numerical constants.
Second and more importantly, the proof of Proposition~\ref{prop:expected-sparse-loo} is self-contained and significantly simpler than that of the high-probability bounds.
Specifically, this proof relies on a combination of exchangeability and leave-one-out arguments.
Such arguments already appear in the proof of the bound~\eqref{eq:bound-expectation-laplace} for the Laplace estimator (see~\cite{catoni1997mixture,mourtada2022logistic}), although the proof of Proposition~\ref{prop:expected-sparse-loo} requires somewhat
more careful counting and conditioning.

\subsubsection{High-probability upper bounds for sparse distributions}

We now turn to the strongest positive guarantees in this work, namely high-probability upper bounds for the estimator $\pad$ and its confidence-dependent version $\padelta$, defined below.
These results can be seen as sparsity-adaptive extensions of Theorems~\ref{thm:upper-laplace} and~\ref{thm:upper-bound-conf-dependent}, respectively.

\begin{theorem}
  \label{thm:upper-bound-sparse}
  Let $n \geq 12, d \geq 3$, $\delta \in (e^{-n/6}, e^{-2})$ and $P \in \probas_d$, and denote $s_n = s_n (P)$ and $s_n^\circ = s_n^\circ (P)$.
  The add-$\wh \lambda$ estimator $\pad = (\wh p_1, \dots, \wh p_d)$
  given by
  \begin{equation}
    \label{eq:def-add-hat-lambda}
    \wh p_j = \frac{N_j + \wh \lambda}{n + \wh \lambda d}
  \end{equation}
  with $\wh \lambda = D_n/d$, where $\wh D_n = \sum_{j=1}^d \indic{N_j \geq 1}$ is the number of distinct classes that appear in the sample $X_1, \dots, X_n$, satisfies the following guarantee: with probability at least $1-14 \delta$ under $P$, 
  \begin{equation}
    \label{eq:upper-bound-sparse}
    \kll{P}{\pad} \leq
    121000 \, \frac{s_n + s_{n/112}^\circ \log (e d/s_{n}) + \max \{ \log d, \log \log (1/\delta) \} \log (1/\delta)}{n}
    \, .
  \end{equation}
  In addition, %
  letting $\wh \lambda_\delta = \max \{ D_n, \log (1/\delta) \}/d$, the add-$\wh \lambda_\delta$ estimator $\padelta$ satisfies the following guarantee:
  with probability at least $1-14 \delta$ under $P$, 
  \begin{equation}
    \label{eq:upper-bound-sparse-confident}
    \kll{P}{\padelta} \leq
    121000 \, \frac{s_n + s_{n/112}^\circ \log ( e d / s_n ) + \log (d) \log (1/\delta)}{n}
    \, .
  \end{equation}
\end{theorem}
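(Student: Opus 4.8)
The plan is to mirror the proof of Theorem~\ref{thm:upper-laplace}: write $\kll{P}{\pad} = \sum_{j=1}^d D(p_j, \wh p_j)$, partition the classes $j \in \{1,\dots,d\}$ according to how accurately $p_j$ is estimated, and control the dangerous ``underestimated'' classes via sharp $L^p$ moment bounds rather than through the moment generating function (which is infinite for super-exponential tails). Two features are specific to the present setting. First, the smoothing level $\wh\lambda = D_n/d$ is random and correlated with all the counts $N_j$; I would neutralize this by conditioning on a concentration event for $D_n$. Second, the bound~\eqref{eq:upper-bound-sparse} must be sparsity-sensitive, which requires separating the contribution of classes that are essentially observed correctly (yielding the $s_n$ term, as in the in-expectation argument behind Proposition~\ref{prop:expected-sparse-loo} and the risk decomposition of Lemma~\ref{lem:decomp-risk}) from that of classes that are missing or severely underestimated (yielding the $s_{n/112}^\circ \log(ed/s_n)$ and deviation terms).

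\textbf{Step 1: freezing $\wh\lambda$.} Since changing a single observation $X_i$ alters $D_n$ by at most one, a self-bounding / Bernstein-type argument combined with Fact~\ref{fac:expected-support} shows that, outside an event of probability $O(\delta)$, $D_n$ concentrates around $\E_P[D_n] \asymp s_n$ and lies in a band $[\underline D, \overline D]$ with $\underline D$ comparable to $s_{n'}$ for a constant fraction $n' \asymp n/112$ of the sample and $\overline D \lesssim s_n + \log(1/\delta) \lesssim n$. On this event $\wh\lambda \in [\underline D/d, \overline D/d]$, and since for the rare classes (those with $N_j < n/d$, which are exactly the potentially troublesome ones) the map $\lambda \mapsto (N_j + \lambda)/(n + \lambda d)$ is nondecreasing, one obtains the deterministic floor $\wh p_j \ge (N_j + \underline D/d)/(n + \overline D)$ in terms of $N_j$ alone. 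This reduces the analysis to the fixed-$\lambda$ situation of Theorems~\ref{thm:upper-laplace} and~\ref{thm:upper-bound-conf-dependent} with $\lambda \asymp s_{n'}/d$, at the cost of replacing $s_n^\circ$ by $s_{n/112}^\circ$ in the final bound.

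\textbf{Step 2: the three groups.} Conditionally on the above event I would split $\sum_j D(p_j,\wh p_j)$ into: (i) classes appearing with roughly their true frequency, where $D(p_j,\wh p_j) = \wh p_j\, h(p_j/\wh p_j)$ is handled by per-class binomial concentration and summing gives $\lesssim s_n/n$ (this is the core of the risk decomposition); (ii) overestimated classes ($\wh p_j \ge p_j$), where $D(p_j,\wh p_j) \le \wh p_j - p_j$, so the at most $d - D_n$ missing classes, each with $\wh p_j = \wh\lambda/(n + \wh\lambda d) \le D_n/(nd)$, contribute in total $\le D_n/n \lesssim s_n/n$; (iii) the remaining ``bad'' classes --- missing (or drastically underestimated) while still having $\wh p_j < p_j$, which forces $p_j > \wh\lambda/(n + \wh\lambda d) \asymp s_{n'}/(nd) =: \beta$ --- where $D(p_j,\wh p_j) \le p_j \log(p_j/\beta)$. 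For group (iii) I would dyadically split according to the size of $p_j$: classes with $p_j \le 1/n$ have total mass $\le s_n^\circ/n$ \emph{deterministically} and log-factor $\le \log(1/(n\beta)) \lesssim \log(ed/s_n)$, contributing $\lesssim s_{n/112}^\circ \log(ed/s_n)/n$; classes with $p_j > 1/n$ are missing only with probability $\le e^{-np_j}$, so Theorem~\ref{thm:deviation-missing-mass} (applied at scale $n' \asymp n/112$) bounds their total mass by $\lesssim s_{n/112}^\circ/n + \log(1/\delta)/n$, while the $L^p$-moment computation of Theorem~\ref{thm:upper-laplace} --- treating $N_j$ as essentially $\poissondist(np_j)$, bounding the moments of each bad-class contribution, summing over $j$, and optimizing the Markov exponent --- produces the super-exponential $\max\{\log d, \log\log(1/\delta)\}\log(1/\delta)/n$ term. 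A union bound over the $O(1)$ events yields~\eqref{eq:upper-bound-sparse}. For $\padelta$, replacing $\wh\lambda$ by $\max\{D_n,\log(1/\delta)\}/d$ enforces $\wh\lambda d \ge \log(1/\delta)$, which --- exactly as in the passage from Theorem~\ref{thm:upper-laplace} to Theorem~\ref{thm:upper-bound-conf-dependent} --- turns $\log\log(1/\delta)$ into $\log d$ in the moment bound and gives~\eqref{eq:upper-bound-sparse-confident}.

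\textbf{Main obstacle.} The delicate point is Step~2(iii): obtaining the \emph{sharp} logarithmic factor $\log(ed/s_n)$ on the $s_{n/112}^\circ$ term --- rather than the easier but weaker $\log(ed/s_n^\circ)$ or $\log(nd/s_n)$ --- while simultaneously keeping the deviation term decoupled and of the claimed super-exponential shape. This requires carefully interleaving the deterministic floor $\wh p_j \ge \beta$, the deterministic mass bound for the classes with $p_j \le 1/n$, the high-probability missing-mass bound of Theorem~\ref{thm:deviation-missing-mass} for the classes with $p_j > 1/n$, a concavity/counting estimate over the at most $d$ missing classes, and the heavy-tailed moment method --- all under the conditioning on $D_n$ --- and propagating the (large) numerical constants cleanly through these nested reductions (including the subsampling that generates the factor $1/112$) is where most of the work lies.
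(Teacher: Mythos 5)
Your proposal follows essentially the same route as the paper: the risk decomposition of Lemma~\ref{lem:decomp-risk} into a Hellinger term (giving $s_n/n$ via Lemma~\ref{lem:hellinger-empirical}), a bias term controlled by concentration of $D_n$, and an underestimated-classes residual whose logarithmic factor is split into a $\log(np_j)$ part (handled by the Lata{\l}a-moment machinery of Lemma~\ref{lem:tail-residual}, yielding the super-exponential deviation term) and a $\log(1/\wh\lambda)=\log(d/D_n)$ part paired with the underestimated mass $U_n$ of Theorem~\ref{thm:deviation-missing-mass}; the confidence-dependent case is dispatched exactly as you say.

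One step of your sketch fails as stated and needs the paper's patch. You claim that with probability $1-O(\delta)$ the count $D_n$ lies in a band $[\ul D,\ol D]$ with $\ul D\asymp s_{n'}$; but the lower deviation $\P_P(D_n\leq 0.3\,\E_P[D_n])$ is only of order $e^{-c\,s_n}$, which is much larger than $\delta$ whenever $s_n\ll\log(1/\delta)$, so no useful deterministic floor on $\wh\lambda$ is available on a $1-O(\delta)$ event in that regime. The fix is a case split rather than a uniform band: if $\log(1/\delta)\lesssim s_n$ the lower bound $D_n\gtrsim s_n$ does hold with the required probability and $\log(d/D_n)\lesssim\log(ed/s_n)$; otherwise one falls back on the trivial bound $D_n\geq 1$, uses $s^\circ_{n/112}\leq s_n\lesssim\log(1/\delta)$, and absorbs the resulting $U_n\log d\lesssim\log(d)\log(1/\delta)/n$ contribution into the deviation term of~\eqref{eq:upper-bound-sparse}. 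Your target bound already contains the term needed to absorb this second case, so the gap is repairable, but the two-sided concentration event you condition on in Step~1 cannot be taken for granted.
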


The proof of Theorem~\ref{thm:upper-bound-sparse} is provided in Section~\ref{sec:proof-upper-sparse-whp}.
This proof relies on a combination of lemmata already used in the analysis of the Laplace estimator, together with additional results on the ``underestimated mass'' described in Section~\ref{sec:missing-mass} below.
We now comment on Theorem~\ref{thm:upper-bound-sparse}.

First, since $s_{n/112}^\circ \leq s_{n/112} \leq s_n \leq d$, using that $s \mapsto s \log (e d/s)$ is increasing on $[0, d]$ and recalling inequality~\eqref{eq:comparison-conf-dep-indep},
Theorem~\ref{thm:upper-bound-sparse} shows that the confidence-independent estimator $\adp$ achieves the best possible uniform high-probability guarantee over $\probas_d$ among confidence-independent estimators, while the confidence-dependent estimator $\adpdelta$ achieves the minimax high-probability guarantee over $\probas_d$.
These results match the lower bounds of previous sections and the guarantees for the Laplace estimator and its confident-dependent modification.

Second, for any $s \in \{2, \dots, d\}$ and $P \in \probas_{s,d}$ one has $s_{n/112}^\circ \leq s_n \leq s$; hence, Theorem~\ref{thm:upper-bound-sparse} implies that the estimator $\adpdelta$ achieves an upper bound of order $\{ s \log (e d/s) + \log (d) \log(1/\delta) \}/n$ over $\probas_{s,d}$.
This matches the minimax lower bound over $\probas_{s,d}$ of Corollary~\ref{cor:minimax-sparse-lower}, and thus completes the characterization of the minimax high-probability rate over this class.
In addition, the estimator $\adpdelta$ achieves this rate simultaneously for all sparsity levels $s \in \{ 2, \dots, d \}$.

Finally, the guarantees of Theorem~\ref{thm:upper-bound-sparse} mainly dependent on the intrinsic and distribution-dependent parameters $s = s_n (P)$ and $s^\circ = s_{n/112}^\circ (P)$, with only a (necessary) logarithmic dependence on the total number $d$ of classes.
They may therefore be viewed as essentially ``non-parametric''.
Note that since $s^\circ \leq s$, the term $s^\circ \log (e d/s)$ in~\eqref{eq:upper-bound-sparse} and~\eqref{eq:upper-bound-sparse-confident} may be bounded by $s^\circ \log (e d/s^\circ)$; this matches the corresponding term in the in-expectation bound of Proposition~\ref{prop:expected-sparse-loo}, up to constant factors in the error bound and sample size.
In fact, the complexity term in Theorem~\ref{thm:upper-bound-sparse} may appear to be of a smaller order of magnitude than that of Proposition~\ref{prop:expected-sparse-loo} when $s^\circ \ll s$.
This is not the case, since one also has
$s + s^\circ \log (e d/s^\circ) \lesssim s + s^\circ \log (e d/s)$
(indeed, either $s^\circ \log (e d/s^\circ) \leq s$ and the bound holds, or otherwise $\frac{e d/s^\circ}{\log (e d/s^\circ)} < ed/s$ and thus $\log (e d/s^\circ) \lesssim \log (e d/s)$ hence the bound also holds).

\paragraph{Consequences for the sample complexity.}

While we have stated our results in terms of error rates, one may also formulate them in terms of the \emph{sample complexity}, which is the sample size $n$ required to achieve an error in relative entropy of $\eps$ with probability at least $1-\delta$.

For instance, the results of Section~\ref{sec:confidence-minimax} (Theorems~\ref{thm:upper-bound-conf-dependent} and~\ref{thm:lower-bound-minimax}) imply that for $d \geq 5000$, $0 < \delta < e^{-2}$ and $0 < \eps < 1 $, a sample size of
\begin{equation*}
  n \gtrsim
  \frac{d + \log (d) \log (1/\delta)}{\eps}
\end{equation*}
is both necessary and sufficient for the existence of an estimator $\wh P_n = \Phi (X_1, \dots, X_n)$ such that $\P_P (\kll{P}{\wh P_n} \leq \eps) \geq 1-\delta$ for every $P \in \probas_d$.

It is also instructive to formulate the distribution-dependent upper bound~\eqref{eq:upper-bound-sparse-confident} for the adaptive and confidence-dependent estimator $\adpdelta$ in this way.
For any dimension $d \geq 2$, accuracy $\eps \in (0, 1)$, failure probability $\delta \in (0, e^{-2})$ and distribution $P \in \probas_d$, define the following ``critical sample sizes'':
\begin{align}
  \label{eq:critical-sample-observed}
  \nobs (P, \eps)
  &= \inf \bigg\{ n \geq 1 : \frac{s_n (P)}{n} \leq \eps \bigg\}
    \, ,
  \\
  \nmiss (P, d, \eps)
  &= \inf \bigg\{ n \geq 1
    : \frac{s_n^\circ (P)}{n}
    \leq \frac{\eps}{\log ( e d / \eps n)}
    \bigg\}
    \, ,
    \label{eq:critical-sample-missing} \\
  \ndev (d, \eps, \delta)
  &= \frac{\log (d) \log (1/\delta)}{\eps}
    \, .
    \label{eq:critical-sample-dev}
\end{align}
Note that for $n \geq \nobs (P, \eps)$ (resp.~$n \geq \nmiss (P, d, \eps)$), one has $s_n (P)/n \leq \eps$ (resp.~$s_n^\circ (P)/n \leq \eps/\log (e d/\eps n)$) since $s_n (P)/n$ (resp.~$s_n^\circ (P)/n$ and $\log (e d/\eps n)$) is non-increasing in $n$, as noted above.
The sample sizes~\eqref{eq:critical-sample-observed} and~\eqref{eq:critical-sample-dev} serve to control the first and third term in the error bound~\eqref{eq:upper-bound-sparse-confident}.
In addition, up to constant factors, the second sample size allows one to control the second term in the bound~\eqref{eq:upper-bound-sparse-confident}.
Indeed, bounding $s_n \geq s_{n/112}^\circ$, the second term is at most of order $ s_{n/112}^\circ \log (e d/s_{n/112}^\circ) / n$.
In addition, up to replacing $n$ by $112 n$, for this term to be bounded by $C \eps$ for some absolute constant $C > 1$, it suffices that (recalling that $s_n^\circ \leq d$)
\begin{equation*}
  \frac{s_n^\circ \log (e d/s_n^\circ)}{n}
  \lesssim \eps
  \quad \text{i.e.} \quad
  \frac{d/s_n^\circ}{\log (e d/s_n^\circ)}
  \gtrsim \frac{d}{\eps n}
  \quad \text{i.e.} \quad
  \frac{d}{s_n^\circ}
  \gtrsim \frac{d}{\eps n} \log \Big( \frac{e d}{\eps n} \Big)
  \, ,
\end{equation*}
which amounts to the condition in~\eqref{eq:critical-sample-missing}.

The upper bound~\eqref{eq:upper-bound-sparse-confident} from Theorem~\ref{thm:upper-bound-sparse}, together with the previous discussion, implies the following:
for some absolute constants
$c_1, c_2 \geq 1$ (one may take $c_1 = 112$), %
if
\begin{equation}
  \label{eq:critical-sample-size-total}
  n
  \geq c_1 \max \big\{ \nobs (P, \eps), \nmiss (P, d, \eps), \ndev (d, \eps, \delta) \big\}
  \, ,
\end{equation}
then the estimator $\adpdelta$ satisfies
\begin{equation}
  \label{eq:high-probability-bound}
  \P_P \big( \kll{P}{\adpdelta} \geq c_2 \, \eps \big)
  \leq \delta
  \, .
\end{equation}

\section{High-probability bound on the missing mass}
\label{sec:missing-mass}

In this section, we present a result that plays an important role in the proof of Theorem~\ref{thm:upper-bound-sparse}, namely in the high-probability analysis of adaptive estimators, but which may also be of independent interest.

Specifically, the following quantities appear naturally in our analysis and in other contexts:

\begin{definition}[Missing and underestimated masses]
  \label{def:missing-underestimated-mass}
  Given a distribution $P$ and an \iid sample $X_1, \dots, X_n$ from $P$,
  The \emph{missing mass} $M_n = \sum_{j=1}^d p_j \indic{N_j = 0}$ is the total mass under $P$ of
  classes that do not appear in the sample.
  
  We also define the \emph{underestimated mass} $U_n = \sum_{j=1}^d p_j \indic{N_j \leq np_j/4}$ as the total mass of classes whose empirical frequency underestimates their true probability by at least a factor of $4$.
\end{definition}

The missing and underestimated masses depend on both the sample $X_1, \dots, X_n$ and on the true distribution; as such, they are not ``observable'' from the data.

It is clear from the definition that $M_n \leq U_n$.
As it happens, the quantity that plays a role in our analysis is the underestimated mass $U_n$, and thus our main goal is to provide a high-probability upper bound on $U_n$.
On the other hand, the missing mass $M_n$ is a classical quantity, which has been studied in Statistics since the work of Good~\cite{good1953population}, and our bound on $U_n$ will also yield a new high-probability upper bound on $M_n$.

Specifically, our aim is to address the following question:
\begin{quote}
  For a given $\eps > 0$ and $\delta \in (0, e^{-1})$ and a distribution $P \in \probas_d$, how large must the sample size $n$ be to ensure that the underestimated mass $U_n$ (and thus the missing mass $M_n$) is smaller than $\eps$ with probability at least $1-\delta$?
\end{quote}

Before stating our main result, we first clarify that the behavior of the expected missing mass is essentially governed by the parameter $s_n^\circ (P)$ (Definition~\ref{def:effective-missing-support}), a fact we alluded to in Section~\ref{sec:adaptation-support}.

\begin{lemma}
  \label{lem:expected-missing-mass}
  For $P \in \probas_d$, 
  let $s_n^\bullet (P) = \sum_{j=1}^d (np_j) e^{-np_j}$ for $n \in (0, +\infty)$, and recall the definitions of
  $s_n^\circ (P)$ and $M_n$ (Definitions~\ref{def:effective-missing-support} and~\ref{def:missing-underestimated-mass}).
  For any integer $n \geq 1$, the following holds:
  \begin{enumerate}
  \item $\E_P [M_n] = \sum_{j=1}^d p_j (1-p_j)^n$;
  \item $s_{2n}^\bullet (P)/(2n) - e^{-0.3 n} \leq \E_P [M_n] \leq s_n^\bullet (P)/n$;
  \item $e^{-1} s_n^\circ (P) \leq s_n^{\bullet} (P) \leq 2 s_{n/2}^\circ (P)$.
  \item For $n \geq 3$, letting $s_n^\diamond (P) = \E_P [D_{2n} - D_n]$, one has $s_{2n}^\circ (P)/12 - e^{-n} \leq s_n^\diamond (P) \leq s_n^\circ (P)$.
  \end{enumerate}
\end{lemma}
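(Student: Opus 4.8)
The plan is to establish the four items more or less independently, starting from elementary identities and then comparing sums termwise. For item~1, I would simply observe that $\E_P[M_n] = \sum_j p_j \, \P_P(N_j = 0)$, and that $N_j \sim \mathrm{Binomial}(n, p_j)$, so $\P_P(N_j = 0) = (1-p_j)^n$; this is immediate from linearity of expectation and independence. Item~1 then feeds directly into item~2 and item~4 (via $\E_P[D_{2n} - D_n] = \sum_j \{(1-p_j)^n - (1-p_j)^{2n}\}$, again by linearity applied to the indicators $\indic{N_j \geq 1}$).

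For item~2, the plan is to compare $(1-p_j)^n$ with $e^{-np_j}$ termwise, using the sandwich $e^{-np_j - np_j^2} \leq (1-p_j)^n \leq e^{-np_j}$ valid for $p_j \in [0,1/2]$ (the upper inequality $1-x \leq e^{-x}$ holds for all $x$). Summing the upper bound yields $\E_P[M_n] \leq \sum_j p_j e^{-np_j} = s_n^\bullet(P)/n$ directly. For the lower bound one passes to $s_{2n}^\bullet/(2n) = \sum_j p_j e^{-2np_j}$; here I would split the sum according to whether $p_j \leq 1/2$ or $p_j > 1/2$. On the first part, $(1-p_j)^n \geq e^{-2np_j}$ requires care — the clean route is to use $(1-x) \geq e^{-2x}$ for $x \in [0, 1/2]$ (a standard elementary inequality, since $\log(1-x) + 2x$ vanishes at $0$ with nonnegative derivative $2 - 1/(1-x) \geq 0$ on $[0,1/2]$), giving $(1-p_j)^n \geq e^{-2np_j}$ for each such $j$. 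On the second part there is at most one index $j$ with $p_j > 1/2$, and its contribution $p_j e^{-2np_j} \leq e^{-2n \cdot 1/2} = e^{-n} \leq e^{-0.3n}$ — which is where the subtracted error term $e^{-0.3n}$ comes from (the constant $0.3$ giving slack). Collecting the pieces yields $\E_P[M_n] \geq s_{2n}^\bullet(P)/(2n) - e^{-0.3n}$.

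For item~3, the plan is again termwise comparison between $(np_j) e^{-np_j}$ and $\min(e^{1-np_j}, np_j)$. Writing $t = np_j \geq 0$: on one hand $t e^{-t} \leq e^{-t} \cdot \sup_{t \geq 0}(t e^{-t} e^{t}) $ — more simply, $t e^{-t} \leq e^{1-t}$ iff $t \leq e$, and for $t \geq e$ one has $t e^{-t} \leq e \cdot e^{-t} \leq $ hmm; the cleaner bound is $t e^{-t} \leq e^{-1} \cdot e^{1-t} = e^{-t}$ — let me instead argue $t e^{-t} \leq \min(e^{1-t}, t)$: indeed $te^{-t} \leq t$ always, and $t e^{-t} \leq e^{1-t} \iff t \leq e$, while for $t > e$, $t e^{-t} \leq e^{1-t}$ also holds since $t e^{-1} \le e^{t-1}$ — wait, one should just check $\sup_{t>0} t e^{-t}/e^{1-t} = e^{-1}\sup_t t e^{-1} $ is unbounded, so this is false; the correct comparison is $t e^{-t} \le e^{-1}\min(e^{1-t}, t)$ is also false for large $t$ against the second argument. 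The honest route: for the lower bound $s_n^\bullet \geq e^{-1} s_n^\circ$, compare $t e^{-t}$ with $\min(e^{1-t}, t)$: when $t \leq 1$, $\min = t$ and $te^{-t} \geq t e^{-1}$; when $t \geq 1$, $\min = e^{1-t}$ and $t e^{-t} \geq e^{1-t} \cdot e^{-1}$ iff $t \geq 1$, true. So $s_n^\bullet \geq e^{-1} s_n^\circ$. For the upper bound $s_n^\bullet \leq 2 s_{n/2}^\circ$, with $u = np_j/2$ I compare $2u e^{-2u}$ against $2\min(e^{1-u}, u)$, i.e. $u e^{-2u} \leq \min(e^{1-u}, u)$: the bound against $u$ is clear, and against $e^{1-u}$ it reads $u e^{-2u} \leq e^{1-u}$, i.e. $u e^{-u} \leq e$, which holds since $\sup u e^{-u} = e^{-1} \leq e$. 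Summing over $j$ gives item~3.

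For item~4, starting from $s_n^\diamond(P) = \sum_j \{(1-p_j)^n - (1-p_j)^{2n}\} = \sum_j (1-p_j)^n \{1 - (1-p_j)^n\}$, the upper bound follows from $1 - (1-p_j)^n \leq 1$ together with $(1-p_j)^n \leq e^{-np_j}$ and then the termwise bound $e^{-np_j} \leq \min(e^{1-np_j}, np_j)$ used in reverse — actually I would bound $\sum_j (1-p_j)^n \leq \sum_j \min(e^{-np_j}, 1)$ and check termwise that $\min(e^{-t}, 1) \leq \min(e^{1-t}, t)$ for $t \geq 0$ (true: against $e^{1-t}$ clear since $e^{-t} \leq e^{1-t}$, and against $t$, for $t \leq 1$ one has $e^{-t} \leq 1$ but need $\leq t$ — false for small $t$). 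So instead: $(1-p_j)^n\{1-(1-p_j)^n\} \leq (1-p_j)^n$; then I claim $\sum_j (1-p_j)^n \leq s_n^\circ(P)$ is \emph{not} quite right either. The safe statement is $s_n^\diamond \le \E_P[M_n] \le s_n^\bullet/n$, and then item~3 gives $s_n^\bullet/n \le 2 s_{n/2}^\circ/n$ — but the claimed bound is $s_n^\diamond \leq s_n^\circ$, so I would instead note $(1-p_j)^n\{1 - (1-p_j)^n\}$: when $p_j \geq 1/n$ this is $\leq (1-p_j)^n \leq e^{-np_j} \leq e^{1-np_j}$ (since $e^{-1} \le 1$); when $p_j < 1/n$ this is $\leq (1-p_j)^n \cdot n p_j \leq np_j$ (using $1 - (1-p_j)^n \le np_j$ by Bernoulli/union bound). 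Summing gives $s_n^\diamond \le s_n^\circ$. For the lower bound I would restrict the sum to indices with $p_j \leq 1/n$, on which $1 - (1-p_j)^n \geq 1 - e^{-np_j} \geq (1-e^{-1})np_j \geq np_j/2$ by convexity, and $(1-p_j)^n \geq e^{-2np_j}$ as in item~2, so each such term is $\geq (np_j/2) e^{-2np_j} \geq \tfrac14 \cdot 2np_j e^{-2np_j}$; the contribution of indices with $p_j > 1/n$ to $s_{2n}^\circ$ is $\sum_{p_j > 1/n} e^{1-2np_j} \leq d \cdot e^{1-2} \le$ — better, it is $\le \sum_{p_j>1/n} e^{1-2np_j}$ and there are at most $n$ such indices so this is $\leq n e^{-1}$, not small; so I would instead relate $s_{2n}^\circ$ to $\E_P[M_{2n}]$ via items~2--3 and then use $s_n^\diamond = \E_P[M_n] - \E_P[M_{2n}] \geq \E_P[M_n] - s_n^\bullet/(2n)$... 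The cleanest path, and the one I would actually write, is: $s_n^\diamond = \E_P[M_n - M_{2n}]$, and using item~2 twice, $\E_P[M_n] \geq s_{2n}^\bullet/(2n) - e^{-0.3n}$ while $\E_P[M_{2n}] \leq s_{2n}^\bullet/(2n)$ — that gives a negative bound, useless. So the genuine argument must compare $s_n^\diamond$ directly to $s_{2n}^\circ$ termwise as I did for the upper bound but in the other direction; the factor $1/12$ and the $-e^{-n}$ correction absorb (i) the constant $(1-e^{-1})/2 \cdot e^{-\text{something}}$ losses on indices with $p_j \approx 1/(2n)$, and (ii) the single possible large-$p_j$ index. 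The main obstacle throughout is item~4's lower bound: getting the termwise inequality $(1-p_j)^n\{1-(1-p_j)^n\} \geq c \min(e^{1-2np_j}, np_j) - (\text{single-index correction})$ to hold uniformly in $p_j \in [0,1]$ with an explicit constant $c \geq 1/12$ requires splitting the range of $t = np_j$ into a few regimes ($t \leq 1/2$, $1/2 \leq t \leq$ const, $t$ large) and checking each; the $e^{-n}$ term handles the regime where $e^{1-2np_j}$ is not itself exponentially small only through the lone index $p_j > 1/2$, exactly as in item~2.
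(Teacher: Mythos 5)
Items 1 and 3, and the upper bounds in items 2 and 4, are correct and essentially identical to the paper's argument (termwise comparison of $(1-p_j)^n$ with $e^{-np_j}$, and of $te^{-t}$ with $\min(e^{1-t},t)$ after rescaling). For the \emph{lower} bound in item 2 you take a genuinely different and more elementary route: the paper Poissonizes the sample size, computes $\E[M_{N(2n)}]=s_{2n}^\bullet(P)/(2n)$ exactly, and pays $\P(N(2n)<n)\leq e^{-0.3n}$ via a Poisson tail bound; you instead use $1-x\geq e^{-2x}$ on $[0,1/2]$ termwise and absorb the at most one index with $p_j>1/2$ into the error term. Your version is shorter, avoids the Poissonization machinery, and in fact yields the stronger correction $-e^{-n}$ in place of $-e^{-0.3n}$.

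The one genuine gap is the lower bound of item 4, which you describe as ``the main obstacle'' and then do not carry out: you name the required termwise inequality
$(1-p_j)^n\{1-(1-p_j)^n\}\geq \tfrac{1}{12}\min(2np_j,\,e^{1-2np_j})-(\text{single-index correction})$
but never verify it, and your write-up for this item consists largely of discarded attempts. The missing casework is short but not free, and the constants are tight enough that it must actually be checked. For $np_j\leq 1$: $1-(1-p_j)^n\geq(1-e^{-1})np_j$ and $(1-p_j)^n\geq(1-1/n)^n\geq(2/3)^3$ for $n\geq 3$, so the term is at least $(2/3)^3(1-e^{-1})np_j\geq np_j/6=\min(2np_j,e^{1-2np_j})/12$ (note $(2/3)^3(1-e^{-1})\approx 0.187$, barely above $1/6$ --- this is where the hypothesis $n\geq 3$ enters, which your sketch never uses). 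For $np_j>1$: $1-(1-p_j)^n\geq 1-e^{-1}$, and $1-p>e^{-2p}$ only on $[0,3/4]$, so $(1-p_j)^n\geq e^{-2np_j}-e^{-n}\indic{p_j>3/4}$; then $(1-e^{-1})e^{-2np_j}\geq e^{1-2np_j}/12$ holds because $1-e^{-1}\geq e/12$, and there is at most one index with $p_j>3/4$, producing the $-e^{-n}$ term. Your instinct that the argument mirrors item 2's single-index correction is right, but as written the proof of this part does not exist.
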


In short, the quantities $n \, \E_P [M_n], s_n^\bullet (P), s_n^\circ (P)$
are essentially equivalent,
up to constant factors in their values and in the sample size $n$, and possibly additive exponentially small terms.

\begin{proof}[Proof of Lemma~\ref{lem:expected-missing-mass}]
  For the first identity, write $\E_P [M_n] = \sum_{j=1}^d p_j \P_P(N_j = 0) = \sum_{j=1}^d p_j (1-p_j)^n$.
  The upper bound $\E_P [M_n] \leq s_n^\bullet (P)/n$ comes from the fact that $(1-p_j)^n \leq e^{-np_j}$.
  
  For the lower bound, given $\lambda \in \R^+$ let $N (\lambda) \sim \poissondist (\lambda)$, and set $N_j^{(\lambda)} = \sum_{1 \leq i \leq N(\lambda)} \indic{X_i = j}$ and $M^{(\lambda)} = M_{N (\lambda)} = \sum_{j=1}^d p_j \indic{N_j^{(\lambda)} = 0}$.
  It is a classical fact (\eg, this follows from~\cite[Theorem~5.6 p.~100]{mitzenmacher2017probability} and the convolution property of Poisson distribution) that $N_j^{(\lambda)} \sim \poissondist (\lambda p_j)$.
  Thus $\E [M^{(\lambda)}] = \sum_{j=1}^d p_j \P (N_j^{(\lambda)} = 0) = \sum_{j=1}^d p_j e^{-\lambda p_j} = s_\lambda^{\bullet} (P)/\lambda$.
  Clearly, if $N (2n) \geq n$ then $M^{(2n)} \leq M_n$.
  On the other hand, if $N (2n) < n$ we may write $M_n \geq 0 \geq M^{(2n)} - 1$.
  Hence, 
  \begin{equation*}
    n \, \E_P [M_n]
    \geq n\, \E_P [M^{(2n)} - \indic{N(2n) < n}]
    = n \times \frac{s_{2n}^\bullet (P)}{2n} - n \P (N(2n) < n)
    \, ,
  \end{equation*}
  and the desired claim follows from the bound $\P (N(2n) < n) \leq \exp (-D(n, 2n))
  \leq e^{-(1-\log 2)n} %
  \leq e^{-0.3 n}$ by Lemma~\ref{lem:poisson-tail}.

  For the third point, we apply the bounds $(np_j) e^{-np_j} \leq np_j$ and $(np_j) e^{-np_j} = 2 (np_j/2) e^{-np_j} \leq 2 e^{-1+np_j/2} e^{-np_j} \leq e^{1-n p_j/2}$ to get the upper bound.
  To get the lower bound, we use that $np_j \geq 1$ when $p_j \geq 1/n$, and $e^{-np_j} \geq e^{-1}$ when $p_j < 1/n$.

  Finally, for the fourth point, write
  \begin{equation*}
    s_n^\diamond (P)
    = \E_P [D_{2n} - D_n]
    = \sum_{j=1}^d \big\{ (1-p_j)^n - (1 - p_j)^{2n} \big\}
    = \sum_{j=1}^d (1-p_j)^n \big\{ 1 - (1 - p_j)^{n} \big\}
    \, .
  \end{equation*}
  The upper bound $s_n^\diamond (P) \leq s_n^\circ (P)$ follows from the bounds $(1-p_j)^n \leq e^{-np_j}$ and $1 - (1-p_j)^n \leq \min (1, np_j)$.
  For the lower bound, consider first indices $j$ for which $np_j \leq 1$.
  In this case, one has $1-(1-p_j)^n \geq (1-e^{-1}) n p_j$ (see the proof of Fact~\ref{fac:expected-support}) while $(1-p_j)^n \geq (1-1/n)^n \geq (2/3)^3$, thus $(1-p_j)^n \{ 1 - (1 - p_j)^{n} \} \geq (2/3)^3 (1-e^{-1}) n p_j \geq n p_j/6 \geq \min (2 n p_j, e^{1-2np_j}) / 12$.
  When $np_j > 1$, one has $1 - (1-p_j)^n > 1 - (1-1/n)^n \geq 1- e^{-1}$, while $(1-p_j)^n \geq e^{-2np_j} - e^{-n} \indic{p_j >3/4}$ as $1-p > e^{-2p}$ for $p \in [0, 3/4]$; thus
  $    (1-p_j)^n\{ 1 - (1 - p_j)^{n} \} \geq (1-e^{-1}) e^{-2np_j} - e^{-n} \indic{p_j > 3/4} \geq {\min (2np_j, e^{1-2np_j})}/{12} - e^{-n} \indic{p_j > 3/4}$.
  Combining the previous inequalities %
  and using that there is at most one index $j$ such that $p_j > 3/4$ concludes the proof.
\end{proof}

In particular, the inequalities of Lemma~\ref{lem:expected-missing-mass} imply that:
\begin{equation}
  \label{eq:link-missing-s-n-zero}
  \frac{s_{2n}^\circ (P)}{2 e n} - e^{-0.3 n}
  \leq \E_P [M_n]
  \leq \frac{2 s_{n/2}^\circ (P)}{n}
  \, .
\end{equation}
(In addition, the $e^{-0.3 n}$ term can often be ignored: for instance, it is dominated by the first term for large $n$ whenever there is a class $j$ with $p_j \leq 0.1$, which occurs whenever at least $10$ classes have nonzero probability.)
Hence, ignoring the $e^{-0.3 n}$ term, the
behavior of the typical value $\E_P [M_n] $ of the missing mass is essentially equivalent to that of $s_n^\circ (P)/n$.

Theorem~\ref{thm:deviation-missing-mass} below provides a deviation upper bound on the underestimated and missing masses, involving the same complexity parameter as the in-expectation estimates~\eqref{eq:link-missing-s-n-zero}:

\begin{theorem}
  \label{thm:deviation-missing-mass}
  For any $n, d \geq 2$, $\delta\in (0,e^{-1})$ and any $P \in \probas_d$, with probability at least $1- 8\delta$ under $P$ one has
  \begin{equation}
    \label{eq:deviation-missing-mass}
    M_n
    \leq U_n
    \leq 
    \frac{336 \, s_{n/112}^\circ (P) + 2500 e \log (1/\delta)}{n}
    \, .
  \end{equation}
\end{theorem}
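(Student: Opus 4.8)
The plan is to decompose $U_n$ into contributions from classes of different probability ranges, and to control the dominant contribution by a Poissonization argument combined with a concentration inequality for sums of independent (negatively associated) indicator weights. First I would write $U_n = \sum_{j=1}^d p_j \indic{N_j \le n p_j/4}$ and observe that the only classes that can contribute are those with $p_j$ not too large relative to $1/n$: if $p_j \ge c_0/n$ for a suitable constant, a Chernoff bound on the binomial $N_j \sim \mathrm{Bin}(n,p_j)$ gives $\P(N_j \le np_j/4) \le e^{-c_1 n p_j}$, so such a class contributes $p_j$ to $U_n$ only on an event of exponentially small probability; summing these tail bounds (or, better, bounding the expected contribution and then its deviation) controls the ``heavy'' classes. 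For the ``light'' classes with $p_j < c_0/n$, one always has $p_j \indic{N_j \le n p_j/4} \le p_j$, and here the relevant fact is that $\sum_{j : p_j < c_0/n} n p_j$ is exactly (up to constants) one of the two pieces making up $s_n^\circ(P)$; so the light classes are already accounted for, deterministically, by the $s_{n/112}^\circ$ term — provided we are willing to lose a constant factor in the sample size (which is why the statement involves $s_{n/112}^\circ$ rather than $s_n^\circ$).

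The crux is therefore the intermediate range $c_0/n \lesssim p_j \lesssim C_0/n$, i.e.\ classes with $p_j \asymp 1/n$, whose total count is (up to constants and a rescaling of $n$) comparable to $s_n^\circ(P)$, and each of which has a constant-order probability of being underestimated. Here I would Poissonize: replace the fixed sample size $n$ by an independent $N(\lambda) \sim \poissondist(\lambda)$ with $\lambda \asymp n$, so that the counts $N_j^{(\lambda)} \sim \poissondist(\lambda p_j)$ become \emph{independent} across $j$ (as in the proof of Lemma~\ref{lem:expected-missing-mass}), at the cost of an $e^{-cn}$ additive error from the event $\{N(\lambda)$ far from $\lambda\}$. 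Then $U_n^{(\lambda)} = \sum_j p_j \indic{N_j^{(\lambda)} \le \lambda p_j/4}$ is a sum of independent terms, each bounded by $p_j \le C_0/\lambda$, with $\E[U_n^{(\lambda)}] \lesssim s_\lambda^\circ(P)/\lambda$ (using the $p_j e^{-cp_j\lambda} \le \min(e^{1-c p_j\lambda}, p_j)$ type bounds from Lemma~\ref{lem:expected-missing-mass}). A Bernstein / bounded-difference inequality — using that each summand is at most $O(1/\lambda)$ and the variance is at most $O((1/\lambda)\cdot \E[U_n^{(\lambda)}])$ — then yields, with probability at least $1-\delta$,
\begin{equation*}
  U_n^{(\lambda)}
  \lesssim \frac{s_\lambda^\circ(P)}{\lambda} + \frac{\log(1/\delta)}{\lambda}
  \, .
\end{equation*}
Translating back to the fixed-$n$ setting via a coupling of $U_n$ with $U_n^{(\lambda)}$ (for $\lambda$ slightly larger than $n$, so that $N(\lambda) \ge n$ with probability $1-e^{-cn}$, hence $N_j^{(\lambda)} \ge N_j$ and one can control $U_n$ in terms of $U_{N(\lambda)}$ at the price of a constant factor in the underestimation threshold), and using monotonicity of $s_n^\circ(P)/n$ in $n$ to pass from $s_\lambda^\circ$ to $s_{n/112}^\circ$, gives the claimed bound; the constants $336$, $2500e$, $112$ and the failure probability $8\delta$ come from carefully tallying the losses across the three ranges, the Poissonization error, and the threshold rescaling in the coupling.

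The main obstacle I anticipate is the coupling/depoissonization step: unlike the expectation computation in Lemma~\ref{lem:expected-missing-mass}, a high-probability transfer from the Poissonized $U_n^{(\lambda)}$ to the fixed-sample $U_n$ is delicate because $U_n$ is not monotone in $n$ in a way that is directly usable — adding observations can both rescue an underestimated class and (via the threshold $np_j/4$ growing) newly flag one. The fix is to be generous with constants: choose $\lambda = (1+\eta)n$ so that with probability $1-e^{-cn}$ one has $n \le N(\lambda) \le \lambda$, and then bound $\indic{N_j \le np_j/4} \le \indic{N_j^{(\lambda)} \le \lambda p_j/4}$ on this event (since $N_j \le N_j^{(\lambda)}$ and $np_j \le \lambda p_j$), which is why the final constant $112$ appears instead of something closer to $1$. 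Getting the variance proxy right in the Bernstein step (so that the deviation term is clean $\log(1/\delta)/n$ rather than $\sqrt{s_n^\circ \log(1/\delta)}/n$) requires noting that each summand $p_j$ is itself of order $1/n$, so the ``sub-exponential'' scale and the ``bounded'' scale coincide up to constants, collapsing the Bernstein bound to the stated additive form.
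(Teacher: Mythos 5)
Your outline gets several ingredients right (the deterministic treatment of classes with $p_j < 1/n$, Poissonization to gain independence, and a Bernstein-type argument for classes with $p_j \asymp 1/n$), but it has a genuine gap exactly where the paper's proof does its real work: the classes with $p_j \gg 1/n$. You propose to dismiss these ``heavy'' classes by summing the individual tail bounds $\P(N_j \leq np_j/4) \leq e^{-c_1 np_j}$, or alternatively to absorb them into a single Bernstein bound. Neither works. The union bound fails because for each dyadic scale $2^k/n \leq p_j < 2^{k+1}/n$ there can be $\asymp n/2^k$ classes, each underestimated with probability $\asymp e^{-c2^k}$, so the summed failure probabilities are of order $\sum_k (n/2^k)e^{-c2^k} \gg \delta$; these scales are not rare events but contribute at the same order as the main term $s^\circ_{n}/n$. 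The global Bernstein bound fails because its linear deviation term is governed by $\max_j p_j \cdot \log(1/\delta)$, which can be as large as $\log(1/\delta)$ rather than $\log(1/\delta)/n$; your remark that ``each summand is of order $1/n$'' only holds in the intermediate range, and pushing the boundary of that range up to $p_j \lesssim \log(\cdot)/n$ so that a union bound handles the rest reintroduces a spurious logarithmic factor in the deviation term. Even a per-scale Bennett bound, which does capture the Poisson-type tail, leaves you with $\log_2 n$ scales whose deviation terms must be aggregated; a naive union bound over scales costs a $\log n$ factor. The paper's proof is organized entirely around this obstacle: it dominates the count of underestimated classes at scale $k$ by a Poisson variable $P_k$ with mean $d_k' e^{-2^k/14}$, shows (Lemma~\ref{lem:underestimated-single-scale}) that $2^k P_k \mleq 56\, d_k' e^{-2^k/56} + 28e\,E_k$ with $E_k$ exponential of a \emph{scale-independent} rate (the factor $2^k$ is cancelled by the $\log$ in the Poisson tail exponent), and then aggregates the $E_k$ jointly via gamma concentration while treating ``rarely contributing'' scales ($d_k'e^{-2^k/56} < e$) separately to avoid the residual $\log n$. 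None of this is present in, or implied by, your plan.

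A second, more local error is the direction of your depoissonization coupling. You take $\lambda > n$ so that $N_j^{(\lambda)} \geq N_j$ and claim $\indic{N_j \leq np_j/4} \leq \indic{N_j^{(\lambda)} \leq \lambda p_j/4}$; this implication is false ($N_j$ being small says nothing about the larger count $N_j^{(\lambda)}$ being small). The correct coupling goes the other way: take $\lambda = n/2$ so that $N(\lambda) \leq n$ with probability $1-e^{-cn}$, hence $\wt N_j \leq N_j$ and $\indic{N_j \leq np_j/4} \leq \indic{\wt N_j \leq np_j/4}$, the threshold $np_j/4$ still sitting a factor $2$ below the Poissonized mean $np_j/2$ so that the Chernoff bound applies. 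This is how the paper (and its Lemma~\ref{lem:tail-residual}) sets up the Poissonization.
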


The proof of Theorem~\ref{thm:deviation-missing-mass} %
is provided in Section~\ref{sec:proof-deviation-missing-mass}.
Roughly speaking, the proof %
relies on
a careful control of the contribution to $U_n$ of classes $p_j$ of each order of magnitude, followed by a combination of these %
per-scale contributions.

As we argue now, Theorem~\ref{thm:deviation-missing-mass} constitutes an
almost
optimal high-probability upper bound on the missing mass, up to constant factors in the mass and sample size.
Indeed, combining~\eqref{eq:link-missing-s-n-zero} and Theorem~\ref{thm:deviation-missing-mass} shows that
for every $\delta \in (0, e^{-1})$, with probability at least $1-\delta$,
\begin{equation}
  \label{eq:missing-mass-bernstein}
  M_n
  \leq U_n
  \leq 3 e\, \E_P [M_{n/224}] %
  + 2600 e \, \frac{\log (1/\delta)}{n}
  \, .
\end{equation}
(Specifically, from~\eqref{eq:link-missing-s-n-zero} we obtain an additive term in $3 e^{1-0.15 n} \leq 20e\log (1/\delta)/n$.)
In addition, as will be discussed below, the deviation term in $\log (1/\delta)/n$ is
necessary, except in situations where the probabilities in $P$ exhibit a significant gap; and in this case, the bound of Theorem~\ref{thm:deviation-missing-mass} admits a simple strengthening that addresses its sub-optimality.

\paragraph{Critical sample size.}

Thanks to Theorem~\ref{thm:deviation-missing-mass}, we can address the question of the critical sample size for which the missing mass is small with high probability.
Specifically, for $P \in \probas_d$ and $\eps \in (0, 1)$,
define
\begin{align}
  \label{eq:critical-missing-not}
  N^\circ (P, \eps)
  &= \inf \bigg\{ n \geq 1 : \frac{s_n^\circ (P)}{n} \leq \eps \bigg\}
    \,
    .
\end{align}
Note that $s_n^\circ (P)/n \leq \eps$ for $n \geq N^\circ (P, \eps)$ as $s_n^\circ(P)/n$ is non-increasing in $n$.
Then, it follows from Theorem~\ref{thm:deviation-missing-mass}
that
for any $\delta \in (0, e^{-1})$, if
\begin{equation}
  \label{eq:critical-missing-whp}
  n
  \geq
  2500 \max \Big\{ N^\circ (P, \eps), \frac{\log (1/\delta)}{\eps} \Big\}
  \, ,
\end{equation}
then $\P_P (M_n \geq 6 %
\eps) \leq \delta$.
Below, we show that the estimate~\eqref{eq:critical-missing-whp} is optimal in many situations, and that a simple tightening is optimal in the general case.
We finally compare our condition with those derived from previous upper bounds on the missing mass in the literature.

\paragraph{Necessity of the complexity parameter.}

We first argue that the condition $n \gtrsim N^\circ (P, \eps)$ is almost necessary for the missing mass to be small in expectation.
Indeed, the latter condition amounts to
\begin{equation}
  \label{eq:critical-expected-missing}
  n
  \geq \nexp (P, \eps)
  = \inf \Big\{ n \geq 1 : \E_P [M_n] \leq \eps \Big\}
  \, .
\end{equation}
But by inequality~\eqref{eq:link-missing-s-n-zero}, one has for some absolute constant $c > 1$:
\begin{equation}
  \label{eq:equiv-ncirc-nexp}
  N^\circ (P, c \, \eps)
  \leq c \, \nexp (P, \eps) + c \log \Big( \frac{1}{\eps} \Big)
  \, ,
\end{equation}
hence whenever $\log (1/\eps) \ll N^\circ (P, \eps)$ (which as discussed above occurs in most situations of interest)
the condition $n \gtrsim N^\circ (P, \eps)$ is necessary.
Besides, whenever the second term $\log (1/\delta)/\eps \geq \log(1/\eps)$ in~\eqref{eq:critical-missing-whp} is necessary, so is $\log (1/\eps)$ and thus $N^\circ (P, \eps)$.

In fact, it follows from inequality~\eqref{eq:missing-mass-bernstein} that $\P_P (U_n \geq 4 e \eps) \leq \delta$ whenever
\begin{equation}
  \label{eq:condition-optimal-exp}
  n \geq
  2600 \max \bigg\{ \nexp (P, \eps), \frac{\log (1/\delta)}{\eps} \bigg\}
  \, .
\end{equation}

\paragraph{%
  Deviation term and refinement.%
}

We now address the question of whether the condition $n \gtrsim \log (1/\delta)/\eps$ is necessary.
As it turns out, this is often but not always the case; however,
a simple strengthening of this result
does provide a necessary condition.
In order to state this refinement, define the set
of all sums of class probabilities in $P$:
\begin{equation}
  \label{eq:set-sums-P}
  \setsums (P)
  = \bigg\{ p_J = \sum_{j \in J} p_j : J \subset [d] \bigg\}
  \subset [0, 1]
  \, .
\end{equation}
It is clear that $U_n, M_n \in \setsums (P)$.
Hence, in order to ensure that
$M_n < \eps$,
it suffices that
$M_n < \ol \eps$,
where we let
\begin{equation}
  \label{eq:upper-epsilon}
  \ol \eps
  = \ol \eps (P, \eps)
  = \inf \big( \setsums (P) \cap [\eps, 1] \big)
  \, .
\end{equation}
(Equivalently, if $M_n \leq t$ then $M_n \leq \ul b (P, t) = \sup \big( \setsums (P) \cap [0, t] \big)$, which
strengthens
the bound of Theorem~\ref{thm:deviation-missing-mass}.)
Hence,
condition~\eqref{eq:critical-missing-whp} ensures that
$\P_P (M_n \geq 6 \eps) \leq \delta$ whenever
\begin{equation}
  \label{eq:critical-missing-strengthened}
  n
  \geq 15000 \max \Big\{ N^\circ (P, \eps), \frac{\log (1/\delta)}{\ol \eps (P, \eps)
  } \Big\}
  \, .
\end{equation}
Indeed, if $\ol \eps \leq 6 \eps$, apply~\eqref{eq:critical-missing-whp} and bound $1/\eps \leq 6/\ol \eps$.
On the other hand, if $\ol \eps > 6 \eps$, apply~\eqref{eq:critical-missing-whp} to $\ol \eps/6 > \eps$, so that with probability $1-\delta$ one has $M_n < 6 \times \ol \eps/6  = \ol \eps$, hence $M_n < \eps$, and use that $N^\circ (P, \ol \eps/6) \leq N^\circ (P, \eps)$.

We now show that the condition $n \gtrsim \log (1/\delta)/\ol \eps$ is necessary to ensure this high-probability bound, whenever $\ol \eps$ is bounded away from $1$.
For concreteness, assume that $\ol \eps \leq 3/4$, so that $1-\ol \eps > e^{-2 \ol \eps}$.
Let $J \subset [d]$ such that $p_J = \sum_{j \in J} p_j = \ol \eps$, and let $N_J = \sum_{j \in J} N_j = \sum_{i=1}^n \indic{X_i \in J}$.
Clearly,
if $N_J = 0$ then $M_n \geq p_J = \ol \eps \geq \eps$.
Hence, $\P_P (M_n \geq \eps)
\geq \P_P (N_J = 0) = (1 - p_J)^n > e^{-2\ol \eps n} \geq \delta$ whenever $n \leq \log (1/\delta)/(2\ol \eps)$.

Summarizing the previous discussion, we
obtain
up to universal constants the critical sample size after which the missing mass is small, in expectation and with high probability (the sufficient condition being deduced from~\eqref{eq:condition-optimal-exp} in the same way that~\eqref{eq:critical-missing-strengthened} is deduced from~\eqref{eq:critical-missing-whp}).

\begin{corollary}
  \label{cor:sample-complexity-missing}
  Let $\eps \in (0, 3/4)$, $\delta \in (0, e^{-1})$, $n \geq 1$ and
  $P \in \probas_d$.
  \begin{enumerate}
  \item When $\ol \eps (P, \eps) \leq 3/4$, if $\E_P [M_n] \leq \eps$ and $\P_P (M_n \geq \eps) \leq \delta$, then
    \begin{equation*}
      n \geq \frac{1}{2} \max \bigg\{ \nexp (P, \eps), \frac{\log (1/\delta)}{\ol \eps(P, \eps)} \bigg\}
      \, .
    \end{equation*}
  \item Conversely, if
    \begin{equation}
      \label{eq:sample-complexity-opt}
      n
      \geq 11000 e \max \bigg\{ \nexp (P, \eps), \frac{\log (1/\delta)}{\ol \eps (P, \eps)} \bigg\}
      \, ,
    \end{equation}
    then $\E_P [M_n] \leq \eps$ and $\P_P (M_n \geq 4 e \eps) \leq \delta$.
  \end{enumerate}  
\end{corollary}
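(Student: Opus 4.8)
The plan is to assemble Corollary~\ref{cor:sample-complexity-missing} from the ingredients already developed in this section: the pathwise monotonicity of the missing mass $n\mapsto M_n$ (hence of $n\mapsto \E_P[M_n]$) and of $n\mapsto \nexp(P,\cdot)$; the in-expectation equivalence~\eqref{eq:link-missing-s-n-zero}; the high-probability estimate~\eqref{eq:missing-mass-bernstein}, in the form of its consequence~\eqref{eq:condition-optimal-exp}; and the elementary observation that $M_n,U_n\in\setsums(P)$, so that no value of the (underestimated or) missing mass can lie in the gap $[\eps,\ol\eps(P,\eps))$. The two items are handled separately.

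\emph{Part 1 (necessity).} First I would record that $M_n$ is non-increasing in $n$ along every realization of the sample (enlarging the sample can only shrink the set of missing classes), so $\E_P[M_n]$ is non-increasing and the hypothesis $\E_P[M_n]\leq\eps$ forces $n\geq\nexp(P,\eps)$ by definition of the latter. For the deviation term I would use the single-subset argument: since $\setsums(P)$ is finite the infimum defining $\ol\eps=\ol\eps(P,\eps)$ is attained, so there is $J\subseteq[d]$ with $p_J=\sum_{j\in J}p_j=\ol\eps$; on the event $\{N_J=0\}$ one has $M_n\geq p_J=\ol\eps\geq\eps$, whence
\begin{equation*}
  \delta \geq \P_P(M_n\geq\eps) \geq \P_P(N_J=0) = (1-p_J)^n = (1-\ol\eps)^n > e^{-2\ol\eps n},
\end{equation*}
using $1-p>e^{-2p}$ for $p\in[0,3/4]$ together with the hypothesis $\ol\eps(P,\eps)\leq 3/4$. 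Rearranging gives $n\geq\log(1/\delta)/(2\ol\eps)$, and combining with $n\geq\nexp(P,\eps)$ yields $n\geq\tfrac12\max\{\nexp(P,\eps),\log(1/\delta)/\ol\eps\}$.

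\emph{Part 2 (sufficiency).} Here I would bootstrap from~\eqref{eq:condition-optimal-exp} (which gives $\P_P(U_n\geq 4e\eps)\leq\delta$ once $n\geq 2600\max\{\nexp(P,\eps),\log(1/\delta)/\eps\}$) and upgrade the deviation scale from $\eps$ to $\ol\eps$ by the same device used to pass from~\eqref{eq:critical-missing-whp} to~\eqref{eq:critical-missing-strengthened}, splitting into two cases. If $\ol\eps\leq 4e\eps$, then $1/\eps\leq 4e/\ol\eps$, so the hypothesis $n\geq 11000e\max\{\nexp(P,\eps),\log(1/\delta)/\ol\eps\}$ dominates $2600\max\{\nexp(P,\eps),\log(1/\delta)/\eps\}$ and~\eqref{eq:condition-optimal-exp} applies directly, giving $\P_P(M_n\geq 4e\eps)\leq\P_P(U_n\geq 4e\eps)\leq\delta$. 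If instead $\ol\eps>4e\eps$, set $\eps'=\ol\eps/(4e)>\eps$; then $\nexp(P,\eps')\leq\nexp(P,\eps)$ by monotonicity of $\nexp$ in its argument and $\log(1/\delta)/\eps'=4e\log(1/\delta)/\ol\eps$, so the hypothesis again implies the condition of~\eqref{eq:condition-optimal-exp} at level $\eps'$, whence $\P_P(U_n\geq\ol\eps)=\P_P(U_n\geq 4e\eps')\leq\delta$; since $U_n\in\setsums(P)$ and $\setsums(P)\cap[\eps,\ol\eps)=\varnothing$, the events $\{U_n\geq\ol\eps\}$ and $\{U_n\geq\eps\}$ coincide, so even $\P_P(M_n\geq\eps)\leq\delta$. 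In both cases $\E_P[M_n]\leq\eps$ follows from $n\geq\nexp(P,\eps)$ and monotonicity, and tracking the two constants (the $2600$ from~\eqref{eq:condition-optimal-exp} and the $4e$ from the rescaling) produces a bound of the stated form $11000e$.

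\emph{Main obstacle.} No step is technically deep; the only delicate point is the bookkeeping in the case split of Part 2---one must use carefully that $U_n$ (hence $M_n$) takes values in the discrete set $\setsums(P)$, so that controlling $U_n$ below $\ol\eps$ genuinely controls it below $\eps$, and that inflating $\eps$ to $\eps'=\ol\eps/(4e)$ does not increase $\nexp$; getting these to combine into the single clean conclusion $\P_P(M_n\geq 4e\eps)\leq\delta$ with the right constant is where most of the care goes. Everything else reduces to citing~\eqref{eq:link-missing-s-n-zero}, \eqref{eq:missing-mass-bernstein}/\eqref{eq:condition-optimal-exp}, and the pathwise monotonicity of the missing mass.
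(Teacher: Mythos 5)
Your proof is correct and follows essentially the same route as the paper: the necessity of the deviation term via the event $\{N_J=0\}$ for a subset $J$ with $p_J=\ol\eps(P,\eps)$ together with $1-p>e^{-2p}$ on $[0,3/4]$, and the sufficiency by applying~\eqref{eq:condition-optimal-exp} with the same two-case split ($\ol\eps\leq 4e\eps$ versus $\ol\eps>4e\eps$, rescaling to $\eps'=\ol\eps/(4e)$ and using that $U_n\in\setsums(P)$ has no values in $[\eps,\ol\eps)$) that the paper uses to pass from~\eqref{eq:critical-missing-whp} to~\eqref{eq:critical-missing-strengthened}. The constant bookkeeping ($2600\times 4e\leq 11000e$) and the monotonicity observations are all in order.
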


Since the optimal deviation term scales as $\log (1/\delta)/\ol \eps$, the deviation term $\log(1/\delta)/\eps$ is optimal
unless $\ol \eps \gg \eps$.
Distributions for which this occurs can be characterized as follows:

\begin{fact}
  \label{fac:gap-eps}
  Let $\eps \in (0, 1/2)$ and $\ol \eps \geq 2 \eps$.
  Let $P = (p_1, \dots, p_d) \in \probas_d$,
  with $p_1 \geq \dots \geq p_d$ without loss of generality.
  The following properties are equivalent:
  \begin{enumerate}
  \item[\textup(i\textup)] $\ol \eps (P, \eps) \geq \ol \eps$;
  \item[\textup(ii\textup)] there exists $j^* \in \{ 1, \dots, d \}$ such that $p_{j^*} \geq \ol \eps$ while $\sum_{j^* < j \leq d} p_j < \eps$.    
  \end{enumerate}
  In this case, one has
  $\ol\eps (P, \eps) = p_{j^*}$.
\end{fact}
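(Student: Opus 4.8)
The plan is to prove the equivalence by establishing each implication directly, reading off the identity $\ol\eps(P,\eps)=p_{j^*}$ as a by-product. Throughout I would write $v=\ol\eps(P,\eps)$ for the quantity defined in~\eqref{eq:upper-epsilon}; since $\setsums(P)$ is finite and contains $1=p_{[d]}$, this infimum is attained, so $v=\min\big(\setsums(P)\cap[\eps,1]\big)$ and $v\in[\eps,1]$. The hypothesis $\ol\eps\geq 2\eps$ will enter only through the inclusion $[\eps,2\eps)\subseteq[\eps,v)$, which is available whenever we already know $v\geq\ol\eps$.

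For the implication (ii)$\Rightarrow$(i), I would argue that the candidate index $j^*$ ``shields'' a gap in the set of subset sums. Concretely, if $J\subseteq[d]$ satisfies $p_J<p_{j^*}$, then $J$ cannot contain any index $j\leq j^*$, since $j\in J$ would force $p_J\geq p_j\geq p_{j^*}$ by the decreasing ordering of $P$; hence $J\subseteq\{j^*+1,\dots,d\}$ and $p_J\leq\sum_{j^*<j\leq d}p_j<\eps$. Thus $\setsums(P)\cap[\eps,p_{j^*})=\emptyset$. Since $p_{j^*}$ is itself a singleton subset sum lying in $[\eps,1]$ (using $p_{j^*}\geq\ol\eps\geq\eps$), this yields $v=p_{j^*}$, and in particular $v\geq\ol\eps$, which is~(i). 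This same argument simultaneously proves the final assertion $\ol\eps(P,\eps)=p_{j^*}$.

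For the implication (i)$\Rightarrow$(ii), the key step is to bound the total mass $s_A=\sum_{j\in A}p_j$ of the ``small'' classes $A=\{j:p_j<\eps\}$. Suppose $s_A\geq\eps$; listing the classes of $A$ with positive probability in any order and forming their partial sums, one gets a strictly increasing sequence from $0$ to $s_A\geq\eps$ whose consecutive increments are each strictly below $\eps$, so the first partial sum reaching $\eps$ lies in $[\eps,2\eps)\subseteq[\eps,v)$ and is a subset sum, contradicting minimality of $v$. Hence $s_A<\eps$. I would then set $j^*=\big|\{j:p_j\geq\eps\}\big|$: one has $j^*\geq 1$ (otherwise $1=\sum_j p_j=s_A<\eps$), the classes of index $>j^*$ are exactly those of $A$ by the sorting, so $\sum_{j^*<j\leq d}p_j=s_A<\eps$, and $p_{j^*}\geq\eps$ is a subset sum in $[\eps,1]$, hence $p_{j^*}\geq v\geq\ol\eps$. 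This is precisely~(ii).

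The only genuinely delicate point is this small-mass step: converting the subset-sum gap $\setsums(P)\cap[\eps,v)=\emptyset$ into the quantitative bound $s_A<\eps$ via a greedy/telescoping walk through the small probabilities, where the assumption $\ol\eps\geq 2\eps$ is essential so that crossing the level $\eps$ in steps of size $<\eps$ lands inside the forbidden window $[\eps,v)$. Everything else is elementary bookkeeping with the decreasing rearrangement of $P$ and the trivial bound $p_J\geq p_j$ for $j\in J$.
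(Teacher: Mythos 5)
Your proof is correct and takes essentially the same route as the paper's: the (ii)$\Rightarrow$(i) direction is identical, and your (i)$\Rightarrow$(ii) step---bounding the total mass of the classes with $p_j<\eps$ by walking through partial sums in increments $<\eps$ and observing that the first crossing of $\eps$ would land in the forbidden window $[\eps,2\eps)\subseteq[\eps,\ol\eps(P,\eps))$---is the same crossing argument the paper runs on the consecutive tail sums $\sum_{j^*<j\leq j'}p_j$, merely with the contradiction phrased on the value of the crossing sum rather than on the size of the last increment. No gaps.
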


\begin{proof}
  We first prove the implication $(i) \Rightarrow (ii)$.
  Let $j^* = \max \{ 1 \leq j \leq d : p_j \geq \eps \} \in \{ 0, \dots, d \}$, with the convention that $j^* = 0$ if the set is empty.
  
  We first show that $\sum_{j^* < j \leq d} p_j < \eps$.
  If $j^* = d$, the sum equals $0$ and the property holds; we thus assume that $j^* \leq d-1$.
  In this case, let $j'$ denote the largest integer in $\{ j^*+1, \dots, d\}$ such that $\sum_{j^* < j \leq j'} p_j < \eps$ (by definition of $j^*$, this inequality holds for $j' = j^*+1$, hence $j' \geq j^*+1$).
  We need to show that $j' = d$.
  We proceed by contradiction and assume that $j' \leq d-1$.
  In this case, one has $\sum_{j^* < j \leq j'} p_j < \eps$ while $\sum_{j^* < j \leq j'+1} p_j \geq \eps$.
  But since $\sum_{j^* < j \leq j'+1} p_j \in \setsums (P)$, this also implies that $\sum_{j^* < j \leq j'+1} p_j \geq \ol \eps \geq 2\eps$, hence $p_{j'+1} = \sum_{j^* < j \leq j'+1} p_j - \sum_{j^* < j \leq j'} p_j
  > \eps$.
  But this contradicts the fact that $p_{j'+1} \leq p_{j^*+1} < \eps$, proving the claim of this paragraph.

  Now since $\sum_{0 < j^* \leq d} p_j = 1$, the inequality $\sum_{j^* < j \leq d} p_j < \eps < 1$ implies that $j^* \geq 1$.
  By definition of $j^*$, one has $p_{j^*} \geq \eps$; but since $p_{j^*} \in \setsums(P)$, this implies that $p_{j^*} \geq \ol \eps (P, \eps) \geq \ol\eps$.

  We now conclude with the implication $(ii)\Rightarrow (i)$.
  Let $J \subset [d]$.
  If $J \subset \{ j^*+1, \dots, d \}$, then $\sum_{j \in J} p_j \leq \sum_{j^*<j\leq d} p_j < \eps$.
  Otherwise, $J$ contains an element $j' \in \{1, \dots, j^*\}$, hence $\sum_{j \in J} p_j \geq p_{j'} \geq p_{j^*} \geq \ol\eps$.
  Since the inequality $\sum_{j \in J} p_j \geq p_{j^*}$ is an equality for $J = \{j^*\}$, one has $\ol \eps (P, \eps) = p_{j^*} \geq \ol \eps$.
\end{proof}

It follows from Fact~\ref{fac:gap-eps} that if $\ol\eps (P, \eps) \gg \eps$, then there exists $j^* \in \{ 1, \dots, d \}$ such that $p_{j^*} \gg \eps$ while $\sum_{j^* < j \leq d} p_j < \eps$.
Then, either $j^* = d$ and $p_d\gg \eps$, or $j^* \leq d-1$ and $\sum_{j=j^*+1}^d p_j < \eps \ll p_{j^*}$, so in particular $p_{j^*+1} \ll p_{j^*}$.

We note that the latter property occurs neither for polynomial decay of probabilities (as defined in Remark~\ref{rem:example-distributions}) nor for exponential decay, for which $p_{j+1} \gtrsim p_j$ for every $j<d$.
However, this property does occur in the case of sparse distributions (third example of Remark~\ref{rem:example-distributions}), for which the refinement in terms of $\ol \eps (P, \eps)$ brings an improvement.

\paragraph{Previous results.}

The behavior of the missing mass and the question of its estimation have been studied in a rich literature, which can be traced to the work of Good~\cite{good1953population}
introducing the Good-Turing estimate.
In what follows, we focus our discussion on existing high-probability bounds on the missing mass, and %
on
conditions they imply on the sample size for the missing mass to be small.
We additionally refer to the discussion in~\cite{benhamou2017concentration} (and references therein)
for background and references on the distinct question of estimation of the missing mass.

First, a deviation bound due to
McAllester and Schapire~\cite{mcallester2000convergence}, with constants later improved by McAllester and Ortiz~\cite{mcallester2003concentration} 
(see also Berend and Kontorovitch~\cite{berend2013concentration} for an alternative approach), shows that the missing mass exhibits sub-Gaussian tails.
Specifically, the following distribution-free deviation bound holds~\cite[Theorem~16]{mcallester2003concentration}: for any $d \geq 2$, distribution $P \in \probas_d$ and $\delta \in (0, 1)$, one has
\begin{equation}
  \label{eq:mcallester-ortiz}
  \P_P \bigg( M_n \geq \E_P [M_n] + \sqrt{\frac{\log (1/\delta)}{n}} \bigg)
  \leq \delta
  \, .
\end{equation}
This implies that for any $\eps \in (0, 1)$ and $\delta \in (0, e^{-1})$, if
\begin{equation}
  \label{eq:sample-cond-mcallester-ortiz}
  n
  \geq \max \bigg\{ \nexp (P, \eps), \frac{\log (1/\delta)}{\eps^2} \bigg\}
  \, ,
\end{equation}
then $\P_P (M_n \geq 2 \eps) \leq \delta$.
While significant and nontrivial, 
condition~\eqref{eq:sample-cond-mcallester-ortiz} exhibits a suboptimal dependence on $\eps$ compared to~\eqref{eq:condition-optimal-exp}, although it involves smaller numerical constants.

To the best of our knowledge, the previous best deviation bound on the missing mass is due to Ben-Hamou, Boucheron and Ohannessian~\cite{benhamou2017concentration}, tightening previous multiplicative concentration bounds by Ohannessian and Dahleh~\cite{ohannessian2012rare}.
Specifically, the following tail bound follows from~\cite[Theorem~3.9]{benhamou2017concentration}:
\begin{align}
  \label{eq:benhamou}
  \P_P \bigg( M_n \geq \E_P [M_n]
  &+ \frac{\sqrt{2 d_n^+ \log (1/\delta)}}{n} + \frac{\log (1/\delta)}{n} \bigg)
    \leq \delta \, , \\
  \text{where} \qquad
  d_n^+
  = d_n^+ (P)
  &= \sum_{j=1}^d \big\{ 1 - (n p_j + 1) e^{-np_j} \big\}
    \asymp \sum_{j=1}^d \min \big[ 1, (n p_j)^2 \big]
    \, .
\end{align}
(Note that $d_n^+ (P) \asymp s_n (P)$ whenever
$\sum_{j \pp p_j < 1/n} (np_j) \lesssim \sum_{j \pp p_j \geq 1/n} 1$.)
Since $d_n^+ \leq \sum_{j=1}^d (1 - e^{-np_j}) \leq \sum_{j=1}^d (n p_j) = n$, this bound recovers~\eqref{eq:mcallester-ortiz} up to constants in the regime of interest $\delta \in (e^{-n}, 1)$.
Furthermore, as shown in~\cite{benhamou2017concentration}, inequality~\eqref{eq:benhamou} strictly improves~\eqref{eq:mcallester-ortiz} in most situations, and in fact provides sharp results in several interesting cases.

However, there are situations in which the second term in~\eqref{eq:benhamou} dominates and leads to a suboptimal bound.
For instance, consider the case where $p_j \asymp 1/d$ for $1 \leq j \leq d/2$, while $p_j \asymp 2^{- (j-d/2)} / d$ for $d/2 < j \leq d$, and the regime of constant probability $\delta \in (0, 1)$.
For $d \log d \ll n \ll 2^d$, the typical missing mass is at most of order $\E_P [M_n] \asymp 1/n$.
However, in this regime one has $d_n^+ \asymp d$, hence the bound~\eqref{eq:benhamou} is of order $\sqrt{d}/n$, which is suboptimal by a $\sqrt{d}$ factor.
This $\sqrt{d}$ factor can be removed by resorting to Theorem~\ref{thm:deviation-missing-mass}.

\section{Proof of high-probability upper bounds}
\label{sec:proof-high-prob}

In this section, we provide the proof of high-probability upper bounds for estimation in this paper, namely Theorems~\ref{thm:upper-laplace},~\ref{thm:upper-bound-conf-dependent} and~\ref{thm:upper-bound-sparse} (the proof of the last result relies in part on Theorem~\ref{thm:deviation-missing-mass} on the underestimated mass, which is proved in Section~\ref{sec:proof-deviation-missing-mass}).

Specifically, we start with %
lemmata
that are used in the proof of all high-probability upper bounds, before concluding with the proof of each specific result.

\subsection{Risk decomposition}
\label{sec:risk-decomposition}

All estimators we consider are ``add-$\lambda$'' smoothing rules, where $\lambda$ may
be confidence-dependent and/or data-dependent.
We therefore start our analysis with the following deterministic risk decomposition for add-$\lambda$ estimators.
Below, we let $\ol p_j = N_j/n$ be empirical frequency of class $j=1, \dots, d$.

\begin{lemma}
  \label{lem:decomp-risk}
  Consider the %
  distribution
  $\wh P_n = (\wh p_1, \dots, \wh p_d) %
  $
  given by
  \begin{equation}
    \label{eq:estimator-add-constant}
    \wh p_j
    = \frac{N_j + \lambda}{n + \lambda d}
    \, , \quad j = 1, \dots, d
    \, ,
  \end{equation}
  for some $\lambda \in (0, n/d]$ that may depend on $X_1, \dots, X_n$.
  Then, we have
  \begin{equation}
    \label{eq:decomp-kl-upper}
    \kll{P}{\wh P_n}
    \leq 6 \sum_{j=1}^d \Big(\sqrt{\ol p_j} - \sqrt{p_j}\Big)^2 + \frac{7\lambda d}{n} %
    + \sum_{j \pp p_j \geq 4 \lambda/n} p_j \log \Big( \frac{2 n p_j}{\lambda} \Big) \bm 1 \Big( N_j \leq \frac{n p_j}{4} \Big)    
    \, .
  \end{equation}
\end{lemma}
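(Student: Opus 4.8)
The plan is to first reduce the claim to the simpler \emph{unnormalized} ``add-$\lambda$'' rule, and then to split the contribution of each class $j$ according to whether its count $N_j$ severely underestimates $np_j$. Since $\sum_j p_j = \sum_j \wh p_j = 1$, we have $\kll{P}{\wh P_n} = \sum_{j=1}^d p_j \log(p_j/\wh p_j)$. Writing $\wh p_j = v_j/(1 + \lambda d/n)$ with $v_j := (N_j + \lambda)/n = \ol p_j + \lambda/n$, and using $\log(1+x) \leq x$, this gives
\[
  \kll{P}{\wh P_n} = \sum_{j=1}^d p_j \log\frac{p_j}{v_j} + \log\Big(1 + \frac{\lambda d}{n}\Big) \leq \sum_{j=1}^d p_j \log\frac{p_j}{v_j} + \frac{\lambda d}{n} \, .
\]
It thus suffices to bound $\sum_j p_j \log(p_j/v_j)$ by $6 \sum_j (\sqrt{\ol p_j} - \sqrt{p_j})^2 + 6\lambda d/n$ plus the last term of~\eqref{eq:decomp-kl-upper}.

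For the underestimated classes, let $C = \{ j : p_j \geq 4\lambda/n \text{ and } N_j \leq np_j/4 \}$. For $j \in C$ one has $v_j = N_j/n + \lambda/n \leq p_j/4 + p_j/4 = p_j/2$ while $v_j \geq \lambda/n$, whence $0 \leq p_j \log(p_j/v_j) \leq p_j \log(np_j/\lambda) \leq p_j \log(2np_j/\lambda)$; summing over $j \in C$ produces exactly the last term of~\eqref{eq:decomp-kl-upper}.

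For the remaining classes $j \notin C$ one has $v_j > p_j/4$, i.e.\ $p_j \leq 4 v_j$ (indeed, either $p_j < 4\lambda/n$, so that $v_j \geq \lambda/n > p_j/4$, or $N_j > np_j/4$, so that $v_j \geq \ol p_j > p_j/4$). Using the identity $p_j \log(p_j/v_j) = D(p_j, v_j) + (p_j - v_j)$ together with the elementary bound $D(u,v) = v\, h(u/v) \leq 3(\sqrt u - \sqrt v)^2$ valid for $0 \leq u \leq 4v$ (equivalently, $h(x) \leq 3(\sqrt x - 1)^2$ on $[0,4]$), we get $\sum_{j \notin C} p_j \log(p_j/v_j) \leq 3 \sum_{j \notin C} (\sqrt{p_j} - \sqrt{v_j})^2 + \sum_{j \notin C} (p_j - v_j)$. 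The linear term is non-positive: since $v_j \geq \ol p_j$ and $\sum_j (p_j - \ol p_j) = 0$, one has $\sum_{j \notin C}(p_j - v_j) \leq \sum_{j \notin C}(p_j - \ol p_j) = -\sum_{j \in C}(p_j - \ol p_j) \leq 0$, using $p_j \geq 4\ol p_j$ for $j \in C$. For the quadratic term, $\sqrt{v_j} - \sqrt{\ol p_j} = \sqrt{\ol p_j + \lambda/n} - \sqrt{\ol p_j} \leq \sqrt{\lambda/n}$, so $(\sqrt{p_j} - \sqrt{v_j})^2 \leq 2(\sqrt{p_j} - \sqrt{\ol p_j})^2 + 2\lambda/n$, hence $\sum_{j \notin C}(\sqrt{p_j} - \sqrt{v_j})^2 \leq 2\sum_{j=1}^d (\sqrt{\ol p_j} - \sqrt{p_j})^2 + 2\lambda d/n$. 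Combining the three steps yields~\eqref{eq:decomp-kl-upper}, with $7\lambda d/n = \lambda d/n + 6\lambda d/n$.

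The only step that is not pure bookkeeping is the scalar inequality $h(x) \leq 3(\sqrt x - 1)^2$ on $[0,4]$; I expect it to follow by checking that $g(x) = 3(\sqrt x - 1)^2 - h(x)$ vanishes at $x=1$ together with $g'(1)$, is convex near $1$, and is positive at the relevant endpoints, so that $g \geq 0$ on $[0,4]$. Keeping track of the exact constants $6$ and $7$ (rather than unspecified absolute constants), and in particular making sure the split threshold ``$np_j/4$'' is compatible with the factor $4$ in the bound $D(u,v) \leq 3(\sqrt u - \sqrt v)^2$ for $u \leq 4v$, is the other point requiring some care.
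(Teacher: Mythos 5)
Your proof is correct and follows essentially the same route as the paper's: both decompose the relative entropy into a squared-Hellinger term (controlled by relating the smoothed estimate to $\ol p_j$ at the cost of an extra $O(\lambda d/n)$), a regularization bias, and the residual over classes with $p_j \geq 4\lambda/n$ and $N_j \leq n p_j/4$, via the elementary bound $D(u,v) \lesssim (\sqrt u - \sqrt v)^2$ when $v$ is not much smaller than $u$; your variant merely factors out the normalization $\log(1+\lambda d/n)$ first and splits directly on the target event rather than on $\wh p_j \leq p_j/8$. The one inequality you deferred, $h(x) \leq 3(\sqrt x - 1)^2$ on $[0,4]$, does hold: the ratio $\phi(x) = h(x)/(\sqrt x - 1)^2$ is non-decreasing on $\R^+$ (this is the content of Lemma~\ref{lem:kl-hellinger-bounded}), so its supremum on $[0,4]$ equals $\phi(4) = 4\log 4 - 3 \approx 2.55 < 3$.
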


The decomposition~\eqref{eq:decomp-kl-upper}
features three terms.

The first term, which does not depend on the estimator (that is, on $\lambda$), corresponds to the squared Hellinger distance between the empirical distribution $\ol P_n = (\ol p_1, \dots, \ol p_d)$ and the true distribution $P$.
It constitutes a natural ``hard limit'' for our estimation guarantees in relative entropy.
This term is controlled in Section~\ref{sec:upper-bound-hellinger}.

The second term accounts for the ``bias'' due to the use of regularization, which diminishes the probability assigned to high-frequency classes.
This term increases with the smoothing parameter $\lambda$.
Its control is immediate when $\lambda$ is data-independent, and relatively straightforward when $\lambda$ is data-dependent.

The third and final term accounts for the contribution of classes whose frequency is significantly underestimated, which inflates the relative entropy over ideal asymptotic rates.
The effect of underestimation of true frequencies is mitigated by the use of smoothing,  
and indeed this term decreases with the smoothing parameter $\lambda$.
The control of this term is at the core of the analysis, and is carried out in Section~\ref{sec:contr-contr-under}.

\begin{proof}[Proof of Lemma~\ref{lem:decomp-risk}]
  By Lemma~\ref{lem:kl-hellinger-bounded}, for any $p, q \in \R^+$, if $q \geq p/8$ then $D (p, q) \leq \phi(8) (\sqrt{p} - \sqrt{q})^2 \leq 3 (\sqrt{p} - \sqrt{q})^2$.
  On the other hand, if $q \leq p/8$ then $D (p, q) = p \log (p/q) - p + q \leq p \log (p/q)$.
  Hence, for any $p, q \in \R^+$, we have
  \begin{equation}
    \label{eq:kl-hellinger-plus-res}
    D (p, q)
    \leq 3 (\sqrt{p} - \sqrt{q})^2 + p \log \Big( \frac{p}{q} \Big) \indic{q \leq p/8}
    \, .
  \end{equation}
  It follows from this inequality that
  \begin{equation}
    \label{eq:proof-decomp-kl-1}
    \kll{P}{\wh P_n}
    = \sum_{j=1}^d D (p_j, \wh p_j)
    \leq 3 \sum_{j=1}^d \left( \sqrt{\wh p_j} - \sqrt{p_j} \right)^2 + \sum_{j=1}^d p_j \log \Big( \frac{p_j}{\wh p_j} \Big) \ind \Big( \wh p_j \leq \frac{p_j}{8} \Big)
    \, .
  \end{equation}

  We now bound the two terms of the right-hand side of~\eqref{eq:proof-decomp-kl-1}, starting with the first one.
  First, since $\wh p_j = (N_j + \lambda)/(n+ \lambda d) = (\ol p_j + \lambda/n)/(1 + \lambda d/n)$, we have
  $\ol p_j / (1 + \lambda d/n) \leq \wh p_j \leq \ol p_j + \lambda/n$.
  We therefore have
  \begin{align}
    &\Big(\sqrt{\wh p_j} - \sqrt{p_j} \Big)^2
    \leq 2 \Big( \sqrt{\wh p_j} - \sqrt{\ol p_j} \Big)^2 + 2 \Big( \sqrt{\ol p_j} - \sqrt{p_j} \Big)^2 \nonumber \\
    &\leq 2 \max \Big\{ \Big( \sqrt{\ol p_j} - \sqrt{\ol p_j} \Big( 1 + \frac{\lambda d}{n} \Big)^{-1/2} \Big)^{2}, \Big( \sqrt{\ol p_j + \frac{\lambda}{n}} - \sqrt{\ol p_j} \Big)^2 \Big\} + 2 \Big( \sqrt{\ol p_j} - \sqrt{p_j} \Big)^2 \nonumber \\
    &\leq 2 \Big( 1 -  \Big( 1 + \frac{\lambda d}{n} \Big)^{-1/2} \Big)^{2} \ol p_j + 2 \Big( \sqrt{\ol p_j + \frac{\lambda}{n}} - \sqrt{\ol p_j} \Big)^2 + 2 \Big( \sqrt{\ol p_j} - \sqrt{p_j} \Big)^2
      \label{eq:decomp-helling-regularized}
      \, .
  \end{align}
  Using that, for any $u \in \R^+$, one has
  $1 - (1+u)^{-1/2} = u / (1+ u + \sqrt{1+u}) \leq u / (2 \sqrt{u} + \sqrt{u}) = \sqrt{u}/3$, %
  we can bound
  the first term in~\eqref{eq:decomp-helling-regularized} as
  \begin{equation*}
    \Big( 1 -  \Big( 1 + \frac{\lambda d}{n} \Big)^{-1/2} \Big)^{2} \ol p_j
    \leq \bigg( \frac{\sqrt{\lambda d/n}}{3} \bigg)^2 \ol p_j
    \leq \frac{\lambda d}{9 n} \cdot \ol p_j
    \, .
  \end{equation*}
  Likewise, we can bound the second term as follows:
  \begin{equation*}
    \Big( \sqrt{\ol p_j + \lambda/n} - \sqrt{\ol p_j} \Big)^2
    = \bigg( \frac{(\ol p_j + \lambda/n) - \ol p_j}{\sqrt{\ol p_j + \lambda/n} + \sqrt{\ol p_j}} \bigg)^2
    \leq \bigg( \frac{\lambda/n}{\sqrt{\lambda/n}} \bigg)^2
    = \frac{\lambda}{n}
    \, .
  \end{equation*}
  Plugging these two bounds into~\eqref{eq:decomp-helling-regularized}, summing over $j=1, \dots, d$ and using that $\sum_{j=1}^d \ol p_j = 1$, we deduce that
  \begin{align}
    \label{eq:proof-regularized-hellinger}
    \sum_{j=1}^d \Big(\sqrt{\wh p_j} - \sqrt{p_j} \Big)^2
    &\leq \frac{2 \lambda d}{9 n} \cdot \sum_{j=1}^d \ol p_j + \frac{2 \lambda d}{n} + 2 \sum_{j=1}^d \Big( \sqrt{\ol p_j} - \sqrt{p_j} \Big)^2 \nonumber \\
    &= \frac{20 \lambda d}{9 n} + 2 \sum_{j=1}^d \Big( \sqrt{\ol p_j} - \sqrt{p_j} \Big)^2
    \, .
  \end{align}

  We now turn to bounding the second term in the decomposition~\eqref{eq:proof-decomp-kl-1}.
  First, since $\lambda \leq n/d$, we have $n + \lambda d \leq 2 n$, and thus
  $\wh p_j \geq \max \{ N_j, \lambda \}/(2n)$.
  This implies that
  \begin{align}
    \label{eq:proof-decomp-bound-res-term}
    \sum_{j=1}^d p_j \log \Big( \frac{p_j}{\wh p_j} \Big) \ind \Big( \wh p_j \leq \frac{p_j}{8} \Big)    
    &\leq \sum_{j=1}^d p_j \log \Big( \frac{p_j}{\lambda/(2n)} \Big) \ind \Big( \frac{\max \{ N_j, \lambda \}}{2n} \leq \frac{p_j}{8} \Big) \nonumber \\
    &= \sum_{j \pp p_j \geq 4 \lambda/n} p_j \log \Big( \frac{2 n p_j}{\lambda} \Big) \ind \Big( N_j \leq \frac{n p_j}{4} \Big)
      \, .
  \end{align}
  Plugging upper bounds~\eqref{eq:proof-regularized-hellinger} and~\eqref{eq:proof-decomp-bound-res-term} into the decomposition~\eqref{eq:decomp-helling-regularized}
  concludes the proof.
\end{proof}

\subsection{Upper bound in Hellinger distance}
\label{sec:upper-bound-hellinger}

In this section, we proceed with the control of the first term in the decomposition of Lemma~\ref{lem:decomp-risk}.
Specifically, Lemma~\ref{lem:hellinger-empirical} below provides a high-probability bound
on the squared Hellinger distance between the empirical distribution $\ol P_n$ and the true distribution $P$.
This bound follows from the analysis of the reverse-relative entropy from~\cite{agrawal2022finite} (for large probabilities) combined with a simple binomial deviation bound (for small probabilities).

\begin{lemma}
  \label{lem:hellinger-empirical}
  Let $n,d \geq 2$ and $\delta \in (0,1)$.
  For any $P \in \probas_d$, letting
  $s_n (P) = \sum_{j=1}^d \min (1, n p_j)$
  and denoting by $\ol P_n = (\ol p_1, \dots, \ol p_d)$ the empirical distribution of $X_1, \dots, X_n$, one has
  \begin{equation}
    \label{eq:hellinger-empirical}
    \P_P \bigg( \sum_{j=1}^d \Big( \sqrt{\ol p_j} - \sqrt{p_j} \Big)^2
    \geq \frac{4 s_n (P) + 7 \log (1/\delta)}{n} \bigg)
    \leq 2 \delta
    \, .
  \end{equation}
\end{lemma}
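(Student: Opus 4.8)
The plan is to bound the squared Hellinger distance by the \emph{reverse} relative entropy, split the alphabet according to whether $p_j \ge 1/n$ or $p_j < 1/n$, and control the two contributions separately. The reduction rests on the pointwise inequality $(\sqrt u - \sqrt v)^2 \le D(u,v)$ for $u,v \in \R^+$, which amounts to $h(x) \ge (\sqrt x - 1)^2$ for $x \ge 0$ and follows from the identity $h(x) - (\sqrt x - 1)^2 = 2\sqrt x\, h(\sqrt x) \ge 0$. Applying it coordinatewise gives $\sum_{j=1}^d (\sqrt{\ol p_j} - \sqrt{p_j})^2 \le \kll{\ol P_n}{P}$. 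One could now invoke the reverse-entropy deviation bound~\eqref{eq:deviation-agrawal}, but this only yields a rate of order $d/n$, which is too weak when $P$ is effectively sparse; the key point is that the classes with $p_j < 1/n$ contribute negligibly to the Hellinger distance and should be peeled off before applying Agrawal's bound.

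Let $J = \{ j : p_j \ge 1/n \}$, so that $|J| \le \sum_{j \in J}\min(np_j,1) \le s_n(P)$; assume $J \ne \emptyset$, else the ``large'' part below is empty. Introduce the lumped distribution $Q$ on $J \cup \{*\}$ with $q_j = p_j$ for $j \in J$ and $q_* = 1 - \sum_{j\in J} p_j$, and let $\ol Q_n$ be the corresponding empirical distribution; then $\ol Q_n$ is the empirical distribution of an \iid sample of size $n$ from a distribution on $d' := |J|+1$ atoms. By nonnegativity of $D$,
\[
  \sum_{j \in J} (\sqrt{\ol p_j} - \sqrt{p_j})^2
  \le \sum_{j \in J} D(\ol p_j, p_j)
  \le \sum_{j \in J} D(\ol p_j, p_j) + D(\ol q_*, q_*)
  = \kll{\ol Q_n}{Q}
  \, .
\]
Applying the sharp reverse relative-entropy concentration inequality of Agrawal~\cite[Cor.~1.7]{agrawal2022finite} (the cruder form~\eqref{eq:deviation-agrawal} would give the same conclusion with larger absolute constants) to this $n$-sample from $Q$, which lives on $d' \le s_n(P)+1 \le 2\, s_n(P)$ atoms, bounds $\kll{\ol Q_n}{Q}$, hence this contribution, by $O\big((s_n(P)+\log(1/\delta))/n\big)$ with probability at least $1-\delta$.

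For the small-probability classes $j \notin J$, use the trivial bound $(\sqrt{\ol p_j} - \sqrt{p_j})^2 \le \ol p_j + p_j$, so that $\sum_{j \notin J}(\sqrt{\ol p_j}-\sqrt{p_j})^2 \le \ol p_{J^c} + p_{J^c}$, where $p_{J^c} = \sum_{j \notin J} p_j$ and $\ol p_{J^c} = N_{J^c}/n$ with $N_{J^c} = \sum_{i=1}^n \indic{X_i \notin J} \sim \mathrm{Bin}(n, p_{J^c})$. Since $np_j < 1$ for $j \notin J$, one has $n\, p_{J^c} = \sum_{j\notin J}\min(np_j,1) \le s_n(P)$, so $p_{J^c} \le s_n(P)/n$ deterministically; and a standard Chernoff bound for $N_{J^c}$ --- equivalently, the concentration consequences of negative association of the multinomial counts~\cite{joagdev1983negative} --- gives $N_{J^c} \le C\, s_n(P) + C'\log(1/\delta)$, hence $\ol p_{J^c} \le (C\, s_n(P) + C'\log(1/\delta))/n$, with probability at least $1-\delta$. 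Intersecting this event with the one from the previous paragraph (probability at least $1-2\delta$) and summing the three bounds yields the claim; the explicit prefactors $4$ and $7$ come from carrying the constants through, which is where the sharp form of Agrawal's inequality and an appropriate choice of binomial tail estimate are needed.

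I expect the main obstacle to be the lumping step: one must check that merging all small classes into a single atom leaves the large-probability part of the Hellinger distance genuinely controlled by a reverse-entropy deviation for a multinomial on only $\approx s_n(P)$ atoms --- this is precisely the mechanism that replaces the dimension $d$ by the effective support size $s_n(P)$ --- and then to be careful enough with the numerical constants so as not to overshoot the stated bound (in particular, the crude bound~\eqref{eq:deviation-agrawal} alone does not suffice to reach the constant $4$).
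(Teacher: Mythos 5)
Your proposal is correct and follows the same overall strategy as the paper: split the alphabet at the threshold $p_j \geq 1/n$, control the large-probability coordinates through the reverse relative entropy, and bound the small-probability coordinates by $\ol p_j + p_j$ plus a Poisson-type concentration bound. The one genuine difference is in how the large part is handled. The paper does \emph{not} lump: it re-opens Agrawal's argument and observes that the moment-generating-function bound of \cite[Proposition~2.4]{agrawal2022finite} can be summed over the indices $j \in J^+$ only, yielding directly that $H^+ = \sum_{j \in J^+} D(\ol p_j, p_j)$ is sub-gamma with variance factor $8 s^+/n^2$ and expectation at most $s^+/n$. Your lumping of all small classes into a single extra atom achieves the same effect as a black-box application of the full-alphabet result on $|J|+1$ atoms, which is arguably cleaner conceptually, at the cost of one spurious atom in both the mean and the variance factor. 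Two caveats on the constants, which you rightly flag as the delicate point: the form \eqref{eq:deviation-agrawal} gives $12\, s_n$ already from $d' \leq 2 s_n$, so you must indeed use the sub-gamma form around the mean (i.e., essentially Proposition~2.4 again, not just the corollary as quoted in the paper), and you need to choose the AM--GM split of the cross term so that the surplus from the extra atom is absorbed using $s_n \geq 1$ — with, say, $2\sqrt{2(s^++1)\log(1/\delta)} \leq (s^++1) + 2\log(1/\delta)$ the totals do come in under $4 s_n + 7\log(1/\delta)$. Finally, a small simplification on your side: since $\sum_{j \notin J} N_j = \sum_{i=1}^n \indic{X_i \notin J}$ is exactly a binomial, the standard Chernoff bound you invoke suffices and no appeal to negative association is needed there (the paper invokes negative association for the same sum, which is in fact superfluous for this particular step).
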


We note in passing that this bound is of the right order of magnitude, as one may check that $\E_P [\sum_{j=1}^d (\sqrt{\ol p_j} - \sqrt{p_j})^2] \asymp s_n (P) /n$ whenever $\max_{j} p_j$ is bounded away from $1$.

\begin{proof}[Proof of Lemma~\ref{lem:hellinger-empirical}]
  Let $J^+ = \{ 1 \leq j \leq d : p_j \geq 1/n \}$ and
  $J^- = \set{1, \dots, d} \setminus J^+$.
  Also, let $s^+ = |J^+|$ and $s^- = n \sum_{j \in J^-} p_j$, so that $s_n (P) = s^+ + s^-$.
  From the inequalities $(\sqrt{p} - \sqrt{q})^2 \leq D (p, q)$ (Lemma~\ref{lem:kl-hellinger-bounded}) and $(\sqrt{p} - \sqrt{q})^2 \leq p + q$, it follows that
  \begin{equation}
    \label{eq:proof-hellinger-small-large}
    \sum_{j=1}^d \Big( \sqrt{\ol p_j} - \sqrt{p_j} \Big)^2
    \leq \sum_{j \in J^+} D \big( \ol p_j, p_j \big) + \sum_{j \in J^-} \big( p_j + \ol p_j \big)
    \, .
  \end{equation}

  \paragraph{First term.}
  We start by controlling the first term in~\eqref{eq:proof-hellinger-small-large}, following~\cite{agrawal2022finite}.
  Specifically, let $H^+ = \sum_{j \in J^+} D ( \ol p_j, p_j )$.
  By proceeding as in~\cite[Corollary~1.7]{agrawal2022finite}, except that in \cite[Proposition~2.4]{agrawal2022finite} we sum only over indices $j \in J^+$ (rather than over $1 \leq j \leq d$), we obtain the following inequality:
  for any $t \in (0, n/2)$,
  \begin{equation*}
    \log \E_P \big[ %
    e^{t (H^+ - \E_P [H^+])}
    \big]
    \leq \frac{4 s^+ t^2/n^2}{1-2t/n}
    \, .
  \end{equation*}
  In other words, $H^+ - \E_P [H^+]$ is sub-gamma~\cite[\S2.4]{boucheron2013concentration} with variance factor $8 s^+ /n^2$ and shape parameter $2/n$.
  Hence, by~\cite[p.~29]{boucheron2013concentration}, one has for every $\delta \in (0, 1)$,
  \begin{equation}
    \label{eq:proof-subgamma-kl}
    \P \bigg( H^+ - \E_P [H^+] \geq \frac{4 \sqrt{s^+ \log (1/\delta)}}{n} + \frac{2 \log (1/\delta)}{n} \bigg)
    \leq \delta
    \, .
  \end{equation}
  In addition, recalling the inequality $h(t) \leq (t-1)^2$ for any $t \in \R^+$ (Lemma~\ref{lem:entropy-function}), we have for all $p,q \in \R^+, q >0$:
  \begin{equation*}
    D (p, q)
    = q h \Big( \frac{p}{q} \Big)
    \leq q \Big( \frac{p}{q}-1 \Big)^2
    = \frac{(p-q)^2}{q}
    \, .
  \end{equation*}
  This implies that
  \begin{equation*}
    \E_P [H^+]
    = \sum_{j \in J^+} \E_P [D (\ol p_j, p_j)]
    \leq \sum_{j \in J^+} \frac{\E_P [(\ol p_j - p_j)^2]}{p_j}
    = \sum_{j \in J^+} \frac{p_j (1-p_j)/n}{p_j}
    \leq \frac{s^+}{n}
    \, .
  \end{equation*}
  Plugging the inequality into~\eqref{eq:proof-subgamma-kl}, we deduce that with probability at least $1-\delta$, we have
  \begin{equation}
    \label{eq:proof-bound-H+}
    H^+
    < \frac{s^+ + 4 \sqrt{s^+ \log (1/\delta)} + 2 \log (1/\delta)}{n}
    \leq \frac{(1 + 2 \sqrt{2}) s^+ + (2 + 2 \sqrt{2}) \log (1/\delta)}{n}
    \, .
  \end{equation}

  \paragraph{Second term.}
  We now turn to the second term in~\eqref{eq:proof-hellinger-small-large}.
  Note that
  \begin{equation*}
    n \sum_{j \in J^-} \ol p_j =
    \sum_{j \in J^-} N_j
    = \sum_{i=1}^n \sum_{j \in J^-} \indic{X_i = j}
    = \sum_{i=1}^n \indic{X_i \in J^-}
    \, ,
  \end{equation*}
  which follows a binomial distribution with parameters $n$ and $\sum_{j \in J^-} p_j = s^-/n$.
  Bernstein's inequality~\cite[Theorem~2.10 p.~37]{boucheron2013concentration} then yields, with probability at least $1-\delta$,
  \begin{equation}
    \label{eq:proof-bound-H-}
    \sum_{j \in J^-} \big( p_j + \ol p_j \big)
    < \frac{s^-}{n} + \frac{s^- + \sqrt{2 s^- \log (1/\delta)} + \log (1/\delta)}{n}
    < \frac{3 s^- + 2 \log (1/\delta)}{n}
    \, .
  \end{equation}

  \paragraph{Conclusion.}
  For $\delta \in (0, 1/2)$, with probability at least $1-2\delta$ both~\eqref{eq:proof-bound-H+} and~\eqref{eq:proof-bound-H-} hold.
  In this case, inequality~\eqref{eq:proof-hellinger-small-large} together with the fact that $s^+ + s^- = s_n (P)$ and the bound $2 \sqrt{2} \leq 3$ imply that 
  \begin{equation*}
    \sum_{j=1}^d \Big( \sqrt{\ol p_j} - \sqrt{p_j} \Big)^2
    < \frac{4 s_n (P) + 7 \log (1/\delta)}{n}
    \, .
  \end{equation*}
  This concludes the proof.
\end{proof}

\subsection{Control of the contribution of underestimated frequencies}
\label{sec:contr-contr-under}

The control of the second term, namely the bias due to the use of regularization, is immediate when the parameter $\lambda$ is not data-dependent;
we also postpone its analysis in the context of data-dependent regularization to the proof of Theorem~\ref{thm:upper-bound-sparse} below.

We now turn to the control of the key term in the decomposition of Lemma~\ref{lem:decomp-risk}, namely the third term accounting for the contribution of classes whose true frequency is significantly underestimated in the sample.
Specifically, in this section we establish the following control on the residual, which is arguably the core of the analysis:

\begin{lemma}
  \label{lem:tail-residual}
  Let $P \in \probas_d$.
  For any $\lambda \geq 1$, let $d_{\lambda} = | \set{1 \leq j \leq d : p_j \geq 4 \lambda/n} |$ and
  \begin{equation*}
    R_\lambda
    = \sum_{j \pp p_j \geq 4 \lambda/n} p_j \log \Big( \frac{2 n p_j}{\lambda} \Big) \bm 1 \Big( N_j \leq \frac{n p_j}{4} \Big)
    \, .
  \end{equation*}
  For any $\delta \in (e^{-n/6}, e^{-2})$,
  one has  
  \begin{equation}
    \label{eq:tail-residual-laplace}
    \P_P \bigg( R_1
    \geq \frac{62000 \, d_1 + 106000 \log (1/\delta) \log \log (1/\delta)}{n}
    \bigg)
    \leq 2 \delta
    \, .
  \end{equation}
  In addition, for any $\delta \in (e^{-n/6}, e^{-d}]$, if $\lambda = \log (1/\delta)/d$, then 
  \begin{equation}
    \label{eq:tail-residual-conf}
    \P_P \bigg( R_\lambda
    \geq %
    \frac{74000 \log (d) \log (1/\delta)}{n}
    \bigg)
    \leq 2 \delta
    \, .
  \end{equation}
\end{lemma}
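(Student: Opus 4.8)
Write $R_\lambda = \sum_{j\,:\,p_j \geq 4\lambda/n} b_j Z_j$, where $Z_j = \indic{N_j \leq np_j/4}$ and $b_j = p_j \log(2np_j/\lambda)$. Since (as the paper notes) the super-exponential tails of $R_\lambda$ preclude the moment generating function, the plan is to pass to an independent model and control high moments. A Chernoff bound for the binomial $N_j$ gives $\P_P(Z_j = 1) \leq \exp(-np_j\, h(1/4))$ with $h(1/4) = \tfrac34 - \tfrac12\log 2 > \tfrac25$. As $(N_j)_{j}$ are negatively associated \cite{joagdev1983negative} and each $Z_j$ is a monotone function of the single coordinate $N_j$, the family $(Z_j)_j$ is negatively associated, so $\E_P[R_\lambda^p] \leq \E[\widetilde R_\lambda^p]$ for every integer $p \geq 1$, where $\widetilde R_\lambda = \sum_j b_j B_j$ with the $B_j$ independent, $\P(B_j = 1) = \pi_j := \exp(-2np_j/5) \geq \P_P(Z_j=1)$. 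It then suffices to prove the two tail bounds for $\widetilde R_\lambda$, which will follow from estimating $\|\widetilde R_\lambda\|_p$ for a suitable $p \asymp \log(1/\delta)$ and using Markov's inequality in the form $\P(\widetilde R_\lambda \geq e\,\|\widetilde R_\lambda\|_p) \leq e^{-p}$.

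\textbf{Dyadic decomposition and the three scale regimes.} I would split the classes into dyadic shells $J_k = \{j : 2^k \leq np_j < 2^{k+1}\}$ for $k \geq k_0 := \lceil \log_2(4\lambda) \rceil$, so that $\widetilde R_\lambda \leq \sum_{k \geq k_0} \beta_k S_k$ with $\beta_k = \tfrac{2^{k+1}}{n}\log(2^{k+2}/\lambda)$ and $S_k = \sum_{j \in J_k} B_j$. Here $S_k$ is stochastically dominated by $\mathrm{Bin}(|J_k|,\, e^{-2\cdot 2^k/5})$, with $|J_k| \leq \min(d,\, n/2^k)$ (from $\sum_{j\in J_k} np_j \leq n$) and $\sum_k |J_k| \leq d_\lambda$. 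One then distinguishes the shells relative to the critical scale $2^{k^\ast} \asymp \tfrac{1}{h(1/4)}\log(1/\delta)$. \emph{Low shells} ($2^k \lesssim 2^{k^\ast}$): one controls $\sum_{k \leq k^\ast} \beta_k S_k$ via sharp $L^p$ estimates for the independent weighted Bernoulli sums; since $\E[\widetilde R_\lambda] = \sum_k \beta_k \E[S_k] \leq C d_\lambda/n$ (the per-shell term $\beta_k |J_k| e^{-2\cdot 2^k/5}$ summed against $\sum_k |J_k| \leq d_\lambda$ is $O(d_\lambda/n)$, because $q\log q\, e^{-2q/5}$ is bounded), these shells contribute at most $C(d_\lambda + \log(1/\delta))/n$ with probability $1-\delta$, producing the $d_\lambda$ term. \emph{High shells} ($2^k \gg 2^{k^\ast}$): a single underestimated class there already exceeds the target, so one needs $S_k = 0$, which is the union bound $\P(\exists k > k^\ast : S_k \geq 1) \leq \sum_{k > k^\ast} \min(d, n/2^k)\, e^{-2\cdot 2^k/5}$; its summand decays doubly exponentially in $k$ and telescopes to a quantity $\leq \delta$ precisely because $\delta > e^{-n/6}$ bounds $n/\log(1/\delta)$ from above, with $\min(d, n/2^k)$ controlled at the threshold either by $d_\lambda$ or by $n/2^k$. \emph{The critical shell} contributes the anomalous term: a single underestimated class there has weight $\beta_{k^\ast} \asymp \tfrac{\log(1/\delta)}{n}\log\!\big(\tfrac{2\cdot 2^{k^\ast}}{\lambda}\big)$, which for $\lambda = 1$ is of order $\tfrac{\log(1/\delta)\log\log(1/\delta)}{n}$ (here one uses $d_\lambda \geq \log d_\lambda$ so that even the sub-case $2^{k^\ast} \asymp \tfrac{1}{h(1/4)}\log(d_\lambda/\delta)$ stays within $(d_\lambda + \log(1/\delta)\log\log(1/\delta))/n$). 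Assembling the three contributions and tracking the explicit constants yields \eqref{eq:tail-residual-laplace}.

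\textbf{The confidence-dependent smoothing.} For \eqref{eq:tail-residual-conf} the only change is the smoothing level $\lambda = \log(1/\delta)/d$, which satisfies $\lambda \geq 1$ since $\delta \leq e^{-d}$ (and $\delta > e^{-n/6}$ forces $n > 6d$). Then the shells start at $2^{k_0} \asymp \lambda = \log(1/\delta)/d$, and at the critical scale $2^{k^\ast} \asymp \tfrac{1}{h(1/4)}\log(d/\delta) \asymp \tfrac{1}{h(1/4)}\log(1/\delta)$ (again using $\log(1/\delta) \geq d \geq \log d$) one has $2^{k^\ast}/\lambda \asymp d$, so the weight of a single underestimated class at the critical scale is $\beta_{k^\ast} \asymp \tfrac{\log(1/\delta)}{n}\log d$: the logarithmic factor is capped at $\log d$ rather than $\log\log(1/\delta)$. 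The low- and high-shell estimates go through verbatim with $d_\lambda \leq d$, giving \eqref{eq:tail-residual-conf}.

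\textbf{The main obstacle.} The crux is combining the shells without waste: the $\log(1/\delta)$ deviation budget must be spent essentially once, at the critical scale, rather than once per shell (a per-shell union bound at failure probability $\asymp\delta$ would inflate the bound by a factor $\log\log(1/\delta)$ or worse), which is exactly why the low and middle shells are handled through $L^p$-norm estimates rather than crude Chernoff bounds, and why the critical scale must be identified explicitly. A second delicate point is making the high-shell union bound close with only the hypothesis $\delta > e^{-n/6}$: this requires keeping both inequalities $|J_k| \leq d$ and $|J_k| \leq n/2^k$ available throughout and distinguishing the regime where $d_\lambda$ is small from the one where it is comparable to $n/\log(1/\delta)$. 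These are the difficulties that prevent a moment-generating-function argument and force the analysis through raw moments and the per-scale bookkeeping above.
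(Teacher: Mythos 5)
Your overall strategy (pass to independent counts, slice the classes into dyadic shells, and treat low, critical and high shells differently) is a genuinely different route from the paper, which instead shows that \emph{every} summand $\lambda_j \log(\lambda_j/\lambda)\ind (\wt N_j \leq \lambda_j/4)$ is stochastically dominated by a single envelope variable $W^{(\lambda)}$ with $\P(W^{(\lambda)} \geq t\log(t/\lambda)) = e^{-t/14}$, \emph{uniformly in $\lambda_j$}, so that $\wt R_\lambda$ is dominated by an i.i.d.\ sum of $d_\lambda$ copies of $W^{(\lambda)}$; the whole multi-scale bookkeeping is then delegated to Lata{\l}a's moment inequality for i.i.d.\ nonnegative sums. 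However, your version has a genuine gap at its core. First, the step you describe as ``sharp $L^p$ estimates for the independent weighted Bernoulli sums'' over the low shells is precisely the hard part of the lemma and is only asserted, never carried out: a per-shell Chernoff/union bound at level $\delta$ spends the deviation budget once per shell (which you correctly identify as fatal), and no concrete moment inequality is named that combines the shells without this loss. Moreover, the claimed output of that step is wrong as stated: a single class at scale $2^k \asymp \log(1/\delta)$ (i.e.\ just \emph{below} your critical scale) fires with probability $\gg \delta$ and then contributes $\asymp \log(1/\delta)\log\log(1/\delta)/n$, so the low shells cannot be bounded by $C(d_\lambda + \log(1/\delta))/n$; the anomalous factor is not confined to one critical shell.

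Second, the high-shell argument fails outright in a natural regime. You require $S_k = 0$ for all $k > k^*$ with $2^{k^*} \asymp \log(1/\delta)$, and bound the failure probability by $\sum_{k>k^*}\min(d, n/2^k)e^{-2\cdot 2^k/5}$. When $d_\lambda \gg 1/\delta$ (e.g.\ $\delta$ a small constant, $d = 10^5$, $n = 10^6$, all $np_j \asymp 10$), the first term of that sum is $\asymp d\,\delta^{O(1)} \gg \delta$, and indeed many high-shell classes \emph{are} underestimated with probability close to $1$; the target bound still holds there only because those firings are absorbed by the $62000\, d_\lambda/n$ budget, which your decomposition does not allow the high shells to use. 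The inequality you invoke to close this union bound is also reversed: $\delta > e^{-n/6}$ bounds $n/\log(1/\delta)$ from \emph{below} by $6$, not from above, so it gives no control on $\min(d, n/2^k)$. A repair would require raising the ``must not fire'' threshold to a scale $2^{k^{**}}$ with $2^{k^{**}}\log 2^{k^{**}} \gtrsim d_\lambda + \log(1/\delta)\log\log(1/\delta)$ and letting intermediate shells charge the $d_\lambda$ term --- at which point you are reconstructing, shell by shell, what the paper obtains in one stroke from the envelope domination plus Lata{\l}a. (A minor further point: deducing $\E[R_\lambda^p]\leq\E[\wt R_\lambda^p]$ from negative association needs Shao's convex-ordering comparison theorem, not just the Joag-Dev--Proschan reference; the paper sidesteps this with Poisson sampling at a cost of one $\delta$.)
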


Roughly speaking, inequality~\eqref{eq:tail-residual-laplace} will be used in the analysis of confidence-independent estimators such as the Laplace estimator (or of confidence-dependent estimators in the low confidence regime), while~\eqref{eq:tail-residual-conf} will be used in the analysis of the confidence-dependent estimators in the high-confidence regime.
Intuitively, the first term in~\eqref{eq:tail-residual-laplace} comes from the contribution of all classes, while the second term may come from single class $j$ with $p_j \asymp \log (1/\delta)/n$.

Observe that the residual $R_\lambda$ of Lemma~\ref{lem:tail-residual} is a sum of dependent random variables, since the counts $(N_j)_{1 \leq j \leq d}$ are dependent.
Up to a standard technique of Poisson sampling, one may reduce its control to that of a ``Poissonized'' sum involving independent summands, stated next:

\begin{lemma}
  \label{lem:tail-residual-poisson}
  Let $\wt N_1, \dots, \wt N_d$ be independent random variables, with $\wt N_j \sim \poissondist (\lambda_j/2)$ for $j = 1, \dots, d$.
  For any $\lambda \geq 1$, let $d_{\lambda} = | \set{1 \leq j \leq d : \lambda_j \geq 4 \lambda} |$ and
  \begin{equation*}
    \wt R_\lambda
    = \sum_{j \pp \lambda_j \geq 4 \lambda} \lambda_j \log \Big( \frac{2 \lambda_j}{\lambda} \Big) \bm 1 \Big( \wt N_j \leq \frac{\lambda_j}{4} \Big)
    \, .
  \end{equation*}
  For any $\delta \in (0, e^{-2})$,
  one has  
  \begin{equation}
    \label{eq:tail-residual-laplace-poisson}
    \P \Big( \wt R_1
    \geq 62000 \, d_1 + 106000 \log (1/\delta) \log \log (1/\delta)
    \Big)
    \leq \delta
    \, .
  \end{equation}
  In addition, for any $\delta \in (0, e^{-d}]$, if $\lambda = \log (1/\delta)/d$, then 
  \begin{equation}
    \label{eq:tail-residual-conf-poisson}
    \P \Big( \wt R_\lambda
    \geq %
    74000 \log (d)
    \log (1/\delta) 
    \Big)
    \leq \delta
    \, .
  \end{equation}
\end{lemma}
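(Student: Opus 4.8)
The plan is to prove Lemma~\ref{lem:tail-residual-poisson} by the $L^p$-moment method: bound $\|\wt R_\lambda\|_p = \E[\wt R_\lambda^p]^{1/p}$ for a suitable integer $p$ and apply Markov, $\P(\wt R_\lambda \ge e\|\wt R_\lambda\|_p) \le e^{-p}$, so that taking $p = \lceil\log(1/\delta)\rceil$ yields the failure probability $\delta$. As pointed out after Theorem~\ref{thm:upper-laplace}, the moment generating function is unusable here (the tails are super-exponential), so one genuinely works with raw moments.

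First I would record the reductions. Write $\wt R_\lambda = \sum_{j\in J}a_j B_j$ with $J = \{j : \lambda_j\ge 4\lambda\}$, $a_j = \lambda_j\log(2\lambda_j/\lambda)$, $B_j = \bm 1(\wt N_j\le\lambda_j/4)$; the $B_j$ are independent Bernoulli variables, and by the Poisson Chernoff bound (Lemma~\ref{lem:poisson-tail}), $q_j := \P(B_j = 1) = \P(\poissondist(\lambda_j/2)\le\lambda_j/4)\le e^{-D(\lambda_j/4,\lambda_j/2)} = e^{-c\lambda_j}$ with $c = (1-\log 2)/4$, so $q_j\le e^{-4c\lambda}$ on $J$. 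The feature responsible for the super-exponential tails, and the one to exploit, is that the weight $a_j$ exceeds $\log(1/q_j)\ge c\lambda_j$ only by a logarithmic factor: from $\lambda_j\le\tfrac1c\log(1/q_j)$ one gets $a_j\le\tfrac1c\log(1/q_j)\cdot\log\!\big(\tfrac{2}{c\lambda}\log(1/q_j)\big)$.

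Next I would bound $\E[\wt R_\lambda^p]$ for the weighted Bernoulli sum $\wt R_\lambda = \sum_{j\in J}a_j B_j$. Expanding the power and using $B_j\in\{0,1\}$, $\E[\wt R_\lambda^p] \le \sum_{k\ge 1}\binom{d_\lambda}{k}\max_{|S| = k}\big(\sum_{j\in S}a_j\big)^p\prod_{j\in S}q_j$; since $a_j$ is increasing and $q_j$ decreasing in $\lambda_j$, a rearrangement shows that the extremal sets concentrate on a single scale, reducing matters to optimizing $\binom{d_\lambda}{k}\big(k\,\mu\log(2\mu/\lambda)\big)^p e^{-ck\mu}$ over the number $k\le p$ of underestimated classes and their common scale $\mu\ge 4\lambda$. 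One checks that for $p\asymp\log(1/\delta)$ the optimum occurs either at $k$ of order up to $d_\lambda$ with $\mu$ of order $\lambda$ (the "bulk" contribution $\lesssim \lambda d_\lambda$, which is essentially $\|\wt R_\lambda\|_1 = \sum_j a_j q_j$) or at a small integer $k$ with $\mu\asymp\tfrac1c\log(d_\lambda/\delta)$ (the "deviation" contribution $\lesssim \log(d_\lambda/\delta)\,\log\!\big(\tfrac{2}{c\lambda}\log(d_\lambda/\delta)\big)$), giving $\|\wt R_\lambda\|_p\lesssim \lambda d_\lambda + \log(d_\lambda/\delta)\,\log\!\big(\tfrac{2}{c\lambda}\log(d_\lambda/\delta)\big)$. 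For $\lambda = 1$ this is $\lesssim d_1 + \log(1/\delta)\log\log(1/\delta)$ (splitting on whether $\log d_1 \le \log(1/\delta)$ or not; in the latter case $\log d\,\log\log d \ll d$ absorbs the deviation term), which is \eqref{eq:tail-residual-laplace-poisson}; and for \eqref{eq:tail-residual-conf-poisson} one takes $\lambda = \log(1/\delta)/d$ (so $\lambda\ge 1$, $\lambda d_\lambda\le\lambda d = \log(1/\delta)$, $\tfrac{2}{c\lambda}\log(d_\lambda/\delta)\le\tfrac{4d}{c}$), and the logarithmic factor becomes $O(\log d)$, so $\|\wt R_\lambda\|_p\lesssim\log(d)\log(1/\delta)$.

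The hard part is carrying out this moment optimization so as to obtain exactly one $\log\log$ factor (and with workable numerical constants). Crude moment inequalities for sums of independent nonnegative variables — Rosenthal's inequality, even with the sharp $p/\log p$ constant — are insufficient, because they effectively charge the entire $p$-th moment to one large summand and overshoot by a factor $\asymp\log(1/\delta)/\log\log(1/\delta)$; what rescues the bound is that, at the scale $\mu\asymp\tfrac1c\log(d_\lambda/\delta)$ relevant to the deviation term, the typical number of simultaneously active Bernoulli variables is a bounded integer (since $q_j\le e^{-4c\lambda}\le e^{-4c}$), so the ``$k$'' in the optimization stays $O(1)$ rather than growing with $p$ — this is the Latała-type refinement one must exploit rather than a generic Rosenthal bound. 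Keeping track of the range restriction $\lambda_j\ge 4\lambda$, the factor $d_\lambda^{1/p}$ (which is $O(1)$ precisely when $d_\lambda\le 1/\delta$), and the attendant case distinctions in $p$, $d_\lambda$ and $\lambda$, all with explicit constants, is the bulk of the remaining work.
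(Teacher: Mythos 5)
Your overall architecture is the same as the paper's: control raw $L^p$ moments of $\wt R_\lambda$ (the m.g.f.\ being useless for super-exponential tails) and conclude via $\P(\wt R_\lambda \geq e\|\wt R_\lambda\|_p)\leq e^{-p}$ with $p\asymp\log(1/\delta)$, and you correctly identify both the key structural fact (each weight $\lambda_j\log(2\lambda_j/\lambda)$ exceeds $\log(1/q_j)$ only logarithmically) and the reason a generic Rosenthal bound loses a $\log(1/\delta)/\log\log(1/\delta)$ factor. Where you diverge is in how the moment bound is executed. The paper first stochastically dominates each summand by a single explicit envelope variable $W^{(\lambda)}$ depending only on $\lambda$ (Lemma~\ref{lem:upper-envelope}), so that $\wt R_\lambda$ is dominated by an \emph{i.i.d.}\ sum of $d_\lambda$ copies of $W^{(\lambda)}$; it then computes $\|W^{(\lambda)}\|_s$ once (Lemma~\ref{lem:moment-w-lambda}) and feeds this into Lata{\l}a's moment inequality for i.i.d.\ nonnegative sums (Lemma~\ref{lem:latala}), which performs your entire combinatorial optimization as a black box. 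Your route is a bare-hands expansion of $\E[(\sum_j a_j B_j)^p]$ over the set of active Bernoullis; this can be made to work and is more self-contained, but it forces you to redo (with explicit constants) exactly the optimization that Lata{\l}a's theorem packages, and you concede that this is ``the bulk of the remaining work.''

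There is one step in your sketch that is wrong as stated and would need repair. You claim that ``a rearrangement shows that the extremal sets concentrate on a single scale,'' reducing the inner maximum to $\bigl(k\mu\log(2\mu/\lambda)\bigr)^p e^{-ck\mu}$. But for a fixed value of $\prod_{j\in S}q_j$, i.e.\ fixed $L=\sum_{j\in S}\lambda_j$, the map $(\lambda_j)_{j\in S}\mapsto\sum_{j\in S}\lambda_j\log(2\lambda_j/\lambda)$ is \emph{convex}, so it is maximized at an extreme point of $\{\lambda_j\geq 4\lambda,\ \sum_j\lambda_j=L\}$ --- one coordinate near $L$ and the rest at the boundary $4\lambda$ --- and the equal-scale configuration is where this slice is (essentially) minimized. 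Your single-scale value $k\mu\log(2\mu/\lambda)=L\log(2L/(k\lambda))$ therefore \emph{under}estimates the true maximum $\approx L\log(2L/\lambda)$ by a factor that can grow like $\log(2L/\lambda)$, so the reduction is not an upper bound. The fix is easy and does not change the answer: bound $\sum_{j\in S}a_j\leq L\log(2L/\lambda)$ and optimize the one-parameter family $\binom{d_\lambda}{k}\bigl(L\log(2L/\lambda)\bigr)^p e^{-cL}$ over $L\geq 4k\lambda$ and $k$, which still yields the mean-type term $\lesssim\lambda d_\lambda$ plus a deviation term $\lesssim\log(d_\lambda/\delta)\log\bigl(\tfrac{1}{c\lambda}\log(d_\lambda/\delta)\bigr)$; your subsequent case analysis in $d_1$ versus $1/\delta$, and the specialization $\lambda=\log(1/\delta)/d$, then go through as you describe.
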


\begin{proof}[Proof of Lemma~\ref{lem:tail-residual} from Lemma~\ref{lem:tail-residual-poisson}]
  We resort to the technique of \emph{Poisson sampling}.
  Specifically, let $(X_i)_{i \geq n+1}$ be an \iid sequence of random variables with distribution $P$, independent from $X_1, \dots, X_n$.
In addition, let $N \sim \poissondist (n/2)$ be independent from the sequence $(X_i)_{i \geq 1}$.
For $j = 1, \dots, d$, define
\begin{equation}
  \label{eq:poisson-count}
  \wt N_j
  = \sum_{1 \leq i \leq N} \indic{X_i = j}
  \, .
\end{equation}
It is a classical fact about Poisson random variables (which follows, \eg, from the combination of~\cite[Theorem~5.6 p.~100]{mitzenmacher2017probability} and~\cite[Exercise~2.1.11 p.~55]{durrett2010probability}) that $\wt N_1, \dots, \wt N_d$ are independent random variables, with $\wt N_j \sim \poissondist (n p_j/2)$ for $j=1, \dots, d$.
Consider now the event $E = \{ N \leq n \}$; by the Poisson deviation bound (Lemma~\ref{lem:poisson-tail}), one has
\begin{equation}
  \label{eq:proba-poisson-sample}
  \P (E)
  = 1 - \P (N > n)
  \geq 1 - e^{- D (n, n/2)} %
  \geq 1 - e^{-n/6}
  \geq 1-\delta
  \, .
\end{equation}
In addition, under $E$ one has $\wt N_j \leq N_j$ for $j = 1, \dots, d$, hence letting $\lambda_j = n p_j$, %
\begin{equation}
  \label{eq:domination-poisson-residual}
  R_\lambda =
  \sum_{j \pp p_j \geq 4 \lambda/n} p_j \log \Big( \frac{2 n p_j}{\lambda} \Big) \bm 1 \Big( N_j \leq \frac{n p_j}{4} \Big)  
  \leq \frac{1}{n} \sum_{j \pp \lambda_j \geq 4 \lambda} \lambda_j \log \Big( \frac{2 \lambda_j}{\lambda} \Big) \bm 1 \Big( \wt N_j \leq \frac{\lambda_j}{4} \Big)
  \, .
\end{equation}
The right-hand side of~\eqref{eq:domination-poisson-residual} is controlled in Lemma~\ref{lem:tail-residual-poisson}, which concludes the proof.
\end{proof}

We now turn to the proof of Lemma~\ref{lem:tail-residual-poisson}.
It should be noted that %
$\wt R_\lambda$
is a nonnegative weighted sum of independent Bernoulli variables, with varying coefficient and parameters.
Perhaps a natural approach to control such a sum is to apply Bennett's inequality~\cite[Theorem~2.9 p.~35]{boucheron2013concentration}.
Unfortunately, this would lead to a highly suboptimal tail bound.
Roughly speaking, the reason why Bennett's inequality fails to capture the right tail behavior of this sum is that it is highly inhomogeneous, in the sense that the coefficients of the independent Bernoulli variables may be of different orders of magnitude.
In order to handle this structure, we
instead evaluate the upper envelope of the tails of the individual summands, and then
resort to a sharp estimate from Lata{\l}a~\cite{latala1997estimation} on moments of sums of independent random variables.

We start with the control on the tails of the individual summands that comprise $\wt R_\lambda$:

\begin{lemma}
  \label{lem:upper-envelope}
  For any $\lambda \geq 1$, let $W^{(\lambda)}$ be a random variable such that $\P (W^{(\lambda)} = 0) = 1- e^{-2\lambda/7}$, and $\P (W^{(\lambda)} \geq t \log (t/\lambda)) = e^{- t/14}$ for any $t\geq 4 \lambda$.
  Then, for any $j = 1, \dots, d$ such that $\lambda_j \geq 4 \lambda$, the random variable
  \begin{equation*}
    V_j^{(\lambda)}
    = \lambda_j \log \Big( \frac{\lambda_j}{\lambda} \Big) \ind \Big( \wt N_j \leq \frac{\lambda_j}{4} \Big)
  \end{equation*}
  is stochastically dominated by $W^{(\lambda)}$, in the sense that $\P (V_j^{(\lambda)} \geq w) \leq \P (W^{(\lambda)} \geq w)$ for any $w \in \R$.
\end{lemma}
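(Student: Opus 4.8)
The plan is to exploit the fact that $V_j^{(\lambda)}$ is a \emph{two-valued} random variable, which collapses the stochastic-domination statement to a single scalar tail comparison; the two ingredients are then a Poisson lower-tail estimate and the explicit form of the law of $W^{(\lambda)}$.

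\emph{Step 1 (reduction to one point).} Fix $j$ with $\lambda_j \geq 4\lambda$ and set $v_j := \lambda_j \log(\lambda_j/\lambda)$; since $\lambda_j \geq 4\lambda > \lambda$ (recall $\lambda \geq 1$) one has $v_j > 0$, so $V_j^{(\lambda)}$ takes only the values $v_j$ (on $\{\wt N_j \leq \lambda_j/4\}$) and $0$ (otherwise). Consequently $\P(V_j^{(\lambda)} \geq w)$ equals $1$ for $w \leq 0$, equals $\P(\wt N_j \leq \lambda_j/4)$ for $0 < w \leq v_j$, and vanishes for $w > v_j$. As $W^{(\lambda)} \geq 0$ almost surely and $w \mapsto \P(W^{(\lambda)} \geq w)$ is non-increasing, the claimed domination follows once I establish $\P(\wt N_j \leq \lambda_j/4) \leq \P(W^{(\lambda)} \geq w)$ for every $w \in (0, v_j]$.

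\emph{Step 2 (Poisson lower tail).} Since $\wt N_j \sim \poissondist(\lambda_j/2)$, Lemma~\ref{lem:poisson-tail} gives $\P(\wt N_j \leq \lambda_j/4) \leq e^{-D(\lambda_j/4,\lambda_j/2)}$, and a direct computation yields $D(\lambda_j/4,\lambda_j/2) = \frac{1-\log 2}{4}\,\lambda_j$. Since $(1-\log 2)/4 > 1/14$, this gives $\P(\wt N_j \leq \lambda_j/4) \leq e^{-\lambda_j/14}$ — this inequality is precisely what dictates the constant $14$ in the definition of $W^{(\lambda)}$.

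\emph{Step 3 (matching against the law of $W^{(\lambda)}$).} I would first record that $g(t) := t\log(t/\lambda)$ has $g'(t) = \log(t/\lambda) + 1 > 0$ for $t \geq 4\lambda$ (as then $t/\lambda \geq 4$), so $g$ is increasing on $[4\lambda,\infty)$ and in particular $v_j = g(\lambda_j) \geq g(4\lambda) = 4\lambda\log 4$. Moreover the definition of $W^{(\lambda)}$ is consistent (the two prescriptions agree at $t = 4\lambda$, both giving probability $e^{-2\lambda/7}$) and forces $W^{(\lambda)} \in \{0\}\cup[4\lambda\log 4,\infty)$ almost surely, so $\P(W^{(\lambda)} \geq w) = e^{-2\lambda/7}$ for $0 < w \leq 4\lambda\log 4$ while $\P(W^{(\lambda)} \geq w) = e^{-g^{-1}(w)/14}$ for $w \geq 4\lambda\log 4$. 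I then split: if $0 < w \leq 4\lambda\log 4$, then $\P(W^{(\lambda)} \geq w) = e^{-2\lambda/7} = e^{-4\lambda/14} \geq e^{-\lambda_j/14}$ because $\lambda_j \geq 4\lambda$; if $4\lambda\log 4 < w \leq v_j$, then writing $w = g(t)$ with $t := g^{-1}(w) \in (4\lambda, \lambda_j]$ (valid by monotonicity of $g$ on $[4\lambda,\lambda_j]$), I get $\P(W^{(\lambda)} \geq w) = e^{-t/14} \geq e^{-\lambda_j/14}$. Combining either case with Step 2 finishes the proof.

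\emph{Main obstacle.} There is essentially no conceptual obstacle; the proof is a short case analysis. The only points requiring care are the bookkeeping that the law of $W^{(\lambda)}$ is well-defined and supported on $\{0\}\cup[4\lambda\log 4,\infty)$ — so that the case split on the size of $w$ is exhaustive and the tail function is explicitly known — and the elementary numerical verification that $(1-\log 2)/4 \geq 1/14$, which is what makes the Poisson tail exponent compatible with the definition of $W^{(\lambda)}$.
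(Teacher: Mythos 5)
Your proposal is correct and follows essentially the same route as the paper's proof: the same Poisson lower-tail bound $\P(\wt N_j \leq \lambda_j/4) \leq e^{-(1-\log 2)\lambda_j/4} \leq e^{-\lambda_j/14}$, and the same case split on $w$ relative to $4\lambda\log 4$ using the monotonicity of $t \mapsto t\log(t/\lambda)$. Your reformulation via the two-valued nature of $V_j^{(\lambda)}$ is just a slightly cleaner packaging of the same argument.
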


In particular, for $\lambda = 1$, Lemma~\ref{lem:upper-envelope} asserts that for any $\delta < e^{-2/7}$,
the quantile of order $1-\delta$ of $V_j^{(1)}$ is smaller than $14 \log (1/\delta) \log (14 \log (1/\delta)) \asymp \log (1/\delta) \log \log (1/\delta)$, \emph{regardless of the value of $\lambda_j \geq 4$}.
By independence of the variables $V_j^{(\lambda)}$ and by Lemma~\ref{lem:stoch-domin}, we deduce from Lemma~\ref{lem:upper-envelope} that the random variable $\wt R_\lambda$ is stochastically dominated by $\sum_{j=1}^{d_\lambda} W_j^{(\lambda)}$, where $W_1^{(\lambda)}, W_2^{(\lambda)}, \dots$ are \iid random variables with the same distribution as $W^{(\lambda)}$.

\begin{proof}[Proof of Lemma~\ref{lem:upper-envelope}]
  First, since $\wt N_j \sim \poissondist (\lambda_j/2)$, by the Poisson deviation bound (Lemma~\ref{lem:poisson-tail}) we have
  \begin{equation}
    \label{eq:proof-envelope-proba-0}
    \P \Big( \wt N_j \leq \frac{\lambda_j}{4} \Big)
    \leq \exp \Big( - D \Big( \frac{\lambda_j}{4}, \frac{\lambda_j}{2} \Big) \Big)
    = \exp \big( - (1- \log 2) \lambda_j/4 \big)
    \leq e^{- \lambda_j/14}
    \, .
  \end{equation}
  We need to show that $\P ( V_j^{(\lambda)} \geq w) \leq \P (W^{(\lambda)} \geq w)$ for any $w \in \R$.
  For $w \leq 0$, both probabilities are equal to $1$.
  For $0 < w \leq 4 \log(4) \lambda$, this is a consequence of~\eqref{eq:proof-envelope-proba-0} as $\P (V_j^{(\lambda)} \geq w) \leq \P (\wt N_j \leq \lambda_j/4) \leq e^{-\lambda_j/14} \leq e^{-4 \lambda/14} = e^{-2\lambda/7} = \P (W^{(\lambda)} \geq w)$.

  For $w > 4 \log (4) \lambda$, using that the map $t \mapsto t \log (t/\lambda)$ is an increasing bijection from $(4\lambda, + \infty)$ to $(4 \log (4) \lambda, + \infty)$, we may write $w = t \log (t/\lambda)$ for some $t> 4\lambda$.
  There are now two cases.
  First, if $\lambda_j \geq t$, then by~\eqref{eq:proof-envelope-proba-0} $\P (V_j^{(\lambda)} \geq w) \leq \P (\wt N_j \leq \lambda_j/4) \leq e^{-\lambda_j/14} \leq e^{-t/14} = \P (W^{(\lambda)} \geq w)$.
  On the other hand, if $\lambda_j < t$, then $V_j^{(\lambda)} \leq \lambda_j \log (\lambda_j/\lambda) < t \log (t/\lambda)$, thus $\P (V_j^{(\lambda)} \geq w) = 0$ and the inequality also holds.
\end{proof}

We now aim to obtain a high-probability bound on $\sum_{j=1}^{d_\lambda} W_j^{(\lambda)}$, which is a sum of \iid nonnegative random variables.
Unfortunately, since $W^{(\lambda)}$ has super-exponential tails, its moment generating functions is infinite at any positive value, thus one cannot rely on the Chernoff approach to obtain such a tail bound.
We will instead work with its moments, using a moment estimate of Lata{\l}a~\cite{latala1997estimation}.
Specifically, Lemma~\ref{lem:latala} below is a consequence of (part of) \cite[Corollary~1]{latala1997estimation} obtained by tracking the numerical constants in this result.
We recall that, for any $p \in \R^+$ such that $p \geq 1$ and any real random variable $Z$, we let $\norm{Z}_p = \E [|Z|^p]^{1/p} \in \R^+ \cup \{ +\infty\}$.

\begin{lemma}[\cite{latala1997estimation}, Corollary~1]
  \label{lem:latala}
  Let $Z, Z_{1}, \dots, Z_{m}$ be \iid nonnegative random variables.
  Then, for any $p \in [1, + \infty)$,
  \begin{equation*}
    \bigg\|\sum_{i=1}^{m}Z_{i} \bigg\|_{p} 
    \leq 2e^{2} \sup%
    \Big\{ \frac{p}{s}\Big(\frac{m}{p}\Big)^{1/s}  \|Z\|_{s} 
    : \max \Big( 1, \frac{p}{m} \Big) \leq s \leq p
    \Big\} \, . 
  \end{equation*}
\end{lemma}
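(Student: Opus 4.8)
The plan is to deduce Lemma~\ref{lem:latala} from the two-sided moment bounds of Latała~\cite{latala1997estimation} for sums of independent nonnegative random variables, keeping track of the absolute constants that occur there. In the generality we need, Latała's estimate asserts that for independent nonnegative $Z_1,\dots,Z_m$ and any $p\geq1$,
\[
  \Big\|\sum_{i=1}^m Z_i\Big\|_p
  \;\asymp\; \sup\Big\{\tfrac ps\Big(\tfrac1p\sum_{i=1}^m\|Z_i\|_s^s\Big)^{1/s} : \max(1,p/m)\leq s\leq p\Big\},
\]
with an absolute constant in the upper bound. Specialising to the i.i.d.\ case gives $\sum_{i=1}^m\|Z_i\|_s^s=m\,\|Z\|_s^s$, so that $\big(\tfrac1p\sum_i\|Z_i\|_s^s\big)^{1/s}=(m/p)^{1/s}\|Z\|_s$ and the supremum becomes exactly the one in the statement. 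The only substantive point is therefore to certify that the absolute constant in the upper bound can be taken to be $2e^2$.

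To do this I would retrace the proof of the upper bound in Latała's paper. It splits naturally into: (i) a symmetrization step, bounding $\|\sum_i(Z_i-\E Z_i)\|_p$ by $\|\sum_i(Z_i-Z_i')\|_p$ (with $Z_i'$ an independent copy) at the cost of a factor $2$, together with the observation that the centering term $\sum_i\E Z_i=m\,\E Z$ is itself comparable to the value of the supremum at $s=\max(1,p/m)$ — indeed there this value is at least $m\,(m/p)^{m/p}\,\E Z\geq e^{-1/e}\,m\,\E Z$ — so the centering costs only a further bounded factor; and (ii) the core estimate for symmetric summands, whose proof truncates each variable at the level $t$ for which the expected number of exceedances is of order $p$, controls the truncated parts by a moment (Bernstein/Bennett-type) inequality and the large parts via a union bound on exceedances, and then optimises over $t$. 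Each step carries an explicit multiplicative constant, and multiplying them through — morally a factor $2$ from symmetrization and a factor $e^2$ from the truncation-level optimisation — produces the bound $2e^2$. Here one uses crucially that for nonnegative variables the admissible range of $s$ is $[\max(1,p/m),p]$, with no lower restriction $s\geq2$ as in the symmetric case.

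The main obstacle I anticipate is purely one of bookkeeping: Latała states his bounds with unspecified absolute constants, so one must work through the chain of auxiliary lemmata (the symmetric-case estimate and the truncation/discretisation lemma underlying it) and check that no hidden constant overshoots the budget $2e^2$. A cleaner, fully self-contained alternative, which I would fall back on if those constants prove awkward to track, is to prove the i.i.d.\ upper bound from scratch: with $t$ chosen so that $m\,\P(Z>t)\asymp p$, write $Z_i=Z_i\ind(Z_i\leq t)+Z_i\ind(Z_i>t)$, bound $\big\|\sum_i Z_i\ind(Z_i\leq t)\big\|_p$ by a moment version of Bernstein's inequality and $\big\|\sum_i Z_i\ind(Z_i>t)\big\|_p$ by relating it to the $p$-th moment of a Binomial count of exceedances (together with $\|Z\ind(Z>t)\|_s$ estimates), and optimise over $t$; every constant in this route is transparent and is readily seen to stay below $2e^2$. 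As a final sanity check on the constant I would test the inequality in the extremal regimes — a single summand ($m=1$, where it reduces to $p^{-1/p}\|Z\|_p\leq 2e^2\|Z\|_p$) and sums of i.i.d.\ exponential or Bernoulli variables, where both sides can be computed explicitly.
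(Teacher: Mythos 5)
Your approach coincides with the paper's: the paper offers no independent proof of this lemma, stating only that it is obtained from \cite[Corollary~1]{latala1997estimation} by tracking the numerical constants there, which is exactly your primary plan (specialize the two-sided i.i.d.\ estimate to get the supremum in the stated form and certify the constant $2e^2$). One small caveat on your sketch of Latała's internals: for nonnegative summands his argument does not go through symmetrization but works directly with the functional $\sum_i \log \E\big(1+X_i/t\big)^p$ (Theorem~1 of that paper), so the constant-tracking follows that route rather than the symmetric-case machinery you describe; this does not affect the viability of the plan.
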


In order to apply Lemma~\ref{lem:latala}, we need to control the $L^p$ norm of $W^{(\lambda)}$.
This is achieved in the following lemma.

\begin{lemma}
  \label{lem:moment-w-lambda}
  For any $p \in [1, + \infty)$, one has
  \begin{equation}
    \label{eq:moment-w-lambda}
    \big\| W^{(\lambda)} \big\|_p
    \leq 215 p \log \Big( \max \Big\{ e, \frac{50 p}{\lambda} \Big\} \Big)
    \, .
  \end{equation}
\end{lemma}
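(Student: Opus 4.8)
The plan is to bound the raw moment $\|W^{(\lambda)}\|_p^p = \int_0^\infty p\, w^{p-1}\,\P(W^{(\lambda)} \geq w)\,\di w$ directly from the tail of $W^{(\lambda)}$ (the moment generating function being infinite here, so Chernoff's method is unavailable). The key structural remark is that $W^{(\lambda)}$ takes only the value $0$ or values in $[4\lambda\log 4, \infty)$: evaluating the second defining relation at $t = 4\lambda$ gives $\P(W^{(\lambda)} \geq 4\lambda\log 4) = e^{-4\lambda/14} = e^{-2\lambda/7} = \P(W^{(\lambda)} > 0)$. Hence
\begin{equation*}
  \big\|W^{(\lambda)}\big\|_p^p
  = e^{-2\lambda/7}\,(4\lambda\log 4)^p
  + \int_{4\lambda\log 4}^{\infty} p\, w^{p-1}\,\P\big(W^{(\lambda)} \geq w\big)\,\di w
  \, .
\end{equation*}
The first (``atom'') term equals $\big(4\log 4 \cdot \lambda\, e^{-2\lambda/(7p)}\big)^p$; using $\lambda\, e^{-2\lambda/(7p)} \leq p\,\max_{x>0} x e^{-2x/7} = \tfrac{7p}{2e}$ (substitute $x = \lambda/p$) and $4\log 4\cdot\tfrac{7}{2e} < 8$, it is at most $(8p)^p$, hence at most $\big(8p\log\max\{e,50p/\lambda\}\big)^p$ since $\log\max\{e,\cdot\} \geq 1$.

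For the integral term I would change variables along $w = t\log(t/\lambda)$, an increasing bijection of $[4\lambda,\infty)$ onto $[4\lambda\log 4, \infty)$ with $\di w = (\log(t/\lambda) + 1)\,\di t$ and $\P(W^{(\lambda)} \geq w) = e^{-t/14}$. Since $t \geq 4\lambda$ forces $\log(t/\lambda) \geq \log 4 > 1$, so $\log(t/\lambda) + 1 \leq 2\log(t/\lambda)$, the integral term is at most $2p\int_{4\lambda}^{\infty} t^{p-1}(\log(t/\lambda))^p\, e^{-t/14}\,\di t$; rescaling $t = 14 s$ and setting $\mu = \lambda/14$ turns this into $2p\cdot 14^p \int_{4\mu}^{\infty} s^{p-1}(\log(s/\mu))^p\, e^{-s}\,\di s$. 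It then remains to establish the key estimate
\begin{equation*}
  \int_{4\mu}^{\infty} s^{p-1}(\log(s/\mu))^p\, e^{-s}\,\di s
  \;\leq\; \big(2p\log\max\{e,\, 2p/\mu\}\big)^p
  \, ;
\end{equation*}
since $2p/\mu = 28p/\lambda \leq 50p/\lambda$, combining this with the atom bound via $a^p + b^p \leq (a+b)^p$ and $(2p)^{1/p}\leq e^{2/e}$ gives $\|W^{(\lambda)}\|_p \leq 67\, p\log\max\{e,50p/\lambda\} \leq 215\, p\log\max\{e,50p/\lambda\}$.

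The crux is the displayed integral estimate. The idea is to peel off a factor $e^{-s/2}$ to absorb the logarithmic factor and reduce to a Gamma integral:
\begin{equation*}
  \int_{4\mu}^{\infty} s^{p-1}(\log(s/\mu))^p\, e^{-s}\,\di s
  \;\leq\; \Big(\sup_{s \geq 4\mu}\,(\log(s/\mu))^p\, e^{-s/2}\Big)\int_0^\infty s^{p-1} e^{-s/2}\,\di s
  \;=\; \Big(\sup_{s \geq 4\mu}\,(\log(s/\mu))^p\, e^{-s/2}\Big)\, 2^p\,\Gamma(p)
  \, ,
\end{equation*}
after which $2^p\Gamma(p) \leq (2p)^p$ (using $\Gamma(p)\leq p^p$ for $p\geq 1$). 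To bound the supremum, note that $s\mapsto p\log\log(s/\mu) - s/2$ is concave on $[4\mu,\infty)$ (its derivative $\tfrac{p}{s\log(s/\mu)} - \tfrac12$ is decreasing), so its maximum is attained either at $s = 4\mu$ or at the unique $s^{\star}$ with $s^{\star}\log(s^{\star}/\mu) = 2p$. In the first case $4\mu\log 4 \geq 2p$, hence $e^{-2\mu} \leq e^{-p/\log 4}$ and $(\log 4)^p e^{-2\mu} \leq (\log 4 \cdot e^{-1/\log 4})^p \leq 1$; in the second case $e^{-s^{\star}/2}\leq 1$ and, writing $\ell = \log(s^{\star}/\mu) \geq \log 4 > 1$, the relation $s^{\star} = 2p/\ell$ yields $\ell = \log(2p/(\ell\mu)) \leq \log(2p/\mu)$, so $(\log(s^{\star}/\mu))^p e^{-s^{\star}/2} \leq \ell^p \leq (\log(2p/\mu))^p$. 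Either way the supremum is at most $\big(\log\max\{e, 2p/\mu\}\big)^p$, as needed. (Alternatively, one can skip the supremum and split $\int_{4\mu}^\infty$ at $s = 2p$, using $\log(s/\mu)\leq\log\max\{4,2p/\mu\}$ on $[4\mu,2p]$ and monotone decay of the integrand on $[2p,\infty)$; this trades calculus for a little more bookkeeping.) Beyond this, the only ingredients are the elementary facts $\Gamma(p)\leq p^p$, $\max_{x>0} xe^{-2x/7} = \tfrac{7}{2e}$, $(2p)^{1/p}\leq e^{2/e}$, and $a^p+b^p\leq(a+b)^p$ for $p\geq 1$.
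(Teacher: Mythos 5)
Your proof is correct and follows essentially the same route as the paper's: both express $\|W^{(\lambda)}\|_p^p$ as a tail integral, isolate the atom at $4\lambda\log 4$, and control the remaining integral by splitting the exponential decay in half and bounding a supremum, with a case distinction according to whether the relevant critical point lies inside the integration range. Your variant, which keeps $s^{p-1}$ inside a Gamma integral and takes the supremum only over the logarithmic factor $(\log(s/\mu))^p e^{-s/2}$, is marginally cleaner and yields the sharper constant $67$ in place of $215$.
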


\begin{proof}[Proof of Lemma~\ref{lem:moment-w-lambda}]
  We have
  \begin{align}
    \label{eq:proof-moment-w}
    \big\| W^{(\lambda)} \big\|_p^p
    &= \E \big[ (W^{(\lambda)})^p \big]
      = \int_{0}^\infty \P \big( (W^{(\lambda)})^p \geq u \big) \di u \nonumber \\
    &= \int_0^{(4 \log (4) \lambda)^p} e^{-2\lambda/7} \di u + \int_{4\log(4) \lambda}^{\infty} \P \big( W^{(\lambda)}\geq w \big) p w^{p-1} \di w  \nonumber \\
    &= (4 \log (4) \lambda)^p e^{-2\lambda/7} + \int_{4 \lambda}^{\infty} \P \Big( W^{(\lambda)} \geq t \log \Big( \frac{t}{\lambda} \Big) \Big) p \Big( t \log \Big( \frac{t}{\lambda} \Big) \Big)^{p-1} \log \Big( \frac{e t}{\lambda} \Big) \di t \nonumber \\
    &\leq (4 \log (4) \lambda)^p e^{-2\lambda/7} + \frac{p}{2\lambda} \int_{4 \lambda}^{\infty} e^{-t/14} \Big( t \log \Big( \frac{t}{\lambda} \Big) \Big)^{p} \di t
      \, ,
  \end{align}
  where we used that $\log (e t/\lambda) \leq 2 \log (t/\lambda) \leq t \log (t/\lambda)/(2\lambda)$.
  Now, for any $t \geq 4 \lambda$, let
  \begin{equation*}
    \phi_\lambda (t)
    = \log \Big\{ e^{-t/28} \Big( t \log \Big( \frac{t}{\lambda} \Big) \Big)^p \Big\}
    = p \log (t) + p \log \log \Big( \frac{t}{\lambda} \Big) - \frac{t}{28}
    \, .
  \end{equation*}
  Since
  \begin{equation*}
    \phi'_\lambda (t)
    = \frac{p}{t} + \frac{p}{t \log (t/\lambda)} - \frac{1}{28}
    \leq \Big( 1 + \frac{1}{\log 4} \Big) \frac{p}{t} - \frac{1}{28}
    \, ,
  \end{equation*}
  we deduce that $\phi'_\lambda (t) < 0$ for any $t \geq 50 p$, thus $\phi_\lambda$ decreases over $[\max\{50 p, 4\lambda\}, + \infty)$.

  \paragraph{Small $p$.}
  We first consider the case where $50 p \leq 4 \lambda$.
  In this case, we get
  \begin{equation*}
    \sup_{t \geq 4 \lambda} \Big\{ e^{-t/28} \Big( t \log \Big( \frac{t}{\lambda} \Big) \Big)^p \Big\}
    = e^{-\lambda/7} \big( 4 \log (4) \lambda \big)^p
    \, ,
  \end{equation*}
  and therefore
  \begin{equation*}
    \frac{p}{2\lambda} \int_{4 \lambda}^{\infty} e^{-t/14} \Big( t \log \Big( \frac{t}{\lambda} \Big) \Big)^{p} \di t
    \leq \frac{1}{25} e^{-\lambda/7} \big( 4 \log (4) \lambda \big)^p \int_{4 \lambda}^{\infty} e^{-t/28} \di t
    = \frac{28}{25} \big( 4 \log (4) \lambda \big)^p e^{-2\lambda/7}
    \, .
  \end{equation*}
  Plugging this inequality into~\eqref{eq:proof-moment-w}, we deduce that
  \begin{equation*}
    \big\| W^{(\lambda)} \big\|_p^p
    \leq 2.2 \big( 4 \log(4) \lambda \big)^p e^{-2\lambda/7}
    \, .
  \end{equation*}
  We now apply the bound $(e x/p)^p \leq e^x$ for $x \in \R^+$ to $x = \lambda/7$, which gives
  \begin{equation*}
    \big\| W^{(\lambda)} \big\|_p^p
    \leq 2.2 \big( 4 \log (4) \big)^p \Big( \frac{7 p}{e} \Big)^p e^{-\lambda/7} %
    \leq 2.2 (15 p)^p e^{-12.5 p/7}
    \leq 2.2 (2.4 p)^p
    \, ,
  \end{equation*}
  so that $\norm{W^{(\lambda)}}_p \leq (2.2)^{1/p} \cdot 2.4 p \leq 6p$.

  \paragraph{Large $p$.}

  We now turn to the case where $4 \lambda \leq 50p$.
  In this case, we get
  \begin{align*}
    &\sup_{t \geq 4 \lambda} \Big\{ e^{-t/28} \Big( t \log \Big( \frac{t}{\lambda} \Big) \Big)^p \Big\}
    = \sup_{4 \lambda \leq t \leq 50 p} \Big\{ e^{-t/28} \Big( t \log \Big( \frac{t}{\lambda} \Big) \Big)^p \Big\} \\
    &\leq \sup_{t \in \R^+} \Big\{ e^{-t/28} t^p \Big\} \Big( \log \Big( \frac{50 p}{\lambda} \Big) \Big)^p
      = e^{-p} (28 p)^p \Big( \log \Big( \frac{50 p}{\lambda} \Big) \Big)^p
      \, .
  \end{align*}
  This implies that
  \begin{align*}
    \frac{p}{2\lambda} \int_{4 \lambda}^{\infty} e^{-t/14} \Big( t \log \Big( \frac{t}{\lambda} \Big) \Big)^{p} \di t
    &\leq \frac{p}{2} \Big( 28 e^{-1}p \log \Big( \frac{50 p}{\lambda} \Big) \Big)^p \int_{4 \lambda}^{\infty} e^{-t/28} \di t \\
    &\leq 14 p \Big( 28 e^{-1} p \log \Big( \frac{50 p}{\lambda} \Big) \Big)^p
      \, .
  \end{align*}
  Plugging this inequality into~\eqref{eq:proof-moment-w} and using the bound $( 4 \log(4) \lambda )^p e^{-2\lambda/7}  \leq (2.4 p)^p$ shown above, we get
  \begin{align*}
    \big\| W^{(\lambda)} \big\|_p
    &\leq \Big[ (2.4 p)^p + 14 p \Big( 28 e^{-1} p \log \Big( \frac{50 p}{\lambda} \Big) \Big)^p \Big]^{1/p} \\
    &\leq 2.4 p + 14 p^{1/p} \cdot 28 e^{-1} p \log \Big( \frac{50 p}{\lambda} \Big) \\
    &\leq 2.4 p + 210 p \log \Big( \frac{50 p}{\lambda} \Big)
      \leq 215 \log \Big( \frac{50p}{\lambda} \Big)
    \, .
  \end{align*}
  Hence, the desired bound holds for all $p \geq 1$.
\end{proof}

Combining the moment estimate of Lemma~\ref{lem:moment-w-lambda} with Lemma~\ref{lem:latala}, we obtain the following control on the moments of the term that dominates residuals when $\lambda = 1$.

\begin{lemma}
  \label{lem:moment-sum-w-laplace}
  For any $p \geq 1$, one has
  \begin{equation}
    \label{eq:moment-sum-w-laplace}
    \bigg\| \sum_{j=1}^{d_1} W_j^{(1)} \bigg\|_p
    \leq 15000 d_1 + 4600 p \log (50 p)
    \, .
  \end{equation}
\end{lemma}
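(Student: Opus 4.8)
The plan is to combine Lemma~\ref{lem:latala} with Lemma~\ref{lem:moment-w-lambda}, and then carry out an elementary optimization over the free parameter $s$ appearing in Latała's bound. First I would apply Lemma~\ref{lem:latala} with $m = d_1$ and $Z_j = W_j^{(1)}$, obtaining
\[
\Big\| \sum_{j=1}^{d_1} W_j^{(1)} \Big\|_p
\leq 2 e^2 \sup\Big\{ \frac ps \Big( \frac{d_1}{p} \Big)^{1/s} \big\| W^{(1)} \big\|_s : \max(1, p/d_1) \leq s \leq p \Big\}
\, .
\]
Every admissible $s$ satisfies $s \geq 1$, so $\max\{ e, 50 s \} = 50 s$ and Lemma~\ref{lem:moment-w-lambda} (with $\lambda = 1$) gives $\| W^{(1)} \|_s \leq 215\, s \log(50 s)$. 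Substituting this and cancelling the factor $s$, the estimate reduces to $\big\| \sum_j W_j^{(1)} \big\|_p \leq 2 e^2 \cdot 215\, p \cdot \sup_s f(s)$ with $f(s) = (d_1/p)^{1/s} \log(50 s)$. It therefore suffices to show that $p \sup_s f(s) \leq p \log(50 p) + d_1 \log 50$; the claim then follows from the numerical inequalities $2 e^2 \cdot 215 \log 50 \leq 15000$ and $2 e^2 \cdot 215 \leq 4600$.

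To control $\sup_s f(s)$ I would split into two cases. If $d_1 \leq p$, then $(d_1/p)^{1/s} \leq 1$ and $\log(50 s) \leq \log(50 p)$ on the whole range, so $\sup_s f(s) \leq \log(50 p)$ and we are done. If $d_1 > p$, the admissible range is $[1,p]$, and the crucial observation is that $f$ attains its maximum over $[1,p]$ at one of the endpoints $s = 1$ or $s = p$. Indeed, setting $L = \log(d_1/p) > 0$, one computes that $(\log f)'(s) = -L/s^2 + 1/(s \log(50 s))$ has, on $[1, \infty)$, the sign of the strictly convex function $\psi(s) = s - L \log(50 s)$. Since $\psi(s) \to +\infty$ as $s \to \infty$ and its global minimizer is at $s = L$, the set $\{ s \geq 1 : \psi(s) < 0 \}$ is an interval; moreover the smaller zero of $\psi$, when it exists, lies in $(1/50, e/50) \subset (0,1)$ (as $\psi > 0$ on $(0,1/50)$ and $\psi(e/50) = e/50 - L \leq 0$ whenever $\psi$ has zeros), so that interval has the form $[1, b)$ with $b \geq 1$. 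Hence $f$ is nonincreasing on $[1,b]$ and nondecreasing on $[b, p]$, giving $\sup_{[1,p]} f = \max\{ f(1), f(p) \}$.

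It remains to bound the two endpoint values. We have $f(1) = (d_1/p) \log 50$, so $p\,f(1) = d_1 \log 50$. For $f(p)$, the elementary inequality $x^{1/p} \leq 1 + (x-1)/p$, valid for $x \geq 1$ and $p \geq 1$ (check $\phi(x) = 1 + (x-1)/p - x^{1/p}$ has $\phi(1)=0$ and $\phi'(x) = \tfrac1p(1 - x^{1/p-1}) \geq 0$), applied with $x = d_1/p$ gives $f(p) = (d_1/p)^{1/p}\log(50p) \leq (1 + d_1/p^2)\log(50p)$, whence $p\,f(p) \leq p\log(50p) + (d_1/p)\log(50p) \leq p\log(50p) + d_1\log 50$, using $\log(50p)/p \leq \log 50$ for $p \geq 1$. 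Therefore $p\max\{f(1),f(p)\} \leq p\log(50p) + d_1\log 50$, which is exactly what was needed. The substitutions and the closing numerical checks are routine; the one step demanding care is pinning down the shape of $f$ on $[1,p]$ — in particular that its interior critical point is always below $s=1$ — since the cruder bounds $(d_1/p)^{1/s} \leq d_1/p$ or $(d_1/p)^{1/s} \leq e$ (for $s \geq L$) each leak a spurious logarithmic factor, so this monotonicity analysis is the heart of the proof.
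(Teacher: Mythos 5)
Your proposal is correct and follows the paper's route: both apply Lata{\l}a's inequality (Lemma~\ref{lem:latala}) with the moment bound $\|W^{(1)}\|_s \leq 215\, s\log(50s)$ from Lemma~\ref{lem:moment-w-lambda}, so everything reduces to bounding $\sup_s (d_1/p)^{1/s}\log(50s)$ over the admissible range. The only divergence is in that elementary optimization: the paper splits the range into $[1,2]$, $[2,\sqrt{d_1/p}]$ and $[\sqrt{d_1/p},p]$ with crude bounds on each piece, whereas you show via the convexity of $\psi(s)=s-L\log(50s)$ that the supremum is attained at an endpoint and then bound $f(1)$ and $f(p)$ directly --- a cleaner argument that yields the same (in fact slightly better) constants, and your monotonicity analysis and numerical checks are sound.
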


\begin{proof}[Proof of Lemma~\ref{lem:moment-sum-w-laplace}]
  We start with the case where $p \geq d_1$.
  In this case, plugging Lemma~\ref{lem:moment-w-lambda} into Lemma~\ref{lem:latala} and bounding $d_1/p \leq 1$ gives:
  \begin{align*}
    \bigg\| \sum_{j=1}^{d_1} W_j^{(1)} \bigg\|_p
    &\leq 2 e^2 \sup \Big\{ \frac{p}{s}\Big(\frac{d_1}{p}\Big)^{1/s}  %
    215 s \log \Big( \max \Big\{ e, {50 s} \Big\} \Big)
      : \max \Big( 1, \frac{p}{d_1} \Big) \leq s \leq p \Big\} \\
    &\leq 2 e^2 \sup_{1 \leq s \leq p} \Big\{ 215 p \log (50 s) \Big\}
      = 430 e^2 p \log \big( 50 p \big)
      \, .
  \end{align*}
  We now turn to the case where $p \leq d_1$.
  In this case, Lemmas~\ref{lem:latala} and~\ref{lem:moment-w-lambda} imply that
  \begin{equation*}
    \bigg\| \sum_{j=1}^{d_1} W_j^{(1)} \bigg\|_p
    \leq 430 e^2 \sup_{1 \leq s \leq p} \Big\{ p \Big( \frac{d_1}{p} \Big)^{1/s} \log (50 s) \Big\}
    \, .
  \end{equation*}
  We bound the supremum over $1 \leq s \leq p$ as follows.
  If $1 \leq s \leq 2$, then (using that $d_1/p \geq 1$)
  \begin{equation*}
    p \Big( \frac{d_1}{p} \Big)^{1/s} \log (50 s)
    \leq p \Big( \frac{d_1}{p} \Big)^{1/s} \log (100)
    \leq p \Big( \frac{d_1}{p} \Big) \log (100)
    = \log (100) d_1
    \, .
  \end{equation*}
  If $s \geq 2$, we consider two cases.
  If $s \leq \sqrt{d_1/p}$, then
  \begin{equation*}
    p \Big( \frac{d_1}{p} \Big)^{1/s} \log (50 s)
    \leq p \Big( \frac{d_1}{p} \Big)^{1/2} \log \bigg( 50 \sqrt{\frac{d_1}{p}} \bigg)
    = d_1 \frac{\log(50) + \log (\sqrt{d_1/p})}{\sqrt{d_1/p}}
    \leq ( \log 50 + e^{-1} ) d_1
    \, .
  \end{equation*}
  Finally, if $\sqrt{d_1/p} \leq s \leq p$, then
  \begin{equation*}
    p \Big( \frac{d_1}{p} \Big)^{1/s} \log (50 s)
    \leq p s^{1/s} \log (50 p)
    \leq e^{1/e} p \log (50 p)
    \, .
  \end{equation*}
  Putting things together, for any value of $p$ one has
  \begin{equation*}
    \bigg\| \sum_{j=1}^{d_1} W_j^{(1)} \bigg\|_p
    \leq 430 e^2 \max \big\{ \log (100) d_1, e^{1/e} p \log (50 p) \big\}
    \, ,
  \end{equation*}
  which proves~\eqref{eq:moment-sum-w-laplace} after bounding the maximum by a sum and simplifying constants.
\end{proof}

Next, we obtain similarly a control of suitable large moments of the sum.

\begin{lemma}
  \label{lem:moment-sum-w-conf}
  Assume that $p \geq d$ and that $\lambda = p/d$.
  Then
  \begin{equation}
    \label{eq:moment-sum-w-conf}
    \bigg\| \sum_{j=1}^{d_\lambda} W_j^{(\lambda)} \bigg\|_p
    \leq 3200 \log (50 d) p
    \, .
  \end{equation}
\end{lemma}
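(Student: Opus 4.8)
The plan is to follow the same route as the proof of Lemma~\ref{lem:moment-sum-w-laplace}, namely combine Lata\l a's moment estimate (Lemma~\ref{lem:latala}) with the single-variable moment bound of Lemma~\ref{lem:moment-w-lambda}, but to exploit the fact that here we are in the regime $p \geq d$ with the specific scaling $\lambda = p/d$, which makes the argument much shorter than in the $\lambda = 1$ case. First I would note that $d_\lambda = |\{1 \leq j \leq d : \lambda_j \geq 4\lambda\}| \leq d \leq p$, so that $\max(1, p/d_\lambda) = p/d_\lambda \geq 1$; hence, applying Lemma~\ref{lem:latala} to the \iid variables $W_j^{(\lambda)}$ with $m = d_\lambda$, the supremum ranges over $s \in [p/d_\lambda, p] \subset [1, p]$.

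Next I would control $\| W^{(\lambda)} \|_s$ on this range using Lemma~\ref{lem:moment-w-lambda}. Since $\lambda = p/d$, we have $50 s/\lambda = 50 s d / p \leq 50 d$ for every $s \leq p$, and $50 d > e$, so Lemma~\ref{lem:moment-w-lambda} gives $\| W^{(\lambda)} \|_s \leq 215\, s \log(50 d)$ uniformly in $s \leq p$. Substituting into Lemma~\ref{lem:latala}, each term of the supremum becomes
\[
  \frac{p}{s}\Big(\frac{d_\lambda}{p}\Big)^{1/s} \cdot 215\, s \log(50 d)
  = 215 \log(50 d)\, p \,\Big(\frac{d_\lambda}{p}\Big)^{1/s}
  \leq 215 \log(50 d)\, p,
\]
where the last step uses $d_\lambda \leq p$, so that $(d_\lambda/p)^{1/s} \leq 1$. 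Taking the supremum over $s$ and multiplying by the prefactor $2e^2$ from Lemma~\ref{lem:latala} yields $\big\| \sum_{j=1}^{d_\lambda} W_j^{(\lambda)} \big\|_p \leq 2 e^2 \cdot 215 \log(50 d)\, p$, and since $2 e^2 \cdot 215 \approx 3177 \leq 3200$, this is the claimed bound~\eqref{eq:moment-sum-w-conf}.

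There is essentially no genuine obstacle in this lemma. Unlike in Lemma~\ref{lem:moment-sum-w-laplace}, where $\lambda$ is frozen at $1$ and one must split the range of $s$ into several subcases in order to bound the geometric factor $(d_1/p)^{1/s}$ when $p \leq d_1$, the present hypotheses $p \geq d \geq d_\lambda$ and $\lambda = p/d$ simultaneously force $(d_\lambda/p)^{1/s} \leq 1$ and make $\log(\max\{e, 50 s/\lambda\})$ uniformly at most $\log(50 d)$ over the whole admissible range of $s$. The only points requiring care are verifying the hypotheses of Lemmas~\ref{lem:latala} and~\ref{lem:moment-w-lambda} (in particular $s \geq 1$, which follows from $s \geq p/d_\lambda \geq 1$) and tracking the numerical constant $2 e^2 \cdot 215$ to confirm it is below $3200$.
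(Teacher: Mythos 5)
Your proposal is correct and follows essentially the same route as the paper: apply Lata{\l}a's estimate, bound $(d_\lambda/p)^{1/s}\leq 1$ using $d_\lambda\leq d\leq p$, and use $\lambda=p/d$ to bound $\log(\max\{e,50s/\lambda\})\leq\log(50d)$ uniformly over $s\leq p$, ending with the constant $430e^2\leq 3200$. No gaps.
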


\begin{proof}[Proof of Lemma~\ref{lem:moment-sum-w-conf}]
  Since $p \geq d$, we have in particular that $d_\lambda / p \leq 1$.
  Plugging Lemma~\ref{lem:moment-w-lambda} into Lemma~\ref{lem:latala} therefore gives:
  \begin{align*}
    \bigg\| \sum_{j=1}^{d_\lambda} W_j^{(\lambda)} \bigg\|_p
    &\leq 2 e^2 \sup \Big\{ \frac{p}{s}\Big(\frac{d_\lambda}{p}\Big)^{1/s}  %
    215 s \log \Big( \max \Big\{ e, \frac{50 s}{\lambda} \Big\} \Big)
      : \max \Big( 1, \frac{p}{d_\lambda} \Big) \leq s \leq p \Big\} \\
    &\leq 2 e^2 \sup_{1 \leq s \leq p} \Big\{ 215 p \log \Big( \max \Big\{ e, \frac{50 s}{\lambda} \Big\} \Big) \Big\} \\
    &= 430 e^2 p \log \max \Big\{ e, \frac{50 p}{\lambda} \Big\}
      = 430 e^2 p \log \big( 50 d \big)
      \, .
  \end{align*}
  The conclusion follows by bounding $430 e^2 \leq 3200$.
\end{proof}

With these results in place, we can conclude the proof of Lemma~\ref{lem:tail-residual-poisson}---and thus, of Lemma~\ref{lem:tail-residual}.

\begin{proof}[Proof of Lemma~\ref{lem:tail-residual-poisson}]
  Denote by $\mleq$ the stochastic domination relation between real random variables (Definition~\ref{def:stoch-domin}).
  As noted above, by independence of $\wt N_1, \dots, \wt N_d$ it follows from Lemmas~\ref{lem:upper-envelope} and~\ref{lem:stoch-domin} that
  \begin{equation}
    \label{eq:stoch-domin-residual-sum}
    \wt R_\lambda
    \leq \frac{3}{2} \sum_{j \pp \lambda_j \geq 4 \lambda} \lambda_j \log \Big( \frac{\lambda_j}{\lambda} \Big) \bm 1 \Big( \wt N_j \leq \frac{\lambda_j}{4} \Big)
    \mleq \frac{3}{2} \sum_{j=1}^{d_\lambda} W_j^{(\lambda)}
    \, .
  \end{equation}
  (Above, we used that $\log (2 \lambda_j/\lambda) \leq \frac{3}{2} \log (\lambda_j/\lambda)$ for $\lambda_j/\lambda \geq 4$.)
  It therefore suffices to establish tail bounds on the right-hand side of~\eqref{eq:stoch-domin-residual-sum}.
  In both cases $\lambda = 1$ and $\lambda = \log (1/\delta)/d$,
  we deduce such tail bounds from the moment bounds of Lemmas~\ref{lem:moment-sum-w-laplace} and~\ref{lem:moment-sum-w-conf}, respectively,
  together with the following inequality:
  for any real random variable $Z$ and $p \geq 1$, 
  \begin{equation*}
    \P (|Z| \geq e \norm{Z}_p)
    = \P (|Z|^p \geq e^p \, \E [|Z|^p])
    \leq e^{-p}
    \, ,
  \end{equation*}
  applied to $p = \log (1/\delta)$.
  Inequalities~\eqref{eq:tail-residual-laplace-poisson} and~\eqref{eq:tail-residual-conf-poisson} are obtained by further bounding constants, using in particular that $\log (50) \leq \log_2(50) \min \{ \log(1/\delta), \log d \}$ as $\delta < e^{-2}$ and $d \geq 2$.
\end{proof}

In the following sections, we proceed with the proofs of Theorem~\ref{thm:upper-laplace},~\ref{thm:upper-bound-conf-dependent} and~\ref{thm:upper-bound-sparse}---the
first two directly obtained by combining the results above.

\subsection{Proof of Theorem~\ref{thm:upper-laplace}}
\label{sec:proof-upper-laplace}

  We apply the decomposition of Lemma~\ref{lem:decomp-risk} with $\lambda = 1$.
  The first term in this decomposition is bounded through the Hellinger bound of Lemma~\ref{lem:hellinger-empirical}, together with the inequality $s_n (P) \leq d$.
  The second term is equal to $7 d/n$.
  Finally, the third term is bounded through the bound~\eqref{eq:tail-residual-laplace} on $R_1$ from Lemma~\ref{lem:tail-residual}.
  Putting things together and using a union bound, we obtain, for any $\delta \in (e^{-n/6}, e^{-2})$,
  \begin{equation*}
    \P_P \bigg( \kll{P}{\wh P_n}
    \geq 6 \times \frac{4 d + 7 \log (1/\delta)}{n} + \frac{7 d}{n} + \frac{62000 d + 106000 \log (1/\delta) \log \log (1/\delta)}{n}
    \bigg)
    \leq 4 \delta
    \, .
  \end{equation*}
  Further bounding constants gives the bound of Theorem~\ref{thm:upper-laplace}.

\subsection{Proof of Theorem~\ref{thm:upper-bound-conf-dependent}}
\label{sec:proof-upper-conf-dependent}

  We consider two cases.
  If $\log (1/\delta) \leq d$, then $\lambda_\delta = 1$ and $\wh P_{n, \delta}$ coincides with the Laplace estimator.
  Then, Theorem~\ref{thm:upper-laplace} ensures that, with probability at least $4 \delta$,
  \begin{equation}
    \label{eq:confidence-smaller-laplace}
    \kll{P}{\wh P_{n,\delta}}
    \leq 110000 \frac{d + \log (1/\delta) \log \log (1/\delta)}{n}
    \leq 110000 \frac{d + \log (1/\delta) \log d}{n}
    \, .
  \end{equation}
  On the other hand, if $\log (1/\delta) > d$, namely $\delta \in (e^{-n/6}, e^{-d})$, then $\lambda_\delta = \log (1/\delta)/d > 1$.
  We then proceed similarly to the proof of Theorem~\ref{thm:upper-laplace}, with only two changes: the second term in the decomposition of Lemma~\ref{lem:decomp-risk} now equals $7\lambda_\delta d/n = 7\log (1/\delta)/n$, while the third term is now controlled using the bound~\eqref{eq:tail-residual-conf} on $R_{\lambda_\delta}$ from Lemma~\ref{lem:tail-residual}.
  This gives, with probability at least $1-4\delta$,
  \begin{equation*}
    \kll{P}{\wh P_{n, \delta}}
    < 6 \times \frac{4 d + 7 \log (1/\delta)}{n} + \frac{7 \log (1/\delta)}{n} + \frac{74000 \log (d) \log (1/\delta)}{n}
    \, ,
  \end{equation*}
  which also implies the desired tail bound.

\subsection{Proof of Theorem~\ref{thm:upper-bound-sparse}}
\label{sec:proof-upper-sparse-whp}

We are now in position to complete the proof of Theorem~\ref{thm:upper-bound-sparse}, up to Theorem~\ref{thm:deviation-missing-mass} which we prove in Section~\ref{sec:proof-deviation-missing-mass} below.

  In what follows, we fix $P \in \probas_d$ and let $s_n = s_n(P)$ and $s_n^\circ = s_n^\circ (P)$.
  For now, let $\wt \lambda$ be either $\wh \lambda$ or $\wh \lambda_\delta$, and let $\wt P$ be the add-$\wt \lambda$ estimator.
  First, the decomposition of Lemma~\ref{lem:decomp-risk} writes:
  \begin{equation}
    \label{eq:proof-sparse-decomp}
    \kll{P}{\wt P}
    \leq 6 \sum_{j=1}^d \Big(\sqrt{\ol p_j} - \sqrt{p_j}\Big)^2 + \frac{7 \wt \lambda d}{n} %
    + \sum_{j \pp p_j \geq 4 \wt \lambda/n} p_j \log \Big( \frac{2 n p_j}{\wt \lambda} \Big) \bm 1 \Big( N_j \leq \frac{n p_j}{4} \Big)
    \, .
  \end{equation}

  \paragraph{First term.}
  The first term in the decomposition~\eqref{eq:proof-sparse-decomp} is bounded through Lemma~\ref{lem:hellinger-empirical}: with probability at least $1-2\delta$,
  \begin{equation}
    \label{eq:proof-sparse-first-term}
    6 \sum_{j=1}^d \Big( \sqrt{\ol p_j} - \sqrt{p_j} \Big)^2
    < \frac{24 s_n + 42 \log (1/\delta)}{n}
    \, .
  \end{equation}

  \paragraph{Second term.}
  For the second term, note that $\wt \lambda \leq \wh \lambda_\delta \leq \max \set{D_n, \log (1/\delta)}/d$, and that by inequality~\eqref{eq:bernstein-occupancy} from Lemma~\ref{lem:deviation-occupancy}, with probability at least $1-\delta$,
  \begin{equation*}
    D_n
    \leq 2 \E_P [D_n] + 2 \log (1/\delta)
    \, .
  \end{equation*}
  Combining these inequalities and recalling that $\E_P [D_n] \leq s_n$ (Fact~\ref{fac:expected-support}), we get: with probability $1-\delta$,
  \begin{equation}
    \label{eq:proof-sparse-second-term}
    \frac{7 \wt \lambda d}{n}
    \leq \frac{14 s_n + 14 \log (1/\delta)}{n}
    \, .
  \end{equation}

  \paragraph{Third term.}

  We now turn to the control of the third term, which
  requires the most effort.

  We first deal with the case where $\wt \lambda > 1$, which directly reduces to
  Lemma~\ref{lem:tail-residual}.
  Indeed, since $\wh \lambda = D_n /d \leq 1$, one must have $\wt \lambda = \wt \lambda_\delta %
  = \log (1/\delta)/d$, and $\delta \leq e^{-d}$.
  Hence, 
  inequality~\eqref{eq:tail-residual-conf} from Lemma~\ref{lem:tail-residual} gives: with probability $1-2\delta$,
  \begin{equation}
    \label{eq:proof-sparse-third-large-lambda}
    \sum_{j \pp p_j \geq 4 \wt \lambda/n} p_j \log \Big( \frac{2 n p_j}{\wt \lambda} \Big) \bm 1 \Big( N_j \leq \frac{n p_j}{4} \Big)
    = R_{\lambda_\delta}
    \leq \frac{74000 \log (d) \log (1/\delta)}{n}
    \, .
  \end{equation}

  From now on, we assume that $\wt \lambda \leq 1$.
  In this case, we further decompose the third term (denoted $R_{\wt \lambda}$) as follows:
  \begin{equation}
    \label{eq:proof-sparse-dec-3}
    R_{\wt \lambda} %
    = \sum_{j \pp p_j \geq 4 \wt \lambda/n} p_j \log ( {2 n p_j} ) \bm 1 \Big( N_j \leq \frac{n p_j}{4} \Big)
      + \sum_{j \pp p_j \geq 4 \wt \lambda/n} p_j \bm 1 \Big( N_j \leq \frac{n p_j}{4} \Big) \cdot \log \bigg( \frac{1}{\wt \lambda} \bigg)
      \, .
  \end{equation}
  We then bound the first term above as
  \begin{align*}
    &\sum_{j \pp p_j \geq 4 \wt \lambda/n} p_j \log ( {2 n p_j} ) \bm 1 \Big( N_j \leq \frac{n p_j}{4} \Big) \\
    &= \sum_{j \pp 4 \wt \lambda/n \leq p_j < 4/n} p_j \log ( {2 n p_j} ) \bm 1 \Big( N_j \leq \frac{n p_j}{4} \Big) + \sum_{j \pp p_j \geq 4/n} p_j \log ( {2 n p_j} ) \bm 1 \Big( N_j \leq \frac{n p_j}{4} \Big) \\
    &\leq \log (8) \sum_{j \pp 4 \wt \lambda/n \leq p_j < 4/n} p_j \bm 1 \Big( N_j \leq \frac{n p_j}{4} \Big) + R_1 
    \leq \log (8) \sum_{j \pp p_j < 4/n} p_j %
      + R_1 \\
      &\leq \frac{4 \log (8) s_n}{n} + R_1
      \, .
  \end{align*}
  Hence, bounding $R_1$ via the bound~\eqref{eq:tail-residual-laplace} of Lemma~\ref{lem:tail-residual} and using that $d_1 = |\set{j : p_j \geq 4/n}| \leq s_n$, we obtain: with probability $1-2\delta$,
  \begin{align*}
    \sum_{j \pp p_j \geq 4 \wt \lambda/n} p_j \log ( {2 n p_j} ) \bm 1 \Big( N_j \leq \frac{n p_j}{4} \Big)
    &\leq \frac{4 \log (8) s_n}{n} + \frac{62000 \, s_n + 106000 \log (1/\delta) \log \log (1/\delta)}{n} \\
    &\leq \frac{63000 \, s_n + 106000 \log (1/\delta) \log \log (1/\delta)}{n}
      \, .
  \end{align*}

  It remains to upper bound the second term in~\eqref{eq:proof-sparse-dec-3}.
  Since $\wt \lambda \geq \wh \lambda = D_n/d$, and recalling the definition of the underestimated mass $U_n$ (Definition~\ref{def:missing-underestimated-mass}), we have
  \begin{equation*}
    \sum_{j \pp p_j \geq 4 \wt \lambda/n} p_j \bm 1 \Big( N_j \leq \frac{n p_j}{4} \Big) \cdot \log \bigg( \frac{1}{\wt \lambda} \bigg)
    \leq \sum_{j =1}^d p_j \bm 1 \Big( N_j \leq \frac{n p_j}{4} \Big) \cdot \log \bigg( \frac{d}{D_n} \bigg)
    = U_n \log \bigg( \frac{d}{D_n} \bigg)
    \, .
  \end{equation*}

  Now, Theorem~\ref{thm:deviation-missing-mass} shows that, with probability at least $1-8\delta$, one has
  \begin{equation*}
    U_n
    \leq \frac{336 \, s_{n/112}^\circ (P) + 2500 e \log (1/\delta)}{n}
    \, .
  \end{equation*}
  Thus, %
  under the same event (using that $D_n \geq 1$),
  \begin{equation}
    \label{eq:proof-sparse-u-log-d-k}
    U_n \log \bigg( \frac{d}{D_n} \bigg)
    \leq \frac{336 \, s_{n/112}^\circ (P) \log (d/D_n) + 2500 e \log (d) \log (1/\delta)}{n}
    \, .
  \end{equation}

  We now consider two cases, depending on whether $\log (1/\delta)$ is larger or smaller than $s_n$.
  First, by deviation lower bound~\eqref{eq:deviation-occupancy-lower} from Lemma~\ref{lem:deviation-occupancy}, letting $s_n' = \E_P [D_n]$ one has
  \begin{equation*}
    \P \Big( D_n \leq \frac{s_n'}{2} \Big)
    \leq \exp \Big\{ - D \Big( \frac{s_n'}{2}, s_n' \Big) \Big\}
    = \exp \Big\{ - \frac{1- \log 2}{2} s_n' \Big\}
    \, .
  \end{equation*}
  But since $s_n' \geq (1-e^{-1}) s_n$ (Fact~\ref{fac:expected-support}), after bounding constants we deduce that
  \begin{equation*}
    \P \big( D_n \leq 0.3 s_n \big)
    \leq e^{-s_n/21}
    \, .
  \end{equation*}
  Thus, if $\log (1/\delta) \leq s_n/21$, then with probability at least $1-\delta$ one has
  \begin{equation*}
    D_n
    \geq 0.3 s_n
    \, ,
  \end{equation*}
  which combined with~\eqref{eq:proof-sparse-u-log-d-k} gives, with probability $1-9\delta$,
  \begin{equation*}
    U_n \log \bigg( \frac{d}{D_n} \bigg)
    \leq \frac{336 \, s_{n/112}^\circ (P) \log (2 e d/s_{n}) + 2500 e \log (d) \log (1/\delta)}{n}
    \, .
  \end{equation*}
  On the other hand, if $\log (1/\delta) > s_n / 21$, then by lower-bounding $D_n \geq 1$ in~\eqref{eq:proof-sparse-u-log-d-k} and using that
  \begin{equation*}
    s_{n/112}^\circ
    \leq s_{n/112}
    \leq s_n
    \leq 21 \log (1/\delta)
    \, ,
  \end{equation*}
  we get with probability at least $1-\delta$ that
  \begin{equation*}
    U_n \log \bigg( \frac{d}{D_n} \bigg)
    \leq \frac{336 \times 21 \log (1/\delta) \times \log (d) + 2500 e \log (d) \log (1/\delta)}{n}
    \leq \frac{14000 \log (d) \log (1/\delta)}{n}
    \, .
  \end{equation*}
  Summarizing, we get that regardless of $\delta$, we have with probability at least $1-9\delta$ that
  \begin{equation*}
    \sum_{j \pp p_j \geq 4 \wt \lambda/n} p_j \bm 1 \Big( N_j \leq \frac{n p_j}{4} \Big) \cdot \log \bigg( \frac{1}{\wt \lambda} \bigg)
    \leq \frac{336 \, s_{n/112}^\circ (P) \log (2 e d/s_{n}) + 14000 \log (d) \log (1/\delta)}{n}
    \, .
  \end{equation*}
  Putting this inequality into the decomposition~\eqref{eq:proof-sparse-dec-3} of the third term, we get that whenever $\wt \lambda \leq 1$, we have with probability at least $1-11\delta$,
  \begin{align}
    \label{eq:proof-sparse-third-term}
    R_{\wt \lambda}
    &\leq \frac{63000 \, s_n + 106000 \log (1/\delta) \log \log (1/\delta)}{n} + \nonumber \\
    & \quad + \frac{336 \, s_{n/112}^\circ (P) \log (2 e d/s_{n}) + 14000 \log (d) \log (1/\delta)}{n} \nonumber \\
    &\leq \frac{64000 \, s_n + 336 \, s_{n/112}^\circ (P) \log (e d/s_{n}) + 120000 \max \{ \log d, \log \log (1/\delta) \} \log(1/\delta)}{n}
      \, .
  \end{align}

  \paragraph{Conclusion.}

  We first establish the bound~\eqref{eq:upper-bound-sparse} for the estimator $\adp$.
  In this case, one has $\wh \lambda = D_n/d \leq 1$, thus the bound~\eqref{eq:proof-sparse-third-term} applies.
  Injecting this inequality, together with the bounds~\eqref{eq:proof-sparse-first-term} and~\eqref{eq:proof-sparse-second-term} on the first two terms, into the decomposition~\eqref{eq:proof-sparse-decomp}, we obtain: with probability at least at $1-14 \delta$,
  \begin{align*}
    \kll{P}{\adp}
    &< \frac{24 s_n + 42 \log (1/\delta)}{n} + \frac{14 s_n + 14 \log (1/\delta)}{n} + \\
    &\ + \frac{64000 \, s_n + 336 \, s_{n/112}^\circ (P) \log (e d/s_{n}) + 120000 \max \{ \log d, \log \log (1/\delta) \} \log(1/\delta)}{n}
      ,
  \end{align*}
  which implies the claimed bound.

  We now conclude with the estimator $\adpdelta$.
  Then, either $\log (1/\delta)\leq d$, in which case $\wh \lambda_\delta \leq 1$ and again the bound on the third term for $\wt \lambda \leq 1$ applies, so that the same bound as for $\adp$ holds.
  In addition, one has $\max \{ \log d, \log \log (1/\delta) \} = \log d$ in this case.
  On the other hand, if $\log (1/\delta)> d$, then $\wh \lambda_\delta > 1$, thus the third term is bounded using~\eqref{eq:proof-sparse-third-large-lambda}.
  Combining with the bounds~\eqref{eq:proof-sparse-first-term} and~\eqref{eq:proof-sparse-second-term} on the first two terms gives the desired inequality.

\section{Proofs of lower bounds}
\label{sec:proof-lower-bounds}

In this section, we provide the proofs of the lower bounds stated in previous sections, specifically the tail (low-probability) lower bounds of Theorem~\ref{thm:lower-bound-conf-indep-tail}, Lemma~\ref{lem:lower-minimax-tail}, Theorem~\ref{thm:lower-bound-minimax} and Corollary~\ref{cor:minimax-sparse-lower}, as well as the high-probability minimax lower bound of Proposition~\ref{prop:lower-bound-minimax-sparse}.

\subsection{Proof of Theorem~\ref{thm:lower-bound-conf-indep-tail}}
\label{sec:proof-theorem}

We first prove Theorem~\ref{thm:lower-bound-conf-indep-tail} on confidence-independent estimators.
The proof rests on the following lemma.

\begin{lemma}
  \label{lem:lower-bound-conf-indep-tail}
  Let $n \geq d \geq 2$ and $\kappa \geq 1$, and $\wh P_n = \Phi (X_1, \dots, X_n)$ be an estimator as in Theorem~\ref{thm:lower-bound-conf-indep-tail}.
  Then, for any $\delta \in (e^{- n}, e^{-16 \kappa^2})$, there exists a distribution $P \in \probas_d$ such that
  \begin{equation}
    \label{eq:lower-bound-conf-indep-tail}
    \P_P \bigg( \kll{P}{\wh P_n} \geq \frac{\log (1/\delta) \log \log (1/\delta)}{10 n} \bigg)
    \geq \delta
    \, .
  \end{equation}
\end{lemma}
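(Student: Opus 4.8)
The plan is to apply the hypothesis only to the single distribution $P_0=\delta_1$, extract from it a pointwise bound on the estimator at the constant sample, and then exhibit a two-point distribution that this bound forces to be badly estimated on an event of probability exactly $\delta$. Write $\wh P_n(x)=(\wh p_1(x),\dots,\wh p_d(x))$ for the value of $\Phi$ at a sample $x\in[d]^n$.

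First I would use the assumption at $P_0=\delta_1\in\probas_d$: under $P_0$ the sample equals $x_0:=(1,\dots,1)$ almost surely, so the event $\{\kll{\delta_1}{\wh P_n}\le\kappa d/n\}$ can have positive probability only if $\kll{\delta_1}{\wh P_n(x_0)}\le\kappa d/n$, i.e. $\log(1/\wh p_1(x_0))\le\kappa d/n$, i.e. $\wh p_1(x_0)\ge e^{-\kappa d/n}$. Hence $\sum_{j=2}^d\wh p_j(x_0)=1-\wh p_1(x_0)\le 1-e^{-\kappa d/n}\le\kappa d/n$, and averaging over the $d-1$ classes $j\in\{2,\dots,d\}$ yields an index $j^\star$ with $\wh p_{j^\star}(x_0)\le\frac{\kappa d}{n(d-1)}\le\frac{2\kappa}{n}$ (using $d\ge2$).

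Next I would set $p=1-\delta^{1/n}$, which lies in $(0,1)$ since $\delta>e^{-n}$ forces $\log(1/\delta)/n<1$, and define $P=(1-p)\delta_1+p\,\delta_{j^\star}\in\probas_d$. Then $\P_P(X_1=\dots=X_n=1)=(1-p)^n=\delta$, and on this event $\wh P_n=\wh P_n(x_0)$, so since $P$ is supported on $\{1,j^\star\}$,
\[
\kll{P}{\wh P_n(x_0)}=(1-p)\log\frac{1-p}{\wh p_1(x_0)}+p\log\frac{p}{\wh p_{j^\star}(x_0)}\ \ge\ (1-p)\log(1-p)+p\log\frac{pn}{2\kappa},
\]
using $\wh p_1(x_0)\le1$ and $\wh p_{j^\star}(x_0)\le2\kappa/n$. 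It remains to check the right-hand side is at least $\frac{\log(1/\delta)\log\log(1/\delta)}{10n}$. Writing $a=\log(1/\delta)/n\in(0,1]$ so that $1-p=e^{-a}$, the elementary bound $1-e^{-a}\ge a-a^2/2\ge a/2$ gives $pn\ge\frac12\log(1/\delta)$, and the hypothesis $\log(1/\delta)>16\kappa^2=(4\kappa)^2$ then gives $\log\frac{pn}{2\kappa}\ge\log\frac{\log(1/\delta)}{4\kappa}\ge\frac12\log\log(1/\delta)$; thus the right-hand side is at least $\frac12(1-e^{-a})\log\log(1/\delta)-ae^{-a}$. Since the target equals $\frac a{10}\log\log(1/\delta)$, I would conclude by verifying $\bigl(\frac12(1-e^{-a})-\frac a{10}\bigr)\log\log(1/\delta)\ge ae^{-a}$: from $1-e^{-a}\ge a-a^2/2$ one gets $\frac12(1-e^{-a})-\frac a{10}\ge a(\frac25-\frac a4)$, and from $1-e^{-a}\ge\frac{5a}{8}$ on $[0,1]$ one gets $a(\frac25-\frac a4)\ge\frac25ae^{-a}$, so the left-hand side is at least $\frac25ae^{-a}\log\log(1/\delta)\ge ae^{-a}$ because $\log\log(1/\delta)\ge\log16>\frac52$. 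This gives the claimed bound on an event of probability $\delta$.

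The only genuinely non-routine step is the first one: recognizing that the positive-probability guarantee at the single point $\delta_1$ already pins $\wh p_{j^\star}(x_0)$ down to order $\kappa/n$. Everything after is essentially forced — take $p$ as large as the constraint $(1-p)^n\ge\delta$ permits, so $pn\asymp\log(1/\delta)$ and underestimating $p_{j^\star}$ by a factor $\asymp\log(1/\delta)/\kappa$ contributes $\asymp\frac1n\log(1/\delta)\log(\log(1/\delta)/\kappa)$ to the divergence — and the remaining difficulty is purely numerical bookkeeping, where the constants $\frac1{10}$ in the conclusion and $16\kappa^2$ in the hypothesis are precisely what makes the inequality close near the boundary $\log(1/\delta)\approx16\kappa^2$ (equivalently, what guarantees $\log\frac{pn}{2\kappa}\ge\frac12\log\log(1/\delta)$ together with $\log\log(1/\delta)\ge\frac52$).
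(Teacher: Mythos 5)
Your proposal is correct and follows essentially the same route as the paper's proof: you extract from the positive-probability hypothesis at $\delta_1$ the bound $\wh p_{j^\star}(1,\dots,1)\leq 2\kappa/n$, then use the two-point alternative $(1-\rho)\delta_1+\rho\,\delta_{j^\star}$ with $\rho=1-\delta^{1/n}$ and the all-ones event of probability exactly $\delta$. The only (immaterial) difference is the final bookkeeping: the paper bounds $\kll{P}{Q}\geq D(\rho,q_{j^\star})=q_{j^\star}h(\rho/q_{j^\star})$ and invokes $h(t)\geq e^{-1}t\log t$ for $t\geq e$, whereas you keep the full two-term KL and absorb the negative $(1-\rho)\log(1-\rho)$ term by hand; your numerical estimates check out.
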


\begin{proof}[Proof of Lemma~\ref{lem:lower-bound-conf-indep-tail}]  
  Let $Q = \Phi (1, \dots, 1) = (q_1, \dots, q_d) \in \probas_d$ be the value of the estimator when only the first class is observed.
  Clearly, if $P = \delta_1$, then $\wh P_n = Q$ almost surely, and thus condition~\eqref{eq:assumption-in-expectation-opt} writes:
  \begin{equation*}
    \kll{\delta_1}{Q}
    = \log (1/q_1)
    \leq \frac{\kappa d}{n}
    \, .
  \end{equation*}
  Since $1 - q_1 \leq - \log (1 - (1-q_1)) = \log (1/q_1)$, this implies that
  \begin{equation*}
    \sum_{j=2}^d q_j
    = 1 - q_1
    \leq \frac{\kappa d}{n}
    \, ,
  \end{equation*}
  and thus there exists $j \in \set{2, \dots, d}$ such that $q_j \leq (\kappa d /n)/(d-1) \leq 2 \kappa/n$.
  Now, for $\delta \in (e^{-n}, e^{-16 \kappa^2})$, consider the distribution $P = P_{\delta, n} = (1 - \rho) \delta_1 + \rho \delta_j$, where $\rho = 1 - \delta^{1/n}$.
  Then, the event $E = \{ X_1 = \dots = X_n = 1 \}$ is such that 
  \begin{equation*}
    \P_P (E)
    = (1-\rho)^n
    = \delta
    \, .
  \end{equation*}
  In addition, under $E$ one has $\wh P_n = Q$, thus
  \begin{equation*}
    \kll{P}{\wh P_n}
    = \kll{P}{Q}
    \geq D (\rho, q_j)
    = q_j h \Big( \frac{\rho}{q_j} \Big)
    \, .
  \end{equation*}
  By convexity of the exponential function, one has $1- e^{-x} \geq (1 - e^{-1}) x$ for $x \in [0,1]$; since $\frac{\log (1/\delta)}{n} \leq 1$, this implies that
  $\rho = 1 - \exp \big(- \frac{\log (1/\delta)}{n} \big) \geq (1-e^{-1}) \frac{\log (1/\delta)}{n}$.
  Since $\delta \leq e^{-16 \kappa^2}$, we therefore have
  \begin{equation*}
    \frac{\rho}{q_j}
    \geq \frac{(1-e^{-1}) 16 \kappa^2 / n}{2 \kappa/n}
    = 8 (1 - e^{-1}) \kappa
    \geq e
    \, .
  \end{equation*}
  Hence, by Lemma~\ref{lem:entropy-function} we have
  \begin{align*}
    q_j h \Big( \frac{\rho}{q_j} \Big)
    &\geq q_j \times e^{-1} \frac{\rho}{q_j} \log \Big( \frac{\rho}{q_j} \Big)
    \geq e^{-1} (1-e^{-1}) \frac{\log (1/\delta)}{n} \log \Big( \frac{(1-e^{-1}) \log (1/\delta)/n}{2 \kappa/n} \Big) \\
    &\geq \frac{\log (1/\delta)}{5 n} \log \Big( \frac{\log (1/\delta)}{4 \kappa} \Big)
      \geq \frac{\log (1/\delta) \log \log (1/\delta)}{10 n}
      \, .
  \end{align*}
  This concludes the proof of Lemma~\ref{lem:lower-bound-conf-indep-tail}.
\end{proof}

We can now conclude the proof of Theorem~\ref{thm:lower-bound-conf-indep-tail}.

\begin{proof}[Proof of Theorem~\ref{thm:lower-bound-conf-indep-tail}]
  The statement follows from a combination of the lower bounds of Lemma~\ref{lem:lower-bound-conf-indep-tail} and of the consequence~\eqref{eq:dense-lower-high-prob} of Proposition~\ref{prop:lower-bound-minimax-sparse}.
  Specifically, one the one hand, since $d \geq 3300 \geq 3000 \log \big( \frac{3}{1-e^{-16}} \big)$, it follows from~\eqref{eq:dense-lower-high-prob} that there exists $P \in \probas_d$ such that
  \begin{equation*}
    \P_P \bigg( \kll{P}{\wh P_n} \geq \frac{d}{4600 n} \bigg)
    \geq 1 - 3 \exp \Big( - \frac{d}{3000} \Big)
    \geq e^{-16}
    \geq \delta
    \, .
  \end{equation*}
  On the other hand, by Lemma~\ref{lem:lower-bound-conf-indep-tail}, there exists $P \in \probas_d$ such that
  \begin{equation*}
    \P_P \bigg( \kll{P}{\wh P_n} \geq \frac{\log (1/\delta) \log \log (1/\delta)}{10 n} \bigg)
    \geq \delta
    \, .
  \end{equation*}
  By taking the best of these two lower bounds (depending on $d,\delta$), we deduce that there exists $P \in \probas_d$ such that, with probability at least $\delta$,
  \begin{align*}
    &\kll{P}{\wh P_n}
    \geq \max \bigg\{ \frac{d}{4600 n}, \frac{\log (1/\delta) \log \log (1/\delta)}{10 n} \bigg\} \\
    &\geq \frac{99}{100} \cdot \frac{d}{4600 n} + \frac{1}{100} \cdot \frac{\log (1/\delta) \log \log (1/\delta)}{10 n}
      \geq \frac{d + \log (1/\delta) \log \log (1/\delta)}{5000 n}
      \, ,
  \end{align*}
  which establishes the claim.
\end{proof}

\subsection{Proof of Lemma~\ref{lem:lower-minimax-tail} and Theorem~\ref{thm:lower-bound-minimax}}
\label{sec:proof-lemma-}

In this section, we establish Lemma~\ref{lem:lower-minimax-tail} and then deduce Theorem~\ref{thm:lower-bound-minimax}.

\begin{proof}[Proof of Lemma~\ref{lem:lower-minimax-tail}]
  Fix $n,d,\delta$ as in Lemma~\ref{lem:lower-minimax-tail}.
  We define the class $\F = \F_{n,d,\delta}$ as
  \begin{equation*}
    \F
    = \Big\{ P^{(j)} = \delta^{1/n} \delta_1 + \big( 1 - \delta^{1/n} \big) \delta_j : 1 \leq j \leq d \Big\}
    = \big\{ \delta_1 \big\} \cup \big\{ P^{(j)} : 2 \leq j \leq d \big\}
    \, .
  \end{equation*}
  Let $\Phi : [d]^n \to \probas_d$ be an estimator, and let $Q = (q_1, \dots, q_d) = \Phi (1, \dots, 1)$ denote the value of this estimator when only the first class is observed.
  Let $\alpha \in \R^+$ such that $1 - q_1 = \alpha d / n$.

  First, assume that $\alpha \geq \log (1/\delta)/(7\sqrt{d})$.
  In this case, if $P = P^{(1)} = \delta_1$, then $\wh P_n = Q$ almost surely, hence
  \begin{equation*}
    \kll{P}{\wh P_n}
    = \kll{\delta_1}{Q}
    = \log (1/q_1)
    \geq 1 - q_1
    = \frac{\alpha d}{n}
    \, .
  \end{equation*}
  Since $\alpha \geq \log (1/\delta)/(7\sqrt{d})$, we deduce that
  \begin{equation}
    \label{eq:lower-minimax-large-alpha}
    \kll{P}{\wh P_n}
    \geq \frac{\sqrt{d} \log (1/\delta)}{7 n}
    \geq \frac{\log (d) \log (1/\delta)}{14 n}
    \, .
  \end{equation}

  Now, assume that $\alpha < \log (1/\delta)/(7\sqrt{d})$.
  Since
  $\sum_{j=2}^d q_j = 1 - q_1 = \alpha d / n$, there exists $2 \leq j \leq d$ such that
  \begin{equation*}
    q_j
    \leq \frac{\alpha d}{n (d-1)}
    \leq \frac{2 \alpha}{n}
    < \frac{2 \log (1/\delta)}{7 n \sqrt{d}}
    \, .
  \end{equation*}
  Let $P = P^{(j)}$, so that letting $E = \{ X_1=1, \dots, X_n = 1 \}$, we have $\P_{P^{(j)}} (E) = (\delta^{1/n})^n = \delta$.
  Under $E$, one has $\wh P_n = Q$, thus denoting $\rho = 1 - \delta^{1/n}$ we have
  \begin{equation*}
    \kll{P}{\wh P_n}
    = \kll{P^{(j)}}{Q}
    \geq D (\rho, q_j)
    = q_j h \Big( \frac{\rho}{q_j} \Big)
    \, .
  \end{equation*}
  By convexity of the exponential function, one has $1- e^{-x} \geq (1 - e^{-1}) x$ for $x \in [0,1]$; since $\frac{\log (1/\delta)}{n} \leq 1$, this implies that
  $\rho = 1 - \exp \big(- \frac{\log (1/\delta)}{n} \big) \geq (1-e^{-1}) \frac{\log (1/\delta)}{n}$.
  We therefore have
  \begin{equation*}
    \frac{\rho}{q_j}
    \geq \frac{(1-e^{-1}) \log (\delta^{-1}) / n}{2 \log (\delta^{-1})/(7 n \sqrt{d})}
    = \frac{7 (1-e^{-1}) \sqrt{d}}{2}
    \geq 2 \sqrt{d}
    \geq e
    \, .
  \end{equation*}
  Hence, by Lemma~\ref{lem:entropy-function} we have
  \begin{align*}
    \kll{P}{\wh P_n}
    &\geq q_j h \Big( \frac{\rho}{q_j} \Big)
    \geq q_j \times e^{-1} \frac{\rho}{q_j} \log \Big( \frac{\rho}{q_j} \Big)
      \geq e^{-1} (1-e^{-1}) \frac{\log (1/\delta)}{n} \log (2 \sqrt{d}) \\
    &\geq \frac{\log (1/\delta) \log (\sqrt{d})}{5 n}
      = \frac{\log (d) \log (1/\delta)}{10 n}
      \, .
  \end{align*}
  This concludes the proof of Lemma~\ref{lem:lower-minimax-tail}.
\end{proof}

\begin{proof}[Proof of Theorem~\ref{thm:lower-bound-minimax}]
  We again use the consequence~\eqref{eq:dense-lower-high-prob} of Proposition~\ref{prop:lower-bound-minimax-sparse}, this time combined with Lemma~\ref{lem:lower-minimax-tail}.
  Specifically, since $d \geq 5000 \geq 3000 \log (\frac{3}{1-e^{-1}})$, it follows from~\eqref{eq:dense-lower-high-prob} that there exists $P \in \probas_d$ such that
  \begin{equation*}
    \P_P \bigg( \kll{P}{\wh P_n} \geq \frac{d}{4600 n} \bigg)
    \geq 1 - 3 \exp \Big( - \frac{d}{3000} \Big)
    \geq e^{-1}
    \geq \delta
    \, .
  \end{equation*}
  On the other hand, by Lemma~\ref{lem:lower-minimax-tail}, there exists $P \in \probas_d$ such that
  \begin{equation*}
    \P_P \bigg( \kll{P}{\wh P_n} \geq \frac{\log (d) \log (1/\delta)}{14 \, n} \bigg)
    \geq \delta
    \, .
  \end{equation*}
  As before, taking the best of these two lower bounds shows that there exists $P \in \probas_d$ under which, with probability at least $\delta$,
  \begin{equation*}
    \kll{P}{\wh P_n}
    \geq \frac{99}{100} \cdot \frac{d}{4600 n} + \frac{1}{100} \cdot \frac{\log (d) \log (1/\delta)}{14 \, n}
    \geq \frac{d + \log (d) \log (1/\delta)}{5000 n}
    \, .
  \end{equation*}
\end{proof}

\subsection{Proof of Proposition~\ref{prop:lower-bound-minimax-sparse}}
\label{sec:proof-lower-minimax-sparse}

In this section, we turn to the proof of the high-probability lower bound of Proposition~\ref{prop:lower-bound-minimax-sparse}.

  Note that if $s \leq 35$, then $1 - 3 e^{-s/35} < 0$ and the inequality is trivial.
  From now on, we therefore assume that $s \geq 36$.

  \paragraph{Random support.}
  Fix $n,d,s$ and an estimator $\wh P_n = \Phi (X_1, \dots, X_n)$.
  Let $\Subsets$ denote the class of subsets $\sigma$ of $\set{2, \dots, d}$ with cardinality $|\sigma| = s-1$.
  For any $\sigma \in \Subsets$, we let $P_\sigma = (p_1^\sigma, \dots, p_d^\sigma)$ be the element of $\probas_{s,d}$ defined by 
  \begin{equation}
    \label{eq:def-P-sigma}
    P_\sigma = \Big( 1 - \frac{s-1}{2 e n} \Big) \delta_1 + \frac{1}{2 e n} \sum_{j \in \sigma} \delta_j
    \, .
  \end{equation}
  In addition, we let $\pi$ denote the uniform distribution on $\Subsets$, which induces a ``prior'' distribution on $\psd$.
  We can then define a joint distribution for $(\sigma, (X_1, \dots, X_n))$ on $\Subsets \times [d]^n$ as follows: $\sigma \sim \pi$, and conditionally on $\sigma$, the variables $X_1, \dots, X_n$ are \iid with distribution $P_\sigma$.
  We denote by $\pi P$ the marginal distribution of $X_1, \dots, X_n$ under this distribution.
  As before, for $1 \leq j \leq d$ we denote by $N_j = \sum_{i=1}^n \indic{X_i = j}$ the number of occurrences of the class $j$.

  We start by establishing an upper bound on the number $D_n = \sum_{j=1}^d \indic{N_j \geq 1}$ of distinct classes.
  For any $\sigma \in \Subsets$, using that $\E [D_n | \sigma] = \E_{P_\sigma} [D_n] \leq 1 + (s-1) n /(2en) = 1 + (s-1)/(2e)$ (where we used that $\P_{P_\sigma} (N_j \geq 1) \leq \sum_{i=1}^n \P_{P_\sigma} (X_i= j)$) and applying Lemma~\ref{lem:deviation-occupancy}, we obtain that
  \begin{equation*}
    \P \bigg( D_n \geq e + \frac{s-1}{2} \bigg)
    = \P \bigg( D_n \geq e \cdot \bigg[ 1 + \frac{s-1}{2 e} \bigg] \bigg)
    \leq \exp \bigg( - 1 - \frac{s-1}{2 e} \bigg)
    \leq \exp \bigg( - \frac{s}{2 e} \bigg)    
    \, .
  \end{equation*}
  In what follows, we define the event $E = \{ D_n < e + (s-1)/2, \, N_j \geq 1 \}$, so that $\P (E^c) \leq \exp (-s/(2e)) + (\frac{s-1}{2 e n})^n \leq \exp (-s/(2e)) + (2e)^{-n}$.
  We also let $\wh \sigma = \{ 2 \leq j \leq d : N_j \geq 1 \}$, so that $|\wh \sigma| = D_n - 1$ under $E$.
  
  We now proceed to the lower bound on the estimation error.
  For any $t > 0$, we have
  \begin{equation}
    \label{eq:fubini-conditioning}
    \E_{\sigma \sim \pi} [ \P (\kll{P_\sigma}{\wh P_n} \geq t | \sigma) ]
    = \E [ \P (\kll{P_\sigma}{\wh P_n} \geq t | X_1, \dots, X_n) ]
      \, .
  \end{equation}
  We thus aim at establishing a lower bound of the following form, for some constant $C > 0$:
  \begin{equation*}
    \P \bigg( \kll{P_\sigma}{\wh P_n} \geq \frac{s \log (e d/s)}{C n} \bigg| X_1, \dots, X_n \bigg)
    \geq e^{- s/C}
    \, .
  \end{equation*}
  Below, we reason conditionally on $X_1, \dots, X_n$.
  First, note that the (posterior) conditional distribution of $\sigma$ given $X_1, \dots, X_n$, denoted $\wh \pi$, is the uniform distribution on the set
  \begin{equation*}
    \wh \Subsets
    = \set{\sigma \in \Subsets : \wh \sigma \subset \sigma}
    \, .
  \end{equation*}
  (Indeed, for any $\sigma \in \Subsets \setminus \wh \Subsets$, the distribution $P_\sigma^{\otimes n}$ puts a mass of $0$ to the sequence $(X_1, \dots, X_n)$; while all measures $P_\sigma^{\otimes n}$ with $\sigma \in \wh \Subsets$ put the same positive mass to such a sequence.)
  In other words, $\sigma = \wh \sigma \cup \wt \sigma$, where (conditionally on $X_1, \dots, X_n$) $\wt \sigma$ is uniformly distributed over all subsets of $\set{2, \dots, d} \setminus \wh \sigma$ with $s - D_n$ elements.
  Write $\wh P_n = (\wh p_1, \dots, \wh p_d)$, and 
  let $\wh \alpha = n (1-\wh p_1)$.

  \paragraph{Large $\wh \alpha$.}
  Assume first that $\wh \alpha \geq \sqrt{s d}/20$.
  In this case, applying Lemma~\ref{lem:kl-lower-subset} with $J = \{ 2, \dots, d \}$ gives, for any $\sigma \in \Subsets$,
  \begin{equation*}
    \kll{P_\sigma}{\wh P_n}
    \geq D \bigg( \sum_{j = 2}^d p_j^\sigma, \sum_{j=2}^d \wh p_j \bigg)
    = D \bigg( \frac{s-1}{2 e n}, \frac{\wh \alpha}{n} \bigg)
    = \frac{\wh \alpha}{n} \cdot h \Big( \frac{s-1}{2 e \wh \alpha} \Big)    
    \, .
  \end{equation*}
  Now, if $\wh \alpha \geq \sqrt{s d}/20$, we have
  $\frac{s-1}{2 e \wh \alpha} \leq 10 e^{-1}{\sqrt{s/d}} \leq 1/2$
  as $d/s \geq 55$,
  thus $h (\frac{s-1}{2 e \wh \alpha}) \geq h (1/2) = (1-\log 2)/2 \geq 1/7$.
  The previous inequality then writes, for any $\sigma \in \wh \Subsets$ (and thus with probability $1$ over $\sigma \sim \wh \pi$),
  \begin{equation}
    \label{eq:lower-large-alpha}
    \kll{P_\sigma}{\wh P_n}
    \geq \frac{1}{7} \frac{\sqrt{s d}}{20 n}
    = \frac{s}{140 n} \sqrt{\frac{d}{s}}
    \geq \frac{s}{140 n} \log \bigg( e \sqrt{\frac{d}{s}} \bigg)    
    \geq \frac{s \log (e d/s)}{280 n}
    \, .
  \end{equation}

  \paragraph{Small $\wh \alpha$.}
  Assume from now on that $\wh \alpha \leq \sqrt{s d}/20$.
  We start by noting that
  \begin{align}
    \label{eq:proof-lower-sparse-decomp-kl}
    \kll{P_\sigma}{\wh P_n}
    &= \sum_{j=1}^d D (p_j^\sigma, \wh p_j) 
    \geq \sum_{2 \leq j \leq d, \, j \not\in \wh \sigma} D \big( p_j^\sigma, \wh p_j \big) 
    \geq \sum_{2 \leq j \leq d, \, j \not\in \wh \sigma} D \Big( \frac{1}{2 e n}, \wh p_j \Big) \indic{j \in \wt \sigma}
      \, .
  \end{align}
  Observe that in the right-hand side of~\eqref{eq:proof-lower-sparse-decomp-kl}, conditionally on $X_1, \dots, X_n$, the only randomness comes from the presence of $\wt \sigma$.
  Now since $\sum_{j=2}^d \wh p_j = \wh \alpha/n \leq \sqrt{s d} / (20 n)$, we have:
  \begin{equation*}
    \Big| \Big\{ 2 \leq j \leq d : \wh p_j \geq \frac{\sqrt{s/d}}{10 n} \Big\} \Big|
    = \Big| \Big\{ 2 \leq j \leq d : \wh p_j \geq \frac{\sqrt{s d}/(20 n)}{d/2} \Big\} \Big|
    \leq d/2
    \, .
  \end{equation*}
  Recall that, under the event $E$, one has $|\wh \sigma| = D_n - 1 \leq e-1 + (s-1)/2$.
  It follows that, letting
  \begin{equation*}
    \wh J
    = \bigg\{ 2 \leq j \leq d : \wh p_j < \frac{\sqrt{s/d}}{10 n}, \ N_j = 0 \bigg\}
    \, ,
  \end{equation*}
  we have, recalling that
  $d \geq 55 s \geq 1980$,
  \begin{align}
    \label{eq:card-J-lower-bound-sparse}
    |\wh J|
    &\geq (d-1) - \frac{d}{2} - | \wh \sigma |
      \geq \frac{d}{2} - 1 - (e-1) - \frac{s-1}{2} %
      = \frac{d - s - (2e-1)}{2}
      \nonumber \\
    &\geq \frac{d}{2}
      \Big( 1 - \frac{1}{55} - \frac{2e-1}{1980} \Big)
      \geq 0.48 d
    \, .
  \end{align}
  In addition,
  since $D (\frac{1}{2 e n}, \cdot)$ decreases on $(0, \frac{1}{2 e n})$, we have for every $j \in \wh J$:
  \begin{align*}
    D \Big( \frac{1}{2 e n}, \wh p_j \Big)
    &\geq D \Big( \frac{1}{2 e n}, \frac{\sqrt{s/d}}{10 n} \Big)
      = \frac{\sqrt{s/d}}{10 n} \cdot h \Big( 5 e^{-1} \sqrt{d/s} \Big)
    \\
    &\geq \frac{\sqrt{s/d}}{10 n} \cdot e^{-1} 5 e^{-1} {\sqrt{d/s}} \log \Big( 5 e^{-1}{\sqrt{d/s}} \Big)
      \geq \frac{1}{4 e^2 n} \log \bigg( \frac{e d}{s} \bigg)
  \end{align*}
  where we used that $5 e^{-1} \sqrt{d/s}
  \geq e$ as $d \geq 55 s %
  $, %
  and that $h (t) \geq e^{-1} t \log t$ for $t \geq e$ (Lemma~\ref{lem:entropy-function}).
  Plugging this lower bound into~\eqref{eq:proof-lower-sparse-decomp-kl}, we obtain
  \begin{equation}
    \label{eq:kl-card-intersect}
    \kll{P_\sigma}{\wh P_n}
    \geq \frac{1}{4 e^2 n} \log \bigg( \frac{e d}{s} \bigg) \sum_{j \in \wh J} \indic{j \in \wt \sigma}
    = \frac{1}{4 e^2 n} \log \bigg( \frac{e d}{s} \bigg) |\wh J \cap \wt \sigma |    
    \, .
  \end{equation}
  We will now show that $| \wh J \cap \wt \sigma | \gtrsim s$ with high probability over the draw of $\wt \sigma$, conditionally on $X_1, \dots, X_n$.
  Although one could in principle show this by purely combinatorial means, we will instead resort to the notion of negative association~\cite{joagdev1983negative}, which provides a particularly convenient way to handle the dependence that arises here.

  Denote $\wh \sigma^c = \{ 2, \dots, d \} \setminus \wh \sigma$, so that $|\wh \sigma^c | = d - D_n$ under $E$.
  Since $\wt \sigma$ is uniformly distributed on subsets of $\wh \sigma^c$ with $s - D_n$ elements, conditionally on $X_1, \dots, X_n$ the vector $( \indic{j \in \wt \sigma})_{j \in \wh \sigma^c}$ is uniformly distributed over all permutations of the vector $( \indic{j \in \wt \sigma_0})_{j \in \wh \sigma^c}$ for some fixed $\wt \sigma_0 \subset \wh \sigma^c$ with $s-D_n$ elements.
  In other words, this distribution is a permutation distribution, which by~\cite[Theorem~2.11]{joagdev1983negative} is negatively associated, in the sense of~\cite[Definition~2.1]{joagdev1983negative}.
  By the restriction property of negatively associated random variables~\cite[Property~P4]{joagdev1983negative}, this implies that the variables $( \indic{j \in \wt \sigma})_{j \in \wh J}$ are negatively associated.
  This implies in particular that, for every $\lambda \in \R^+$,
  \begin{align*}
    &\E \big[ \exp \big( - \lambda | \wh J \cap \wt \sigma | \big) \big| X_1, \dots, X_n \big]
    = \E \bigg[ \prod_{j \in \wh J} \exp \big( - \lambda \indic{j \in \wt \sigma} \big) \bigg| X_1, \dots, X_n \bigg] \\
    &\leq \prod_{j \in \wh J} \E \big[  \exp \big( - \lambda \indic{j \in \wt \sigma} \big) \big| X_1, \dots, X_n \big]
      = \bigg( \frac{s - D_n}{d - D_n} e^{-\lambda} + \frac{d - s}{d - D_n} \bigg)^{|\wh J|}
      \, .
  \end{align*}
  Denoting $\wh \eta = (s-D_n)/(d-D_n) \in [0,1]$ and recalling that $|\wh J| \geq 0.48 d$ by~\eqref{eq:card-J-lower-bound-sparse}, we deduce that, for every $\lambda \in \R^+$ (using that $1 - \wh \eta + \wh \eta e^{-\lambda} \in [0,1]$),
  \begin{equation*}
    \log \E \big[ \exp \big( - \lambda | \wh J \cap \wt \sigma | \big) \big| X_1, \dots, X_n \big]
    \leq {0.48 d} \log \big( 1 - \wh \eta + \wh \eta e^{-\lambda} \big)
    \leq 0.48 \, d\, \wh \eta\, (e^{-\lambda} - 1)    
    \, .
  \end{equation*}
  Now under $E$ and recalling that $s \geq 36$, %
  we have
  \begin{equation*}
    d \, \wh \eta
    = d \cdot \frac{s-D_n}{d-D_n}
    \geq s - \frac{s + 2 e -1}{2}
    = \frac{s - (2e - 1)}{2}
    \geq 0.43 s
    \, ,
  \end{equation*}
  so that $\log \E [ \exp ( - \lambda |\wh J \cap \wt \sigma|)] \leq
  0.2 s
  (e^{-\lambda} - 1)$.  
  By a standard argument~\cite[p.~23]{boucheron2013concentration}, this implies the following tail bound: for every $s' \in (0, {s}/{10})$,
  \begin{equation*}
    \P \big( | \wh J \cap \wt \sigma | \leq s' \big| X_1, \dots, X_n \big)
    \leq \exp \big( - D (s', s/5) \big)
    \, .
  \end{equation*}
  In particular,
  \begin{equation*}
    \P \Big( | \wh J \cap \wt \sigma | \leq \frac{s}{10} \, \Big| \, X_1, \dots, X_n \Big)
    \leq \exp \Big( - \frac{1 - \log 2}{10} s \Big)
    \leq e^{-s/35}
    \, .
  \end{equation*}
  Plugging this into the lower bound~\eqref{eq:kl-card-intersect} shows that, under the event $E$,
  \begin{equation}
    \label{eq:sparse-conditional-lower}
    \P \bigg( \kll{P_\sigma}{\wh P_n} \geq \frac{s}{40 e^2 n} \log \bigg( \frac{e d}{s} \bigg) \,\bigg|\, X_1, \dots, X_n \bigg)
    \geq 1 - e^{-s/35}
    \, .
  \end{equation}
  Note that this lower bound also holds in the case where $\wh \alpha \geq \sqrt{s d}/20$ due to~\eqref{eq:lower-large-alpha}.

  \paragraph{Conclusion of the proof.}

  Using the identity~\eqref{eq:fubini-conditioning} together with the conditional lower bound~\eqref{eq:sparse-conditional-lower} under $E$, as well as the bound $\P (E^c) \leq e^{-s/2e} + (2e)^{-n} \leq 2 e^{-s/2e}$ established above, we get
  \begin{align}
    \label{eq:lower-average-sigma}
    &\E_{\sigma \sim \pi} \bigg[ \P \bigg(\kll{P_\sigma}{\wh P_n} \geq \frac{s \log (e d/s)}{40 e^2 n} \, \bigg| \, \sigma \bigg) \bigg] \nonumber \\
    &\geq \E \bigg[ \P \bigg(\kll{P_\sigma}{\wh P_n} \geq \frac{s \log (e d/s)}{40 e^2 n} \, \bigg| \, X_1, \dots, X_n \bigg) \bm 1_E \bigg] \nonumber \\
    &\geq \P (E) \cdot \big( 1 - e^{-s/35} \big)
      \geq \big( 1 - 2 e^{-s/2e} \big) \big( 1 - e^{-s/35} \big) \nonumber \\
    &\geq 1 - 3 e^{-s/35}
      \, .
  \end{align}
  Since $\max_{\sigma \in \Subsets} \set{\cdots} \geq \E_{\sigma \sim \pi} [\cdots]$, it follows from~\eqref{eq:lower-average-sigma} that there exists $\sigma \in \Subsets$ such that
  \begin{equation*}
    \P_{P_\sigma} \bigg( \kll{P_\sigma}{\wh P_n} \geq \frac{s \log (e d/s)}{40 e^2 n} \bigg)
    \geq 1 - 3 e^{-s/35}
    \, .
  \end{equation*}
  The lower bound of Proposition~\ref{prop:lower-bound-minimax-sparse} follows by further bounding $40 e^2 \leq 300$.

\subsection{Proof of Corollary~\ref{cor:minimax-sparse-lower}}
\label{sec:proof-lower-minimax-sparse-tail}

  Fix $\Phi = \Phi_{s,\delta}$.  
  By Lemma~\ref{lem:lower-minimax-tail}, 
  there exists $P \in \F \subset \probas_{2,d} \subset \probas_{s,d}$ such that
  \begin{equation}
    \label{eq:proof-corollary-sparse-dev}
    \P_P \bigg( \kll{P}{\wh P_n} \geq \frac{\log (d) \log (1/\delta)}{14 \,n} \bigg)
    \geq \delta
    \, .
  \end{equation}
  On the other hand, by Proposition~\ref{prop:lower-bound-minimax-sparse}, there exists a distribution $P \in \probas_{s,d}$ such that
  \begin{equation*}
    \P_P \bigg( \kll{P}{\wh P_n} \geq \frac{s \log (e d/s)}{300 \, n} \bigg)
    \geq 1 - 3 e^{-s/35}
    \, .
  \end{equation*}
  Now, if $s \geq 42$, then $1-3e^{-s/35} > e^{-2} \geq \delta$; on the other hand, if $2 \leq s \leq 41$ then (using that $d \geq 55 s \geq 41$)
  \begin{equation*}
    \frac{\log (d) \log(1/\delta)}{14 n}
    \geq \frac{3 \log (d)}{14 n}
    \geq \frac{2 \cdot 41 \log (e d/41)}{14\cdot 41 n}
    \geq \frac{2 \cdot s \log (e d/s)}{574 n}
    \geq \frac{s \log (e d/s)}{300 n}
    \, .
  \end{equation*}
  Hence, using the previous inequalities, we deduce that regardless of $s \geq 2$, there exists $P \in \probas_{s,d}$ such that
  \begin{equation}
    \label{eq:proof-sparse-complexity}
    \P_P \bigg( \kll{P}{\wh P_n} \geq \frac{s \log (e d/s)}{300 n} \bigg)
    \geq \delta
    \, .
  \end{equation}
  Taking the best of the two lower bounds~\eqref{eq:proof-corollary-sparse-dev} and~\eqref{eq:proof-sparse-complexity}, we obtain that under some $P \in \probas_{s,d}$, with probability at least $\delta$ one has
  \begin{align*}
    \kll{P}{\wh P_n}
    &\geq \max \bigg\{ \frac{s \log (e d/s)}{300 n}, \frac{\log (d) \log (1/\delta)}{14 n} \bigg\} \\
    &\geq \frac{20}{21} \frac{s \log (e d/s)}{300 n} + \frac{1}{21} \frac{\log (d) \log (1/\delta)}{14 n}
      \geq \frac{s \log (e d/s) + \log (d) \log (1/\delta)}{320 \, n}
      \, .
  \end{align*}

\section{Proof of Theorem~\ref{thm:deviation-missing-mass}}
\label{sec:proof-deviation-missing-mass}

We now provide the proof of Theorem~\ref{thm:deviation-missing-mass}.
First, note that if $\delta \leq e^{-n/6}$, then the right-hand side of~\eqref{eq:deviation-missing-mass} is greater than $1$, hence the inequality holds.
We now assume that $\delta \in (e^{-n/6}, 1)$.

\paragraph{Poisson sampling.}

As before, we let $(X_i)_{i \geq 1}$ denote an \iid sequence from $P$, and $N$ be an independent random variable with distribution $\poissondist (n/2)$.
In addition, we let $\wt N_j = \sum_{1 \leq i \leq N} \indic{X_i = j}$.
We work under the event $E= \{ N \leq n \}$, such that $\P (E) \geq 1-e^{-n/6} \geq 1-\delta$ (inequality~\eqref{eq:proba-poisson-sample}), and under which $\wt N_j \leq N_j$ for $1 \leq j \leq d$.
Thus, letting $\lambda_j = n p_j$ 
we have under $E$ that
\begin{equation}
  \label{eq:poissonized-missing-mass}
  U_n
  \leq \sum_{j \pp p_j < 1/n} p_j + \sum_{j \pp p_j \geq 1/n} p_j \ind \Big( N_j \leq \frac{n p_j}{4} \Big)
  \leq \sum_{j \pp p_j < 1/n} p_j + %
  \frac{1}{n} \sum_{j \pp \lambda_j \geq 1} \lambda_j \ind \Big( \wt N_j \leq \frac{\lambda_j}{4} \Big)
  \, .
\end{equation}
Also, recall that $\wt N_1, \dots, \wt N_d$ are independent, with $\wt N_j \sim \poissondist (\lambda_j/2)$ for $j= 1, \dots, d$.
We therefore need to control the last term in~\eqref{eq:poissonized-missing-mass}.

\paragraph{Multi-scale decomposition and domination.}
We collect terms in the sum as follows:
\begin{align*}
  \sum_{j \pp \lambda_j \geq 1} \lambda_j \ind \Big( \wt N_j \leq \frac{\lambda_j}{4} \Big)
  &= \sum_{k\geq 0} \sum_{j \pp 2^k \leq \lambda_j < 2^{k+1}} \lambda_j \ind \Big( \wt N_j \leq \frac{\lambda_j}{4} \Big) \\
  &\leq \sum_{k\geq 0} 2^{k+1} \sum_{j \pp 2^k \leq \lambda_j < 2^{k+1}} \ind \Big( \wt N_j \leq \frac{\lambda_j}{4} \Big)
    \, .
\end{align*}
Now, let $d_k' = \abs{\set{1 \leq j \leq d : 2^k \leq \lambda_j < 2^{k+1}}}$ for each $k \geq 0$, and let $(P_k)_{k \geq 0}$ be independent random variables with $P_k$ following the binomial distribution $\binomdist (d_k', e^{-2^k/14})$.
Since $\P (\wt N_j \leq \lambda_j /4) \leq e^{-\lambda_j/14} \leq e^{-2^k/14}$ for each $k \geq 0$ and $j$ such that $2^{k} \leq \lambda_j < 2^{k+1}$, and since the random variables $\wt N_j$ are independent, it follows from Lemma~\ref{lem:stoch-domin} that
\begin{equation*}
  \sum_{j \pp 2^k \leq \lambda_j < 2^{k+1}} \ind \Big( \wt N_j \leq \frac{\lambda_j}{4} \Big)
  \mleq P_k
  \, .
\end{equation*}
The previous inequalities together with another application of Lemma~\ref{lem:stoch-domin} imply that
\begin{equation}
  \label{eq:multi-scale-decomposition}
  \sum_{j \pp \lambda_j \geq 1} \lambda_j \ind \Big( \wt N_j \leq \frac{\lambda_j}{4} \Big)
  \mleq \sum_{k \geq 0} 2^{k+1} P_k
  \, .
\end{equation}

\paragraph{Control for individual scales.}

The following key lemma controls the tails of individual scales in the sum~\eqref{eq:multi-scale-decomposition}.

\begin{lemma}
  \label{lem:underestimated-single-scale}
  For every $k \in \N$ and $t \geq 0$, one has
  \begin{equation}
    \label{eq:tail-individual-scale}
    \P \Big( 2^k P_k \geq 56 d_k' e^{-2^k/56} + 28 e t \Big)
    \leq e^{-t}
    \, .
  \end{equation}  
\end{lemma}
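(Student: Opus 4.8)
The plan is to obtain a tail bound on the Poisson random variable $P_k \sim \poissondist(\mu_k)$ with $\mu_k = d_k' e^{-2^k/14}$, and then rescale by $2^k$. The natural tool is the standard Poisson deviation bound (Lemma~\ref{lem:poisson-tail} as used elsewhere in the paper), which gives $\P(P_k \geq \nu) \leq \exp(-D(\nu, \mu_k))$ for $\nu \geq \mu_k$, together with the elementary estimate $D(\nu,\mu) = \nu \log(\nu/\mu) - \nu + \mu \geq \nu \log(\nu/\mu) - \nu \geq \frac{\nu}{2}\log(\nu/\mu)$ once $\nu \geq e^2 \mu$, say; more simply, one can use that for $\nu \geq 2e\mu$ one has $D(\nu,\mu) \geq \nu/(2e) \cdot \log(\nu/(e\mu)) \geq \cdots$, but the cleanest route is to invoke a ready-made Poisson upper-tail inequality of the form $\P(P_k \geq u) \leq e^{-u/2}$ whenever $u \geq c\,\mu_k$ for a suitable constant. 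I would isolate the precise constant by writing $u = 2^k P_k/2^k$ and tracking how $e^{-2^k/14}$ against $e^{-2^k/56}$ creates slack.

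\textbf{Main steps.} First, fix $k$ and set $\mu_k = d_k' e^{-2^k/14}$, $a_k = 2^k$. I want to show $\P(a_k P_k \geq 56 d_k' e^{-2^k/56} + 28 e t) \leq e^{-t}$ for all $t \geq 0$. Writing $b_k = 56 d_k' e^{-2^k/56}$, it suffices to show $\P(P_k \geq (b_k + 28 e t)/a_k) \leq e^{-t}$. I split into the bulk regime (small $t$) and the tail regime. In the bulk, I claim $b_k/a_k \geq (2e)\mu_k$: indeed $b_k/(a_k \mu_k) = 56 e^{-2^k/56}/(2^k e^{-2^k/14}) = 56\, 2^{-k} e^{2^k/14 - 2^k/56} = 56\, 2^{-k} e^{3\cdot 2^k/56}$, and since $e^{3\cdot 2^k/56} \geq 2^k$ for all $k \geq 0$ (check $k=0,1$ directly, then note the exponential grows faster), this ratio is $\geq 56 \geq 2e$. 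Hence $\nu_t := (b_k + 28et)/a_k \geq 2e\mu_k$ for every $t \geq 0$, so the Poisson bound applies with room to spare, giving $\P(P_k \geq \nu_t) \leq \exp(-D(\nu_t,\mu_k)) \leq \exp(-\tfrac12 \nu_t \log(\nu_t/(e\mu_k)))$. Since $\nu_t/(e\mu_k) \geq \nu_t/(e\mu_k) \geq 2 > e^{1/2}$... — more carefully, $\nu_t \geq 2e\mu_k$ gives $\log(\nu_t/(e\mu_k)) \geq \log 2 \geq 1/2$, hence $\P(P_k \geq \nu_t) \leq \exp(-\nu_t/4)$. Finally $\nu_t/4 = (b_k + 28et)/(4 a_k) \geq 28 e t/(4\cdot 2^k) = 7et/2^k \geq t$ for $k = 0$... which fails for large $k$ — so this crude chain needs $28e/(4a_k) \geq 1$, i.e. only $k \leq 2$.

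\textbf{The obstacle and its fix.} The difficulty, as just surfaced, is the factor $2^k$ dividing $t$: one cannot absorb $28et/2^k$ into a clean $e^{-t}$ bound via the bulk estimate alone when $k$ is large. The resolution is that for large $t$ the Poisson tail decays \emph{faster than exponentially}: $D(\nu,\mu) \geq \nu\log\nu - \nu\log(e\mu)$ grows superlinearly in $\nu$, so writing $\nu_t = (b_k+28et)/2^k$ we should use $D(\nu_t,\mu_k) \geq \nu_t \log(\nu_t/(e\mu_k))$ and observe that once $t$ is large, $\log(\nu_t/(e\mu_k)) \gtrsim \log(t/(2^k\mu_k)) + k\log 2 \gtrsim \max\{1, k\}$ up to constants — the extra $k\log 2$ precisely compensates the $2^{-k}$ loss, since $\nu_t \geq 28et/2^k$ and $2^k \cdot (k \text{ or } 1) \gtrsim 2^k$ times the needed factor... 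Concretely: $D(\nu_t, \mu_k) \geq \nu_t(\log \nu_t - \log(e\mu_k))$; bound $\nu_t \geq 28et/2^k$ and $\log(1/(e\mu_k)) = 2^k/14 - \log(ed_k') \geq 2^k/14 - \log(e 2^{k+1})$ (since $d_k' \leq 2^{k+1}$), which is $\geq 2^k/28$ for $k$ large and bounded below for small $k$; also $\log \nu_t \geq \log(28 e t) - k\log 2$, so $\log \nu_t + \log(1/(e\mu_k)) \geq \log(28et) - k\log 2 + 2^k/28 \geq \log(28et)$ once $2^k/28 \geq k\log 2$, i.e. for all $k\geq 0$ after checking small cases. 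Then $D(\nu_t,\mu_k) \geq (28et/2^k)(\log(28et) - k\log 2 + 2^k/28)$ and splitting the last factor as $\geq \max\{1, \, 2^k/(28\cdot 28e)\}$ lets one get $D \geq t$ after arithmetic. I would organize the final write-up by: (1) stating the Poisson tail + $D$ lower bound lemma, (2) verifying $b_k/a_k \geq 2e\mu_k$, (3) a case split $t \leq 2^k$ vs $t > 2^k$ (or $t \leq b_k/a_k$ vs larger), using the bulk bound $e^{-\nu_t/4}$ in the first and the superlinear bound in the second, (4) collecting constants so that $56$ and $28e$ work. The only genuinely delicate point is calibrating the constants $56$ and $28e$ so that both regimes close simultaneously; everything else is routine estimation of $D(\cdot,\cdot)$.
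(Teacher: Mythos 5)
Your high-level strategy is the same as the paper's: apply the Poisson tail bound to $P_k \sim \poissondist(\mu_k)$ with $\mu_k = d_k' e^{-2^k/14}$, and exploit the superexponential decay of $D(\nu,\mu_k)$ — i.e.\ a factor $\log(\nu/(e\mu_k)) \gtrsim 2^k$ — to cancel the $2^{-k}$ in $\nu_t = (b_k + 28et)/2^k$. But the step where you actually extract this $2^k$ growth is broken. You write $\log(1/(e\mu_k)) = 2^k/14 - \log(e d_k') \geq 2^k/14 - \log(e\,2^{k+1})$ ``since $d_k' \leq 2^{k+1}$''. There is no such bound: $d_k'$ is the number of classes $j$ with $2^k \leq np_j < 2^{k+1}$ (the paper's displayed definition has a typo, $j$ in place of $\lambda_j$, which may have misled you), and it is only constrained by $d_k' \leq \min(d, n2^{-k})$. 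For instance $n/2$ classes with $np_j = 2$ gives $d_1' = n/2 \gg 4$. When $d_k'$ is large, $\log(1/(e\mu_k))$ is negative and your lower bound on $D(\nu_t,\mu_k)$ collapses; the subsequent ``arithmetic'' cannot be completed as described. (Separately, even granting your bound on the log factor, $L \geq 2^k/(28\cdot 28e)$ only yields $D \geq t/28$, hence $e^{-t/28}$ rather than $e^{-t}$; you need $L \geq 2^k/(28e)$.)

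The repair is contained in your own ``bulk'' computation, where $d_k'$ cancels in the ratio: you verified $(b_k/2^k)/\mu_k = 56\cdot 2^{-k} e^{3\cdot 2^k/56}$, and since $56\cdot 2^{-k}e^{2^k/56} \geq e$ (from $e^y \geq ey$ with $y = 2^k/56$), this gives $\nu_t/\mu_k \geq e\cdot e^{2^k/28}$ for \emph{every} $t \geq 0$, uniformly in $d_k'$. Hence $D(\nu_t,\mu_k) \geq \nu_t\log(\nu_t/(e\mu_k)) \geq (28et/2^k)\cdot(2^k/28) = et \geq t$, and the lemma follows with no case split on $t$ or $k$. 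This is essentially the paper's proof, phrased slightly differently: the paper shows $\P(P_k \geq eu) \leq e^{-2^k u/28}$ for all $u \geq d_k' e^{-2^k/28}$ (the $d_k'$ cancelling in $eu/\mu_k \geq e\cdot e^{2^k/28}$), then takes $u = 4t/2^k$ and calibrates the additive term $56 d_k' e^{-2^k/56}$ so that the threshold on $t$ is absorbed by a shift $t \mapsto t_0 + 7t$ in the exponent. So the idea is right and the constants do close, but the specific justification you give for the key logarithmic lower bound is false and must be replaced by the ratio argument.
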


\begin{proof}
  Since $P_k \sim \binomdist (d_k', e^{-2^k/14})$, Bennett's inequality~\cite[Theorem~2.9 p.~35]{boucheron2013concentration}
  implies that for any $u \geq d_k' e^{-2^k/28}$,
  \begin{equation*}
    \P \big( P_k \geq e u \big)
    \leq \exp \bigg\{ - d_k' e^{-2^k/14} h \bigg( \frac{e u}{d_k' e^{-2^k/14}} \bigg) \bigg\}
      \, .
  \end{equation*}
  Now since $e u/ (d_k' e^{-2^k/14})  \geq e (d_k' e^{-2^k/28})/(d_k' e^{-2^k/14}) \geq e \cdot e^{2^k/28}$, and since
  $h (t) \geq e^{-1} t \log t$ when $t \geq e$ (Lemma~\ref{lem:entropy-function}),
  the previous bound implies that
  \begin{equation*}
    \P \big( P_k \geq e u \big)
    \leq \exp \Big\{ - e^{-1} \cdot e u \log \Big( e \cdot e^{2^k/28} \Big) \Big\}
    \leq \exp \Big\{ - \frac{2^k u}{28} \Big\}
    \, .
  \end{equation*}
  Hence, letting $u = 4 t/2^k$, the inequality
  \begin{equation}
    \label{eq:proof-underestimated-poisson-1}
    \P \big( 2^k P_k \geq 4 e t \big)
    = \P \big( P_k \geq e u %
    \big)
    \leq \exp \Big\{ - \frac{2^k u}{28} \Big\}
    \leq %
    e^{-t/7}
  \end{equation}
  holds for every $t \geq 2^k d_k' e^{-2^k/28}/4$.
  Since
  \begin{equation*}
    \frac{2^k \cdot d_k' e^{-2^k/28}}{4}
    \leq \frac{56 e^{-1} e^{2^k/56} \cdot d_k' e^{-2^k/28}}{4}
    = 14 e^{-1} d_k' e^{-2^k/56} = t_0
    \, ,
  \end{equation*}
  the bound~\eqref{eq:proof-underestimated-poisson-1} holds for any $t \geq t_0$.
  Thus, for any $t \geq 0$,
  \begin{equation*}
    \P \big( 2^k P_k \geq 4e (t_0 + 7 t) \big)
    \leq e^{-(t_0 + 7t)/7}
    \leq e^{-t}
    \, ,
  \end{equation*}
  which is precisely the claimed inequality~\eqref{eq:tail-individual-scale}.
\end{proof}

Lemma~\ref{lem:underestimated-single-scale} states that $2^k P_k$ is stochastically dominated by $56 d_k' e^{-2^k/56} + 28 e E_k$, where $E_k \sim \expdist (1)$ is an exponential random variable.
In addition, since $\lambda_j = np_j \leq n$ for $j = 1, \dots, d$, we have $d_k' = 0$ (and thus $P_k = 0$) for $k> \log_2 n$.
Thus, letting $(E_k)_{k \in \N}$ be independent exponential random variables, inequality~\eqref{eq:multi-scale-decomposition} implies that
\begin{equation}
  \label{eq:underestimated-dominated-exponential}
  \sum_{j \pp \lambda_j \geq 1} \lambda_j \ind \Big( \wt N_j \leq \frac{\lambda_j}{4} \Big)
  \mleq 112 \sum_{0 \leq k \leq \log_2 n} d_k' e^{-2^k/56} + 56 e \sum_{0 \leq k \leq \log_2 n} E_k
  \, .
\end{equation}
In addition, the first term in the r.h.s.\ of~\eqref{eq:underestimated-dominated-exponential} is controlled by $c s_{n/112}^{\circ}$ for some constant $c$ (see below).
The second term may also be controlled using a deviation bound for sums of exponential random variables (that is, gamma random variables)~\cite[p.~28]{boucheron2013concentration}, which gives, with probability $1-\delta$,
\begin{equation*}
  \sum_{0 \leq k \leq \log_2 n} E_k
  \lesssim \log n + \log (1/\delta)
  \, .
\end{equation*}
While already non-trivial and in fact near-optimal, the resulting bound on the missing mass features an additional $\log n$ term, which is suboptimal in some (rather extreme) cases.

In order to address this sub-optimality, we need to account more carefully for the contribution of certain scales to the sum $\sum_{k \geq 0} 2^k P_k$.

\paragraph{Accounting for ``rarely contributing'' scales.}

We now define two types of ``scale indices'' $k\in \N$.
Specifically, we let
\begin{equation}
  \label{eq:typical-scales}
  A
  = \Big\{ k \in \N : d_k' e^{-2^k/56} \geq e \Big\}
  \, .
\end{equation}
Intuitively speaking, since $\E [P_k] = d_k' e^{-2^k/14}$, the scale indices $k \in A$ are those for which $P_k$ is often larger than $1$, \ie non-zero---with the technical caveat that we have changed the exponent in~\eqref{eq:typical-scales}.
On the contrary, indices $k \in \N \setminus A$ are those for which typically $P_k = 0$.
However, given that there may be several such terms and that we account for low-probability events, some of these terms may be positive with small probability.
Hence, they may contribute to the sum, especially if their coefficient $2^k$ is large.
Thus, they should also be accounted for.

In what follows, we separately account for the contribution of indices $k \in A$ and $k \not\in A$.
First recall that, by Lemma~\ref{lem:underestimated-single-scale}, for any $k \in \N$ one has
\begin{equation}
  \label{eq:dominated-scale}
  2^k P_k
  \mleq 56 d_k' e^{-2^k/56} + 28 e E_k
\end{equation}
where $(E_k)_{k \in \N}$ are \iid exponential random variables.

We also recall (see~\cite[p.~28]{boucheron2013concentration}) that for any $\delta \in (0, 1)$ and finite subset $B \subset \N$,
with probability $1-\delta$ one has
\begin{equation}
  \label{eq:sum-exponentials}
  \sum_{k \in B} E_k
  < |B| + \sqrt{2 |B| \log (1/\delta)} + \log (1/\delta)
  \leq 2 |B| + 2\log (1/\delta)
  \, .
\end{equation}

When $k \in A$, one has $d_k' e^{-2^k/56} \geq e$, hence~\eqref{eq:dominated-scale} gives
\begin{equation}
  \label{eq:dominated-scale-typical}
  2^k P_k
  \mleq 56 d_k' e^{-2^k/56} + 56 e + 28 e (E_k - 2)
  \leq 112 d_k' e^{-2^k/56} + 28 e (E_k - 2)
  \, .
\end{equation}
Now applying the tail bound~\eqref{eq:sum-exponentials} to $B = A$ (which is finite since $|A| \leq \log_2 n$), we obtain with probability $1-\delta$,
\begin{equation*}
  \sum_{k \in A} (E_k - 2)
  < 2 |A| + 2 \log (1/\delta) - 2 |A|
  = 2 \log (1/\delta)
  \, ,
\end{equation*}
which combined with~\eqref{eq:dominated-scale-typical} (and Lemma~\ref{lem:stoch-domin}) gives
\begin{equation}
  \label{eq:proof-sum-scale-A}
  \P \bigg( \sum_{k \in A} 2^k P_k \geq 112 \sum_{k \in A} d_k' e^{-2^k/56} + 56 e \log (1/\delta) \bigg)
  \leq \delta
  \, .
\end{equation}

We now turn to the case $k \not\in A$, in which case $d'_ke^{-2^k/56} < e$.
Using that if $P \sim \poissondist (\lambda)$ then $\P (P \neq 0) = 1-e^{-\lambda} \leq \lambda$, this implies that
\begin{equation*}
  \P (P_k \neq 0)
  \leq d_k' e^{-2^k/14}
  = (d_k' e^{-2^k/56}) e^{-2^k 3/56}
  \leq e \cdot e^{-2^k/19}
  \, .
\end{equation*}
Now, let $k^* = \ceil{19 \log (1/\delta)} \leq 20 \log (1/\delta)$ (as $\delta \leq e^{-1}$), so that $e^{-2^{k^*}/19} \leq \delta$.
Using the previous inequality, we may bound
\begin{align*}  
  \P \bigg( \sum_{k \not\in A, \, k \geq k^*} 2^k P_k \neq 0 \bigg)
  &\leq \sum_{k \not\in A, \, k \geq k^*} \P (P_k \neq 0)
    \leq \sum_{k \geq k^*} e \cdot e^{-2^k/19} \\
  &= e \sum_{l \geq 0} e^{- 2^l 2^{k^*}/19}
    \leq e \sum_{l \geq 0} \big( e^{-2^{k^*}/19} \big)^{l+1} \\
  &\leq e \sum_{l \geq 0} \delta^{l+1}
    = \frac{e \delta}{1-\delta}
    \leq \frac{e}{1-e^{-1}} \delta
    \, ,
\end{align*}
so that
\begin{equation}
  \label{eq:proof-nonzero-notA}
  \P \bigg( \sum_{k \not\in A, \, k \geq k^*} 2^k P_k \neq 0 \bigg)
  \leq 5 \delta
  \, .
\end{equation}
Finally, it remains to control indices $0 \leq k < k^*$ such that $k \not\in A$.
For this, we combine the domination~\eqref{eq:dominated-scale} with the tail bound~\eqref{eq:sum-exponentials} to obtain:
\begin{align*}
  \P \bigg( \sum_{k \not\in A,\, k < k^*} 2^k P_k \geq 56 \sum_{k \not\in A,\, k < k^*} d_k' e^{-2^k/56} + 56 e \{k^* + \log (1/\delta)\} \bigg)
  \leq \delta
  \, .
\end{align*}
Since $k^* \leq 20 \log (1/\delta)$, we conclude that
\begin{equation}
  \label{eq:proof-notA-small}
  \P \bigg( \sum_{k \not\in A,\, k < k^*} 2^k P_k \geq 56 \sum_{k<k^*,\, k \not\in A} d_k' e^{-2^k/56} +
  1176 e
  \log (1/\delta) \bigg)
  \leq 7\delta
  \, .
\end{equation}
Combining inequalities~\eqref{eq:proof-sum-scale-A},~\eqref{eq:proof-nonzero-notA} and~\eqref{eq:proof-notA-small}  through a union bound to control the sum of the three terms, we obtain
\begin{equation*}
  \P \bigg( \sum_{k \geq 0} 2^k P_k
  \geq 168 \sum_{k \geq 0} d_k' e^{-2^k/56} +
  1232 e \log (1/\delta) \bigg)
  \leq 7 \delta
  \, .
\end{equation*}

\paragraph{Conclusion.}

Together with inequality~\eqref{eq:multi-scale-decomposition}, the previous bound implies that
\begin{equation*}
  \P \bigg( \sum_{j \pp \lambda_j \geq 1} \lambda_j \ind \Big( \wt N_j \leq \frac{\lambda_j}{4} \Big)
  \geq 336 \sum_{k \geq 0} d_k' e^{-2^k/56} +
  2464 e \log (1/\delta) \bigg)
  \leq 7 \delta
  \, .
\end{equation*}
Now, by definition of $d_k'$ one has
\begin{align*}
  &\sum_{k \geq 0} d_k' e^{-2^k/56}
  = \sum_{k \geq 0} \sum_{j \pp 2^k \leq n p_j < 2^{k+1}} e^{-2^k/56}  \\
  &\leq \sum_{k \geq 0} \sum_{j \pp 2^k \leq n p_j < 2^{k+1}} e^{-np_j/112} 
  = \sum_{j \pp p_j \geq 1/n} e^{-np_j/112}
    \, .
\end{align*}
Plugging the previous inequalities into the decomposition~\eqref{eq:poissonized-missing-mass} (which holds except for an event of probability at most $\delta$), we get that with probability at least $1-8\delta$,
\begin{align*}
  U_n
  &< \frac{1}{n} \bigg[ \sum_{j\pp p_j < 1/n} (np_j) + 336 \sum_{j \pp p_j \geq 1/n} e^{-np_j/112} + 2464 e \log (1/\delta) \bigg] \\
  &\leq \frac{336 \, s_{n/112}^\circ (P) + 2500 e \log (1/\delta)}{n}
  \, .
\end{align*}

\section{Proof of Proposition~\ref{prop:expected-sparse-loo}}
\label{sec:proof-expected-sparse-loo}

  Let $X_{n+1}$ be a new observation from $P$, independent from $X_1, \dots, X_n$.
For $j = 1, \dots, d$, denote by $\wt N_j = \sum_{i=1}^{n+1} \indic{X_i = j}$ the count of class $j$ and by $\wt D = \sum_{j=1}^d \indic{\wt N_j \geq 1}$ the number of distinct classes in the extended sample $(X_1, \dots, X_{n+1})$.
Also, let $\wt N = (\wt N_j)_{1 \leq j \leq d}$.
Since the joint distribution of $(X_1, \dots, X_{n+1})$ is exchangeable (and permuting indices does not change $\wt N$), for every $j = 1, \dots, d$ and $i=1, \dots, n$ one has $\P (X_{n+1} = j | \wt N) = \P (X_i = j | \wt N)$; hence,
\begin{align}
  \label{eq:loo-cond-proba}
  \P (X_{n+1} = j | \wt N)
  &= \frac{1}{n+1} \sum_{i=1}^n \P (X_i = j | \wt N)
  = \frac{1}{n+1} \E \bigg[ \sum_{i=1}^{n+1} \indic{X_i = j} \Big| \wt N \bigg] \nonumber \\
  &= \frac{\E [\wt N_j | \wt N]}{n+1}
  = \frac{\wt N_j}{n+1}
  \, .
\end{align}
Now, observe that for any probability distribution $Q \in \probas_d$, one has 
$\kll{P}{Q} = L (Q) - L (P)$, where $L (Q) = \E [ \ell (Q, X) ]$ with $X \sim P$, and where $\ell$ stands for the logarithmic loss $\ell (Q, x) = - \log Q (\{x\})$.
In addition, %
since $\adp$ is independent of $X_{n+1}$ one has 
\begin{equation*}
  \E [ \ell (\adp, X_{n+1}) ]
  = \E \big[ \E [ \ell (\adp, X_{n+1}) | \adp ] \big]
  = \E [ L (\adp) ]
  \, .
\end{equation*}
On the other hand, let $\wt P = (\wt p_j)_{1 \leq j \leq d}$ denote the maximum likelihood distribution on the extended sample $(X_1, \dots, X_{n+1})$, defined by
\begin{equation*}
  \wt P
  = \argmin_{P' \in \probas_d} \bigg\{ \frac{1}{n+1} \sum_{i=1}^{n+1} \ell (P', X_i) \bigg\}
  \, .
\end{equation*}
It is a standard fact that $\wt P$ is the empirical distribution, namely $\wt p_j = \wt N_j / (n+1)$.
In addition, by definition of $\wt P$, one has
\begin{equation*}
  L (P)
  = \E \bigg[ \frac{1}{n+1} \sum_{i=1}^{n+1} \ell (P, X_i) \bigg]
  \geq \E \bigg[ \frac{1}{n+1} \sum_{i=1}^{n+1} \ell (\wt P, X_i) \bigg]
  = \E \big[ \ell (\wt P, X_{n+1}) \big]
  \, ,
\end{equation*}
where the last step used the fact that the distribution of the vector $(X_1, \dots, X_{n+1})$ is invariant under permutation, and that $\wt P$ is also invariant under permutation.
Putting the previous inequalities together gives:
\begin{equation}
  \label{eq:expect-kl-loo}
  \E \big[ \kll{P}{\adp} \big]
  = \E \big[ L (\adp) \big] - L (P)
  \leq \E \big[ \ell (\adp, X_{n+1}) - \ell (\wt P, X_{n+1}) \big]
  \, .
\end{equation}
Recall that $\wh p_j = (N_j + D_n/d)/(n+D_n)$ while $\wt p_j = \wt N_j/(n+1)$.
It then follows from~\eqref{eq:expect-kl-loo} that
\begin{align}
  \label{eq:ineq-kl-loo-count}
  \E \big[ \kll{P}{\adp} \big]
  &\leq \E \bigg[ \sum_{j=1}^d \big\{ \ell (\adp, X_{n+1}) - \ell (\wt P, X_{n+1}) \big\} \indic{X_{n+1} = j} \bigg] \nonumber \\
    &= \E \bigg[ \sum_{j=1}^d \log \bigg( \frac{\wt N_j / (n+1)}{(N_j + D_n/d)/(n+D_n)} \bigg) \indic{X_{n+1} = j} \bigg] \nonumber \\
    &= \E \bigg[ \sum_{j=1}^d \log \bigg( \frac{\wt N_j}{N_j + D_n/d} \bigg) \indic{X_{n+1} = j} + \log \Big( \frac{n + D_n}{n + 1} \Big) \bigg] \nonumber \\
    &\leq \E \bigg[ \sum_{j=1}^d \E \bigg[ \log \bigg( \frac{\wt N_j}{\wt N_j - 1 + D_n/d} \bigg) \indic{X_{n+1} = j} \Big| \wt N \bigg] \bigg] + \frac{\E [D_n] - 1}{n+1}
    \, ,
\end{align}
where in the last inequality we used that if $X_{n+1} = j$, then $N_j = \wt N_j - 1$, and that $\log [ (n+D_n)/(n+1) ] = \log [ 1 + (D_n-1)/(n+1)] \leq (D_n - 1)/(n+1)$.

Consider first the case where $\wt N_j = 1$.
In this case and if $X_{n+1} = j$, then $D_n = \wt D - 1$ (since $X_{n+1} = j$ does not appear in the first $n$ observations) and hence:
\begin{align}
  \label{eq:loo-bound-missing}
  \E \bigg[ \log \bigg( \frac{\wt N_j}{\wt N_j - 1 + D_n/d} \bigg) \indic{X_{n+1} = j} \Big| \wt N \bigg]
  &= \E \bigg[ \log \bigg( \frac{1}{(\wt D - 1)/d} \bigg) \indic{X_{n+1} = j} \Big| \wt N \bigg] \nonumber \\
  &= \log \Big( \frac{d}{\wt D - 1} \Big) \cdot \P \big( X_{n+1} = j | \wt N \big) \nonumber \\
  &= \frac{1}{n+1} \log \Big( \frac{d}{\wt D - 1} \Big)
\end{align}
where we used~\eqref{eq:loo-cond-proba} (and $\wt N_j = 1$) in the last equality.

Consider now the case $\wt N_j \geq 2$.
We then bound
\begin{align}
  \label{eq:loo-bound-not-missing}
  \E \bigg[ \log \bigg( \frac{\wt N_j}{\wt N_j - 1 + D_n/d} \bigg) \indic{X_{n+1} = j} \Big| \wt N \bigg] 
  &\leq \E \bigg[ \log \bigg( \frac{\wt N_j}{\wt N_j - 1} \bigg) \indic{X_{n+1} = j} \Big| \wt N \bigg] \nonumber \\
  &= \log \bigg( \frac{\wt N_j}{\wt N_j - 1} \bigg) \cdot \P (X_{n+1} = j | \wt N) \nonumber \\
  &= \frac{\wt N_j}{n+1} \cdot \log \bigg( \frac{\wt N_j}{\wt N_j - 1} \bigg) \nonumber \\
  &\leq \frac{\log 4}{n+1}
    \, ;
\end{align}
in the last inequality, we used that the function $\phi : t \mapsto t \log [t/(t-1)]$, such that
\begin{equation*}
  \phi'(t) = \log [t/(t-1)] + t [ 1/t - 1/(t-1) ] = \log [ 1 + 1/(t-1)] - 1/(t-1) \leq 0
\end{equation*}
for $t> 1$, is decreasing, so that $\phi (\wt N_j) \leq \phi (2) = 2 \log 2 =\log 4$.

Denote now by $\wt C_1 = \sum_{j=1}^d \indic{\wt N_j = 1}$ and $\wt C_2 = \sum_{j=1}^d \indic{\wt N_j \geq 2}$, so that $\wt D = \wt C_1 + \wt C_2$.
Plugging the bounds~\eqref{eq:loo-bound-missing} and~\eqref{eq:loo-bound-not-missing} into inequality~\eqref{eq:ineq-kl-loo-count}, we obtain (using that if $\wt N_j = 0$, then $\indic{X_{n+1} = j} = 0$):
\begin{align*}
  &\E \big[ \kll{P}{\adp} \big] \nonumber \\
  &\leq \E \bigg[ \sum_{j=1}^d \E \bigg[ \log \bigg( \frac{\wt N_j}{\wt N_j - 1 + D_n/d} \bigg) \indic{X_{n+1} = j} \Big| \wt N \bigg] \big\{ \indic{\wt N_j = 1} + \indic{\wt N_j \geq 2} \big\} \bigg] + \frac{\E [D_n] - 1}{n+1} \nonumber \\
  &\leq \E \bigg[ \sum_{j=1}^d \frac{1}{n+1} \log \Big( \frac{d}{\wt D - 1} \Big) \indic{\wt N_j = 1} + \sum_{j=1}^d \frac{\log 4}{n+1} \indic{\wt N_j \geq 2} \bigg] + \frac{\E [D_n] - 1}{n+1} \nonumber \\
  &\leq \frac{1}{n+1} \E \bigg[ \wt C_1 \log \Big( \frac{d}{\wt D - 1} \Big) + \wt C_2 \log 4 + D_n - 1 \bigg] %
    \\
  &\leq \frac{1}{n+1} \E \bigg[ \wt C_1 \log \Big( \frac{e d}{\wt C_1} \Big) + {\wt C_2} \log 4 + {D_n - 1} \bigg]
    \, .
\end{align*}
(In the last bound, we have used that if $\wt D = 1$, then only one class appears $n+1 > 1$ times, hence $\wt C_1 = 0$ and the first term vanishes; on the other hand, if $\wt D\geq 2$, then $\wt D - 1
\geq \wt D/e
\geq \wt C_1/e$.)

By concavity of the map $x \mapsto - x \log x$ on $\R_+^*$, we deduce that
\begin{equation}
  \label{eq:proof-expected-KL-loo}
  \E_P \big[ \kll{P}{\adp} \big]
  \leq %
  \frac{1}{n+1} \bigg\{ \E_P [\wt C_1] \log \Big( \frac{e d}{\E_P [\wt C_1]} \Big) + \E_P [\wt C_2] \log 4 + {\E_P [D_n] - 1} \bigg\}
  \, .
\end{equation}
Now, note that (using Lemma~\ref{lem:expected-missing-mass} for the last inequality)
\begin{equation*}
  \E_P [\wt C_1]
  = \sum_{j=1}^d \P (\wt N_j = 1)
  = \sum_{j=1}^d n p_j (1-p_j)^n
  \leq \sum_{j=1}^d n p_j e^{-np_j}
  = s_n^\bullet (P)
  \leq 2 s_{n/2}^\circ (P)  
  \, ,
\end{equation*}
and in addition $\E_P [\wt C_2] \leq \E_P [D_n] \leq s_n = s_n (P)$.
Using that the map $x \mapsto x \log (e d/x)$ is increasing over $[0,d]$, we deduce that
\begin{align*}
  &\E_P [\wt C_1] \log \Big( \frac{e d}{\E_P [\wt C_1]} \Big) + \E_P [\wt C_2] \log 4 + {\E_P [D_n] } \\
  &\leq 2 s_{n/2}^\circ \log \Big( \frac{e d}{s_{n/2}^\circ} \Big) + \E_P [\wt C_1 + \wt C_2] \log 4 + \E_P [ D_n]  \\
  &\leq 2 s_{n/2}^\circ \log \Big( \frac{e d}{s_{n/2}^\circ} \Big) + (1 + \log 4) s_n
    \, .
\end{align*}
Plugging this inequality into~\eqref{eq:proof-expected-KL-loo} and bounding $1 + \log 4 \leq 2.4$ leads to the desired bound~\eqref{eq:expected-sparse-loo}.

\section{Technical lemmata}
\label{sec:technical-lemmata}

In this section, we gather simple technical results that are used at various places in the proofs.

\begin{lemma}
  \label{lem:entropy-function}
  Define the function $h : \R^+ \to \R^+$ by $h (t) = t \log t - t + 1$ for $t > 0$, and $h (0) = 1$.
  Then $h$ is continuous and convex on $\R^+$. 
  In addition, $h (t) \leq t \log t$ for $t \geq 1$, while $h(t) \geq e^{-1} t \log t$ for $t \geq e$.
  Finally, $h (t) \leq (t-1)^2$ for any $t \in \R^+$.
\end{lemma}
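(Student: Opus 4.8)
The plan is to dispatch the four assertions in turn, all by elementary one-variable calculus. For continuity, on the open half-line $(0,\infty)$ the map $t \mapsto t\log t - t + 1$ is smooth, so continuity there is immediate; at the origin I would invoke $\lim_{t\to 0^+} t\log t = 0$, which yields $\lim_{t\to 0^+} h(t) = 1 = h(0)$. For convexity I would compute $h'(t) = \log t$ and $h''(t) = 1/t > 0$ for $t > 0$, so $h$ is convex on $(0,\infty)$; together with continuity at $0$ this upgrades to convexity on all of $\mathbb{R}^+$, since a convex function on an open interval that extends continuously to an endpoint remains convex on the closed interval.

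Next, the bound $h(t) \leq t\log t$ for $t \geq 1$ is immediate, as $h(t) - t\log t = 1 - t \leq 0$ precisely when $t \geq 1$. For the bound $h(t) \geq e^{-1} t\log t$ on $[e,\infty)$, I would introduce $g(t) := h(t) - e^{-1} t\log t = (1 - e^{-1})\, t\log t - t + 1$ and check directly that $g(e) = (e-1) - e + 1 = 0$. Differentiating, $g'(t) = (1 - e^{-1})(\log t + 1) - 1$ is increasing in $t$, and $g'(e) = 2(1 - e^{-1}) - 1 = 1 - 2e^{-1} > 0$ because $e > 2$; hence $g'(t) > 0$ for all $t \geq e$, so $g$ increases on $[e,\infty)$ and therefore $g(t) \geq g(e) = 0$ there, which is the claim.

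Finally, for $h(t) \leq (t-1)^2$ on all of $\mathbb{R}^+$, I would expand
\[
  (t-1)^2 - h(t) = t^2 - 2t + 1 - (t\log t - t + 1) = t^2 - t - t\log t = t\,(t - 1 - \log t),
\]
and conclude via the standard inequality $\log t \leq t - 1$ for $t > 0$ (the $t = 0$ case being handled by the convention $t\log t = 0$), so that both factors are nonnegative. None of these steps is a genuine obstacle; the only points deserving a moment of care are the continuity at $0$ (where $t\log t \to 0$ is used) and, in the second paragraph, the verification that $g'(e) > 0$, which encodes the fact that the linearization $e^{-1} t\log t$ touches $h$ from below exactly at $t = e$ and stays below thereafter.
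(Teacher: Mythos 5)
Your proof is correct. Two of the four claims (continuity/convexity and $h(t)\le t\log t$) are handled exactly as in the paper, but for the remaining two you take genuinely different routes. For $h(t)\ge e^{-1}t\log t$ on $[e,\infty)$, the paper argues via convexity of $t\mapsto t\log t$, which makes the ratio $t\log(t)/(t-1)$ nondecreasing, hence $\ge e/(e-1)$ for $t\ge e$, and then writes $h(t)=t\log t-(t-1)\ge (1-\tfrac{e-1}{e})\,t\log t$; you instead differentiate the difference $g(t)=(1-e^{-1})t\log t-t+1$ and check $g(e)=0$, $g'(e)>0$, $g'$ increasing. Both are sound and of comparable length. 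For $h(t)\le (t-1)^2$ your argument is distinctly cleaner: the factorization
\begin{equation*}
  (t-1)^2-h(t)=t\,(t-1-\log t)\ge 0
\end{equation*}
together with $\log t\le t-1$ settles it in one line, whereas the paper studies the ratio $f(t)=h(t)/(t-1)^2$, proves it is decreasing on $\R^+$ via a sign analysis of $g(t)=2(t-1)-(t+1)\log t$, and uses $\lim_{t\to 0^+}f(t)=1$. The paper's heavier route does yield the extra information that the optimal constant at each $t$ is $f(t)$ (with $f(1)=1/2$, which is what Lemma~\ref{lem:kl-hellinger-bounded} exploits), but for the stated inequality your direct factorization is preferable.
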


\begin{proof}
  Continuity of $h$ is straightforward, while convexity comes from the fact that $h''(t) = 1/t > 0$ for $t > 0$.
  The inequality $h (t) \leq t \log t$ for $t \geq 1$ is immediate.
  We now turn to the lower bound $h (t) \geq e^{-1} t \log t$ for $t \geq e$.
  By convexity of the map $t \mapsto t \log t$ over $\R_+^*$, the quantity $t \log (t)/(t-1)$ increases in $t$, thus for $t \geq e$ one has $t \log (t) / (t-1) \geq e / (e-1)$, hence
  \begin{equation*}
    h (t)
    = t \log t - (t-1)
    \geq \Big( 1 - \frac{e-1}{e} \Big) t \log t
    = e^{-1} t \log t
    \, .
  \end{equation*}

  We conclude with the proof of the bound $h(t) \leq (t-1)^2$.
  To this end, let $f (t) = h (t)/(t-1)^2$ for $t \in \R^+ \setminus \{ 1 \}$, and $f(1) = 1/2$.
  It is easily verified that $f$ is continuous on $\R^+$ and differentiable on $\R^+ \setminus \{1\}$.
  In addition,  $f'(t) = g(t)/(t-1)^3$ where $g(t) = 2 (t-1) - (t+1) \log t$ for $t > 0$.
  One has $g'(t) = - (t^{-1} - 1 - \log (t^{-1})) \leq 0$, and since $g (1) = 0$, this implies that $g \leq 0$ on $(0, 1]$ while $g \geq 0$ on $[1, + \infty)$.
  Therefore $f'(t) = g(t)/(t-1)^3 \leq 0$ for any $t \in \R^+ \setminus \{1\}$ and thus $f$ is decreasing on $\R^+$.
  We conclude by noting that $\lim_{t \to 0^+} f(t) = 1$, so that $f \leq 1$ on $\R_+^*$.
\end{proof}

\begin{lemma}
  \label{lem:D-function}
  For any $p \in \R_+^*$, the function $q \mapsto D (p, q)$ is strictly convex on $\R_+^*$, and reaches its minimum \textup(equal to $0$\textup) at $q = p$.
  Hence, it is decreasing on $(0, p]$ and increasing on $[p, + \infty)$.
  In addition, $D (p, q) \leq p \log (p/q)$ when $q \leq p$, and $D (p, q) \geq e^{-1} p \log (p/q)$ when $q \leq p/e$.
\end{lemma}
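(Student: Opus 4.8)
The plan is to argue everything directly from the closed form $D(p,q) = p\log(p/q) - p + q$, with $p \in \R_+^*$ fixed, viewed as a function of $q$ on $\R_+^*$. First I would differentiate: $\tfrac{\partial}{\partial q} D(p,q) = 1 - p/q = (q-p)/q$ and $\tfrac{\partial^2}{\partial q^2} D(p,q) = p/q^2 > 0$ on $\R_+^*$. Positivity of the second derivative gives strict convexity; the first derivative vanishes exactly at $q = p$, is negative on $(0,p)$ and positive on $(p,+\infty)$, so $q \mapsto D(p,q)$ is decreasing on $(0,p]$ and increasing on $[p,+\infty)$, and its minimum value is $D(p,p) = p\log 1 - p + p = 0$.

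For the inequality $D(p,q) \leq p\log(p/q)$ when $q \leq p$, I would simply rewrite $D(p,q) = p\log(p/q) - (p-q)$ and observe that $p - q \geq 0$, so we are subtracting a nonnegative quantity. For the lower bound $D(p,q) \geq e^{-1} p\log(p/q)$ when $q \leq p/e$, the plan is to use the identity $D(p,q) = q\, h(p/q)$ recorded in the preliminaries, where $h$ is the function of Lemma~\ref{lem:entropy-function}. Setting $t = p/q \geq e$, that lemma provides $h(t) \geq e^{-1} t\log t$, whence $D(p,q) = q\, h(t) \geq q \cdot e^{-1} (p/q)\log(p/q) = e^{-1} p\log(p/q)$, as claimed.

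No step here poses a genuine obstacle: the only nonelementary input is the bound $h(t) \geq e^{-1} t\log t$ for $t \geq e$ from Lemma~\ref{lem:entropy-function}, and everything else is a one-line computation. If one preferred to avoid invoking $h$, the same lower bound can be obtained by reducing to showing $(1-e^{-1})\log t \geq 1 - 1/t$ for $t \geq e$, which holds with equality at $t = e$ and then follows by comparing derivatives, but the route through Lemma~\ref{lem:entropy-function} is cleaner.
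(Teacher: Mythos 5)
Your proposal is correct and follows essentially the same route as the paper: partial derivatives in $q$ for convexity, the minimum, and monotonicity, and the identity $D(p,q)=q\,h(p/q)$ together with Lemma~\ref{lem:entropy-function} for the lower bound. Your direct rewriting $D(p,q)=p\log(p/q)-(p-q)$ for the upper bound is a harmless (and slightly more elementary) variant of the paper's appeal to $h(t)\leq t\log t$.
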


\begin{proof}
  The claims on convexity and monotonicity follow from the fact that $\frac{\partial D}{\partial q} (p, q) = - \frac{p}{q} + 1$ (which cancels out at $q = p$) and $\frac{\partial D}{\partial q} (p, q) = \frac{p}{q^2} > 0$.
  The inequalities on $D$ follow from the expression $D (p, q) = q h (p/q)$ and from Lemma~\ref{lem:entropy-function}.
\end{proof}

\begin{lemma}
  \label{lem:kl-lower-subset}
  Let $P = (p_1, \dots, p_d) \in \probas_d$ and $Q = (q_1, \dots, q_d) \in \probas_d$.
  For any subset $J \subset [d]$, we have
  \begin{equation}
    \label{eq:kl-lower-subset}
    \kll{P}{Q}
    \geq D \bigg( \sum_{j \in J} p_j, \sum_{j \in J} q_j \bigg)
    \, .
  \end{equation}
\end{lemma}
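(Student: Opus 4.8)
The plan is to read the claimed bound as a two-point coarsening (data-processing) inequality for the relative entropy, and to establish it directly by hand via the log-sum inequality. Write $J^c = [d]\setminus J$, and set $p_J = \sum_{j\in J}p_j$, $q_J = \sum_{j\in J}q_j$, and likewise $p_{J^c}, q_{J^c}$, so that $p_J + p_{J^c} = q_J + q_{J^c} = 1$. The point is that the pair $(p_J, p_{J^c})$, $(q_J, q_{J^c})$ are the two-point distributions obtained from $P, Q$ by lumping together the classes in $J$ and the classes in $J^c$, and $\kll{P}{Q}$ can only decrease under this lumping.

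First I would split $\kll{P}{Q} = \sum_{j=1}^d p_j \log(p_j/q_j)$ into the contributions of $J$ and of $J^c$, and apply to each piece the log-sum inequality $\sum_{j\in I} p_j\log(p_j/q_j) \geq p_I \log(p_I/q_I)$; this is an instance of Jensen's inequality for the convex function $t\mapsto t\log t$, using the weights $q_j/q_I$ for $j \in I$. This yields
\[
  \kll{P}{Q} \geq p_J \log\frac{p_J}{q_J} + p_{J^c}\log\frac{p_{J^c}}{q_{J^c}} .
\]
Next I would rewrite the right-hand side in terms of the function $D$: since $p_J + p_{J^c} = q_J + q_{J^c}$, the linear terms cancel and the right-hand side equals $D(p_J, q_J) + D(p_{J^c}, q_{J^c})$, which is exactly the identity~\eqref{eq:kl-D-function} applied to the two-point distributions $(p_J, p_{J^c})$ and $(q_J, q_{J^c})$. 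Finally, since $D(u,v) = v\,h(u/v) \geq 0$ by convexity of $h$ and $h(1)=0$ (Lemma~\ref{lem:entropy-function}, or Lemma~\ref{lem:D-function}), I would discard the nonnegative term $D(p_{J^c}, q_{J^c})$ to obtain~\eqref{eq:kl-lower-subset}.

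The argument is essentially a direct computation, so there is no real obstacle; the only points deserving a sentence of care are the degenerate cases permitted by the conventions for $0\log 0$ and $+\infty$. If $q_j = 0$ while $p_j > 0$ for some $j \in J$ then $\kll{P}{Q} = +\infty$ and the bound is trivial; and if $p_J = 0$ or $q_J = 0$, the relevant terms are read off via $0\log 0 = 0$, for which both the log-sum step and the bound $D\geq 0$ remain valid, so the chain of inequalities goes through unchanged.
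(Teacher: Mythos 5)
Your proof is correct and rests on the same idea as the paper's: Jensen's inequality for a convex function with weights proportional to the $q_j$ (the log-sum inequality is exactly Jensen for $t\mapsto t\log t$, while the paper applies Jensen to $h$). The only organizational difference is that the paper discards the terms outside $J$ first (term by term, using $D(p_j,q_j)\geq 0$) and then convexifies over $J$, whereas you lump to the two-point partition $\{J,J^c\}$ and discard $D(p_{J^c},q_{J^c})$ at the end — an immaterial reordering.
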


\begin{proof}
  We may assume, without loss of generality, that $q_j > 0$ for any $j \in J$.
  Indeed, either there exists a $j \in J$ such that $q_j = 0$ and $p_j > 0$, in which case the left-hand side of~\eqref{eq:kl-lower-subset} is $+ \infty$ and the inequality holds; or, for any $j \in J$ such that $q_j = 0$, one has $p_j = 0$: but in this case replacing $J$ by $J' = \set{j \in J : q_j > 0}$ does not affect the right-hand side of~\eqref{eq:kl-lower-subset}.

  Next, if $J = \varnothing$, the right-hand side of~\eqref{eq:kl-lower-subset} is $0$, thus the inequality holds.
  We now also assume that $J \neq \varnothing$.
  Then, by non-negativity and convexity of the function $h$ (Lemma~\ref{lem:entropy-function}), we have
  \begin{align*}
    \kll{P}{Q}
    &= \sum_{j=1}^d D (p_j, q_j)
      \geq \sum_{j \in J} q_j h \Big( \frac{p_j}{q_j} \Big)
      = \bigg( \sum_{j \in J} q_j \bigg) \sum_{j\in J} \frac{q_j}{\sum_{k \in J} q_k} h \Big( \frac{p_j}{q_j} \Big)
    \\
    &\geq \bigg( \sum_{j \in J} q_j \bigg) h \bigg( \sum_{j\in J} \frac{q_j}{\sum_{k \in J} q_k} \cdot \frac{p_j}{q_j} \bigg)
      = D \bigg( \sum_{j \in J} p_j, \sum_{j \in J} q_j \bigg)
      \, .
      \qedhere
  \end{align*}
\end{proof}

We also recall the following standard Poisson tail bound (\eg,~\cite[p.~23]{boucheron2013concentration}):
\begin{lemma}
  \label{lem:poisson-tail}
  Let $\lambda \in \R^+$ and $N \sim \poissondist (\lambda)$.
  For any $\mu \in \R^+$ such that $\mu \geq \lambda$, one has
  \begin{equation}
    \label{eq:poisson-upper-tail}
    \P (N \geq \mu)
    \leq \exp (- D (\mu, \lambda))
    \, .
  \end{equation}
  In addition, for any $\mu \in \R^+$ such that $\mu \leq \lambda$, one has
  \begin{equation}
    \label{eq:poisson-lower-tail}
    \P (N \leq \mu)
    \leq \exp ( - D (\mu, \lambda))
    \, .
  \end{equation}
\end{lemma}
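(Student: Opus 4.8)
The plan is to prove both inequalities by the classical Chernoff (exponential moment) method, exploiting the explicit moment generating function of a Poisson variable. The starting point is the identity $\E[e^{tN}] = \exp(\lambda(e^t-1))$, valid for all $t \in \R$ and $N \sim \poissondist(\lambda)$, which follows by summing the Poisson series term by term.

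For the upper tail, I would fix $\mu \geq \lambda$ and apply the exponential Markov inequality: for any $t \geq 0$,
\[
\P(N \geq \mu) = \P\big(e^{tN} \geq e^{t\mu}\big) \leq e^{-t\mu}\,\E[e^{tN}] = \exp\big(\lambda(e^t - 1) - t\mu\big).
\]
Then I would minimize the exponent $\psi(t) = \lambda(e^t - 1) - t\mu$ over $t \geq 0$. Since $\psi$ is convex with $\psi'(t) = \lambda e^t - \mu$, the unconstrained minimizer is $t^\star = \log(\mu/\lambda)$, which lies in $[0,\infty)$ precisely because $\mu \geq \lambda$; substituting gives $\psi(t^\star) = (\mu - \lambda) - \mu\log(\mu/\lambda) = -D(\mu,\lambda)$, which is the first inequality.

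For the lower tail I would run the symmetric argument with a negative exponential parameter: for $\mu \leq \lambda$ and any $s \geq 0$,
\[
\P(N \leq \mu) = \P\big(e^{-sN} \geq e^{-s\mu}\big) \leq e^{s\mu}\,\E[e^{-sN}] = \exp\big(\lambda(e^{-s}-1) + s\mu\big),
\]
and minimizing $s \mapsto \lambda(e^{-s}-1) + s\mu$ over $s \geq 0$ yields the optimal $s^\star = \log(\lambda/\mu) \geq 0$ (using $\mu \leq \lambda$), with minimum again equal to $(\mu-\lambda) - \mu\log(\mu/\lambda) = -D(\mu,\lambda)$. The degenerate case $\mu = 0$ should be treated directly: $\P(N \leq 0) = \P(N=0) = e^{-\lambda} = e^{-D(0,\lambda)}$ with the convention $0\log 0 = 0$.

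There is no real obstacle here, as this is the standard Chernoff bound for Poisson variables; the only points needing a moment of care are verifying the sign conditions $t^\star \geq 0$ and $s^\star \geq 0$ (so that the exponential Markov step applies with a legitimate parameter) and checking consistency of the degenerate cases $\lambda = 0$ or $\mu = 0$ with the conventions adopted for the function $D$. Everything else reduces to a one-line optimization.
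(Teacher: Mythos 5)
Your proof is correct and is exactly the standard Chernoff argument that the paper relies on: the paper gives no proof of its own for this lemma, simply citing \cite[p.~23]{boucheron2013concentration}, where the bound is obtained in precisely this way (exponential Markov inequality plus optimization of $\lambda(e^t-1)-t\mu$). The optimizers $t^\star=\log(\mu/\lambda)$ and $s^\star=\log(\lambda/\mu)$ and the resulting exponent $-D(\mu,\lambda)$ all check out, and your handling of the degenerate cases is consistent with the paper's conventions for $D$.
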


\begin{definition}
  \label{def:stoch-domin}
  Let $X,Y$ be real random variables.
  We say that $X$ is \emph{stochastically dominated} by $Y$, denoted $X \mleq Y$, if $\P (X \geq t) \leq \P (Y \geq t)$ for every $t \in \R$.
\end{definition}

\begin{lemma}
  \label{lem:stoch-domin}
  Let $X_1, \dots, X_n$ and $Y_1, \dots, Y_n$ be independent real random variables, such that $X_i \mleq Y_i$ for $i=1, \dots, n$.
  Then $\sum_{i=1}^n X_i \mleq \sum_{i=1}^n Y_i$.
\end{lemma}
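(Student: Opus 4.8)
The statement is the closure of stochastic domination under independent sums. The plan is to prove the $n=2$ case and then extend by induction on $n$, so the entire argument reduces to showing: if $X_1 \mleq Y_1$, $X_2 \mleq Y_2$, and $\{X_1, X_2\}$ are independent and $\{Y_1, Y_2\}$ are independent, then $X_1 + X_2 \mleq Y_1 + Y_2$. The cleanest route is a two-step replacement argument: first replace $X_1$ by $Y_1$ while keeping the second coordinate as $X_2$, then replace $X_2$ by $Y_2$. The key elementary fact I would invoke is that for a fixed real random variable $Z$ and any constant (or, more precisely, any independent random variable) $W$, the map $z \mapsto \P(z + W \geq t)$ is nondecreasing in $z$; hence if $U \mleq V$ and $W$ is independent of both, then $U + W \mleq V + W$. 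This "one-sided" replacement lemma is where the independence hypothesis is actually used, and it is the natural atomic building block.

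Concretely, I would first establish the replacement lemma: let $U \mleq V$ with $W$ independent of both, and fix $t \in \R$. Conditioning on $W$ (or writing $\P(U+W\ge t) = \int \P(U \ge t - w)\, dF_W(w)$ using independence), we get $\P(U + W \geq t) = \int \P(U \geq t-w)\,dF_W(w) \leq \int \P(V \geq t-w)\,dF_W(w) = \P(V + W \geq t)$, where the inequality is pointwise from $U \mleq V$. This gives $U + W \mleq V + W$. Applying this twice — once with $(U,V,W) = (X_1, Y_1, X_2)$ to get $X_1 + X_2 \mleq Y_1 + X_2$, and once with $(U,V,W) = (X_2, Y_2, Y_1)$ to get $Y_1 + X_2 \mleq Y_1 + Y_2$ — and chaining the two inequalities (stochastic domination is clearly transitive, since it is a pointwise inequality between tail functions), yields $X_1 + X_2 \mleq Y_1 + Y_2$, completing the $n=2$ case.

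For general $n$, I would induct: assume $\sum_{i=1}^{n-1} X_i \mleq \sum_{i=1}^{n-1} Y_i$. Since $X_n$ is independent of $(X_1,\dots,X_{n-1})$ and $\sum_{i=1}^{n-1} X_i$ is a function of the latter, $X_n$ is independent of $\sum_{i=1}^{n-1} X_i$; similarly $\sum_{i=1}^{n-1} Y_i$ is independent of $Y_n$. Applying the $n=2$ case to the pair $\big(\sum_{i=1}^{n-1} X_i,\ X_n\big)$ versus $\big(\sum_{i=1}^{n-1} Y_i,\ Y_n\big)$ gives $\sum_{i=1}^n X_i \mleq \sum_{i=1}^n Y_i$.

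I do not expect a genuine obstacle here; the only point requiring mild care is the measure-theoretic justification of the identity $\P(U + W \geq t) = \int \P(U \geq t-w)\, dF_W(w)$, which is just Fubini/Tonelli applied to the product law of the independent pair $(U,W)$, together with the monotonicity of the integrand in the relevant variable. An alternative, even shorter, packaging uses a coupling: by the standard quantile construction, $U \mleq V$ implies there exist $U' \overset{d}{=} U$, $V' \overset{d}{=} V$ on a common space with $U' \leq V'$ a.s.; doing this independently for each coordinate and then forming $\sum U_i' \leq \sum V_i'$ a.s.\ gives the conclusion immediately. Either presentation is routine; I would choose the Fubini version to avoid introducing the quantile-coupling machinery, but would mention the coupling as a remark.
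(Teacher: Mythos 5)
Your proof is correct, but it takes a different route from the paper's. You prove the two-term case by a hybrid/replacement argument (condition on the independent summand, integrate the pointwise tail inequality via Fubini, chain the two one-sided replacements by transitivity) and then induct on $n$; the paper instead uses the quantile coupling you mention only as a remark: it takes the right-continuous inverses $F_i^+ \leq G_i^+$ of the c.d.f.'s, feeds a single vector of independent uniforms $(U_1,\dots,U_n)$ into both, and obtains realizations of the two sums on a common space with $\sum_i F_i^+(U_i) \leq \sum_i G_i^+(U_i)$ almost surely, so the conclusion is immediate with no induction. The coupling buys a one-shot argument whose only ingredient is the equivalence $X \mleq Y \Leftrightarrow F_X \geq F_Y \Leftrightarrow F_X^+ \leq F_Y^+$; your version avoids quantile functions but requires one small point you leave implicit: the intermediate hybrid $Y_1 + X_2$ (and more generally $\sum_{i\leq k} Y_i + \sum_{i>k} X_i$) is not given in the hypotheses, since nothing is assumed about the joint law of the $X$'s and the $Y$'s, so you must construct independent versions of $Y_1$ and $X_2$ on a common space before the replacement step makes sense. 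As stochastic domination depends only on marginal distributions this is routine, but it is worth one sentence; otherwise the argument is complete.
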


\begin{proof}
  For $i=1, \dots, n$, let $F_i (t) = \P (X_i \leq t)$ be the cumulative distribution function (c.d.f.) of $X_i$, and $F_i^+ (u) = \inf \{ t \in \R : F_i (t) \geq u \}$ for $u \in (0,1)$ be its right-continuous inverse.
  Likewise, for $i=1, \dots, n$, let $G_i$ be the c.d.f.~of $Y_i$ and $G_i^+$ its right-continuous inverse.
  Since $X_i \mleq Y_i$, we have $F_i \geq G_i$ and thus $F_i^+ \leq G_i^+$.

  Now, let $U_1, \dots, U_n$ be independent random variables that are uniformly distributed on $[0,1]$.
  Then $(F_i^+ (U_i))_{1 \leq i \leq n}$ has the same distribution as $(X_i)_{1\leq i \leq n}$, while $(G_i^+ (U_i))_{1 \leq i \leq n}$ has the same distribution as $(Y_i)_{1\leq i \leq n}$, thus for any $t \in \R$,
  \begin{align*}
    \P \big( X_1 + \dots + X_n \geq t \big)
    &= \P \big( F_1^+ (U_1) + \dots + F_n^+ (U_n) \geq t \big) \\
    &\leq \P \big( G_1^+ (U_1) + \dots + G_n^+ (U_n) \geq t \big)
      = \P \big( Y_1 + \dots + Y_n \geq t \big)
      \, .
  \end{align*}
  Hence $X_1 + \dots + X_n \mleq Y_1 + \dots + Y_n$.
\end{proof}

We will also use the following concentration inequality, due to Ben-Hamou, Boucheron, and Ohannessian \cite{benhamou2017concentration}, for the number $D_n = \sum_{j=1}^d \indic{N_j \geq 1}$ of distinct classes in the sample.

\begin{lemma}
  \label{lem:deviation-occupancy}
  Let $s_n' = s_n' (P) = \E_P [D_n] = \sum_{j=1}^d \{ 1 - (1-p_j)^n \}$.
  For any $s \in \R^+$, the following holds: if $s > s_n'$, then
  \begin{equation}
    \label{eq:deviation-occupancy-upper}
    \P_P (D_n \geq s)
    \leq \exp (- D (s, s_n'))
    \, ;
  \end{equation}
  in addition, if $s < s_n'$, then
  \begin{equation}
    \label{eq:deviation-occupancy-lower}
    \P_P (D_n \leq s)
    \leq \exp (- D (s, s_n'))
    \, .
  \end{equation}
  Finally, for every $\delta \in (0,1)$,
  \begin{equation}
    \label{eq:bernstein-occupancy}
    \P_P \Big( D_n \geq 2 \big\{ s_n' + \log (1/\delta) \big\} \Big)
    \leq \delta
    \, .
  \end{equation}
\end{lemma}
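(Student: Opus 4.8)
The plan is to reduce the three inequalities to a single moment generating function bound, obtained from the negative association of the class counts, and then invoke the classical Chernoff argument for Poisson-type tails.

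First I would write $D_n=\sum_{j=1}^d B_j$ with $B_j=\indic{N_j\geq 1}$, so that $B_j$ is a Bernoulli variable with parameter $q_j=\P_P(N_j\geq 1)=1-(1-p_j)^n$ and $\sum_{j=1}^d q_j=s_n'$. By \cite[3.1(a)]{joagdev1983negative} the counts $(N_j)_{1\leq j\leq d}$ are negatively associated; since each $B_j$ is a monotone function of $N_j$ alone and these depend on disjoint coordinates, the indicators $(B_j)_{1\leq j\leq d}$ are negatively associated as well. As in the proofs of Lemma~\ref{lem:hellinger-empirical} and Proposition~\ref{prop:lower-bound-minimax-sparse} (where negative association is used with both signs of the exponent), this yields, for every $\lambda\in\R$,
\begin{equation*}
  \E_P\big[e^{\lambda D_n}\big]
  \leq \prod_{j=1}^d \E_P\big[e^{\lambda B_j}\big]
  = \prod_{j=1}^d \big(1+q_j(e^\lambda-1)\big)
  \leq \exp\Big((e^\lambda-1)\sum_{j=1}^d q_j\Big)
  = \exp\big(s_n'(e^\lambda-1)\big)
  \, ,
\end{equation*}
where I use the product bound from negative association (the functions $b\mapsto e^{\lambda b}$ being all non-decreasing if $\lambda\geq 0$ and all non-increasing if $\lambda<0$) together with $1+x\leq e^x$. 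This is precisely the moment generating function bound of a $\poissondist(s_n')$ variable.

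From this point the first two inequalities follow from the standard Chernoff computation for Poisson-type tails \cite[p.~23]{boucheron2013concentration}, recalling $D(s,s_n')=s\log(s/s_n')-s+s_n'$. For $s>s_n'$, optimizing $e^{-\lambda s}\,\E_P[e^{\lambda D_n}]\leq e^{-\lambda s+s_n'(e^\lambda-1)}$ over $\lambda>0$ at $\lambda=\log(s/s_n')$ gives $\P_P(D_n\geq s)\leq e^{-D(s,s_n')}$; for $0<s<s_n'$, the analogous bound $\P_P(D_n\leq s)\leq e^{-\lambda s+s_n'(e^\lambda-1)}$ (valid for $\lambda<0$) optimized at $\lambda=\log(s/s_n')<0$ gives $\P_P(D_n\leq s)\leq e^{-D(s,s_n')}$, while the remaining case $s=0$ is trivial since $D_n\geq 1$ almost surely. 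For the Bernstein-type bound~\eqref{eq:bernstein-occupancy}, rather than optimizing I would plug the fixed value $\lambda=\log 2$ into the Chernoff bound, obtaining $\P_P(D_n\geq s)\leq e^{-s\log 2}\,\E_P[e^{(\log 2)D_n}]\leq e^{-s\log 2+s_n'}$; taking $s=2\{s_n'+\log(1/\delta)\}$ one has $s\log 2-s_n'=(2\log 2-1)s_n'+2\log 2\cdot\log(1/\delta)\geq\log(1/\delta)$ since $2\log 2>1$, hence $\P_P(D_n\geq s)\leq\delta$.

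The only delicate point is the first step: justifying that the indicators $(B_j)$ inherit negative association from the counts $(N_j)$ and that this entails the product bound on the moment generating function for both signs of $\lambda$. This is a standard closure property of negative association and is already used elsewhere in the paper, so the rest is a routine Chernoff calculation.
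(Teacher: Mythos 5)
Your proof is correct and follows essentially the same route as the paper: both rest on the Poisson-type m.g.f.\ bound $\log \E_P[e^{\theta(D_n - s_n')}] \leq s_n'(e^\theta - \theta - 1)$ followed by the Chernoff method, the only difference being that the paper imports this bound from \cite[Proposition~3.4]{benhamou2017concentration} whereas you rederive it from negative association of the counts (a closure property the paper already invokes elsewhere, so this step is sound). For~\eqref{eq:bernstein-occupancy} you use the fixed choice $\lambda = \log 2$ in place of the paper's sub-gamma relaxation $e^\theta - \theta - 1 \leq \theta^2/(2(1-\theta))$; both yield the constant $2$.
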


\begin{proof}
  Applying~\cite[Proposition~3.4]{benhamou2017concentration} with $r = 1$ gives, for any $\theta \in \R$,
  \begin{equation*}
    \log \E \big[ e^{\theta (D_n - s_n')} \big]
    \leq s_n' (e^\theta - \theta - 1)
    \, .
  \end{equation*}
  Applying the standard Chernoff method to this Poisson-type moment generating function gives the tail bounds~\eqref{eq:deviation-occupancy-upper} and~\eqref{eq:deviation-occupancy-lower} (see, \eg,~\cite[p.~23]{boucheron2013concentration}).
  To obtain the bound~\eqref{eq:bernstein-occupancy}, further relax the m.g.f.~bound above by noting that for $\theta \in (0, 1)$, one has $e^{\theta} - \theta -1 = \sum_{k \geq 2} \frac{\theta^k}{k!} \leq \sum_{k\geq 2} \frac{\theta^k}{2} = \frac{\theta^2}{2(1-\theta)}$, and apply the sub-gamma tail bound~\cite[p.~29]{boucheron2013concentration} to conclude that, with probability at least $1-\delta$,
  \begin{equation*}
    D_n
    < s_n' + \sqrt{2 s_n' \log(1/\delta)} + \log (1/\delta)
    \leq 2 \big( s_n' + \log (1/\delta) \big)
    \, .
  \end{equation*}
  This concludes the proof.
\end{proof}

Finally, the following lemma was used in the proof of the decomposition of Lemma~\ref{lem:decomp-risk}.

\begin{lemma}
  \label{lem:kl-hellinger-bounded}
  Let $C \geq 4$, and define $\phi (t) = (t \log t -t +1)/(\sqrt{t}-1)^2$ for $t \in \R_+ \setminus \{1\}$, extended by continuity by $\phi(1)=2$.
  For every $p, q \in \R^+$ such that $q \geq p/C$, one has
  \begin{equation}
    \label{eq:kl-hellinger}
    (\sqrt{p} - \sqrt{q})^2
    \leq D (p, q)
    \leq \phi (C) (\sqrt{p} - \sqrt{q})^2
    \leq 4 \log (C) (\sqrt{p} - \sqrt{q})^2
    \, .
  \end{equation}
\end{lemma}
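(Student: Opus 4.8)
The plan is to reduce the whole statement to a single‑variable inequality and then settle it by a monotonicity argument. Using the identity $D(p,q) = q\, h(p/q)$ together with $(\sqrt p - \sqrt q)^2 = q\,(\sqrt{p/q} - 1)^2$, one sees that when $q > 0$, writing $t = p/q$ (so that the hypothesis $q \geq p/C$ reads $t \leq C$) and dividing by $q>0$, the asserted chain \eqref{eq:kl-hellinger} is equivalent to
\[
  (\sqrt t - 1)^2 \;\leq\; h(t) \;\leq\; \phi(C)\,(\sqrt t - 1)^2
  \qquad \text{for } 0 \leq t \leq C,
\]
i.e.\ to $1 \leq \phi(t) \leq \phi(C)$ on $[0,C]$. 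The boundary cases are immediate: if $q = 0$ then $q \geq p/C$ forces $p = 0$ and all three quantities vanish, while if $p = 0 < q$ then $D(0,q) = q = (\sqrt 0 - \sqrt q)^2$ and the inequalities hold since $\phi(C) \geq 1$. So it suffices to prove that $\phi \geq 1$ on $\R^+$, that $\phi$ is non-decreasing on $\R^+$, and that $\phi(C) \leq 4\log C$ for $C \geq 4$.

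The heart of the proof is the monotonicity of $\phi$, which I would carry out after the substitution $t = u^2$. Define $\psi$ on $\R^+$ by $\psi(u) = \phi(u^2) = g(u)/(u-1)^2$ for $u \neq 1$, where $g(u) = 2u^2\log u - u^2 + 1$, and $\psi(1) = 2$; then $\psi$ is continuous on $\R^+$, differentiable on $\R^+\setminus\{1\}$, and $\psi(0) = 1$. Since $g'(u) = 4u\log u$, a direct computation gives
\[
  \psi'(u)
  = \frac{g'(u)(u-1) - 2g(u)}{(u-1)^3}
  = \frac{2\,(u^2 - 1 - 2u\log u)}{(u-1)^3}
  \, .
\]
Put $m(u) = u^2 - 1 - 2u\log u$. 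Then $m(1) = 0$ and $m'(u) = 2(u - 1 - \log u) \geq 0$ by the standard inequality $\log u \leq u - 1$; hence $m$ is non-decreasing, so $m(u)$ has the same sign as $u-1$. Therefore the numerator $2m(u)$ and the denominator $(u-1)^3$ in the expression for $\psi'$ share the same sign, so $\psi' \geq 0$ on $\R^+\setminus\{1\}$, and by continuity $\psi$ — hence $\phi$ — is non-decreasing on $\R^+$. Consequently $\phi(t) \geq \phi(0) = h(0)/(0-1)^2 = 1$ for all $t \geq 0$ (which, multiplied by $q$, is the left inequality in \eqref{eq:kl-hellinger}), and $\phi(t) \leq \phi(C)$ for $t \leq C$ (which is the middle inequality).

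Finally, the numerical bound $\phi(C) \leq 4\log C$ for $C \geq 4$ is elementary: $h(C) = C\log C - C + 1 \leq C\log C$ since $C \geq 1$, while $(\sqrt C - 1)^2 \geq C/4$ because $\sqrt C - 1 \geq \tfrac12\sqrt C$ whenever $C \geq 4$; dividing, $\phi(C) = h(C)/(\sqrt C - 1)^2 \leq (C\log C)/(C/4) = 4\log C$. I expect the only genuinely delicate point to be the monotonicity of $\phi$ — specifically the sign analysis of $\psi'$ across the removable singularity at $u = 1$ and the bookkeeping of the extended values $\phi(0) = 1$ and $\phi(1) = 2$ — whereas the reduction to the one-variable inequality and the final estimate are routine.
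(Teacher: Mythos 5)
Your proof is correct and follows essentially the same route as the paper's: the same reduction to the one-variable function $\phi(p/q)$, the same substitution $t=u^2$ leading to the same sign analysis of $u^2-1-2u\log u$ via $\log u\leq u-1$, and the same final estimate $h(C)\leq C\log C$ together with $(\sqrt C-1)^2\geq C/4$. No gaps.
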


\begin{proof}
  If $q = 0$, then by assumption $p = 0$ as well, hence all terms in~\eqref{eq:kl-hellinger} are equal to $0$ and the inequalities hold; the same holds when $p=q$.
  We thus assume $q > 0$ and $p \neq q$.
  In this case, we have $D (p, q) / (\sqrt{p} - \sqrt{q})^2 = \phi (p/q)$.
  Now, a direct computation shows that the map $t \mapsto \phi (t^2)$ is continuously differentiable on $\R_+^*$, with derivative $t \mapsto 2 (t-1)^{-3} \psi (t)$ where $\psi (t) = t^2 - 2t\log t -1$.
  Since $\psi'(t) = 2 (t - 1 - \log t) \geq 0$ for any $t > 0$, the function $\psi$ is non-decreasing, and since $\psi(1) = 0$ we deduce that $\psi \leq 0$ on $(0, 1]$ and $\psi \geq 0$ on $[1, +\infty)$.
  Hence $\frac{\di}{\di t} \phi (t^2) = 2 (t-1)^{-3} \psi(t) \geq 0$ for any $t > 0$, $t \neq 1$, thus by continuity of $\phi$ at $0$ and $1$, the function $\phi$ is non-decreasing on $\R^+$.

  Since $\phi (0) = 1$ and $0 \leq p/q \leq C$, we deduce that
  $(\sqrt{p} - \sqrt{q})^2 \leq D (p, q) \leq \phi (C) (\sqrt{p} - \sqrt{q})^2$.
  Finally, since $C \geq 4$ we have
  \begin{equation*}
    \phi (C)
    = \frac{C \log C - C + 1}{C (1 - 1/\sqrt{C})^2}
    \leq \frac{C \log C}{C (1-1/\sqrt{4})^2}
    = 4 \log C
    \, ,
  \end{equation*}
  which concludes the proof.
\end{proof}

\begingroup
\small
\newcommand{\etalchar}[1]{$^{#1}$}

\endgroup


\end{document}